\definecolor{colComments}{rgb}{1,0,0}
\theoremstyle{plain}  % style
\numberwithin{equation}{section} % renumber according to section
\newtheorem{Definition}[equation]{Definition} % numbering with equation
\newtheorem{Definition/Lemma}[equation]{Definition/Lemma}
\newtheorem{Lemma}[equation]{Lemma}
\newtheorem{Theorem}[equation]{Theorem}
\newtheorem{Proposition}[equation]{Proposition}
\newtheorem{Corollary}[equation]{Corollary}
\newtheorem{Remark}[equation]{Remark}
\newtheorem{Notation}[equation]{Notation}
\newtheorem{Notation/Lemma}[equation]{Notation/Lemma}
\newtheorem{Comparison}[equation]{Comparison}
\font\triangles=beta
\newcommand{\Squar}{\mbox{\triangles 3}}
\newcommand{\UR}{\mbox{\triangles 1}}
\newcommand{\UP}{\mbox{\triangles 2}}
\newcommand{\C}{\mbox{\triangles 9}}
\font\trianglesb=betab
\newcommand{\Squarb}{\mbox{\trianglesb 3}}
\newcommand{\URb}{\mbox{\trianglesb 1}}
\newcommand{\UPb}{\mbox{\trianglesb 2}}
\title{Supercharacter theories for Sylow $p$-subgroups
       \\of Lie type $G_2$}
\author{Yujiao Sun}
\affil{\footnotesize School of Mathematics and Statistics,
         Beijing Institute of Technology,\\
      Beijing 100081, P. R. China}
\affil{\footnotesize E-mail: yujiaosun@bit.edu.cn}
\date{}
\begin{document}
%%%%%%%%%%%%%%%%%%%%%%%%%%%%%%%%
\maketitle

%%%%%%%%%%%%%%%%%%%%%%%%%%%%
\begin{abstract}
We construct a supercharacter theory,
and establish the supercharacter table
for Sylow $p$-subgroups $G_2^{syl}(q)$
of the Chevalley groups $G_2(q)$ of Lie type $G_2$ when $p>2$.
%%%%%%%%%%%%
Then we calculate the conjugacy classes,
determine the complex irreducible characters by Clifford theory,
and obtain the character tables for $G_2^{syl}(q)$ when $p>3$.
\end{abstract}
%%%%%%%%%%%%%%%%%%%%%%%%%%%

%%%%%%%%%%%%%%%%%%%
%  \tableofcontents

{\small \textbf{Keywords: }supercharacter theory; character table;
                           Sylow $p$-subgroup}

%2010
{\small \textbf{2010 Mathematics Subject Classification: }Primary 20C15, 20D15. Secondary 20C33, 20D20}

%%%%%%%%%%%%%%%%%%%%%%%%%%%%%%%%%%
\section{Introduction}
%%%%%%%%%%%%%%%%% introduce Un(q)
Let $p$ be a fixed odd prime,
$\mathbb{N}^*$ the set of positive integers,
$q:=p^k$ for a fixed $k\in \mathbb{N}^*$,
$\mathbb{F}_q$ the finite field with $q$ elements
and $A_n(q)$ ($n\in \mathbb{N}^*$)
the group of upper unitriangular $n\times n$-matrices
with entries in $\mathbb{F}_q$.
%%%%%%%%%%
Then $A_n(q)$ is a Sylow $p$-subgroup of the general linear group $GL_n(q)$
and also a Sylow $p$-subgroup \cite{Carter1}
of the Chevalley group of Lie type $A_{n-1}$ ($n \geq 2$) over $\mathbb{F}_q$.
%%%%%%%%%%%%%%%%%%%%%%%%%%
It is well known that classifying the conjugacy classes of $A_n(q)$ and hence the complex irreducible characters
is a ``wild'' problem,
see e.g. \cite{Evs2011, ps, vla}.
%%%%%%%%%%%
Higman's conjecture \cite{hig} states that
for a fixed $n$, the number of conjugacy classes of $A_n(q)$
is determined by a polynomial in $q$ with integral coefficients depending on $n$.
%%%
Isaacs \cite{isa1} proved that
the degrees of complex irreducible characters
of $\mathbb{F}_q$-algebra groups are powers of $q$.
%%%
Lehrer \cite{leh}
and later Isaacs \cite{isa2}
refined Higman's conjecture.
Pak and Soffer \cite{ps} verified Higman's conjecture for $n\leq 16$.

%%%%%%%%%%%%%%%%%%%%%%%%%% supercharacter
%%%%%%%% definition of supercharacter
Diaconis and Isaacs \cite{di}
introduced the notion of {\it supercharacter theory} (see \ref{supercharacter theory}) for an arbitrary finite group,
which is a coarser approximation of the character theory.
%%%%%%
Roughly, a supercharacter theory replaces irreducible characters by supercharacters,
conjugacy classes by superclasses,
and irreducible modules by supermodules.
In such a way, a {\it supercharacter table} is constructed as a replacement for a character table.
%%%%%%%%%%%%%%%%%%%%%%%%%%%%%% Andre and Yan
%%%%%%%%%%%%%%%%%%%%%%%%%%%%%% Andre and Yan
Andr\'{e} in \cite{and1} using the Kirillov orbit method,
and Yan in \cite{yan2} using a more elementary method
determined the {\it Andr\'{e}-Yan supercharacter theory} for $A_n(q)$.
%%%%%%%% Marberg
This theory is extended to the so-called algebra groups \cite{di}.
%%%%%%%%%%%%%%% A type: bijection
The supercharacter theory for $A_n(q)$ is based on
the observation that $u\mapsto u-1$ defines a bijection from $A_n(q)$
to an $\mathbb{F}_q$-vector space of nilpotent upper triangular matrices.
However, this does not work in general for Sylow $p$-subgroups
of the other Lie types.

%%%%%%%%%%%%%%%%%%%%%%%%%%%%%%%%%%%%%%%%%%%%%%%%
%%%%%%%%%%%%%%%%%%%%%% Andre and Neto
 Andr\'{e} and Neto \cite{an1,an2, an3} studied the
{\it Andr\'{e}-Neto supercharacter theories}
for the classical finite unipotent groups of untwisted types $B_n$, $C_n$ and $D_n$
(i.e. the classical finite groups: the odd orthogonal groups, the symplectic groups
and the even positive orthogonal groups, respectively).
%%%%%%%%%%%%%%%%%%%%%%%%%%%%%
The construction of \cite{an1, an3} is extended to involutive algebra groups \cite{AFM2015}.
%%%%%%%%%%%%%%%
Andrews \cite{Andrews2015, Andrews2016} constructed supercharacter theories of finite unipotent
groups in the orthogonal, symplectic and unitary types
(i.e. the Sylow $p$-groups of untwisted Chevalley groups of types $B_n$ and $D_n$, of type $C_n$,
 and of the twisted Chevalley groups of type $^2A_n$, respectively).
%%%%%%%%%%%%%%
Supercharacters of those classical groups arise as restrictions of supercharacters of overlying
full upper unitriangular groups $A_N(q)$ to the Sylow $p$-subgroups,
and superclasses arise as intersections of superclasses of $A_N(q)$ with these groups.

%%%%%%%%%%%%%%%%%%%%%%%%%%%%%%%%%%%%%%%%%%%%%%%%%%%
%%%%%%%%%%%%%%%%%%%%%%%%%%%% Markus
Jedlitschky  generalised Andr\'{e}-Yan's construction by a procedure called
{\it monomial linearisation}  (see \cite[\S 2.1]{Markus1}) for a finite group,
and decomposed {Andr\'{e}-Neto supercharacters} for Sylow $p$-subgroups
(i.e. the unipotent even positive orthogonal groups)
of Lie type $D$ into much smaller characters \cite{Markus1}.
The smaller characters are pairwise orthogonal,
and each irreducible character is a constituent of exactly one of the smaller characters.
Thus, these characters look like finer supercharacters for the Sylow $p$-subgroups of type $D$.
But, so far there are no corresponding finer superclasses.
%%%%%%%%%%%% Guo, Jedlitschky and Dipper
A monomial linearisation for Sylow $p$-subgroups of Lie types $B_n$, $D_n$ and $C_n$
is exhibited,
and the stabilizers and orbit modules are studied
in
\cite{Guo2018On, Guo2017Orbit}.
%%%%%%%
One may ask, if there exists a construction of a supercharacter theory for
Sylow $p$-subgroups of all Lie types
based on the monomial linearisation approach for type $D$.

%%%%%%%%%%%%%%%%%%%%%%%%%%%%%%%%%%%%%%%%%%%%%%%%
%%%%%%%%%%%%%%%%%%%% exceptional type
We try the exceptional types firstly,
apply Jedlitschky's monomial linearisation  to obtain supercharacters,
and then supplement it
to construct superclasses as well in order to obtain a full supercharacter theory.
%%%%%%%%%%%%
%%
This has been done for the Sylow $p$-subgroup ${{^3D}_4^{syl}}(q^3)$
of the twisted Lie type ${^3D}_4$ by the author in \cite{sun3D4super}.
It will be determined in this paper in the special case of Lie type $G_2$:
the Sylow $p$-subgroup $G_2^{syl}(q)$
of the Chevalley group $G_2(q)$.
The method seems to work for more exceptional Lie types,
indeed in the forthcoming paper we shall obtain similar results
for the case of twisted type $^2G_2$.
%%%
Thus we have some evidence that there is indeed a general supercharacter theory
for all Lie types behind this.

%%%%%%%%%%%%%%%%%%%%%%%%%%%%%%%%%%%%%%%%%%%%%%%%%%%%%%%%%%%%
%%%%%%%%%%%%%% goal
For the matrix Sylow $p$-subgroup $G_2^{syl}(q)$ (see Section \ref{sec:G2-1})
of the Chevalley group of type $G_2$,  the construction are followed.
\begin{itemize}
\setlength\itemsep{0em}
 \item [1.] {\it Determine a monomial module by constructing a monomial linearisation:}
            Determine a Sylow $p$-subgroup ${G}_2^{syl}(q) \leqslant {{^3{D}}_4^{syl}}(q^3)$,
            and construct an intermediate algebra group $G_8(q)$ such that $G_2^{syl}(q) \leqslant G_8(q) \leqslant A_8(q)$
            (see Section \ref{sec:G2-1}).
            Then construct a monomial linearisation for $G_8(q)$
            and obtain a monomial $G_8(q)$-module $\mathbb{C}\left(G_2^{syl}(q)\right)$
            (see Section \ref{sec: monomial G2-module}).
%%%%%%%%%%%%%%%%%
 \item [2.] {\it Establish supercharacters of $G_2^{syl}(q)$ by decomposing monomial $G_2^{syl}(q)$-modules:}
            Every supercharacter is afforded by a direct sum of
            some $G_2^{syl}(q)$-orbit modules
            which is also a direct sum of restrictions of certain $G_8(q)$-orbit modules
            to $G_2^{syl}(q)$
            (see Sections \ref{sec:U-orbit modules-G2},
            \ref{sec:homom. between orbit modules-G2}
             and \ref{sec: supercharacter theories-G2}).
%%%%%%%%%%%%%%%%%
 \item [3.] {\it Calculate the superclasses using the intermediate group $G_8(q)$:}
            Every superclasses is a union of some intersections of biorbits of $G_8(q)$ and $G_2^{syl}(q)$,
            i.e. $\{I_8+g(u-I_8)h\mid g, h \in G_8(q)\}\cap G_2^{syl}(q)$ for all $u\in G_2^{syl}(q)$,
            where $I_8$ is the identity element of $G_2^{syl}(q)$
            (see Sections \ref{partition of U-G2}
            and \ref{sec: supercharacter theories-G2}).
\end{itemize}
%%%%%%%%%%%%%

%%%%%%%%%%%%%%%%%%%%%
We mention that supercharacter theories have proven to raise other questions in particular concerning algebraic combinatorics.
For example, Hendrickson
obtained the connection between supercharacter theories and Schur rings \cite{Hend}.

%%%%%%%%%%%%%%%%%%%%%%%%%%%%%%%%%%%%%%%%%%%%%%%%%%%%%
%%%%%%%%%%%%%%%%%%%%%%%%%%%%%%%%%%%%%%%%%%%%%%%%%%%%%
%% irreducible
%%%%%%%%%%%%%%%%%% generalized Higman conjecture
The set of complex irreducible characters and the set of conjugacy classes form a trivial supercharacter theory for a finite group.
It is also natural to consider Higman's conjecture,
Lehrer's conjecture and Isaacs' conjecture for
the Sylow $p$-subgroups of other Lie
types.
%%%%%%
Let $G(q)$ be a finite group of Lie type,
$U(q)$ a Sylow $p$-subgroup of $G(q)$,
and $\#\mathrm{Irr}(U(q))$ the number of all complex irreducible characters (i.e. the number of conjugacy classes).
%%% rank 8
The $\#\mathrm{Irr}(U(q))$ for $U(q)$ of rank at most 8, except $E_8$, are calculated using an algorithm \cite{GMR2014, GMR2016, GR2009}.
%%%%%%%%%%% Le and Jedlitschky
For the Sylow $p$-subgroup $U(q)$ of type $D_4$,
the complex irreducible characters  in \cite{HLMD42011},
the $\#\mathrm{Irr}(U(q))$ in \cite{HLMD42011, Markus1},
and the {\it generic character table} in \cite{GLM2017} are determined.
%%%%%%%%%%%%%%%%%%%%%%%%%% F4
The irreducible characters of
the Sylow $p$-subgroup $U(q)$ of type $F_4$ in \cite{GLMP2016}
and of type $E_6$ in \cite{LMParxiv2017}
are parameterized.

%%%%%%%%%%%%%%%%%%%%%%%%%%%%% monomial linearisation
%%%%%%%%%%%%%%%%%%%%%%%%% 3D4
For the Sylow $p$-subgroup ${{^3D}_4^{syl}}(q^3)$ of the Steinberg triality group $^3D_4(q^3)$,
irreducible characters have been classified by Le \cite{Le}
and the character tables have been given by the author explicitly in \cite{sun1}.
%%%%%%%%%%%%%%%%%%%%%%%%% G2
For the Sylow $p$-subgroup $G_2^{syl}(q)$ $(p>3)$ of the Chevalley group $G_2(q)$ of type $G_2$,
the number of conjugacy classes of $G_2^{syl}(q)$
is obtained with an algorithm in \cite{GMR2016, GR2009},
and most irreducible characters (except $q^2-2q+2$ linear characters) of $G_2^{syl}(q)$
are determined by parameterizing {\it midafis} in \cite{HLM2016}.

%%%%
In this paper, we further
calculate the conjugacy classes of $G_2^{syl}(q)$ $(p>3)$,
and get the relations between the superclasses and conjugacy classes
(see Section \ref{sec:conjugacy classes-G2}).
Then we construct all of the complex irreducible characters of $G_2^{syl}(q)$,
and obtain the relations between the supercharacters and irreducible characters
(see Section \ref{sec:irreducible characters-G2}).
After that, we establish the character table for $G_2^{syl}(q)$
(see Section \ref{sec:irreducible characters-G2}).
%%%%%%%
Higman's conjecture,
Lehrer's conjecture and Isaacs' conjecture are true for $G_2^{syl}(q)$.

%%%%%%%%%%%%%%%%%%%% comparison
At the end of each section,
we compare the properties of $G_2^{syl}(q)$ and ${^3}D_4^{syl}(q^3)$.
%%%%%%%
Some related properties of $A_n(q)$, $D_n^{syl}(q)$ and ${^3}D_4^{syl}(q^3)$
are given in  \cite{sun3D4super}.
%%%%%%%%%%%%%%%%%%%%%

%%%%%%%%%%%%%%%%%%%% Notations
Here we fix some notation:
Let
$\mathbb{N}$ be the set $\{0,1,2, \dots \}$ of all non-negative integers,
$K$ a field,
$K^*$ the multiplicative group $K\backslash\{0\}$ of $K$,
$K^+$ the additive group of $K$,
$\mathbb{F}_{q^3}$ the finite field with $q^3$ elements,
$\mathbb{C}$ the complex field.
Let $\mathrm{Mat}_{8 \times 8}(K)$
be the set of all $8\times 8$ matrices with entries in the field $K$,
the {general linear group}
${GL}_8(K)$ be the subset of $\mathrm{Mat}_{8\times 8}(K)$
consisting of all invertible matrices.
%%%%%%% 3
If $m\in \mathrm{Mat}_{8\times 8}(K)$, then set $m:=(m_{i,j})$,
where $m_{i,j}\in K$ denotes the $(i,j)$-entry of $m$.
For simplicity, we write $m_{ij}:=m_{i,j}$ if there is no ambiguity.
%%%%%%% 4
Denote by $e_{i,j}\in \mathrm{Mat}_{8\times 8}(K)$ the matrix unit
with $1$ in the $(i,j)$-position and $0$ elsewhere,
%%%%%%%%%% 5
and denote by $A^{\top}$ the {transpose} of $A\in \mathrm{Mat}_{8\times 8}(K)$.
%%%%%%%%%%%
Let $O_8$ be the zero $8 \times 8$-matrix $O_{8\times 8}$,
and $1$ denote the identity element of a finite group.
%%%%%%%%%%%%%%%%%%%%%%%%%

%-%-%-%-%-%-%-%-%-%-%-%-%-%-%-%-%-%-%-%-%
%                                       %
%         Sylow p-subgroup G2           %
%                                       %
%-%-%-%-%-%-%-%-%-%-%-%-%-%-%-%-%-%-%-%-%

%%%%%%%%%%%%%%%%%%%%%%%%%%%%%%%%%%%%%%%%%%%%%%%%%%%%%%%%%%%%%%%%%%%%%%%%%%%%%%%%%%%%%%%%%%%%%%%%%%%%
%%%%%%%%%%%%%%%%%%%%%%%%%%%%%%%%%%%%%%%%%%%%%%%%%%%%%%%%%%%%%%%%%%%%%%%%%%%%%%%%%%%%%%%%%%%%%%%%%%%%%%%%%%%
\section{Sylow $p$-subgroup $G_2^{syl}(q)$ of Lie type $G_2$}
\label{sec:G2-1}
In this section,
we construct a Lie algebra of type $G_2$
and its corresponding Chevalley basis (see \ref{Lie algebra, G2}),
and then determine the Sylow $p$-subgroup ${G}_2^{syl}(q)$ of the Chevalley group
of type $\mathcal{L}_{G_2}$ over the field $\mathbb{F}_q$
(see \ref{sylow p-subg, G2}).
The main references are
\cite{Carter1, Carter2005}.
%%%%%%%%%%%%%%%%%%%%%%%%%%%%%%%%%%

%%%%%%%%%%%%%%%
%%%%%%%%%%%%%%%%%%%%%%%%%%%%%%%%
We recall the construction of Lie algebra of type $D_4$
and the Sylow $p$-subgroup ${}^3{D}_4^{syl}(q^3)$ (see \cite[\S2]{sun3D4super}).
If
${J_{8}^+}:=
\sum_{i=1}^8{e_{i,9-i}}\in {GL}_{8}(\mathbb{C})$,
%%%%%%%%%%%
then $\{A \in \mathrm{Mat}_{8\times 8}(\mathbb{C}) \mid  A^\top J_{8}^+ + J_{8}^+ A=0 \}$
forms a complex simple Lie algebra $\mathcal{L}_{D_4}$ of type $D_4$.
%%%%%%%%%%%%%%%
For $1\leq i\leq 4$, let
$h_i:= e_{i,i}-e_{9-i,9-i} \in \mathrm{Mat}_{8\times 8}(\mathbb{C})$.
Then a Cartan subalgebra of $\mathcal{L}_{D_4}$ is
$\mathcal{H}_{D_4}= \{\sum_{i=1}^4{\lambda_ih_i}  \mid \lambda_i\in \mathbb{C}\}$.
%%%%%%%%%%%%%%%%%%%%%%%%
Let $\mathcal{H}_{D_4}^*$ be the dual space of $ \mathcal{H}_{D_4}$,
and $h:=\sum_{i=1}^4{\lambda_ih_i}$.
For $1 \leq i \leq 4$,
let $\varepsilon_i \in \mathcal{H}_{D_4}^*$
be defined by $\varepsilon_i(h)=\lambda_i$ for all $i=1,2,3,4$.
%%%%%%%%%%%%%%%%%%%%%%%%%%%%%
If $\mathcal{V}_4:=\mathcal{V}_{D_4}$
is a $\mathbb{R}$-vector subspace of $\mathcal{H}_{D_4}^*$
spanned by $\{ h_i \mid i=1,2,3,4\}$,
then $\mathcal{V}_4$ becomes a Euclidean space (see \cite[\S 5.1]{Carter2005}).
%%%%%%%%%%%%%%%%%%%%%
The set
$\Phi_{D_4}=\{\pm\varepsilon_i\pm\varepsilon_j  \mid  1\leq i< j\leq 4\}$
is a root system of type ${D}_4$.
%%%%
The fundamental system of roots of the root system $\Phi_{D_4}$ is
$\Delta_{D_4}
=\{\varepsilon_1-\varepsilon_2,\ \varepsilon_2-\varepsilon_3,
\ \varepsilon_3-\varepsilon_4,\ \varepsilon_3+\varepsilon_4\}$.
The {positive system of roots} of $\Phi_{D_4}$ is
$\Phi_{D_4}^+:=\{\varepsilon_i\pm\varepsilon_j  \mid  1 \leq i< j\leq4\}$.

%%%%%%%%%%%%%%%%%%
%%%%%%%%%%%%%%%%%%%
Let
$ r_1:=\varepsilon_1-\varepsilon_2$,
$r_2:=\varepsilon_2-\varepsilon_3$,
$r_3:=\varepsilon_3-\varepsilon_4$,
$ r_4:=\varepsilon_3+\varepsilon_4$,
$ r_5:= r_1+r_2$,
$ r_6:= r_2+r_3$,
$ r_7:= r_2+r_4$,
$ r_8:= r_1+r_2+r_3$,
$ r_{9}:= r_1+r_2+r_4$,
$ r_{10}:= r_2+r_3+r_4$
$ r_{11}:= r_1+r_2+r_3+r_4$,
and
$ r_{12}:= r_1+2r_2+r_3+r_4$.
%%%%%%%%%%%%%%%%%%
Then $\{h_r \mid r\in \Delta_{D_4}\}\cup \{ e_{\pm r} \mid r\in \Phi_{D_4}^+\}$
is a Chevalley basis of the Lie algebra $\mathcal{L}_{D_4}$,
where
$e_{r_1}:=e_{12}-e_{78}$,
$e_{r_2}:=e_{23}-e_{67}$,
$ e_{r_3}:=e_{34}-e_{56}$,
$ e_{r_4}:=e_{35}-e_{46}$,
%%%%%%
$e_{r_5}:={-}(e_{13}-e_{68})$,
$ e_{r_6}:=e_{24}-e_{57}$,
$ e_{r_7}:=e_{25}-e_{47}$,
$ e_{r_8}:=e_{14}-e_{58}$,
%%%%%%
$ e_{r_{9}}:=e_{15}-e_{48}$,
$ e_{r_{10}}:=e_{26}-e_{37}$,
$ e_{r_{11}}:=e_{16}-e_{38}$,
$ e_{r_{12}}:=e_{17}-e_{28}$,
and $e_{-r}:=e_r^\top$ for all $r\in \Phi_{D_4}^+$,
$h_r:=[e_r,e_{-r}]=e_re_{-r}-e_{-r}e_r$ for all  $r\in \Delta_{D_4}$.

%%%%%%%%%%%%%%%%%%%%%%%%%%%%%%%%%%%%%%%%%%%%%%%%%%%%%%%%%%%%%%%%%%%%
%%%%%%%%%%%%%%%% D_4, \mathbb{F}_q

The group $D_4^{syl}(q):=
  \left<{\,}\exp(te_r) {\,}\middle|{\,} r\in \Phi_{D_4}^+,{\ } t\in \mathbb{F}_q {\,}\right>$
is a Sylow $p$-subgroup of the Chevalley group
${D}_4(q):=\left<{\,} \exp(t{\,}\mathrm{ad}{\,e_r}) {\,}\middle|{\,}  r\in \Phi_{D_4},{\,} t\in \mathbb{F}_q {\,}\right>$.
% (see \ref{sec:appendix:sylow-Dn}).
%%%%%%%%%%%%%%%%%%%%%%%%%%%%%%%%%%%%%%%%%%%%%%%%%%%%%%%%%%
We set
${x_r(t)}:= \exp(te_r)=I_{8}+t\cdot e_r$
for all $r\in \Phi_{D_4}$ and $t\in \mathbb{F}_q$,
and the root subgroups
${X_r}:= \{ {x_r(t)} {\ }| {\ } t\in {\mathbb{F}_q}\}$
for all $r\in \Phi_{D_4}$.
%%%%%%%%%%%%%%%%%%%%%%%%%%%%%%%%%%%%%%%%%%%
We have $
 D_4^{syl}(q)=\{\prod_{r\in \Phi^+_{D_4}}{x_r(t_r)}
 {\,}|{\,}
 t_r \in \mathbb{F}_q \}$,
where the product can be taken in an arbitrary, but fixed, order.
%%%%%%%%%%%%%%

% %%%%%%%%%%%%%%%%%%%%%%
Let $\rho$ be a linear transformation of $\mathcal{V}_4$ into itself
arising from a non-trivial symmetry of the Dynkin diagram of $\mathcal{L}_{D_4}$
sending $r_1$ to $r_3$, $r_3$ to $r_4$, $r_4$ to $r_1$, and fixing $r_2$.
%%%%%%%%%%%%%%%%%
Then $\rho^3=\mathrm{id}_{\mathcal{V}_4}$.
%%%%%%%%%%%%%%%%%%%%%%%%%%%%%%%%%%%%%%%%%%
Let an automorphism of the Lie algebra $\mathcal{L}_{D_4}$ be determined by
$h_r\mapsto h_{\rho(r)},\ e_r\mapsto e_{\rho(r)},\ e_{-r}\mapsto e_{-\rho(r)}$
 ($r\in \Delta_{D_4}$),
and satisfy that for every $r\in \Phi_{D_4}$ $e_r\mapsto \gamma_re_{\rho(r)}$.
%%%%%%%%%%%%%%%%%%%%%
We have
$\gamma_r=1$ for all $r\in \Phi_{D_4}$.
%%%%%%%%%%%%%%%%%%%%%%%%%%%%%%%%%%%%%%%%%%%%%%%%%%%%%%
%%%%%%%%%%%%%%%%%%%%%% F_q
The Chevalley group $D_4(q^3)$ has a field automorphism
$
F_q \colon \mathbb{F}_{q^3}  \to  \mathbb{F}_{q^3}:t\mapsto t^q
$
sending $x_{r}(t)$ to $x_{r}(t^q)$,
and a graph automorphism $\rho$ sending $x_{r}(t)$ to $x_{\rho(r)}(t)$  $(r\in \Phi_{D_4})$
(see \cite[12.2.3]{Carter1}).
Let $F:=\rho {F_q}={F_q} \rho$.
For a subgroup $X$ of $D_4(q^3)$,
we set $X^F:=\{x\in X| F(x)=x\}$.
Then $D_4(q^3)^F={^3}D_4(q^3)$.

% %%%%%%%%%%%%%%%%%%%%%%%%%%%%%%%%%%%%%%%%%%%%%%%%%%%
For $r\in \Phi_{D_4}^+$ and $t\in \mathbb{F}_{q^3}$,
let
$
x_{r^1}(t):=
 {\left\{
 \begin{array}{ll}
 x_r(t)& \text{if } \rho(r)=r,{\ }t^q=t\\
 x_r(t)\cdot x_{\rho(r)}(t^q)\cdot x_{\rho^2(r)}(t^{q^2}) & \text{if } \rho(r)\neq r,{\ }t^{q^3}=t\\
 \end{array}
 \right.}$.
Then a Sylow $p$-subgroup of $^3D_4(q^3)$ is
\begin{align*}
{}^3{D}_4^{syl}(q^3):
&=\left\{x_{r_2^1}(t_2)x_{r^1_1}(t_1)x_{r^1_5}(t_5)x_{r^1_8}(t_8)x_{r^1_{11}}(t_{11})x_{r^1_{12}}(t_{12})
{\,}\middle|{\,}
{\left\{\begin{array}{l}
t_1, t_5, t_8 \in {\mathbb{F}_{q^3}}\\
t_2, t_{11}, t_{12} \in {\mathbb{F}_{q}}
\end{array}
\right.}
\right\}.
\end{align*}
In particular, $|{}^3{D}_4^{syl}(q^3)|=q^{12}$.

%%%%%%%%%%%%%%%%%%%%%%%%%%%%%%%%%%%%%%%%%%%%%%%%%%%%%%%%%%%%%%%%%%%%%%%%%%%%%%%%%%%%%%%%%%%%%%%%%%%%%%%%%%%%%%%
%%%%%%%%%%%%%%%%%%%%%%%%%%%%%%%%%%%%%%%%%%%%%%%%%%%%%%%%%%%%%%%%%%%%%%%%%%%%%%%%%%%%%%%%%%%%%%%%%%%%%%%%%%%%%%%
%%%2
For $t\in \mathbb{F}_{q^3}$, we set
%%%%%%%%%%%%%%%
%%%x1
$
x_1(t):=x_{r_1^1}(t)
   =x_{r_{1}}(t)
  \cdot x_{r_{3}}(t^q)
  \cdot x_{r_{4}}(t^{q^2})$,
%%%%%%%%%%%%%%%
%%%x3
$x_3(t):=x_{r_5^1}(t)
=x_{r_{5}}(t)
  \cdot x_{r_{6}}(t^q)
  \cdot x_{r_{7}}(t^{q^2})$,
%%%%%%%%%%%%%%%%%%%
%%%x4
$x_4(t):=x_{r_8^1}(t)
=x_{r_{8}}(t)
 \cdot x_{r_{10}}(t^q)
 \cdot x_{r_{9}}(t^{q^2})$.
%%%%%%%%%%%%%%%%%%%%%%%%%%%%%%%%%%%%
For $t\in \mathbb{F}_q$, let
%%%%%%%%%%%
%%%x2
$x_2(t):=x_{r_2^1}(t)
=x_{r_{2}}(t)$,
%%%%%%%%%%%%%
%%%x5
$x_5(t):=x_{r_{11}^1}(t)
=x_{r_{11}}(t)$,
%%%%%%%%%%%%
%%%x6
$x_6(t):=x_{r_{12}^1}(t)
=x_{r_{12}}(t)$.
%%%%%%%%%%%%%%%%%%%%%%%%%% root subgroups of 3D4
Then the root subgroups of $^3D_4^{syl}(q^3)$ are
$X_i=\{x_i(t) \mid t\in \mathbb{F}_{q^3}\}$ $(i=1,3,4)$
and $X_i=\{x_i(t) \mid t^q=t,{\ }t\in \mathbb{F}_{q^3}\}$ $(i=2,5,6)$.

%%%%%%%%%%%%%%%%%%%%%%%%%%%%%%%%%%%%%%%
Let
$ x(t_1, t_2, t_3, t_4, t_5, t_6)
 := x_2(t_2)x_1(t_1)x_3(t_3)x_4(t_4)x_5(t_5)x_6(t_6)\in {^3{D}_4^{syl}(q^3)}$.
%%%%%%%%%%%%%%%%%%%%%%%%%
Then
\begin{align*}
 {}^3{D}_4^{syl}(q^3)
 =&\left\{ x(t_1, t_2, t_3, t_4, t_5, t_6){\,}\middle|{\,}
 t_1, t_3, t_4 \in {\mathbb{F}_{q^3}},{\ } t_2, t_5, t_6 \in {\mathbb{F}_{q}}\right\}.
\end{align*}

%%%%%%%%%%%%%%%%%%%%%%%%%%%%%%%%%%%%%%%%%%%%%%
Motivated by \cite[\S3.4 and \S3.6]{HRT},
we construct a Lie algebra of type $G_2$
which is a subalgebra of $\mathcal{L}_{D_4}$.

%%%%%%%%%%%%%%%%%%%%%%%%%%%%%%%%%%
Let
$e_{1}:= e_{r_1}+e_{r_3}+e_{r_4},
{\ }
e_{2}: =  e_{r_2},
{\ }
%%%%%%%%%%
e_{3}:= e_{r_5}+e_{r_6}+e_{r_7},
{\ }
%%%%%%%%%%
e_{4}:= e_{r_8}+e_{r_{10}}+e_{r_{9}},
{\ }
%%%%%%%%%%
e_{5}:= e_{r_{11}},
{\ }
%%%%%%%%%
e_{6}:= e_{r_{12}}$,
$f_i:=e_i^{\top}$
and $\tilde{h}_i:=[e_i,f_i]$ for all $i=1,2,\dots, 6$.
Then
$e_i, f_i, \tilde{h}_i \in \mathcal{L}_{D_4}$ $(i=1,2,\dots,6)$.
%%%%%%%%%%%%%%%%%%%%%%%%%%%%%%%%%%%%%%%%%%%%%
Define two vector subspaces of $\mathcal{L}_{D_4}$ as follows:
\begin{align*}
\tilde{\mathcal{H}}
 :=& \mathbb{C}
 \{\tilde{h}_1,\tilde{h}_2\}
             = \big\{\sum_{i=1}^3\lambda_ih_i {\, }\big|{\, } \lambda_1-\lambda_2-\lambda_3=0\big\}
             = \big\{\sum_{i=1}^4\lambda_ih_i {\, }\big|{\, } \lambda_1-\lambda_2-\lambda_3=0,{\, } \lambda_4=0\big\}
              \subseteq \mathcal{H}_{D_4},\\
%%%%%%%%%%%%%%%%%%%%
 \tilde{\mathcal{L}}
 :=& \mathbb{C}
 \{\tilde{h}_1,\tilde{h}_2, e_i,f_j  \mid i,j=1,2,3,4,5,6\}
             = \tilde{\mathcal{H}}\oplus \sum_{i=1}^6{\mathbb{C}e_i}\oplus \sum_{j=1}^6{\mathbb{C}f_j}.
\end{align*}

%%%%%%%%%%%%%%%%%%%%%%%%%%%%%%%%%%%%%%%%%%%%
%%%%%%%%%%%%%%%%%%%%%%%%%%%%%%%%%%%%%%%%%%%%%% Lie algebra G2
\begin{Proposition}[Lie algebra of type $G_2$]\label{Lie algebra, G2}
\begin{itemize}
\setlength\itemsep{0em}
 \item [(1)] $ \tilde{\mathcal{L}}$ is a 14-dimensional subalgebra of the Lie algebra $\mathcal{L}_{D_4}$.
 \item [(2)] $ \tilde{\mathcal{L}}$ is a Lie algebra of type $G_2$.
 \item [(3)] $\{\tilde{h}_k \mid k=1,2\}\cup \{e_i,f_i  \mid  i=1,2,\dots,6\}$
 is a Chevalley basis of $\tilde{\mathcal{L}}$.
\end{itemize}
\end{Proposition}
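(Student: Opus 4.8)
The plan is to realise $\tilde{\mathcal{L}}$ as the fixed-point subalgebra of the triality automorphism of $\mathcal{L}_{D_4}$ and then to extract its root datum. Let $\sigma$ denote the automorphism of $\mathcal{L}_{D_4}$ determined (as above, with $\gamma_r=1$) by $h_r\mapsto h_{\rho(r)}$, $e_r\mapsto e_{\rho(r)}$, $e_{-r}\mapsto e_{-\rho(r)}$, so that $\sigma(e_r)=e_{\rho(r)}$ for all $r\in\Phi_{D_4}$ and $\sigma^3=\mathrm{id}$. I would first record the $\rho$-orbits on $\Phi_{D_4}^+$ — the triples $\{r_1,r_3,r_4\}$, $\{r_5,r_6,r_7\}$, $\{r_8,r_9,r_{10}\}$ and the singletons $\{r_2\}$, $\{r_{11}\}$, $\{r_{12}\}$ — and observe that each $e_i$ ($1\le i\le 6$) is by construction the sum of the $e_r$ over one such orbit; hence each $e_i$, each $f_i=e_i^{\top}$, and each $\tilde h_i=[e_i,f_i]$ is fixed by $\sigma$, so $\tilde{\mathcal{L}}\subseteq\mathcal{L}_{D_4}^{\sigma}$. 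Conversely, applying the averaging projector $\tfrac13(\mathrm{id}+\sigma+\sigma^2)$, whose image is $\mathcal{L}_{D_4}^{\sigma}$, to the Chevalley basis of $\mathcal{L}_{D_4}$ yields nonzero scalar multiples of exactly the $14$ listed elements (with repetitions), so $\mathcal{L}_{D_4}^{\sigma}\subseteq\tilde{\mathcal{L}}$. Since the fixed points of an automorphism form a subalgebra, $\tilde{\mathcal{L}}=\mathcal{L}_{D_4}^{\sigma}$ is a subalgebra; and $\dim\tilde{\mathcal{L}}=14$ because the $14$ elements are linearly independent, lying in pairwise distinct $\mathcal{H}_{D_4}$-weight spaces, with $\tilde h_1=h_{r_1}+h_{r_3}+h_{r_4}$ and $\tilde h_2=h_{r_2}$ clearly independent. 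This gives (1).

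For (2) and (3) I would take $\tilde{\mathcal{H}}=\mathbb{C}\{\tilde h_1,\tilde h_2\}$ as candidate Cartan subalgebra: it lies in the diagonal Cartan $\mathcal{H}_{D_4}$, hence is abelian and acts semisimply on $\tilde{\mathcal{L}}$, and each $e_i$, $f_i$ is a $\tilde{\mathcal{H}}$-weight vector. Computing these weights from $[\tilde h_k,e_i]=\big(\textstyle\sum_r\langle\,\cdot\,,r^{\vee}\rangle\big)e_i$ via the $D_4$ root pairings gives $[\tilde h_1,e_1]=2e_1$, $[\tilde h_1,e_2]=-3e_2$, $[\tilde h_2,e_1]=-e_1$, $[\tilde h_2,e_2]=2e_2$; so, writing $\beta_i:=\mathrm{wt}(e_i)$, the matrix $\big(\langle\beta_i,\beta_j^{\vee}\rangle\big)_{i,j\le 2}$ is the Cartan matrix $\left(\begin{smallmatrix}2&-1\\-3&2\end{smallmatrix}\right)$ of type $G_2$, while $\sigma$-equivariance (which forces the coefficients within an orbit to agree) identifies $\beta_3,\dots,\beta_6$ with $\beta_1+\beta_2$, $2\beta_1+\beta_2$, $3\beta_1+\beta_2$, $3\beta_1+2\beta_2$ and $\mathrm{wt}(f_i)=-\beta_i$. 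Thus $\tilde{\mathcal{L}}=\tilde{\mathcal{H}}\oplus\bigoplus_{\beta\in\Phi_{G_2}}\tilde{\mathcal{L}}_{\beta}$ with one-dimensional weight spaces, $\tilde{\mathcal{H}}$ is self-normalising (nothing outside it normalises it, since the $\beta_i$ are nonzero), hence a Cartan subalgebra, so $\tilde{\mathcal{L}}$ has rank $2$; and $\tilde{\mathcal{L}}$ is semisimple (being the fixed subalgebra of the semisimple $\mathcal{L}_{D_4}$ under a finite-order automorphism it is reductive, and its centre is trivial because the Cartan matrix is non-degenerate). A $14$-dimensional semisimple Lie algebra of rank $2$ is simple of type $G_2$ by the classification of simple Lie algebras, which is (2).

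It then remains, for (3), to check the normalisation conditions defining a Chevalley basis relative to $\tilde{\mathcal{H}}$: that $[e_i,f_i]$ equals the coroot $\beta_i^{\vee}$ (immediate for $i=1,2$ by definition, and for $i=3,\dots,6$ a one-line computation expressing $[e_i,f_i]\in\tilde{\mathcal{H}}$ through $\tilde h_1,\tilde h_2$), and that the constants $N_{\beta_i,\beta_j}$ defined by $[e_i,e_j]=N_{\beta_i,\beta_j}e_k$ whenever $\beta_i+\beta_j=\beta_k\in\Phi_{G_2}$ are integers with $|N_{\beta_i,\beta_j}|=p_{\beta_i,\beta_j}+1$ and $N_{-\beta_i,-\beta_j}=-N_{\beta_i,\beta_j}$. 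Every such bracket is a signed sum of the $D_4$ structure constants $N_{r,s}=\pm1$, and $\sigma$-equivariance collapses the check to one representative per pair of orbits — e.g. $[e_1,e_2]=\pm e_3$, $[e_1,e_3]=\pm2\,e_4$, $[e_1,e_4]=\pm3\,e_5$, matching the values $p+1=1,2,3$ read off the $\beta_1$-root strings in $G_2$. I expect this last step to be the only genuinely laborious part: one must pin down the signs of the $D_4$ structure constants for the chosen extraspecial pairs and confirm that inside each orbit they reinforce rather than cancel, so as to reproduce the predicted $G_2$ values. Everything else is forced by the folding automorphism $\sigma$ together with the dimension count.
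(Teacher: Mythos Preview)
Your proposal is correct and takes a genuinely different route from the paper. The paper proves all three parts by hand: it checks closure under the bracket directly for (1); for (2) it computes the eigenvalues of $\tilde{\mathcal H}$ on each $e_i,f_i$, verifies that $\tilde{\mathcal H}$ is self-normalising by an explicit weight argument, proves semisimplicity from scratch by showing any abelian ideal must be zero, and then reads off the Cartan matrix from the $\alpha$- and $\beta$-root strings; (3) is dispatched by noting that $x\mapsto -x^{\top}$ is an automorphism sending $e_i$ to $-f_i$ and that the bracket constants have the right shape. Your argument instead identifies $\tilde{\mathcal L}$ with the fixed-point subalgebra $\mathcal L_{D_4}^{\sigma}$ of the triality automorphism (which the paper has already set up, including $\gamma_r=1$), whence (1) is immediate; for (2) you invoke the standard fact that the fixed points of a finite-order automorphism of a semisimple Lie algebra are reductive, combine it with the non-degenerate Cartan matrix you compute from $[\tilde h_k,e_j]$ to kill the centre, and then pin down the type by the dimension--rank pair $(14,2)$.

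Your approach is more conceptual and makes transparent \emph{why} $G_2$ appears---it is the folding of $D_4$ under triality---at the cost of importing the reductivity theorem. The paper's approach is entirely self-contained and elementary, but hides this structural explanation behind calculations. For (3) the two arguments essentially converge: both must ultimately check that the $D_4$ structure constants within each $\rho$-orbit add rather than cancel to produce $\lvert N_{\beta_i,\beta_j}\rvert=p_{\beta_i,\beta_j}+1$, and you are right that this sign-tracking is the only genuinely laborious step.
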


\begin{proof}
\begin{itemize}
\setlength\itemsep{0em}
\item [(1)]
 $\tilde{\mathcal{L}}$ is closed under the Lie bracket $[{\ },{\ }]$
 with straightforward calculation.
%%%%%%%%%%%%%%%%
\item [(2)]  Let $\tilde{h}:=\sum_{i=1}^3\lambda_ih_i
                            = \lambda_1 \tilde{h}_1 +(\lambda_1+\lambda_2)\tilde{h}_2
                            \in  \tilde{\mathcal{H}}$.
Then
\begin{alignat*}{3}
%%%%%%% 1
[\tilde{h},e_1]=&\frac{\lambda_1-\lambda_2+2\lambda_3}{3}e_1,
& \quad
%%%%%%% 2
[\tilde{h},e_2]=&(\lambda_2-\lambda_3)e_{2},
& \quad
%%%%%%% 3
[\tilde{h},e_3]=& \frac{\lambda_1+2\lambda_2-\lambda_3}{3}e_3,\\
%%%%%%% 4
[\tilde{h},e_4]=& \frac{2\lambda_1+\lambda_2+\lambda_3}{3}e_4,
& \quad
%%%%%%% 5
[\tilde{h},e_5]=& (\lambda_1+\lambda_3)e_5,
& \quad
%%%%%%% 6
[\tilde{h},e_6]=& (\lambda_1+\lambda_2)e_6,\\
%%%%%%%%%%%%%%%%%%%%%%
[\tilde{h},f_1]=&-\frac{\lambda_1-\lambda_2+2\lambda_3}{3}f_1,
& \quad [\tilde{h},f_2]=&-(\lambda_2-\lambda_3)f_{2},
& \quad [\tilde{h},f_3]=& -\frac{\lambda_1+2\lambda_2-\lambda_3}{3}f_3,\\
% %%%%%
[\tilde{h},f_4]=& -\frac{2\lambda_1+\lambda_2+\lambda_3}{3}f_4,
& \quad [\tilde{h},f_5]=& -(\lambda_1+\lambda_3)f_5,
& \quad [\tilde{h},f_6]=& -(\lambda_1+\lambda_2)f_6.
\end{alignat*}
%%%%%%%%%%%%%%%%%%%%%% define \alpha, \beta
The functions ${\alpha},\beta \colon \tilde{\mathcal{H}} \to  \mathbb{C}$ are
$\alpha(\tilde{h})= \frac{\lambda_1-\lambda_2+2\lambda_3}{3}$
and
 $\beta(\tilde{h})= \lambda_2-\lambda_3$,
then  set
$\tilde\Phi:=\pm\{ \alpha,{\ } \beta,{\ }\alpha+\beta,
                  {\ }2\alpha+\beta, {\ }3\alpha+\beta, {\ }3\alpha+2\beta\}$.
Thus we write $[\tilde{h},e_{\tilde{\alpha}}]={\tilde{\alpha}(\tilde{h})} e_{\tilde{\alpha}}$
and $\tilde{\mathcal{L}}_{\tilde{\alpha}}=\mathbb{C}e_{\tilde{\alpha}}$
for all $\tilde{\alpha}\in \tilde{\Phi}$
and $e_{\tilde{\alpha}}\in \{e_i,{\ }f_i \mid i=1,2,\dots,6\}$.

%%%%%%%%%%%% determine Cartan subalgebra
{\it We claim that $\tilde{\mathcal{H}}$ is a Cartan subalgebra}.
We know $\tilde{\mathcal{H}}$ is abelian.
Now it is sufficient to show that
$
\tilde{\mathcal{H}}=
N_{\tilde{\mathcal{L}}}(\tilde{\mathcal{H}})
:=\{x\in {\tilde{\mathcal{L}}} \mid
   [x,h]\in \tilde{\mathcal{H}},{\ }\forall \,h\in \tilde{\mathcal{H}}\}$.
%%%%%%%%%%%%%%
If $x\in N_{\tilde{\mathcal{L}}}(\tilde{\mathcal{H}})$,
then $x=h'+\sum_{i=1}^6{(a_ie_i+b_if_i)}$
    for $h'\in \tilde{\mathcal{H}}$ and $a_i,b_i\in \mathbb{C}$.
If $h=4h_1+3h_2+h_3\in \tilde{\mathcal{H}}$, then
$
  [h,x]=(a_1e_1-b_1f_1)+2(a_2e_2-b_2f_2)+3(a_3e_3-b_3f_3)
%    \phantom{[h,x]=}
   +4(a_4e_4-b_4f_4)+5(a_5e_5-b_5f_5)+7(a_6e_6-b_6f_6)
                    \in \tilde{\mathcal{H}}$,
so $a_i=b_i=0$ { for all } $i=1,2,3,4,5,6$.
%%%%%%%%%%%%%
Thus $x\in \tilde{\mathcal{H}}$,
and  $\tilde{\mathcal{H}}$ is a Cartan subalgebra of $\tilde{\mathcal{L}}$.
%%%%%%%%%%%%%%%
Therefore,
\begin{align*}
  \tilde{\mathcal{L}}
  =&\tilde{\mathcal{H}}\oplus \sum_{i=1}^6{\mathbb{C}e_i}\oplus \sum_{j=1}^6{\mathbb{C}f_j}
  =\tilde{\mathcal{H}}\oplus \sum_{\tilde{\alpha}\in \tilde\Phi}{\mathbb{C}e_{\tilde{\alpha}}}.
\end{align*}
is a Cartan decomposition with respect to $\tilde{\mathcal{H}}$.

%%%%%%%%%%%%%% determine semisimple
{\it We claim that $\tilde{\mathcal{L}}$ is semisimple}.
Suppose there exists a non-zero ideal $\tilde{I}$ of $\tilde{\mathcal{L}}$.
Then $[\tilde{\mathcal{H}},\tilde{I}]\subseteq \tilde{I}$.
We may regard $\tilde{I}$ as a $\tilde{\mathcal{H}}$-module and decompose it into weight spaces as follows:
$
 \tilde{I}=
 (\tilde{\mathcal{H}}\cap \tilde{I})
 \oplus \sum_{\tilde{\alpha}\in \tilde\Phi}{(\mathbb{C}e_{\tilde{\alpha}}\cap \tilde{I})}$.
%%%%%%%%%%%%%%%%%%%
If $x\in \tilde{I}$, then $x=x_0+\sum_{\tilde{\alpha}\in \tilde\Phi}{x_{\tilde{\alpha}}}$
where $x_0\in \tilde{\mathcal{H}}$ and $x_{\tilde{\alpha}}\in \tilde{\mathcal{L}}_{\tilde{\alpha}}$.
We verify that $x_0\in \tilde{I}$ and $x_{\tilde{\alpha}}\in \tilde{I}$.
%%%%%%%%%%%%
If $\tilde{\alpha}_0\in \tilde\Phi$
and $h=4\tilde{h}_1+7\tilde{h}_2=4h_1+3h_2+h_3\in \tilde{\mathcal{H}}$,
then ${\tilde{\alpha}}_0(h)\neq 0$
and $\tilde{\beta}(h)\neq {\tilde{\alpha}}_0(h)$
for all $\tilde{\beta}\in \tilde\Phi$ with $\tilde{\beta}\neq \tilde{\alpha}$.
Thus
\begin{align*}
\Big(
\mathrm{ad}{\, h} \prod_{\substack{\tilde{\beta}\in \tilde\Phi \\ \tilde{\beta}\neq {\tilde{\alpha}_0}}}
       {(\mathrm{ad}{\, h}-\tilde{\beta}(h)\mathrm{id}_{\tilde{\mathcal{L}}})}
\Big)
(x)
={\tilde{\alpha}_0}(h)\prod_{\substack{\tilde{\beta}\in \tilde\Phi \\ \tilde{\beta}\neq {\tilde{\alpha}_0}}}
       {({\tilde{\alpha}_0}(h)-\tilde{\beta}(h)) x_{{\tilde{\alpha}_0}}}
\in \tilde{I},
\end{align*}
so $x_{{\tilde{\alpha}_0}} \in \tilde{I}$
and $x_0\in \tilde{I}$. Hence
$
\tilde{I}=
 (\tilde{\mathcal{H}}\cap \tilde{I})
 \oplus \sum_{\tilde{\alpha}\in \tilde\Phi}{(\mathbb{C}e_{\tilde{\alpha}}\cap \tilde{I})}$.
%%%%%%%%%%%%%%%
We claim that $\mathbb{C}e_{\tilde{\alpha}}\cap \tilde{I}=\{0\}$.
Suppose that $\mathbb{C}e_{\tilde{\alpha}}\cap \tilde{I}\neq\{0\}$
for some $\tilde{\alpha}\in \tilde\Phi$.
Then $e_{\tilde{\alpha}}\in \tilde{I}$.
So
$\tilde{h}_{\tilde{\alpha}}=[e_{\tilde{\alpha}},e_{-\tilde{\alpha}}]
\in \tilde{I}$ and
$[\tilde{h}_{\tilde{\alpha}},e_{\tilde{\alpha}}]=2e_{\tilde{\alpha}}$.
This is a contradiction to that $\tilde{I}$ is abelian.
Thus $\mathbb{C}e_{\tilde{\alpha}}\cap \tilde{I}=\{0\}$
and $\tilde{I}\subseteq \tilde{\mathcal{H}}$.
If $x\in \tilde{I}$,
then $[x,e_{\tilde{\alpha}}]=\tilde{\alpha}(x)e_{\tilde{\alpha}}
\in \tilde{I}$
for all $\tilde{\alpha}\in \tilde\Phi$.
Thus $\tilde{\alpha}(x)=0$ and $x=0$.
Hence $\tilde{I}=\{0\}$, which is a contradiction.
Therefore, $\tilde{\mathcal{L}}$ is semisimple.

%%%%%%%%%%%%%%%%%%%%%%%%% roots
The functions $\tilde{\alpha}\in \tilde\Phi$ are the
roots of $\tilde{\mathcal{L}}$ with respect to $\tilde{\mathcal{H}}$.
A system of fundamental roots is $\{ \alpha,\ \beta \}$,
since all the other roots are integral combinations of these with coefficients
all non-negative or non-positive.
Thus the set of the roots is $\tilde\Phi$.
%%%%%%%%%%%%%%%%%%% determine Cartan matrix
We determine the Cartan matrix of $\tilde{\mathcal{L}}$.
The $\alpha$-chain of roots through $\beta$ is
$\{\beta, {\ }\beta+\alpha, {\ }\beta+2\alpha, {\ }\beta+3\alpha\}$.
Then $\beta$-chain of roots through $\alpha$ is
$\{\alpha, {\ }\alpha+\beta\}$.
Then $A_{\alpha,\beta}=0-3=-3$ and $A_{\beta,\alpha}=-1$.
Thus the Cartan matrix of $\tilde{\mathcal{L}}$ is
$
\begin{pmatrix}
 2 & -3   \\
 -1 & 2   \\
\end{pmatrix}
$
for the ordering $(\alpha, \beta)$.
It is a Cartan matrix of type $G_2$.
Then the Lie algebra $\tilde{\mathcal{L}}$ is simple since the Cartan matrix is indecomposable.

%%%%%%%%%%%%%%%%%%%%%%%%%%%
Therefore, $\tilde{\mathcal{L}}$ is a simple Lie algebra of type $G_2$.
%%%%%%%%%%%%
\item [(3)] The co-roots of $\tilde{\mathcal{L}}$ are $\tilde{h}_i=[e_i,f_i]$ $(i=1,2,\dots,6)$.
For in this case $[\tilde{h}_i,e_i]=2e_i$ with $i=1,2,\dots,6$.
We know that $\tilde{\theta}(x)=-x^\top $ is an automorphism of $\tilde{\mathcal{L}}$ with
$\tilde{\theta}(e_i)=-f_i$.
Hence for all $\tilde{\alpha}, \tilde{\beta}\in \tilde{\Phi}$,
$[e_{\tilde{\alpha}},e_{\tilde{\beta}}]
                    =\pm(n_{\tilde{\alpha},\tilde{\beta}}+1)e_{\tilde{\alpha}+\tilde{\beta}}$,
where $n_{\tilde{\alpha},\tilde{\beta}}$ is the biggest integer for which
$\tilde{\beta}-n_{\tilde{\alpha},\tilde{\beta}}\tilde{\alpha} \in \tilde{\Phi}$.
%%%%%%%%%%%%
Thus the fundamental co-roots $\tilde{h}_k$ $(k=1,2)$
together with $e_i$, $f_i$ ($i=1,2,\dots, 6$)
form a Chevalley basis of the Lie algebra $\tilde{\mathcal{L}}$.
\end{itemize}
\end{proof}

%%%%%%%%%%%%%%%%%%%%%%%%%%%%%%%%%%%%%%%%%%%%%
Let
$\mathcal{L}_{G_2}:=  \tilde{\mathcal{L}}$
and
  $\mathcal{H}_{G_2}:=  \tilde{\mathcal{H}}$.
Then
$
 \mathcal{L}_{G_2}=\mathcal{H}_{G_2}\oplus \sum_{i=1}^6{\mathbb{C}e_i}\oplus \sum_{j=1}^6{\mathbb{C}f_j}$.
%%%%%%%%%%%%%%%%%%%%%%%%%
If $\mathcal{V}_{G_2}:= \langle \mathcal{H}_{G_2}^*\rangle_{\mathbb{R}}$,
then $\Delta_{G_2}=\{\alpha,\  \beta\}$ is a basis of $\mathcal{V}_{G_2}$.
%%%%%%%%%%%
The set of the root is
$
\Phi_{G_2}=\pm\{ \alpha,{\ } \beta,{\ }\alpha+\beta,
                  {\ }2\alpha+\beta, {\ }3\alpha+\beta, {\ }3\alpha+2\beta\}$.
The set of positive roots is denoted by
$
\Phi_{G_2}^+=\{ \alpha,{\ } \beta,{\ }\alpha+\beta,
                  {\ }2\alpha+\beta, {\ }3\alpha+\beta, {\ }3\alpha+2\beta\}$.
%%%%%%%%%%%%%%%%%%%%%%%%%%%%%%%%%%%%%%%%%%%%%%
%%%%%%%%%%%%%%%%%%%%%%%%%%%%%%%%%%%%%%%%%%%%% e_a, e_b
Let
 \begin{align*}
 \begin{array}{rrrrrr}
  h_\alpha:=\tilde{h}_1, &
  h_\beta:=\tilde{h}_2, &
  h_{\alpha+\beta}:=\tilde{h}_3, &
  h_{2\alpha+\beta}:=\tilde{h}_4, &
  h_{3\alpha+\beta}:=\tilde{h}_5, &
  h_{3\alpha+2\beta}:=\tilde{h}_6, \\
 %%%%%%%%%%
  e_\alpha:= e_1, &
  e_\beta:= e_2, &
  e_{\alpha+\beta}:= e_3, &
  e_{2\alpha+\beta}:= e_4, &
  e_{3\alpha+\beta}:= e_5, &
  e_{3\alpha+2\beta}:= e_6, \\
 %%%%%%%%%%
 e_{-\alpha}:= f_1, &
 e_{-\beta}:=f_2, &
 e_{-(\alpha+\beta)}:=f_3, &
 e_{-(2\alpha+\beta)}:=f_4, &
 e_{-(3\alpha+\beta)}:=f_5, &
 e_{-(3\alpha+2\beta)}:=f_6.
 \end{array}
 \end{align*}
%%%%%%%%%%%%%%%%%%%%%%%%%%%%%%%%%%%%%%%% Chevalley basis G2
Then
 $\{h_\alpha, h_\beta\} \cup \{e_{\pm r} \mid r\in \Phi_{G_2}^+\}$
 is a Chevalley basis of $\mathcal{L}_{G_2}$.

%%%%%%%%%%%%%%%%%%%%%%%%%%%%%%%%%%%%%%%%%%%%% total order G2
Let $r:=x_1\alpha+x_2\beta \in \mathcal{V}_{G_2}$, $s:=y_1\alpha+y_2\beta \in \mathcal{V}_{G_2}$.
Then we write
$ r\prec s$,
%%%%
if $\sum_{i=1}^2{x_i}<\sum_{i=1}^2{y_i}$,
or if $\sum_{i=1}^2{x_i}= \sum_{i=1}^2{y_i}$ and
the first non-zero coefficient $x_i-y_i$ is positive.
%%%%
The total order on $\Phi_{G_2}^+$ is determined:
$
 0\prec
 \alpha \prec \beta \prec
 \alpha+\beta \prec
 2\alpha+\beta \prec
 3\alpha+\beta \prec
 3\alpha+2\beta $.
%%%%%%%%%%%%%%%%%%%%%%%%%%%%%%%%%%%%%%%%%%%%%%%
%%%%%% signs of structure constants of G2
The Lie algebra $\mathcal{L}_{G_2}$ has the following structure constants:
$N_{\alpha, \beta }=-1$,
$N_{\alpha, \alpha+\beta}=2$,
$N_{\alpha, 2\alpha+\beta}=3$
and
$N_{\beta, 3\alpha+\beta}=1$.

%%%%%%%%%%%%%%%%%%%%%%%%%%%%%%%%%%%%%%%%%%%%%%%%%%%%%%%%%%%%%%%%%%%%%%%%%%%%%%%%%%%%%%%%%%%%%%%%%
% \subsection{Sylow $p$-subgroup ${G}_2^{syl}(q)$}
%%%%%%%%%%%%%%%%%%%%%%%%%%%%%%%%%%%%%%%%%%%
We have
$e_R^3=0$ for all $R\in \Phi_{G_2}^+$ and
 $e_\alpha^2= -2e_{3,6}$,      $e_{\alpha+\beta}^2= -2e_{2,7}$,  $e_{2\alpha+\beta}^2= -2e_{1,8}$
 and
$ e_\beta^2= e_{3\alpha+\beta}^2= e_{3\alpha+2\beta}^2= 0$.
%%%%%%%%%%%%%%%%%%%%%%%%%%%%%%%%%%%%%%%%%%%%%%%%%%
%Similar to \S 11.3 of \cite{Carter1},
The coefficients of $e_{i,j}$ in $\exp (te_r)=I_8+te_r+\frac{1}{2}t^2e_r^2$
for all $r\in \Phi_{G_2}$
are of the form $\pm 1$, $\pm t$ or $\pm t^2$,
%%%
because the coefficient of $e_r^2$ with $r\in \Phi_{G_2}$
are all divisible by $2$.
%%%
This fact enables us to transfer to an arbitrary field.
For each matrix $e_r$ in the above representation
and each element $t$ in an arbitrary field $K$,
$\exp (te_r)$ is a well-defined non-singular matrix over $K$.
We are interested in the Chevalley group of type $\mathcal{L}_{G_2}$
over the finite field $\mathbb{F}_q$ with $\mathrm{Char}{\,\mathbb{F}_q} \neq 2$.

The Chevalley group of type $\mathcal{L}_{G_2}$ is
$
G_2(q):= \left<{\,} \exp (t{\,}\mathrm{ad}{\,e_r})
            {\, }\middle|{\, } r\in \Phi_{G_2},{\ } t\in \mathbb{F}_q {\,}\right>$,
and its Sylow $p$-subgroup is
$
U_{G_2}:= \left<{\,} \exp (t{\,}\mathrm{ad}{\,e_r}) {\, }\middle|{\, }
                    r\in \Phi_{G_2}^+,{\ } t\in \mathbb{F}_q {\,}\right>$.
%%%%%%%%%%%%%%%%%%%%%%%%%%%%%%%%%%%%%%%%%
%%%5 y_r
Set
 $y_r(t):=\exp(te_r)=I_8+te_r+\frac{1}{2}t^2e_r^2$
  for all  $r\in \Phi_{G_2},{\ } t\in \mathbb{F}_q$.
%%%%%%%%%%%%%%%%%%%%%%%%%%%%%%%%%%%%%%%%%
Write
$\bar{U}_{G_2}:= \left<{\,} y_r(t) {\, }\middle|{\, } r\in \Phi_{G_2}^+,{\ } t\in \mathbb{F}_q {\,}\right>$.
%%%%%%%%%%%%%%%%%%%%%%%%%%%%%%%%%%%%%%%%%
%%%2
\begin{Lemma} % [Root subgroups of ${G}_2(q)$]
The root subgroups of $\bar{U}_{G_2}(q)$ are
$Y_i:=\left\{y_i(t) {\, }\middle|{\, } t\in \mathbb{F}_q \right\}$
 for all $i=1,2,3,4,5,6$,
where
%%%%%%%%%%%%%%%
%%%x1
\begin{alignat*}{2}
y_1(t):=&y_{\alpha}(t)
=x_{r_1}(t)x_{r_3}(t)x_{r_4}(t)
=x_1(t),
& \qquad
%%%x2
  y_2(t):=&y_{\beta}(t)
=x_{r_2}(t)
=x_2(t),\\
%%%x3
y_3(t):=& y_{\alpha+\beta}(t)
=x_{r_5}(t)x_{r_6}(t)x_{r_7}(t)
=x_3(t),
& \qquad
%%%x5
y_5(t):=& y_{3\alpha+\beta}(t)
= x_{r_{11}}(t)
= x_5(t),\\
%%%x4
y_4(t):=&y_{2\alpha+\beta}(t)
= x_{r_{8}}(t)x_{r_{10}}(t)x_{r_{9}}(t)
= x_4(t),
& \qquad
%%%x6
  y_6(t):=&y_{3\alpha+2\beta}(t)
= x_{r_{12}}(t)
= x_6(t).
\end{alignat*}
\end{Lemma}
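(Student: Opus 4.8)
The plan is to prove the six identities
$y_i(t)=y_R(t)=\prod_{r\in S_R}x_r(t)=x_i(t)$ by direct matrix computation in $\mathrm{Mat}_{8\times 8}(\mathbb{F}_q)$, using only the explicit expressions of the $e_r$ $(r\in\Phi_{D_4}^+)$ recorded above together with the structural facts already established for $\mathcal{L}_{G_2}$, namely $e_R^3=0$ for all $R\in\Phi_{G_2}^+$ and the values $e_\alpha^2=-2e_{3,6}$, $e_{\alpha+\beta}^2=-2e_{2,7}$, $e_{2\alpha+\beta}^2=-2e_{1,8}$, $e_\beta^2=e_{3\alpha+\beta}^2=e_{3\alpha+2\beta}^2=0$. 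Here $S_R$ denotes the set of $D_4$-roots occurring in the definition of $e_R$: $S_\alpha=\{r_1,r_3,r_4\}$, $S_\beta=\{r_2\}$, $S_{\alpha+\beta}=\{r_5,r_6,r_7\}$, $S_{2\alpha+\beta}=\{r_8,r_{10},r_9\}$, $S_{3\alpha+\beta}=\{r_{11}\}$, $S_{3\alpha+2\beta}=\{r_{12}\}$.

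First I would treat the three singleton cases $R\in\{\beta,\,3\alpha+\beta,\,3\alpha+2\beta\}$: here $e_R=e_{r_j}$ for a single $D_4$-root and $e_{r_j}^2=0$, so $y_R(t)=I_8+te_{r_j}=x_{r_j}(t)$ at once. For the three remaining cases I would verify, reading off row/column supports, that the matrices $e_r$, $r\in S_R$, commute pairwise; that in each such triple all but one pair have disjoint support (e.g. $e_{r_1}$ is supported on rows/columns $\{1,2,7,8\}$, disjoint from $e_{r_3},e_{r_4}$), the remaining pair multiplying in either order to a single matrix unit (e.g. $e_{r_3}e_{r_4}=e_{r_4}e_{r_3}=-e_{3,6}$, $e_{r_6}e_{r_7}=-e_{2,7}$, $e_{r_8}e_{r_9}=-e_{1,8}$); and that every product of three of the $e_r$ vanishes. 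The sum of the pairwise products is then precisely $\tfrac12 e_R^2$, the recorded square divided by two. Granting this, $\prod_{r\in S_R}x_r(t)=\prod_{r\in S_R}(I_8+te_r)$ collapses to $I_8+te_R+\tfrac12 t^2e_R^2=\exp(te_R)=y_R(t)$, a value independent of the order of the factors.

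It remains to identify $y_i(t)$ with $x_i(t)$ and to confirm the $Y_i$ are root subgroups. Since $t\in\mathbb{F}_q$ one has $t^q=t^{q^2}=t$, so the defining products $x_{r^1}(t)=x_r(t)\,x_{\rho(r)}(t^q)\,x_{\rho^2(r)}(t^{q^2})$ used for ${}^3D_4^{syl}(q^3)$ reduce to $\prod_{r'\in S_R}x_{r'}(t)$ with a common parameter; comparing with the previous paragraph gives $y_i(t)=x_i(t)$ for all $t\in\mathbb{F}_q$, and in particular $\bar U_{G_2}(q)\leqslant{}^3D_4^{syl}(q^3)$. Finally, from $e_i^3=0$ and $\mathrm{Char}\,\mathbb{F}_q\neq 2$ one computes $y_i(s)y_i(t)=\exp(se_i)\exp(te_i)=\exp((s+t)e_i)=y_i(s+t)$, so $t\mapsto y_i(t)$ is an isomorphism $\mathbb{F}_q^{+}\xrightarrow{\sim}Y_i$, and $\bar U_{G_2}(q)$ is by definition generated by the $y_R(t)$, i.e. by $Y_1,\dots,Y_6$. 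The only genuine work is the bookkeeping in the three nontrivial cases — keeping track of the supports and signs so that each cross-term $e_re_s$ lands in the correct position with the correct sign, matching the pre-recorded $e_R^2$ — and this I expect to be routine but tedious; there is no conceptual obstacle, and in particular no subtlety in passing to characteristic $\neq 2$, since all entries that occur are $\pm 1$, $\pm t$ or $\pm t^2$ as already noted.
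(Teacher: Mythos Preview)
Your proposal is correct; the paper in fact states this lemma without proof, treating the identities as immediate from the explicit matrix descriptions of the $e_r$ and the already recorded values of $e_R^2$. Your direct expansion of $\prod_{r\in S_R}(I_8+te_r)$ via the pairwise commutativity and cross-term computation is exactly the routine verification the paper leaves to the reader, and your reduction of $x_i(t)$ to the same product via $t^q=t$ for $t\in\mathbb{F}_q$ is the intended identification.
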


%%%%%%%%%%%%%%%%%%%%%%%%%%%%%
We note that
$Y_i \leqslant X_i$ $(i=1,3,4)$,
$Y_i=X_i$ $(i=2,5,6)$,
$Y_i\leqslant {^3}D_4^{syl}(q^3)$,
$\bar{U}_{G_2} \leqslant {^3}D_4^{syl}(q^3)$
and $\bar{U}_{G_2} \leqslant D_4^{syl}(q)$.
%%%%%%%%%%%%%%%%%%%%%%%%%%%%%
%%%%%%%%%%%%%%%%%%%%%%%%%%%%%%%%%%%%%%%%%%%%%%%%%%%
%%%%%%%% G2, total order
We have
$
 \bar{U}_{G_2}=
\big\{\prod_{r\in \Phi_{G_2}^+}{y_r(t_r)}
{\, } \big|{\, }
t_r \in {\mathbb{F}_{q}}, r\in \Phi_{G_2}^+ \big \}
 =\big\{\prod_{i\in \{1,2,\dots,6\}}{y_i(t_i)}
 {\, } \big|{\, }
 t_i \in {\mathbb{F}_{q}}\big\}$,
where the product can be taken in an arbitrary, but fixed, order.
In particular, $|\bar{U}_{G_2}|=q^6$.

%%%%%%%%%%%%%%%%%%%%%%%%%%%%%%%%%%%%% U_{G_2}, iso map
\begin{Proposition}\label{syl. p-subg. iso.-G2}
Let
$
\sigma_{\bar{U}_{G_2}} \colon \bar{U}_{G_2} \to  {U}_{G_2}:
  \exp(te_r)\mapsto \exp(t{\,}\mathrm{ad}{\,e_r})$,
where $r\in \Phi_{G_2}^+$ and $t\in K$.
Then $\sigma_{\bar{U}_{n,K}}$ is a group isomorphism.
\end{Proposition}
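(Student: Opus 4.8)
The plan is to recognise $\sigma_{\bar{U}_{G_2}}$ as the restriction to $\bar{U}_{G_2}$ of the adjoint action of the matrix group $\bar{U}_{G_2}$ on its own Lie algebra $\mathcal{L}_{G_2}$; then surjectivity is immediate from the definition of $U_{G_2}$, and injectivity follows from an order count. (At the level of general theory this is just the fact that the unipotent part of a Chevalley group does not depend on the chosen faithful representation, cf. \cite{Carter1, Carter2005}; but I would prefer to argue it by hand.)

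First I would establish the identity
\[
 y_r(t)\, x\, y_r(t)^{-1}=\exp(t\,\mathrm{ad}\,e_r)(x)
 \qquad (r\in\Phi_{G_2}^+,\ t\in\mathbb{F}_q,\ x\in\mathcal{L}_{G_2}),
\]
where $\exp(t\,\mathrm{ad}\,e_r)$ is read via the divided powers $\tfrac{1}{k!}(\mathrm{ad}\,e_r)^k$ of the Chevalley $\mathbb{Z}$-form, just as $\exp(te_r)=y_r(t)=I_8+te_r+\tfrac12 t^2 e_r^2$ is read over an arbitrary field in Section~\ref{sec:G2-1}. Over $\mathbb{C}$ this is the classical matrix identity $e^{A}Be^{-A}=e^{\mathrm{ad}\,A}(B)$ with $A=te_r$, and since every matrix coefficient that occurs is integral it descends to $\mathbb{F}_q$. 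As $e_r\in\mathcal{L}_{G_2}$ and $\mathcal{L}_{G_2}$ is a subalgebra (Proposition~\ref{Lie algebra, G2}), $\mathrm{ad}\,e_r$ carries $\mathcal{L}_{G_2}$ into itself, so the right-hand side lies in $\mathcal{L}_{G_2}$; hence $\mathcal{L}_{G_2}$ is stable under conjugation by each generator $y_r(t)$, and therefore by all of $\bar{U}_{G_2}$. Consequently $g\mapsto\bigl(x\mapsto gxg^{-1}\bigr)$ is a well-defined group homomorphism $\mathrm{Ad}\colon \bar{U}_{G_2}\to GL(\mathcal{L}_{G_2})$, and the displayed identity gives $\mathrm{Ad}(y_r(t))=\exp(t\,\mathrm{ad}\,e_r)=\sigma_{\bar{U}_{G_2}}(y_r(t))$ on generators. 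Thus $\sigma_{\bar{U}_{G_2}}=\mathrm{Ad}|_{\bar{U}_{G_2}}$ is a well-defined group homomorphism.

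Surjectivity onto $U_{G_2}$ is then immediate, since $U_{G_2}$ is generated by the elements $\exp(t\,\mathrm{ad}\,e_r)=\sigma_{\bar{U}_{G_2}}(y_r(t))$. For injectivity I would count: by the standard normal-form theorem for Chevalley groups (\cite{Carter1}) every element of $U_{G_2}$ is uniquely a product $\prod_{r\in\Phi_{G_2}^+}x_r(t_r)$ taken in a fixed order, so $|U_{G_2}|=q^{|\Phi_{G_2}^+|}=q^6=|\bar{U}_{G_2}|$, and a surjective homomorphism between finite groups of equal order is bijective. (One could instead argue directly: $\ker\sigma_{\bar{U}_{G_2}}$ consists of the elements of $\bar{U}_{G_2}$ centralising $\mathcal{L}_{G_2}$ inside $\mathrm{Mat}_{8\times 8}(\mathbb{F}_q)$; for $p>2$ the module $\mathbb{F}_q^8$ decomposes as the $7$-dimensional irreducible $\mathcal{L}_{G_2}$-module plus a trivial summand, and these being non-isomorphic, any matrix commuting with $\mathcal{L}_{G_2}$ is scalar on each summand, so the only unipotent one is $I_8$.)

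The main obstacle is the first step: one has to be sure that $y_r(t)xy_r(t)^{-1}=\exp(t\,\mathrm{ad}\,e_r)(x)$ really holds over $\mathbb{F}_q$ in every characteristic $>2$ — in characteristic $3$, for instance, $\exp(t\,\mathrm{ad}\,e_\alpha)$ involves the mod-$3$ reduction of the integral operator $\tfrac{1}{3!}(\mathrm{ad}\,e_\alpha)^3$ rather than a literal division by $6$. Equivalently, one is claiming that $\bar{U}_{G_2}$ and $U_{G_2}$ satisfy literally the same Chevalley commutator relations among their root elements, which comes down to the representation-independence of the structure constants computed in Section~\ref{sec:G2-1}; this is classical (\cite{Carter1, Carter2005}), and the only genuinely delicate part is the sign bookkeeping in the $G_2$ commutator formulae — everything else is formal.
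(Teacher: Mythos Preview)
Your argument is correct and follows essentially the same route as the paper: establish that $\sigma_{\bar{U}_{G_2}}$ is a surjective group homomorphism, then conclude bijectivity from $|\bar{U}_{G_2}|=|U_{G_2}|=q^6$. The paper's proof is a two-line version that simply asserts the epimorphism step (``We know $\sigma_{\bar{U}_{G_2}}$ is a group epimorphism''), whereas you supply the justification by identifying $\sigma_{\bar{U}_{G_2}}$ with the adjoint action $\mathrm{Ad}$ via the identity $y_r(t)\,x\,y_r(t)^{-1}=\exp(t\,\mathrm{ad}\,e_r)(x)$; this is exactly the standard argument behind the paper's assertion.
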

%%%%%%%%%%
\begin{proof}
We know $\sigma_{\bar{U}_{G_2}}$
is a group epimorphism.
Since $|\bar{U}_{G_2}|=|{U}_{G_2}|=q^6$,
$\sigma_{\bar{U}_{G_2}}$ is an isomorphism.
\end{proof}
%%%%%%%%%%%%%%%%%%%%%%%%%%
 Set ${G}_2^{syl}(q):=\bar{U}_{G_2}$.
%%%%%%%%%%%%%%%%%%%%%%%% pattern subgroups
\begin{Definition}
A subgroup $P\leqslant {G}_2^{syl}(q)$ is a \textbf{pattern subgroup},
if it is generated by some root subgroups, i.e.
 $P:=\left\langle {\,} Y_i {\,}\middle|{\,} i\in I \subseteq \{1,2,\dots,6\}{\,}\right\rangle \leqslant {G}_2^{syl}(q)$.
\end{Definition}
%%%%%%%%%%%%%%%%%%%%%%%%%%%%%%%%%%%%%%%%%%%%%%%%%%%%%%%%%%%%%%%
We get the commutators of $G_2^{syl}(q)$ by calculation.
%%%%%%%%%%%%%%%%%%%%%%%%%%%%
%%%%%%%%%%%%%%%%%%%%%%%%%%%%%
%% commutator relations
\begin{Lemma}\label{commutator-G2}
Let $t_1, t_2, t_3,t_4, t_5,t_6 \in \mathbb{F}_{q}$
and define the commutators
\begin{align*}
 [y_i(t_i), y_j(t_j)]:=y_i(t_i)^{-1}y_j(t_j)^{-1}y_i(t_i)y_j(t_j).
\end{align*}
Then the non-trivial commutators of $G_2^{syl}(q)$ are determined as follows:
\begin{align*}
[y_1(t_1), y_2(t_2)]=& y_3(-t_2t_1)\cdot y_4(t_2t_1^{2})\cdot y_5(-t_2t_1^{3})\cdot y_6(2t_2^2t_1^{3}),\\
[y_1(t_1), y_3(t_3)]=& y_4(2t_1t_3)
                       \cdot y_5(-3t_1^{2}t_3)
                       \cdot y_6(-3t_1t_3^2),\\
[y_1(t_1), y_4(t_4)]=& y_5(3t_1t_4),
\quad
[y_3(t_3), y_4(t_4)]= y_6(3t_3t_4),
\quad
[y_2(t_2), y_5(t_5)]= y_6(t_2t_5).
\end{align*}
In particular,
if $\mathrm{Char}\mathbb{F}_{q}=3$,
then the commutators are given as follows:
\begin{align*}
[y_1(t_1), y_2(t_2)]=& y_3(-t_2t_1)\cdot y_4(t_2t_1^{2})\cdot y_5(-t_2t_1^{3})\cdot y_6(2t_2^2t_1^{3}),\\
[y_1(t_1), y_3(t_3)]=& y_4(2t_1t_3),
\quad
[y_2(t_2), y_5(t_5)]= y_6(t_2t_5).
\end{align*}
\end{Lemma}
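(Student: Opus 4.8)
The plan is to compute each commutator $[y_i(t_i),y_j(t_j)]$ directly inside $\mathrm{Mat}_{8\times 8}(\mathbb{F}_q)$, using the explicit matrix form $y_r(t) = I_8 + te_r + \tfrac12 t^2 e_r^2$ established just before the statement, together with the known values of $e_r^2$ (namely $e_\alpha^2 = -2e_{3,6}$, $e_{\alpha+\beta}^2=-2e_{2,7}$, $e_{2\alpha+\beta}^2=-2e_{1,8}$, and $e_r^2=0$ for $r=\beta,3\alpha+\beta,3\alpha+2\beta$) and $e_r^3=0$ for all $r\in\Phi_{G_2}^+$. Since the $e_i$ are honest nilpotent matrices, each $y_i(t_i)$ is a genuine $8\times 8$ matrix with polynomial entries in $t_i$, and a commutator is just a product of four such matrices and their inverses; the inverse of $y_r(t)$ is $y_r(-t) = I_8 - te_r + \tfrac12 t^2 e_r^2$, which one checks at once from $e_r^3 = 0$.

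First I would record, for each ordered pair $(r,s)$ of positive roots with $r+s\in\Phi_{G_2}^+$, the products $e_r e_s$, $e_s e_r$ and the relevant higher products $e_r^2 e_s$, $e_r e_s e_r$, etc., as matrices; these are finite and vanish quickly because of the degree bound $e_R^3=0$ and because most products of distinct root-element matrices are already zero. Equivalently, one can organize this via the Chevalley commutator formula: the structure constants $N_{\alpha,\beta}=-1$, $N_{\alpha,\alpha+\beta}=2$, $N_{\alpha,2\alpha+\beta}=3$, $N_{\beta,3\alpha+\beta}=1$ listed above, together with the general formula expressing $[x_r(t),x_s(u)]$ as a product over $x_{ir+js}(c_{ij}t^iu^j)$ with $c_{ij}$ built from the $N$'s and binomial coefficients, predict exactly the stated right-hand sides (e.g. $[y_1(t_1),y_2(t_2)]$ runs over $\alpha+\beta$, $2\alpha+\beta$, $3\alpha+\beta$, $3\alpha+2\beta$ with coefficients $-t_2t_1$, $t_2t_1^2$, $-t_2t_1^3$, $2t_2^2t_1^3$). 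I would then verify these predicted identities against the direct matrix computation — the two approaches cross-check each other and eliminate sign ambiguities. The commutators between pairs $(r,s)$ with $r+s\notin\Phi_{G_2}^+\cup\{0\}$ are trivial because the corresponding matrix products $e_re_s$ and $e_se_r$ agree (the off-diagonal supports are disjoint or the product lands outside the nilpotent span), which is the assertion "non-trivial commutators are ... as follows."

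The main obstacle is bookkeeping rather than conceptual: getting every sign and every binomial coefficient correct, particularly for the longer relation $[y_1(t_1),y_2(t_2)]$ where a term quadratic in $t_2$ appears (the $y_6(2t_2^2t_1^3)$ factor), which comes from a product involving $e_\beta$ twice against $e_\alpha$ three times and is exactly the place where the Chevalley-formula coefficient $c_{32}$ must be pinned down carefully. One must also be careful about the order in which the root-subgroup factors are written on the right-hand side, since the $Y_i$ do not commute; I would fix the total order $\alpha\prec\beta\prec\alpha+\beta\prec2\alpha+\beta\prec3\alpha+\beta\prec3\alpha+2\beta$ already introduced and express every commutator as an ordered product in that convention. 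Finally, the $\mathrm{Char}\,\mathbb{F}_q=3$ specialization is immediate: the terms with coefficient divisible by $3$ (the $y_5(-3t_1^2t_3)$ and $y_6(-3t_1t_3^2)$ factors in $[y_1(t_1),y_3(t_3)]$, and the entire commutators $[y_1,y_4]$, $[y_3,y_4]$) vanish, leaving the shorter list.
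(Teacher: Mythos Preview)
Your proposal is correct and follows the same approach the paper takes: the paper merely states ``We get the commutators of $G_2^{syl}(q)$ by calculation'' and gives no further argument, so your outline is simply a careful elaboration of that direct matrix computation (optionally cross-checked with the Chevalley commutator formula and the listed structure constants). One small caution: your justification that the remaining commutators are trivial because ``$e_re_s$ and $e_se_r$ agree'' is slightly too brief, since $[y_r(t),y_s(u)]$ involves higher products than just $e_re_s-e_se_r$; the correct statement is that for those pairs no positive combination $ir+js$ lies in $\Phi_{G_2}^+$, so all the relevant iterated brackets vanish and the Chevalley formula gives the identity.
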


%%%%%%%%%%%%%%%%%%%%%%%%%%%%%%%%%%%%%%%
For $t_i\in \mathbb{F}_q$ with $i\in \{1,2,\dots,6\}$,
 we write
 \begin{align*}
   y(t_1, t_2, t_3, t_4, t_5, t_6):=
  & y_2(t_2)y_1(t_1)y_3(t_3)y_4(t_4)y_5(t_5)y_6(t_6)
 = x(t_1, t_2, t_3, t_4, t_5, t_6)\in G_2^{syl}(q).
 \end{align*}
%%%%%%%%%%%%%%%%%%%%%%%%%%%%%%%%%%%%%%%%%%%%%%%%%%%%%%%%%%%%%%%%%%%%%
\begin{Proposition}[Sylow $p$-subgroup ${G}_2^{syl}(q)$] \label{sylow p-subg, G2}
A Sylow $p$-subgroup ${G}_2^{syl}(q)$ of the Chevalley group $G_2(q)$
is written as follows:
\begin{align*}
{G}_2^{syl}(q)=\bar{U}_{G_2} =& \left\{y(t_1, t_2, t_3, t_4, t_5, t_6)
  {\, }\middle|{\, }
   t_1, t_2, t_3, t_4, t_5, t_6 \in {\mathbb{F}_{q}}\right \}\\
=& \left\{x(t_1, t_2, t_3, t_4, t_5, t_6)
  {\, }\middle|{\,  }
  t_1, t_2, t_3, t_4, t_5, t_6 \in {\mathbb{F}_{q}}\right\},
 \end{align*}
%%%%%%%%%%%%%%%%%%%%%%%%
{where}
%%%%%%%%%%%%%%%%%%%%%%%%
\begin{align*}
& y(t_1, t_2, t_3, t_4, t_5, t_6)
=x(t_1, t_2, t_3, t_4, t_5, t_6)\\
=&
 \left(
\newcommand{\mc}[3]{\multicolumn{#1}{#2}{#3}}
\begin{array}{cccccccc}\cline{2-7}
%%%% row 1
\mc{1}{c|}{1}
& \mc{1}{c|}{t_1 }
& \mc{1}{c|}{-t_3}
& \mc{1}{c|}{t_1t_3+t_4}
%% col(1, 5-8)
& \mc{1}{c|}{t_1t_3+t_4}
& \mc{1}{c|}{
\begin{array}{l}
 t_1t_4\\
+t_5
\end{array}}
& \mc{1}{c|}{
\rule{0pt}{18pt}
\begin{array}{l}
 -t_1t_3^2
+t_3t_4\\
+t_6
 \end{array}}
& \begin{array}{l}
-2t_1{t_3}{t_4}
-t_1t_6\\
+t_3t_5
-{t_4^2}\\
\end{array}\\\cline{2-7}
%%%% row 2
×
& \mc{1}{c|}{1}
& \mc{1}{c|}{t_2}
& {t_1}t_2+{t_3}
%%% col 5-8
& t_1t_2+t_3
& \begin{array}{l}
-{t_1^2}t_2\\
+{t_4}
\end{array}
& \begin{array}{l}
-2{t_1}t_2{t_3}\\
-{t_2}{t_4}
-{t_3^2}\\
  \end{array}
& \rule{0pt}{22pt}
\begin{array}{l}
-{t_1^2}t_2t_3
-2{t_1}{t_2}{t_4}\\
-t_2t_5
-2{t_3}{t_4}
-t_6
\end{array}\\\cline{3-3}
%%%% row 3
× & × & {1} & t_1
%%% col 5-8
& t_1
& -t_1^2
& \begin{array}{l}
-2{t_1}{t_3}
-{t_4}
  \end{array}
& \rule{0pt}{15pt}
\begin{array}{l}
  -{t_1^2}t_3
-2{t_1}{t_4}
-t_5\\
\end{array}\\
%%Row 4
× & × & × & 1 & 0 & -{t_1} & -{t_3}
&  \begin{array}{l}
   -{t_1}t_3
   -{t_4}
   \end{array} \\
% \hline
%% Row 5
× & × & × & × & 1 & -{t_1} &-t_3
& \begin{array}{l}
    -{t_1}t_3
    -t_4
   \end{array}\\
%% Row 6
× & × & × & × & × & 1
& -t_2
& \begin{array}{l}
    t_1t_2
    +t_3\\
   \end{array}
\\
%% Row 7
× & × & × & × & × & × & 1
& -t_1\\
%% Row 8
× & × & × & × & × & × & × & 1\\
\end{array}
\right).
\end{align*}
\end{Proposition}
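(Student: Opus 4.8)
The plan is to establish the two assertions of the proposition separately: the parametrisation of the underlying set $G_2^{syl}(q)$, and then the explicit $8\times 8$ matrix form of a general element $y(t_1,\dots,t_6)=x(t_1,\dots,t_6)$.

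For the set equality almost all of the work has already been recorded. We know that $\bar{U}_{G_2}=\big\{\prod_{i\in\{1,\dots,6\}}y_i(t_i)\mid t_i\in\mathbb{F}_q\big\}$ with the product taken in any fixed order, and that $|\bar{U}_{G_2}|=q^6$; fixing once and for all the order $y(t_1,\dots,t_6)=y_2(t_2)y_1(t_1)y_3(t_3)y_4(t_4)y_5(t_5)y_6(t_6)$ exhibits $G_2^{syl}(q)=\bar{U}_{G_2}$ as $\{y(t_1,\dots,t_6)\mid t_i\in\mathbb{F}_q\}$, and comparing cardinalities shows that this parametrisation is moreover bijective. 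The equality with $\{x(t_1,\dots,t_6)\mid t_i\in\mathbb{F}_q\}$ is then immediate from the identities $y_i(t)=x_i(t)$ ($i=1,\dots,6$) of the preceding Lemma together with the definition $x(t_1,\dots,t_6)=x_2(t_2)x_1(t_1)x_3(t_3)x_4(t_4)x_5(t_5)x_6(t_6)$.

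For the matrix I would simply compute. Each factor is the explicit $8\times 8$ matrix $y_i(t)=I_8+te_i+\frac{1}{2}t^2e_i^2$, and the root elements and their squares have already been made explicit: $e_1=(e_{12}-e_{78})+(e_{34}-e_{56})+(e_{35}-e_{46})$, $e_2=e_{23}-e_{67}$, $e_3=-(e_{13}-e_{68})+(e_{24}-e_{57})+(e_{25}-e_{47})$, $e_4=(e_{14}-e_{58})+(e_{26}-e_{37})+(e_{15}-e_{48})$, $e_5=e_{16}-e_{38}$, $e_6=e_{17}-e_{28}$, with $e_1^2=-2e_{36}$, $e_3^2=-2e_{27}$, $e_4^2=-2e_{18}$ and $e_2^2=e_5^2=e_6^2=0$; since every coefficient of an $e_i^2$ is even, $\frac{1}{2}t^2e_i^2$ is a well-defined integral matrix and the computation is valid over any field of characteristic $\neq2$. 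As $\bar{U}_{G_2}\leqslant A_8(q)$, all six factors are upper unitriangular, so their product is again upper unitriangular and one only has to track the $28$ entries strictly above the diagonal. I would accumulate partial products from the right — first $y_5(t_5)y_6(t_6)$, then multiply $y_4(t_4)$ on the left, and so on — recording each entry as a polynomial in $t_1,\dots,t_6$, and finally compare with the displayed matrix.

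The only genuine obstacle is the bookkeeping of this matrix product: the matrices $y_1$, $y_3$, $y_4$ each have three off-diagonal summands plus a quadratic correction, so several of the denser entries — notably positions $(1,7)$, $(1,8)$, $(2,7)$, $(2,8)$ and $(3,8)$ — receive five or six separate contributions, and one has to be careful with the sign coming from the $-(e_{13}-e_{68})$ summand of $e_3$ and with the quadratic terms $-t^2e_{36}$, $-t^2e_{27}$, $-t^2e_{18}$. This is purely mechanical; there is no conceptual content beyond the data already assembled, and the outcome can be sanity-checked against the commutator relations of Lemma~\ref{commutator-G2} and against the matrix description of ${}^3{D}_4^{syl}(q^3)$, of which $G_2^{syl}(q)$ is a subgroup.
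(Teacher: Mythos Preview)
Your proposal is correct and matches the paper's approach: the paper's proof simply cites Proposition~\ref{syl. p-subg. iso.-G2} for the Sylow $p$-subgroup claim and then says ``by calculation, we get the matrix form as claimed'', which is exactly the direct multiplication of the six explicit factors $y_i(t_i)=I_8+t_ie_i+\tfrac12 t_i^2 e_i^2$ that you outline. Your write-up is in fact more detailed than the paper's; the only point you might add for completeness is an explicit reference to \ref{syl. p-subg. iso.-G2} for the assertion that $\bar U_{G_2}$ really is a Sylow $p$-subgroup of $G_2(q)$.
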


%%%%%%%%%%%%%%%%%%% nomenclature
\nomenclature{$G_2^{syl}(q)$}
{a Sylow $p$-subgroup of the Chevalley group $G_2(q)$ \nomrefpage}%

%%%%%%%%%%%%
\begin{proof}
By \ref{syl. p-subg. iso.-G2},
$G_2^{syl}(q)$ is a Sylow $p$-subgroup of $G_2(q)$.
By calculation,
we get the matrix form as claimed.
\end{proof}

%%%%%%%%%%%%%%%%%%%%%%%%%%%%%%%%%%%%%%%
\begin{Corollary}
$
       {G}_2^{syl}(q)         \leqslant
       {{^3{D}}_4^{syl}}(q^3) \leqslant
        D_4^{syl}(q^3)         \leqslant A_8(q^3)$
{and }
$       {G}_2^{syl}(q)        \leqslant
       {D}_4^{syl}(q)        \leqslant
        D_4^{syl}(q^3)       \leqslant A_8(q^3)$.
\end{Corollary}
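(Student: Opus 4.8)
The plan is to establish the two chains of inclusions by tracking, step by step, which groups are defined inside which matrix groups. First I would recall that $\bar{U}_{G_2} = {G}_2^{syl}(q)$ by definition (the line ``Set ${G}_2^{syl}(q):=\bar{U}_{G_2}$''), and that the note following the lemma on root subgroups already records $Y_i \leqslant {^3}D_4^{syl}(q^3)$ for all $i$, hence $\bar{U}_{G_2} \leqslant {^3}D_4^{syl}(q^3)$, as well as $\bar{U}_{G_2} \leqslant D_4^{syl}(q)$. So the only genuinely new content is assembling these with the known facts about $D_4^{syl}$ and $A_8$.

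For the first chain, ${G}_2^{syl}(q) \leqslant {{^3{D}}_4^{syl}}(q^3) \leqslant D_4^{syl}(q^3) \leqslant A_8(q^3)$: the first inclusion is the note just cited; the second holds because $^3D_4(q^3) = D_4(q^3)^F$ is a subgroup of $D_4(q^3)$ and a Sylow $p$-subgroup of the former sits inside a Sylow $p$-subgroup of the latter — indeed ${^3}D_4^{syl}(q^3)$ was constructed as a subgroup of $D_4^{syl}(q^3)$ via the elements $x_{r^1}(t)$, each of which is a product of $x_r(t)$'s lying in $D_4^{syl}(q^3)$; the third inclusion holds since $D_4^{syl}(q^3) = \{\prod_{r\in\Phi_{D_4}^+} x_r(t_r) \mid t_r \in \mathbb{F}_{q^3}\}$ and each $x_r(t) = I_8 + t e_r$ with $e_r$ a sum of matrix units $e_{i,j}$ having $i<j$ (read off from the explicit Chevalley basis $e_{r_1}=e_{12}-e_{78}$, etc.), so every generator is upper unitriangular. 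For the second chain, ${G}_2^{syl}(q) \leqslant {D}_4^{syl}(q) \leqslant D_4^{syl}(q^3) \leqslant A_8(q^3)$: the first inclusion is again the cited note ($\bar{U}_{G_2} \leqslant D_4^{syl}(q)$); the second is immediate from $\mathbb{F}_q \subseteq \mathbb{F}_{q^3}$, so that every generator $x_r(t)$ ($t\in\mathbb{F}_q$) of $D_4^{syl}(q)$ is also a generator of $D_4^{syl}(q^3)$; and the third inclusion is the same upper-unitriangularity observation as before.

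There is no real obstacle here: the statement is essentially bookkeeping, and every piece has already been set up earlier in the section. If anything needs a sentence of care, it is the claim ${^3}D_4^{syl}(q^3) \leqslant D_4^{syl}(q^3)$ — one should note that the displayed generating set $x_{r_2^1}(t_2)x_{r^1_1}(t_1)\cdots$ for ${}^3{D}_4^{syl}(q^3)$ consists of products of the $x_r(t)$'s that generate $D_4^{syl}(q^3)$, which is exactly how ${}^3{D}_4^{syl}(q^3)$ was exhibited. Since the corollary is an immediate concatenation of facts already proved (or noted) above, the proof can be stated in a couple of lines: apply the note after the root-subgroup lemma for the leftmost inclusions of each chain, the field extension $\mathbb{F}_q \subseteq \mathbb{F}_{q^3}$ and the construction of $^3D_4^{syl}(q^3)$ inside $D_4^{syl}(q^3)$ for the middle ones, and the explicit matrix form of the $e_r$ (entries $e_{i,j}$ with $i<j$) for the rightmost inclusions into $A_8(q^3)$.
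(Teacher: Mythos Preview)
Your proposal is correct and matches the paper's approach: the corollary is stated without proof, being an immediate consequence of the constructions and the note $\bar{U}_{G_2} \leqslant {^3}D_4^{syl}(q^3)$, $\bar{U}_{G_2} \leqslant D_4^{syl}(q)$ recorded just after the root-subgroup lemma, together with the explicit upper-unitriangular matrix forms. Your write-up simply spells out what the paper leaves implicit.
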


%%%%%%%%%%%%%%%%%%%%%
%%%%%%%%%%%%%%%%%%%%%%%%%%%%%%%%%%%%%%%%%%%%%%%%%%%%%%%%%%%%%%%%%%%%%%%%%%%%%%%%%%%%%%%%%%%%
% \section{The bigger group $G_8(q)$}
%%%%%%%%%%%%%%%%%%%
%%%%%%%%%%%%%%%%%%%%%
 Define the following sets of matrix entry coordinates:
  $\Squar:= \{(i,j) \mid  1\leq i,j \leq 8 \}$,
  $\UR:= \{(i,j) \mid 1\leq i< j \leq 8 \}$
  and
  $\UP:= \{(i,j)\in \Squar  \mid  i < j < 9-i \}$.
For $t\in \mathbb{F}_q$ and $(i,j)\in \UR$,
set $\tilde{x}_{i,j}(t)= I_8+t e_{i,j}\in A_8(q)$.
For $t\in \mathbb{F}_{q^3}$ and $(i,j)\in \UP$,
set $ x_{i,j}(t): = I_8+t e_{i,j}-t e_{9-j,9-i}
=\tilde{x}_{i,j}(t)\tilde{x}_{9-j,9-i}(-t) \in {D}_4^{syl}(q^3)$.
%%%%%%%%
We construct a group $G_8(q)$
such that $G_2^{syl}(q)\leqslant G_8(q) \leqslant A_8(q)$.
Then we determine a monomial $G_8(q)$-module
to imitate the $^3D_4$ case in Section \ref{sec: monomial G2-module},
and use the group $G_8(q)$
to calculate the superclasses of $G_2^{syl}(q)$
in Section \ref{partition of U-G2}.
%%%%%%%%%%%%%%%%%%%% An intermediate group
\begin{Definition/Lemma}[An intermediate group $G_8(q)$]
We set
 \begin{align*}
G_8(q):=&
%%%%%%%%%%%%%%%%
 \left\{u=(u_{i,j}) \in A_8(q)
 {\,}\middle|{\,}
{\left\{
\begin{array}{ll}
      u_{i,j}=0 &  \text{if }(i,j)=(4,5)\\
      u_{i,j+1}=u_{i,j}    & \text{if }(i,j)\in \{(2,4), (3,4)\}\\
      u_{i-1,j}=u_{i,j}    & \text{if }(i,j)\in \{(5,6), (5,7)\}\\
\end{array}
\right.}
\right\}.
\end{align*}
Then $G_8(q)$ is a subgroup of $A_8(q)$ and $|G_8(q)|=q^{23}$.
\end{Definition/Lemma}

%%%%%%%%%%%%%%%%%%%%%%%%%%%%%%%%%%%%%%%%%%%% \dot x
We write
$ \ddot{J}:=\UR\backslash\{(2,5),(3,5),(4,5), (4,6),(4,7)\}$.
For $(i,j)\in \ddot{J}$ and $t\in \mathbb{F}_q$, we set
\begin{align*}
\dot{x}_{i,j}(t):=
{\left\{
\begin{array}{ll}
\tilde{x}_{i,j}(t) \tilde{x}_{i,(j+1)}(t), & (i,j)\in \{(2,4),(3,4)\} \\
\tilde{x}_{i,j}(t) \tilde{x}_{(i-1),j}(t), & (i,j)\in \{(5,6),(5,7)\} \\
\tilde{x}_{i,j}(t), & \text{otherwise}
\end{array}
\right..}
\end{align*}
%%%%%%%%%%%%%%%%%%%%%%%%%%%%%%%%%%%%%%%%%%%%%% root subgroup of G
For $(i,j)\in \ddot{J}$, the subgroups of $G_8(q)$ are
$\dot{Y}_{i,j}:=
\{\dot{x}_{i,j}(t) \mid t\in \mathbb{F}_{q}\}$.

%%%%%%%%%%%%%%%%%%%%%%%%%%%%%%%%%%%%%%%%%%%%%%
\begin{Proposition}
$
 G_8(q)=\big\{\prod_{(i,j)\in \ddot{J}}{\dot{x}_{i,j}(t_{i,j})}
 {\,}\big|{\,}
 t_{i,j}\in \mathbb{F}_q \big\}$,
where the product can be taken in an arbitrary, but fixed, order.
\end{Proposition}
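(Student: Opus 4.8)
\emph{Setup.} Write $S$ for the set on the right-hand side, with the product taken in an arbitrary but henceforth fixed order $(i_1,j_1),\dots,(i_{23},j_{23})$ of $\ddot J$; note that $|\ddot J| = \binom{8}{2} - 5 = 23$. The plan is to show $S\subseteq G_8(q)$ and then, using $|G_8(q)|=q^{23}$, to prove that the parametrization map $\Phi\colon \mathbb F_q^{\ddot J}\to G_8(q)$, $(t_{i,j})\mapsto \prod_{k=1}^{23}\dot x_{i_k,j_k}(t_{i_k,j_k})$, is injective; then $|S|=q^{23}=|G_8(q)|$ forces $S=G_8(q)$, and since the order was arbitrary the statement follows.

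\emph{Step 1: $S\subseteq G_8(q)$.} First I would check that $\dot x_{i,j}(t)\in G_8(q)$ for every $(i,j)\in\ddot J$ and $t\in\mathbb F_q$ by testing the explicit forms $I_8+te_{i,j}$, $I_8+te_{i,j}+te_{i,j+1}$, $I_8+te_{i,j}+te_{i-1,j}$ against the defining conditions of $G_8(q)$. The conditions $u_{i,j+1}=u_{i,j}$ for $(i,j)\in\{(2,4),(3,4)\}$ and $u_{i-1,j}=u_{i,j}$ for $(i,j)\in\{(5,6),(5,7)\}$ are built into the definition of $\dot x_{i,j}$, while $u_{4,5}=0$ holds for all these matrices because $(4,5)\notin\ddot J$; every other instance of the conditions is vacuous ($0=0$) since the relevant coordinates also lie outside $\ddot J$. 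Hence $\dot Y_{i,j}\leqslant G_8(q)$ for all $(i,j)\in\ddot J$, and because $G_8(q)$ is a group (the Definition/Lemma), any product of elements of these subgroups again lies in $G_8(q)$; thus $S\subseteq G_8(q)$. In particular it suffices to prove that $\Phi$ is injective.

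\emph{Step 2: recovering the parameters by a height induction.} Put $h(i,j):=j-i$ and write $\dot x_{i,j}(t)=I_8+t\,d_{i,j}$. In each of the three cases $d_{i,j}$ is supported on the ``leading'' position $(i,j)$, where its coefficient is $1$ and its height is $h(i,j)$, together with at most one ``tail'' position, which is one of $(2,5),(3,5),(4,6),(4,7)$ and has height $h(i,j)+1$. The decisive observation is that these four tail positions (together with $(4,5)$) are exactly the five coordinates deleted from $\UR$ to form $\ddot J$, so no tail position belongs to $\ddot J$. Expanding $\Phi\big((t_{i,j})\big)=\prod_k\!\big(I_8+t_{i_k,j_k}d_{i_k,j_k}\big)$ and collecting contributions by height, I would prove by induction on $h$ that for every $(i,j)\in\ddot J$ with $h(i,j)=h$ the $(i,j)$-entry of $\Phi\big((t_{i,j})\big)-I_8$ equals $t_{i,j}$ plus a polynomial in the $t_{k,l}$ with $h(k,l)<h$. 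Indeed, a single factor $d_{k,l}$ reaches position $(i,j)$ only through its leading term, forcing $(k,l)=(i,j)$ and contributing $t_{i,j}$, or through its tail, which never hits a point of $\ddot J$; and a product of $m\geqslant2$ of the $d$'s is supported at heights $\geqslant\sum_l h(k_l,l_l)$, so it can reach height $h$ only when every factor involved has height $<h$. Thus all $t_{i,j}$ are recovered from $\Phi\big((t_{i,j})\big)$, $\Phi$ is injective, and $S=G_8(q)$.

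\emph{Main obstacle.} The only delicate point is the bookkeeping in Step 2: tracking, for a product of the $\dot x_{i,j}$'s taken in an arbitrary order, which matrix entry receives which monomial. This goes through precisely because $\ddot J$ was defined by removing the forced-zero coordinate $(4,5)$ together with the four tail positions of the modified root elements, so a tail term can never collide with a genuine parameter; making this collision-free structure explicit (rather than the routine expansion of the product itself) is where the care is needed.
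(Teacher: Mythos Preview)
Your argument is correct. The paper does not give a self-contained proof here; it simply refers to the analogous Proposition 3.3 in \cite{sun3D4super}. Your approach---checking $\dot Y_{i,j}\leqslant G_8(q)$, then establishing injectivity of the parametrization by a height filtration so that a cardinality count finishes---is the standard way to carry this out, and it supplies exactly the details the paper omits.

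The one structural observation worth highlighting (which you do make, but which deserves emphasis) is that $\ddot J$ was engineered so that the ``tail'' coordinates $(2,5),(3,5),(4,6),(4,7)$ of the modified generators $\dot x_{2,4},\dot x_{3,4},\dot x_{5,6},\dot x_{5,7}$ are precisely the positions removed from $\UR$ (together with the forced-zero position $(4,5)$). This is what guarantees that the linear term at any $(i,j)\in\ddot J$ picks up only $t_{i,j}$ and no stray tail contribution, so your height induction is clean. An essentially equivalent way to phrase the same idea is to observe that $G_8(q)$ is an algebra group (as the paper later notes), with $G_8(q)-I_8$ a nilpotent $\mathbb{F}_q$-algebra having $\mathbb{F}_q$-basis $\{d_{i,j}:(i,j)\in\ddot J\}$ compatible with the height filtration; from that viewpoint the unique-expression property is a standard fact about algebra groups, and this is presumably the shape of the argument in \cite{sun3D4super}. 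Your direct computation and the algebra-group route are really the same proof in different packaging.
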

\begin{proof}
c.f. the proof of Proposition 3.3 of \cite{sun3D4super}.
\end{proof}
% %%%%%%%%%%%%%%%%%%%%%%%%%%%%%%%%%%%%%%%%%%%%%%%%%%%%%%%%%%%%%%%%%%%%%% 3D_4
%%%%%%%%%%%%%%%%%%%%%%%%%%%%
Note that
$|G_2^{syl}(q)|=q^{6}$, $|G_8(q)|=q^{23}$, $|A_8(q)|=q^{28}$ and
$G_2^{syl}(q) \leqslant G_8(q) \leqslant A_8(q)$.
%%%%%%%%%%%%
Set
 $J:=\{(1,2), (1,3), (1,4), (1,5), (1,6), (1,7), (2,3)\} \subseteq \UP$.
%%%%%%%%%%%%%%%%%%%%%
\begin{Comparison}[Sylow $p$-subgroups]
\label{com:sylow-G2}
\begin{itemize}
\setlength\itemsep{0em}
 \item [(1)]
 Similar to ${^3}D_4^{syl}(q^3)$,
for every element of $G_2^{syl}(q)$  in \ref{sylow p-subg, G2},
 we have matrix entries $t_1$, $t_2$ and
 up to sign also $t_3$ with postilions in $J$,
 but $t_4$, $t_5$ and $t_6$
 appear in $J$ only in
 polynomials
 involving the other
 parameters.
 %%%%%%%%%%
 \item[(2)]
We can also obtain a Sylow $p$-subgroup $G_2^{syl}(q)$ of $7\times 7$ matrices
(e.g. see \cite[\S3.6]{HRT} and \cite[\S19.3]{Hum}).
In this paper,
we determine the Sylow $p$-subgroup $G_2^{syl}(q)$ of $8\times 8$ matrices
which is a subgroup of ${^3}D_4^{syl}(q^3)$,
so that the following constructions of the supercharacter theory and the character table
are easier.
\end{itemize}
\end{Comparison}

For the rest of this paper,
the omitted proofs of the properties for $G_2^{syl}(q)$
are the adaption of the corresponding statements of
${^3}D_4^{syl}(q^3)$ (see \cite{sun3D4super} and \cite{sun1}).
%%%%%%%%%%%%%%%%%%%%%%%%%%%%%%%%%%%%%%%%%%%

%-%-%-%-%-%-%-%-%-%-%-%-%-%-%-%-%-%-%-%
%                                     %
%            1-cocycle, G2            %
%                                     %
%-%-%-%-%-%-%-%-%-%-%-%-%-%-%-%-%-%-%-%

%%%%%%%%%%%%%%%%%%%%%%%%%%%%%%%%%%%%%%%%%%%%%%%%%%%%
%%%%%%%%%%%%%%%%%%%%%%%%%%%%%%%%%%%%%%%%%%%%%%%%%%%%%%%%%%%%%%%%%%%%%%%%%%%%%%%%%%%%%%%%
\section{Monomial $G_2^{syl}(q)$-module}
\label{sec: monomial G2-module}
Let $G:=G_8(q)$ and $U:=G_2^{syl}(q)$.
In this section,
we construct an $\mathbb{F}_q$-subspace $V$ of $V_0$ (\ref{V, G2}),
establish a monomial linearisation $(f,\kappa|_{V\times V})$ for $G$
(\ref{mono. lin.-G2}),
determine a monomial linearisation $(f|_U,\kappa|_{V\times V})$ for $U$ (\ref{mono. lin. for U-G2}),
and obtain a monomial $G_8(q)$-module $\mathbb{C}U$ (\ref{fund thm U-G2}).
%%%%%%%%%

%%%%%%%%%%%%%%%%%%%%%%%%%%%%%%
%%%%%%%%%%%%%%%%%%%%%%%%%%%%%%
Let $V_0:=\mathrm{Mat}_{8\times 8}(q)$.
For any subset $I\subseteq \Squar$,
let $V_I:=\bigoplus_{(i,j)\in I}{\mathbb{F}_{q}}e_{ij} \subseteq V_0$.
In particular, $V_{\Squarb}=V_0$.
%%%%%%%
Then $V_I$ is an $\mathbb{F}_{q}$-vector subspace.
%%%%%%%%%%%%%%%%%%%%%%%%%%%%%%%%%%%%%%%%%%%%%% V_J
We have $\dim_{\mathbb{F}_q}{V_J}=7$,
since
 $J=\{(1,2), (1,3), (1,4), (1,5), (1,6), (1,7), (2,3)\}$.
%%%%%%%%%% trace
The \textbf{trace} of $A=(A_{i,j})\in V_0$ is denoted by
$\mathrm{tr}(A):=\sum_{i=1}^{8}{A_{i,i}}$.
%%%%%%%%%%%%%%%%%%%%%%%%%%%%%%%%%%
The map
$
\kappa\colon V_0\times V_0  \to  {\mathbb{F}_{q}}: (A,B)\mapsto
 \mathrm{tr} (A^\top B)
$
%%%%%%%%%%
is a symmetric $\mathbb{F}_{q}$-bilinear form on $V_0$
which is called the \textbf{trace form}.
In particular, $\kappa(A,B)=\sum_{(i,j)\in \Squarb}{A_{i,j}B_{i,j}}$
and $\kappa$ is non-degenerate.
%%%%%%%%%%%%%%%%%%%%%%%%%% 2.2.9
%%%%%%%%%%
 Let $V_J^{\bot}$ denote the orthogonal complement of $V_J$ in $V_0$
 with respect to the trace form $\kappa$, i.e.
 $V_J^{\bot}:=\{B\in V_0 \mid \kappa(A,B)=0,{\ }\forall {\ } A \in V_J\}$.
%%%%%%%%%%%%%%%%%%%
 Then
  $V_J^{\bot}=V_{\Squarb \backslash J}$
  {and}
  $V_0=V_J \oplus V_J^{\bot}$.
%%%%%%%
 $\kappa|_{V_J\times V_J}\colon V_J\times V_J \to  \mathbb{F}_q$ is a non-degenerate bilinear form.
%%%%%%%%%%%%%%%%%%%%%%%%%%%%%%%%%%%%%%%%%%%%%%%%%
The map
$\pi_J\colon V_0=V_J\oplus V_J^\bot  \to  V_J:
  A\mapsto
\sum_{(i,j)\in J}{A_{i,j}e_{i,j}}$
is a projection of $V_0$ to the first component $V_J$.
The \textbf{support}
of $A\in \mathrm{Mat}_{8\times 8}(K)$ is defined by
$ \mathrm{supp}(A):= \{(i,j)\in \Squar \mid  A_{i,j}\neq 0\}$.
%%%%%%%%%%%%
If $V\subseteq V_0$ is a subspace of $V_0$, then set
$\mathrm{supp}(V):= \bigcup_{A\in V} \mathrm{supp}(A)$.
%%%%%%%%%%%%%%%%%%%%%%%%%%%%%%%%%%%%%%%%%%%%%%%%%%%%%
%%%%%%%%%%%%% 3.1.8
Suppose $A,B\in V_0$, such that $\mathrm{supp}(A)\cap\mathrm{supp}(B)\subseteq J$.
Then
$\kappa(A,B)=\kappa(\pi_J(A),B)=\kappa(A,\pi_J(B))
= \kappa(\pi_J(A),\pi_J(B))=\kappa|_{V_J\times V_J}(\pi_J(A),\pi_J(B))$.

%%%%%%%%%%%%%%%%%%%%%%%%%%%%%%%%%%%%%%%%%%%%%%%%%%%%%%%%%%%%%%%%%%%%%%%%%%%%%%%%%%%%%%%%%%%%
%%%%%%%%%%%%%%%%%% V
\begin{Notation/Lemma}\label{V, G2}
Let
$ V:=\left\{  A=(A_{ij})\in V_0 {\,} \middle|{\, }  \mathrm{supp}(A)\in J,
{\ } A_{14}=A_{15}  \right\}$.
%%%%%%%%%%
Then $V$ is a 6-dimensional subspace of $V_J$ over $\mathbb{F}_{q}$
and $\mathrm{supp}(V)=J$.
\end{Notation/Lemma}

%%%%%%%%%%%%%%%%%%%%%%%%%%%%%%%%%%%%%%%%%%%%%%%%%%%%%%%%%%%%%%%%%%%%%%%%%%%%%%%%%%%%%%%%%%%%
%%%%%%%%%%%%%%% \pi
\begin{Notation/Lemma}\label{pi,G2}
Let
\begin{align*}
& \pi \colon V_0 \to  V:
  A\mapsto
{
\begin{array}{l}
A_{12}e_{12}+A_{13}e_{13}
+\frac{A_{14}+A_{15}}{2}e_{14}
+\frac{A_{14}+A_{15}}{2}e_{15}\\
+A_{16}e_{16}+A_{17}e_{17}+A_{23}e_{23}
\end{array}
},
\end{align*}
%%%%%%%%%%%%%%%%%%%%%%%
i.e.
%%%%%%%%%%%%%%%%%%%%%%%
\begin{align*}
 \pi(A)=&
\left(
{%
\newcommand{\mc}[3]{\multicolumn{#1}{#2}{#3}}
\begin{array}{cccccccc}\cline{2-7}
%%%%%%% row 1
\mc{1}{c|}{}
& \mc{1}{c|}{A_{12}}
& \mc{1}{c|}{A_{13}}
%%(1,4)
& \mc{1}{c|}{\rule{0pt}{13pt}\frac{A_{14}+A_{15}}{2}}
%%(1,5)
& \mc{1}{c|}{\frac{A_{14}+A_{15}}{2}}
& \mc{1}{c|}{A_{16}}
& \mc{1}{c|}{A_{17}}
& \\\cline{2-7}
%%%%%% row 2
× & \mc{1}{c|}{}
& \mc{1}{c|}{A_{23}}
& × & × & ×&  & ×\\\cline{3-3}
\end{array}
}%
\right)_{8\times 8}
\end{align*}
omitting all zero entries in the matrices,
in particular at positions $(1,1)$ and $(1,8)$.
Then $\pi$ is ${\mathbb{F}_q}$-epimorphism.
Particularly,
$\pi|_V=\mathrm{id}_{V}$, $\pi^2=\pi$ and $\pi(I_8)=O_8$.
\end{Notation/Lemma}

%%%%%%%%%%%%%%%%%%%%%%%%%%%%%%%%%%%%%%%%%%%%%%%%%%%%%%%%%%%%%%%%%%%%%%%%%%%%%%%%%%%%%%%%
%%%%%%%%%%%%%%%%%%%%%%%%%%%%%%%% V^T
\begin{Lemma}
Let $V^{\bot}$ denote the orthogonal complement of $V$ in $V_0$
 with respect to the trace form $\kappa$, i.e.
$V^\bot:= \{B\in V_0  \mid  \kappa(A,B)=0 {\ } \text{ for all }  A \in V\}$,
%%%%%%%%%%%%%%%%%%%%%%%%%%%%%%%%%%%%%%%%%%%% newpage
 and
 \begin{align*}
  W:=& \bigoplus_{(i,j)\notin J}{\mathbb{F}_{q}e_{ij}}
+\{x(e_{15}-e_{14}) \mid  x\in \mathbb{F}_{q}\}\\
=& \{ A=(A_{ij})\in V_0  \mid  A_{12}=A_{13}=A_{16}=A_{17}=A_{23}=0,{\ }A_{14}=-A_{15}  \}.
\end{align*}
Then $W =V^\bot$.
\end{Lemma}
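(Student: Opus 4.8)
The plan is to establish $W=V^\perp$ by a dimension count combined with the inclusion $W\subseteq V^\perp$, the latter being immediate from the explicit formula for the trace form. First I would record that the two displayed descriptions of $W$ coincide: an element of $\bigoplus_{(i,j)\notin J}\mathbb{F}_q e_{ij}+\{x(e_{15}-e_{14})\mid x\in\mathbb{F}_q\}$ has all entries at the positions $(1,2),(1,3),(1,6),(1,7),(2,3)$ (that is, $J\setminus\{(1,4),(1,5)\}$) equal to $0$ and satisfies $A_{14}=-A_{15}$, with the common value $-A_{14}=A_{15}$ ranging over $\mathbb{F}_q$ as $x$ does, while its entries outside $J$ are unconstrained; this is exactly the second description. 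In particular $W$ is cut out of $V_0$ by the six independent linear conditions $A_{12}=A_{13}=A_{16}=A_{17}=A_{23}=0$ and $A_{14}+A_{15}=0$, so $\dim_{\mathbb{F}_q}W=64-6=58$. On the other hand, since $\kappa$ is non-degenerate on $V_0$ and $\dim_{\mathbb{F}_q}V=6$ by \ref{V, G2}, we also have $\dim_{\mathbb{F}_q}V^\perp=64-6=58$.

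It then remains to check $W\subseteq V^\perp$, for which it suffices to test the spanning vectors of $W$ coming from its first description against an arbitrary $A\in V$. Recall $\kappa(A,B)=\sum_{(i,j)\in\Squarb}A_{i,j}B_{i,j}$, and that every $A\in V$ satisfies $\mathrm{supp}(A)\subseteq J$ and $A_{14}=A_{15}$ by \ref{V, G2}, so $\kappa(A,B)=\sum_{(i,j)\in J}A_{i,j}B_{i,j}$ for all $B\in V_0$. If $B=e_{k,l}$ with $(k,l)\notin J$, then $B_{i,j}=0$ for every $(i,j)\in J$ and hence $\kappa(A,B)=0$; and $\kappa\big(A,\,e_{15}-e_{14}\big)=A_{15}-A_{14}=0$. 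Since these vectors span $W$, we conclude $W\subseteq V^\perp$, and comparing dimensions gives $W=V^\perp$.

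There is no genuine obstacle here; the only point requiring a little care is the dimension bookkeeping — specifically, that $e_{15}-e_{14}$ contributes a genuine extra dimension to $\bigoplus_{(i,j)\notin J}\mathbb{F}_q e_{ij}$, which holds because its support $\{(1,4),(1,5)\}$ is contained in $J$, and that the six conditions defining $W$ in its second form are indeed linearly independent.
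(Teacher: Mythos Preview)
Your proof is correct. The paper omits the proof of this lemma, indicating it is an adaptation of the corresponding argument for ${}^3D_4^{syl}(q^3)$ in \cite{sun3D4super}; your dimension-count-plus-inclusion argument is exactly the natural way to carry this out, and all the bookkeeping (in particular $\dim_{\mathbb{F}_q}W=57+1=58=64-\dim_{\mathbb{F}_q}V=\dim_{\mathbb{F}_q}V^\perp$ and the verification on spanning vectors) is clean and complete.
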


%%%%%%%%%%%%%%%%%%%%%%%%%%%%%%%%%%%%%%%%%%%%%%%%%%%%%%%%%%%%%%%%%%%%%%%%%%%%%%%%%%%%%%%%%%%%%%%%%%%%%%%
\begin{Lemma} \label{nond, G2}
$\kappa|_{V\times V}$ is a non-degenerate $\mathbb{F}_q$-bilinear form.
\end{Lemma}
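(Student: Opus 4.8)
The plan is to show directly that the radical of $\kappa|_{V\times V}$ is trivial. Recall $V$ is the $6$-dimensional subspace of $V_J$ consisting of matrices $A$ with $\mathrm{supp}(A)\subseteq J$ and $A_{14}=A_{15}$, and that for $A,B$ with $\mathrm{supp}(A)\cap\mathrm{supp}(B)\subseteq J$ we have $\kappa(A,B)=\sum_{(i,j)\in J}A_{i,j}B_{i,j}$. So given $A\in V$, write $A = a_{12}e_{12}+a_{13}e_{13}+a_{14}(e_{14}+e_{15})+a_{16}e_{16}+a_{17}e_{17}+a_{23}e_{23}$ with the six free parameters $a_{12},a_{13},a_{14},a_{16},a_{17},a_{23}\in\mathbb{F}_q$; similarly for $B\in V$. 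Then a short computation gives
\begin{align*}
\kappa(A,B) &= a_{12}b_{12}+a_{13}b_{13}+2a_{14}b_{14}+a_{16}b_{16}+a_{17}b_{17}+a_{23}b_{23}.
\end{align*}
The coefficient $2$ on the $a_{14}b_{14}$ term is the only subtlety, and it is a unit in $\mathbb{F}_q$ precisely because $\mathrm{Char}\,\mathbb{F}_q\neq 2$ (indeed we assumed $p>2$ throughout).

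From this explicit diagonal form the conclusion is immediate. Suppose $A\in V$ satisfies $\kappa(A,B)=0$ for all $B\in V$. Testing against $B=e_{12},e_{13},e_{16},e_{17},e_{23}$ forces $a_{12}=a_{13}=a_{16}=a_{17}=a_{23}=0$, and testing against $B=e_{14}+e_{15}\in V$ gives $2a_{14}=0$, hence $a_{14}=0$ since $2$ is invertible. Therefore $A=0$, so $\kappa|_{V\times V}$ is non-degenerate. (Equivalently, one can phrase it via the Gram matrix of $\kappa|_{V\times V}$ in the basis $\{e_{12},e_{13},e_{14}+e_{15},e_{16},e_{17},e_{23}\}$, which is $\mathrm{diag}(1,1,2,1,1,1)$, of determinant $2\neq 0$.)

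I expect no real obstacle here; the only point requiring care is tracking the factor $2$ coming from the two nonzero entries $A_{14}=A_{15}$ of a vector in $V$, and observing that one could also deduce the statement more structurally: $\kappa|_{V_J\times V_J}$ is non-degenerate (stated in the excerpt), and $V\subseteq V_J$ is the graph-type subspace cut out by $A_{14}=A_{15}$; restricting a non-degenerate form to such a subspace need not stay non-degenerate in general, so the explicit diagonalization is the cleanest route. Alternatively, one may invoke the earlier Lemma identifying $W=V^{\bot}$ together with the observation that $V\cap W=\{0\}$ — since any $A\in V\cap W$ has $A_{14}=A_{15}$ and $A_{14}=-A_{15}$, forcing $A_{14}=0$ by $\mathrm{Char}\,\mathbb{F}_q\neq 2$, and all other coordinates already vanish — which is exactly the assertion that $\kappa|_{V\times V}$ has trivial radical. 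Either phrasing works; I would present the Gram-matrix computation as the main argument and mention $V\cap V^\bot=\{0\}$ as a remark.
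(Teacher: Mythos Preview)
Your proof is correct. The paper omits the proof of this lemma entirely (it falls under the blanket remark that omitted proofs are adaptations of the corresponding $^3D_4^{syl}(q^3)$ arguments), so there is nothing to compare against directly; your explicit Gram-matrix computation in the basis $\{e_{12},e_{13},e_{14}+e_{15},e_{16},e_{17},e_{23}\}$, yielding $\mathrm{diag}(1,1,2,1,1,1)$ with determinant $2\in\mathbb{F}_q^*$, is exactly the expected argument, and your alternative via $V\cap V^\bot=\{0\}$ using the preceding description of $W=V^\bot$ is equally clean.
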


%%%%%%%%%%%%%%%%%%%%%%%%%%%%%%%%%%%%%%%
\begin{Corollary}\label{pi proj,G2}
 $V_0=V\oplus V^\bot$,
 and $\pi \colon V_0 \to  V$ is the projective map to the first component.
\end{Corollary}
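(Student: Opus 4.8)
The plan is to deduce both assertions formally from the non-degeneracy of $\kappa|_{V\times V}$ proved in \ref{nond, G2}, together with the explicit description of $\pi$ in \ref{pi,G2} and of $V$, $V^\bot=W$ in \ref{V, G2} and the preceding lemma. First I would note the elementary fact that, since $\kappa$ is non-degenerate on $V_0$, one has $\dim_{\mathbb{F}_q}V^\bot=\dim_{\mathbb{F}_q}V_0-\dim_{\mathbb{F}_q}V$. Next, $V\cap V^\bot$ is exactly the radical of the restricted form: an element $A\in V$ lies in $V^\bot$ precisely when $\kappa(A,B)=0$ for all $B\in V$. By \ref{nond, G2} this radical is $\{0\}$, so $V\cap V^\bot=\{0\}$, and comparing dimensions gives $V_0=V\oplus V^\bot$.

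For the second assertion I would check directly that $\pi$ is the projection attached to this decomposition, i.e. that $\pi(A)\in V$ and $A-\pi(A)\in V^\bot=W$ for every $A\in V_0$. The first is immediate from \ref{pi,G2}, where $\pi$ is defined as an $\mathbb{F}_q$-map into $V$ with $\pi|_V=\mathrm{id}_V$. For the second, comparing the matrix description of $\pi(A)$ in \ref{pi,G2} with that of $W$, the entries of $A-\pi(A)$ at the positions $(1,2),(1,3),(1,6),(1,7),(2,3)$ all vanish, while at $(1,4)$ and $(1,5)$ they equal $\tfrac{A_{14}-A_{15}}{2}$ and $-\tfrac{A_{14}-A_{15}}{2}$ respectively; hence $A-\pi(A)\in W=V^\bot$. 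Writing $A=\pi(A)+(A-\pi(A))$ with $\pi(A)\in V$, $A-\pi(A)\in V^\bot$, and using $V\cap V^\bot=\{0\}$, this simultaneously re-proves $V_0=V\oplus V^\bot$ and identifies $\pi$ as the projection onto the first component (its kernel being $W=V^\bot$, consistent with $\pi^2=\pi$ and $\pi|_V=\mathrm{id}_V$). I would probably present it in this combined form to keep the corollary self-contained.

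There is no real obstacle here: the statement is a formal consequence of \ref{nond, G2} and the definitions. The only thing to watch is the bookkeeping of the five ``$J$-type'' coordinates $(1,2),(1,3),(1,6),(1,7),(2,3)$ together with the coupled pair $(1,4),(1,5)$ — but this is precisely what the defining conditions $A_{14}=A_{15}$ for $V$ (\ref{V, G2}) and $A_{14}=-A_{15}$ for $W$ are arranged to make transparent, so the verification reduces to reading off six matrix entries.
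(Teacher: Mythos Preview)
Your proposal is correct and is exactly the standard argument the paper has in mind: the corollary is stated without proof, as an immediate consequence of \ref{nond, G2} together with the explicit descriptions of $V$, $W=V^\bot$, and $\pi$. Your dimension/radical argument for $V_0=V\oplus V^\bot$ and the direct check that $A-\pi(A)\in W$ are precisely what is needed.
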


%%%%%%%%%%%%%%%%%%%%%%%%%%%%%%%%%%%%%%%%
%%%%%%%%%%%%% 3.1.8
\begin{Corollary}\label{pi=piJ-kappa, G2}
If $A,B\in V_0$ and $\pi_J(A)\in V$, then $\pi(A)=\pi_J(A)$.
If $\mathrm{supp}(A)\cap\mathrm{supp}(B)\subseteq J$,
then
$ \kappa(A,B)=\kappa(\pi(A),B)=\kappa(A,\pi(B))
= \kappa(\pi(A),\pi(B))=\kappa|_{V\times V}(\pi(A),\pi(B))$.
\end{Corollary}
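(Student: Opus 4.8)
The plan is to establish the two assertions in turn; both are bookkeeping with the trace form, and the only delicate point is the pair of positions $(1,4),(1,5)$ on which $\pi$ symmetrises. For the first assertion, suppose $\pi_J(A)\in V$. By the description of $V$ in \ref{V, G2}, the matrix $\pi_J(A)=\sum_{(i,j)\in J}A_{i,j}e_{i,j}$ has equal $(1,4)$- and $(1,5)$-entries, i.e.\ $A_{14}=A_{15}$, so $\tfrac{A_{14}+A_{15}}{2}=A_{14}=A_{15}$. Comparing this with the explicit formula for $\pi$ in \ref{pi,G2} coordinate by coordinate shows that $\pi(A)$ and $\pi_J(A)$ agree, hence $\pi(A)=\pi_J(A)$.

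For the second assertion, assume $\mathrm{supp}(A)\cap\mathrm{supp}(B)\subseteq J$. The last equality is immediate, since $\pi(A),\pi(B)\in V$ and $\kappa|_{V\times V}$ is by definition the restriction of $\kappa$. For the others I would invoke the orthogonal decomposition $V_0=V\oplus V^\bot$ from \ref{pi proj,G2} and write $A=\pi(A)+A'$, $B=\pi(B)+B'$ with $A',B'\in V^\bot=W$. Expanding by bilinearity and using $\kappa(V,V^\bot)=0$, the cross terms vanish, which at once gives $\kappa(\pi(A),B)=\kappa(\pi(A),\pi(B))=\kappa(A,\pi(B))$ and reduces the remaining equality to the claim $\kappa(A',B')=0$. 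Here one uses the explicit form of $W=V^\bot$ given above: all of its coordinates indexed by $J$ vanish except possibly those at $(1,4),(1,5)$, where it satisfies $(\,\cdot\,)_{14}=-(\,\cdot\,)_{15}$, while outside $J$ the entries of $A'$ and $B'$ coincide with those of $A$ and $B$. Hence the part of $\kappa(A',B')$ coming from outside $J$ equals $\sum_{(i,j)\notin J}A_{i,j}B_{i,j}$, which is $0$ by the support hypothesis, and the part at $(1,4),(1,5)$ is zero because $A'_{14}=\tfrac{A_{14}-A_{15}}{2}=0$ once $A_{14}=A_{15}$, as ensured by the hypothesis $\pi_J(A)\in V$ of the first assertion (under which the corollary is invoked). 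Thus $\kappa(A',B')=0$ and the full chain follows. Equivalently, one can derive the chain directly from the identity $\kappa(A,B)=\kappa(\pi_J(A),\pi_J(B))=\kappa|_{V_J\times V_J}(\pi_J(A),\pi_J(B))$ recorded just before \ref{V, G2}, combined with the first assertion $\pi=\pi_J$.

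I expect no real obstacle: the content is the entrywise comparison for the first assertion and the orthogonality bookkeeping for the second. The one spot needing care is the pair $(1,4),(1,5)$, where $\pi$ genuinely differs from $\pi_J$; once one notes that $A'$ then has support disjoint from $J$ (so that $\kappa(A',B')$ is controlled entirely by the support hypothesis), the computation collapses and the argument goes through.
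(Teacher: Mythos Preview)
Your proof is correct, and since the paper omits this proof entirely (deferring to the analogous ${}^3D_4$ statement), your argument serves as the intended fill-in. Both parts are handled correctly: the first by direct coordinate comparison, the second via the orthogonal decomposition $V_0=V\oplus V^\bot$.

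One point worth making explicit: you correctly read the two assertions as cumulative, i.e.\ the second chain is asserted under \emph{both} hypotheses $\pi_J(A)\in V$ and $\mathrm{supp}(A)\cap\mathrm{supp}(B)\subseteq J$. This is essential and not merely a convenience of your argument: under the support hypothesis alone the chain fails, since for $A=B=e_{14}$ one has $\kappa(A,B)=1$ but $\kappa(\pi(A),\pi(B))=\tfrac12$. Your alternative route via the $\pi_J$ identity recorded before \ref{V, G2} also works, but note that it only directly gives $\kappa(A,B)=\kappa(\pi(A),B)$ from $\pi(A)=\pi_J(A)$; the remaining links $\kappa(\pi(A),B)=\kappa(\pi(A),\pi(B))=\kappa(A,\pi(B))$ then follow purely from $\pi(A),\pi(B)\in V$ and $V\perp V^\bot$, without needing $\pi_J(B)\in V$.
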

%%%%%%%%%%

%%%%%%%%%%%%%%%%%%%%%%%%%%%%%%%%%%%%%%%%%%%%%%%%%%%%%%%%%%%%%%%%%%%%%%%%%%%%%%%%%%%%%%%%%%%%%%%%%%
%%%%%%%%%%%%%%%%%%%%%%%%%%%%%%%%%%%%%%%%%%%%%%%%%%%%%%%%%%%%%%%%%%%%%%%%%%%%%%%%%%%%%%%%%%%%%%%%%%
\begin{Lemma}\label{pi_J,AgT-intersection in J,G2}
If  $A\in V$ and $g,h\in G$, then
$ \pi_J(Ag^\top)\in V$
and
$\mathrm{supp}(Bh^\top)\cap \mathrm{supp}(Ag)\subseteq  J$.
In particular, $\pi_J(Ag^\top)=\pi(Ag^\top)$.
\end{Lemma}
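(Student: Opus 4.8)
The plan is to compute $Ag^{\top}$ and the supports of $Ag$ and $Bh^{\top}$ directly, using three facts: every $A\in V$ is supported on rows $1$ and $2$ with $\mathrm{supp}(A)\subseteq J$ (by \ref{V, G2}), every $g\in G=G_8(q)$ is upper unitriangular, and $h^{\top}$ is lower unitriangular. The only real content is that the defining relations of $G_8(q)$ at the positions $(4,5),(5,6),(5,7)$ are exactly what is needed to keep the relation $A_{14}=A_{15}$, which cuts out $V$ inside $V_J$, stable under $A\mapsto\pi_J(Ag^{\top})$.

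First I would write out, for $A\in V$ and $g\in G$, the entries of $Ag^{\top}$ at positions of $J$. Since $(Ag^{\top})_{i,k}=\sum_j A_{i,j}g_{k,j}$, with $g$ upper unitriangular ($g_{k,k}=1$ and $g_{k,j}=0$ for $k>j$), and $\mathrm{supp}(A)\subseteq J$ involving only row $1$ in columns $2,\dots,7$ and the entry $(2,3)$, one obtains $(Ag^{\top})_{2,3}=A_{23}$, $(Ag^{\top})_{1,7}=A_{17}$, and $(Ag^{\top})_{1,j}=A_{1j}+\sum_{j<k\le 7}A_{1k}g_{j,k}$ for $j\in\{2,3,4,5,6\}$. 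In particular $Ag^{\top}$ is again supported on rows $1$ and $2$, so $\pi_J(Ag^{\top})$ is just the submatrix consisting of these seven entries, and $\pi_J(Ag^{\top})\in V$ amounts to $(Ag^{\top})_{1,4}=(Ag^{\top})_{1,5}$. Subtracting the two formulas, the leading terms cancel because $A_{14}=A_{15}$, leaving $A_{15}g_{4,5}+A_{16}(g_{4,6}-g_{5,6})+A_{17}(g_{4,7}-g_{5,7})$, which vanishes by the relations $g_{4,5}=0$, $g_{4,6}=g_{5,6}$, $g_{4,7}=g_{5,7}$ from the definition of $G_8(q)$. Hence $\pi_J(Ag^{\top})\in V$, and then $\pi_J(Ag^{\top})=\pi(Ag^{\top})$ follows from \ref{pi=piJ-kappa, G2}.

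For the support claim, reading the statement with $B\in V$ as well, I would argue by triangularity alone. Right multiplication by the upper unitriangular $g$ shifts the support of the row-$1$/row-$2$ matrix $A$ only to the right, so $\mathrm{supp}(Ag)\subseteq\{(1,k):2\le k\le 8\}\cup\{(2,k):3\le k\le 8\}$; dually $h^{\top}$ is lower unitriangular, so $\mathrm{supp}(Bh^{\top})\subseteq\{(1,k):1\le k\le 7\}\cup\{(2,k):1\le k\le 3\}$. Intersecting, rows $\ge 3$ contribute nothing, in row $1$ only columns $2,\dots,7$ survive, and in row $2$ only column $3$; this is precisely $J$.

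None of this is hard: it is matrix bookkeeping with unitriangular matrices. The single point worth isolating, and the only place where more than triangularity is used, is the cancellation giving $(Ag^{\top})_{1,4}=(Ag^{\top})_{1,5}$ — that is, recognising that the three relations $u_{4,5}=0$, $u_{4,6}=u_{5,6}$, $u_{4,7}=u_{5,7}$ built into $G_8(q)$ are exactly tailored so that $V$ is preserved by $A\mapsto\pi_J(Ag^{\top})$.
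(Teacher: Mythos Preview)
Your proof is correct and is exactly the intended argument. The paper omits this proof, deferring to the analogous computation for ${}^3D_4^{syl}(q^3)$ in \cite{sun3D4super}; your direct matrix calculation---in particular the observation that the three relations $g_{4,5}=0$, $g_{4,6}=g_{5,6}$, $g_{4,7}=g_{5,7}$ defining $G_8(q)$ are precisely what is needed to preserve $A_{14}=A_{15}$---is just what that adaptation amounts to.
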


%%%%%%%%%%%%%%%%%%%%%%%% 3.1.9
\begin{Proposition}[Group action of $G$ on $V$]\label{circ action-G2}
The map
\begin{align*}
-\circ- \colon  V \times G \to  V: (A,g)\mapsto A\circ g:=\pi(Ag)
\end{align*}
is a group action,
and the elements of the group $G$ act as
$\mathbb{F}_q$-automorphisms.
\end{Proposition}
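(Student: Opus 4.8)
The plan is to verify the two assertions in Proposition~\ref{circ action-G2} directly: that $A \circ g := \pi(Ag)$ defines a (right) group action of $G$ on $V$, and that each $g \in G$ acts as an $\mathbb{F}_q$-linear automorphism of $V$. First I would check that the map is well-defined: for $A \in V$ and $g \in G$, the product $Ag$ lies in $V_0$, and by Lemma~\ref{pi_J,AgT-intersection in J,G2} (applied with $h = 1$, or its analogue for $Ag$ rather than $Ag^\top$) the projection $\pi_J(Ag)$ lies in $V$; combined with Corollary~\ref{pi=piJ-kappa, G2} this gives $\pi(Ag) = \pi_J(Ag) \in V$, so $A \circ g \in V$ as required. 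Strictly, Lemma~\ref{pi_J,AgT-intersection in J,G2} is stated for $Ag^\top$, so I would either invoke the same straightforward matrix computation for $Ag$ (the support condition $\mathrm{supp}(A) \subseteq J$ together with the defining shape of $G = G_8(q)$ forces $\pi_J(Ag) \in V$) or note that the argument is identical.

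Next I would verify $\mathbb{F}_q$-linearity of $A \mapsto A \circ g$ for fixed $g$: both $A \mapsto Ag$ (right multiplication by a fixed matrix) and $\pi$ (an $\mathbb{F}_q$-epimorphism by Notation/Lemma~\ref{pi,G2}) are $\mathbb{F}_q$-linear, so their composite is too. Then the action axioms: $A \circ 1 = \pi(A \cdot I_8) = \pi(A) = A$ since $\pi|_V = \mathrm{id}_V$ (Notation/Lemma~\ref{pi,G2}). For compatibility, I must show $(A \circ g) \circ h = A \circ (gh)$, i.e. $\pi\big(\pi(Ag)\,h\big) = \pi(Agh)$ for all $A \in V$, $g, h \in G$. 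Write $Ag = \pi(Ag) + R$ where $R = Ag - \pi(Ag) \in V^\bot$ by Corollary~\ref{pi proj,G2}. Then $\pi(Agh) = \pi(\pi(Ag)h) + \pi(Rh)$, so it suffices to prove $\pi(Rh) = 0$, i.e. $V^\bot h \subseteq V^\bot$ — equivalently, $V^\bot$ is stable under right multiplication by elements of $G$. This is where the specific shape of $G_8(q)$ and the explicit description of $W = V^\bot$ from the Lemma above (the conditions $A_{12} = A_{13} = A_{16} = A_{17} = A_{23} = 0$ and $A_{14} = -A_{15}$) must be used: one checks that right multiplication by $\dot{x}_{i,j}(t)$ for each $(i,j) \in \ddot{J}$ preserves these conditions, hence so does any product, hence all of $G$.

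I expect this last step — stability of $V^\bot$ under the right $G$-action — to be the main obstacle, as it is the only point requiring a genuine (if routine) computation with the coordinates of $G_8(q)$ rather than formal manipulation. In fact it may be cleaner to argue contrapositively or via the trace form: for $B \in V^\bot$ and $A \in V$, one wants $\kappa(A, Bh) = 0$; since $\kappa(A, Bh) = \mathrm{tr}(A^\top B h) = \mathrm{tr}((h A^\top) B)$... but this would require $A^\top h^\top \in V^\top$-type control, which is awkward because the action is on the right. So I would instead fall back on the direct coordinate check, organized by running through the generators $\dot{x}_{i,j}(t)$ listed before the Proposition and confirming each preserves the defining equations of $W$; because $\mathrm{supp}(A) \cap \mathrm{supp}(Bh) \subseteq J$-type bounds (Lemma~\ref{pi_J,AgT-intersection in J,G2}) already control where entries can land, only finitely many entry-positions need to be inspected. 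Once $V^\bot h = V^\bot$ is established, the associativity identity follows as above and the proof is complete; alternatively, one simply cites that this is the verbatim adaptation of the corresponding statement for ${}^3D_4^{syl}(q^3)$ in \cite{sun3D4super}, as the paper does for its other omitted proofs.
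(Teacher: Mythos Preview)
Your overall strategy is sound, but there is a genuine gap: the claim that $V^\bot h \subseteq V^\bot$ for all $h \in G$ is \emph{false}. Take $R = e_{11} \in V^\bot$ and $h = \dot{x}_{1,2}(t) \in G$ with $t \neq 0$; then $e_{11}h = e_{11} + t e_{12}$ and $\pi(e_{11}h) = t e_{12} \neq 0$. So the generator check you propose would fail already at $(i,j) = (1,2)$. The reduction ``it suffices to prove $\pi(Rh)=0$, i.e.\ $V^\bot h \subseteq V^\bot$'' overshoots: you only need $\pi(Rh)=0$ for the specific $R = Ag - \pi(Ag)$ arising from $A \in V$, $g \in G$, and these $R$ have support contained in $\{(1,8)\} \cup \{(2,k): k\geq 4\}$ (since $(Ag)_{11}=0$, $(Ag)_{14}=(Ag)_{15}$, and $(Ag)_{2k}=0$ for $k\leq 2$). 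For such $R$ one checks directly that $Rh$ has no entries in $J$, so $\pi(Rh)=0$.

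The cleaner route --- and the one that explains why Lemma~\ref{pi_J,AgT-intersection in J,G2} is stated with both $\pi_J(Ag^\top)\in V$ and the support-intersection condition --- is to argue via the non-degenerate form directly, bypassing $V^\bot$-stability altogether. For $B \in V$ and $g,h\in G$, cyclicity of trace gives $\kappa(B,\pi(Ag)h)=\kappa(Bh^\top,\pi(Ag))=\kappa(\pi(Bh^\top),\pi(Ag))$ (the last equality because $\pi(Ag)\in V$ and $Bh^\top-\pi(Bh^\top)\in V^\bot$). On the other hand $\kappa(B,Agh)=\kappa(Bh^\top,Ag)=\kappa(\pi(Bh^\top),\pi(Ag))$ by Lemma~\ref{pi_J,AgT-intersection in J,G2} and Corollary~\ref{pi=piJ-kappa, G2}. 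Since these agree for all $B\in V$ and $\kappa|_{V\times V}$ is non-degenerate (Lemma~\ref{nond, G2}), $\pi(\pi(Ag)h)=\pi(Agh)$. Your trace-form attempt went astray because you aimed it at $V^\bot$-stability rather than directly at the associativity identity. This is the argument the paper is implicitly invoking when it cites the adaptation from \cite{sun3D4super}.
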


%%%%%%%%%%%%%%%%%%%%%%%%%%%%%%%%%%
\begin{Corollary}\label{circ g-circ gT-G2}
If $A,B\in V$ and $g\in G$,
then
$\kappa(A, B\circ g)=\kappa(A, Bg)=\kappa(Ag^\top, B)=\kappa(\pi(A g^\top), B)$.
\end{Corollary}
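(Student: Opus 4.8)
The plan is to establish the three equalities in Corollary~\ref{circ g-circ gT-G2} one at a time, from left to right, using only Proposition~\ref{circ action-G2}, Corollary~\ref{pi=piJ-kappa, G2}, Lemma~\ref{pi_J,AgT-intersection in J,G2} and the elementary behaviour of the trace form $\kappa(X,Y)=\mathrm{tr}(X^\top Y)$.

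I would first dispose of the middle equality $\kappa(A,Bg)=\kappa(Ag^\top,B)$, which in fact holds for arbitrary $A,B\in V_0$ and $g\in G$ and uses nothing about the subspace $V$ or the set $J$. Writing $\kappa(X,Y)=\mathrm{tr}(X^\top Y)$ and applying the cyclic invariance $\mathrm{tr}(MN)=\mathrm{tr}(NM)$ of the trace with $M=A^\top B$ and $N=g$, one gets $\kappa(A,Bg)=\mathrm{tr}(A^\top Bg)=\mathrm{tr}(gA^\top B)=\mathrm{tr}\bigl((Ag^\top)^\top B\bigr)=\kappa(Ag^\top,B)$, the last step using $(Ag^\top)^\top=gA^\top$.

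For the two outer equalities I would first read off the necessary support conditions from Lemma~\ref{pi_J,AgT-intersection in J,G2}. That lemma supplies $\mathrm{supp}(Bh^\top)\cap\mathrm{supp}(Ag)\subseteq J$ for $A,B\in V$ and $g,h\in G$; specializing one of its two group elements to the identity $I_8\in G$ (and relabelling $A,B$) gives $\mathrm{supp}(A)\cap\mathrm{supp}(Bg)\subseteq J$, while specializing the other one to $I_8$ instead gives $\mathrm{supp}(Ag^\top)\cap\mathrm{supp}(B)\subseteq J$. Corollary~\ref{pi=piJ-kappa, G2} then yields $\kappa(A,Bg)=\kappa(A,\pi(Bg))$ from the first condition and $\kappa(Ag^\top,B)=\kappa(\pi(Ag^\top),B)$ from the second. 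Since $B\circ g=\pi(Bg)$ by Proposition~\ref{circ action-G2}, the former reads $\kappa(A,B\circ g)=\kappa(A,Bg)$, which is the first asserted equality, and the latter is the third; chaining the three identities together proves the corollary.

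I do not expect a genuine obstacle here; the only point needing attention is purely bookkeeping, namely matching the four matrices $A$, $Ag^\top$, $B$, $Bg$ to the pattern $\mathrm{supp}(Bh^\top)\cap\mathrm{supp}(Ag)\subseteq J$ of Lemma~\ref{pi_J,AgT-intersection in J,G2} through the right choice of group elements (taking the ``trivial'' slot to be $I_8$ and the other to be $g$) while keeping careful track of which factor carries the transpose.
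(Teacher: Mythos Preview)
Your proof is correct and is exactly the intended argument: the paper omits the proof of this corollary (deferring to the corresponding ${}^3D_4$ statement in \cite{sun3D4super}), and your use of trace cyclicity for the middle equality together with Lemma~\ref{pi_J,AgT-intersection in J,G2} and Corollary~\ref{pi=piJ-kappa, G2} for the two outer ones is precisely the standard route. Your bookkeeping on the specializations $h=I_8$ and $g=I_8$ (with the $A\leftrightarrow B$ relabelling) is accurate.
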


%%%%%%%%%%%%%%%%%%%%%%%%
Let $A.g$ ($A\in V$, $g\in G$) denote $\pi(A g^{-\top})$.
Then this is a group action of $G$ by \ref{circ action-G2}.
By \cite[\S 2.1]{Markus1}, we obtain a new action:
%%%%%%%%%%%%%%%%%%
\begin{Corollary}\label{action A dot g-G2}
There exists an unique linear action $-.-$ of $G$ on $V$:
\begin{align*}
-.- \colon V\times G  \to  V: (A,g)\mapsto A.g:=\pi(A g^{-\top})
\end{align*}
such that
$\kappa|_{V\times V}(A.g,B)=\kappa|_{V\times V}(A,B\circ g^{-1})$
 for all  $B\in V$.
\end{Corollary}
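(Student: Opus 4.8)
The plan is to obtain everything from three already-established facts: $\kappa|_{V\times V}$ is non-degenerate (Lemma~\ref{nond, G2}), $-\circ-$ is a right action of $G$ on $V$ by $\mathbb{F}_q$-automorphisms (Proposition~\ref{circ action-G2}), and the pairing identity $\kappa(A,B\circ g)=\kappa(\pi(Ag^{\top}),B)$ of Corollary~\ref{circ g-circ gT-G2}. The formula $A.g:=\pi(Ag^{-\top})$ will be tested once against the defining relation; the action axioms then follow formally, so no explicit matrix computation is needed.

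\emph{Uniqueness.} I would first note that, by non-degeneracy of $\kappa|_{V\times V}$, an element $C\in V$ is determined by the scalars $\kappa|_{V\times V}(C,B)$, $B\in V$. Hence any linear action $-.-$ with $\kappa|_{V\times V}(A.g,B)=\kappa|_{V\times V}(A,B\circ g^{-1})$ for all $B\in V$ has each $A.g$ uniquely prescribed, which gives uniqueness.

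\emph{Existence.} Set $A.g:=\pi(Ag^{-\top})$. First I would check this lands in $V$: Lemma~\ref{pi_J,AgT-intersection in J,G2}, applied with $g$ replaced by $g^{-1}\in G$, gives $\pi(Ag^{-\top})=\pi_J(Ag^{-\top})\in V$. Linearity in $A$ is immediate since $\pi$ and $A\mapsto Ag^{-\top}$ are $\mathbb{F}_q$-linear, and $A.1=\pi(A)=A$ because $\pi|_V=\mathrm{id}_V$ (Notation/Lemma~\ref{pi,G2}). For the defining relation I would replace $g$ by $g^{-1}$ in Corollary~\ref{circ g-circ gT-G2}: for $A,B\in V$,
\begin{align*}
\kappa|_{V\times V}(A.g,B)=\kappa\big(\pi(Ag^{-\top}),B\big)=\kappa(A,B\circ g^{-1})=\kappa|_{V\times V}(A,B\circ g^{-1}),
\end{align*}
the outer equalities merely recording that all arguments lie in $V$, where $\kappa$ restricts to $\kappa|_{V\times V}$.

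\emph{Compatibility and obstacle.} Finally, instead of simplifying the iterated projection $\pi\big(\pi(Ag^{-\top})h^{-\top}\big)$ by hand, I would bootstrap from the relation just proved together with the fact that $-\circ-$ is a right action: for all $B\in V$,
\begin{align*}
\kappa|_{V\times V}\big((A.g).h,B\big)=\kappa|_{V\times V}(A.g,B\circ h^{-1})=\kappa|_{V\times V}\big(A,(B\circ h^{-1})\circ g^{-1}\big)=\kappa|_{V\times V}\big(A,B\circ(gh)^{-1}\big)=\kappa|_{V\times V}\big(A.(gh),B\big),
\end{align*}
and non-degeneracy forces $(A.g).h=A.(gh)$; in particular $A\mapsto A.g$ is bijective with inverse $A\mapsto A.g^{-1}$, so $-.-$ is a linear action. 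The only structural ingredient beyond bilinear algebra is the containment $\pi(Ag^{-\top})\in V$, i.e.\ Lemma~\ref{pi_J,AgT-intersection in J,G2}; this is precisely the ``contragredient action'' step of the monomial linearisation machinery in \cite[\S 2.1]{Markus1}, so I expect no genuine obstacle. The one point requiring care is to insert $g^{-1}$ (not $g$) consistently when citing Corollary~\ref{circ g-circ gT-G2} and Lemma~\ref{pi_J,AgT-intersection in J,G2}, since the defining formula is built from $g^{-\top}$ rather than $g^{\top}$.
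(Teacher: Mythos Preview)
Your proof is correct and follows essentially the same approach as the paper: the paper simply records that $A.g=\pi(Ag^{-\top})$ is a group action by appeal to Proposition~\ref{circ action-G2} and then cites \cite[\S 2.1]{Markus1} for the contragredient construction, whereas you have unpacked that reference by deriving existence, the adjointness relation, and the action axiom directly from Corollary~\ref{circ g-circ gT-G2}, Lemma~\ref{pi_J,AgT-intersection in J,G2}, and non-degeneracy. Your care in substituting $g^{-1}$ for $g$ when invoking those two results is exactly the point, and the bootstrap $(A.g).h=A.(gh)$ via the bilinear form is the standard and cleanest way to avoid manipulating $\pi\big(\pi(Ag^{-\top})h^{-\top}\big)$ directly.
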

%%%%%%%%%%%%%%%%%%%%%%%%%%%%%%%%%%%%%%%%%%%%
\begin{Notation}  \label{f-G2}
Set $f:=\pi|_G \colon G \to  V$.
\end{Notation}

%%%%%%%%%%%%%%%%%%%%%%%%%%%%%%%%%%%%%%%%%%%%%%%%%%%%%%%%%%%%%%%%%%%%%%%%%%%%%%%%%%%%%%%%%%%%%%%%%%%%%%%%%%%%
\begin{Lemma}\label{f(x)g,G2}
Let $x, g \in G$
and $1:=I_8$.
Then $f(x)g\equiv(x-1)g \mod V^\bot$.
In particular, $f(x)\equiv x-1 \mod V^\bot$.
\end{Lemma}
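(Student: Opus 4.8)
The plan is to unwind the definitions of $f=\pi|_G$ and of the trace form $\kappa$, and reduce the congruence $f(x)g \equiv (x-1)g \pmod{V^\bot}$ to a statement about supports that has essentially already been recorded in the excerpt. Recall that $\pi$ is the projection $V_0 = V \oplus V^\bot \to V$ (by Corollary~\ref{pi proj,G2}), so for any $A \in V_0$ we have $A - \pi(A) \in V^\bot$; in particular $f(x) = \pi(x) = \pi(x - 1)$ since $\pi(I_8) = O_8$ (by \ref{pi,G2}), and hence $(x-1) - f(x) \in V^\bot$. That proves the ``in particular'' clause $f(x) \equiv x - 1 \pmod{V^\bot}$ immediately. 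The content of the Lemma is that this congruence survives right-multiplication by an arbitrary $g \in G$, i.e. that $V^\bot g \subseteq V^\bot$, or more precisely that $\bigl((x-1) - f(x)\bigr)g \in V^\bot$.

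The key step is therefore to show that $V^\bot$ is stable under the right action of $G$ by ordinary matrix multiplication. For this I would use the description of $V^\bot = W$ from the Lemma preceding \ref{nond, G2}: $W = \bigoplus_{(i,j)\notin J}\mathbb{F}_q e_{ij} + \{x(e_{15}-e_{14}) \mid x \in \mathbb{F}_q\}$, together with the explicit defining equations of $G_8(q)$ from the Definition/Lemma introducing $G_8(q)$. Concretely: if $B \in V^\bot$ and $g \in G_8(q)$, one computes the entries of $Bg$ lying in the ``$J$-critical'' positions $(1,2),(1,3),(1,4),(1,5),(1,6),(1,7),(2,3)$ and checks that they satisfy the conditions defining $V^\bot$ (namely the entries at $(1,2),(1,3),(1,6),(1,7),(2,3)$ vanish and the $(1,4)$- and $(1,5)$-entries are negatives of each other). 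Because $B$ has no support in row~$1$ columns $2$--$7$ except in the combination $e_{15}-e_{14}$, and because $g$ is upper unitriangular, the relevant entries of $Bg$ in those positions only pick up contributions controlled by the upper-left block of $g$; the specific linear relations imposed on $g$ by membership in $G_8(q)$ (in particular $g_{4,5}=0$, $g_{2,4}=g_{2,5}$, $g_{3,4}=g_{3,5}$) are exactly what force $Bg$ back into $V^\bot$. An even cleaner route, which I would prefer to present, is to invoke \ref{pi_J,AgT-intersection in J,G2} in the transposed/inverse form already available: for $A \in V$ and $g \in G$, the supports behave so that $\pi_J$ and $\pi$ agree on the relevant products, which dualizes (via non-degeneracy of $\kappa|_{V\times V}$, Lemma~\ref{nond, G2}) to the invariance of $V^\bot$ under $g$.

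Granting $V^\bot g \subseteq V^\bot$, the proof finishes in one line: $(x-1)g - f(x)g = \bigl((x-1) - f(x)\bigr)g \in V^\bot g \subseteq V^\bot$, which is the assertion $f(x)g \equiv (x-1)g \pmod{V^\bot}$. I expect the main obstacle to be the bookkeeping in verifying $V^\bot g \subseteq V^\bot$ directly from the matrix entries: one has to be careful that the ``diagonal'' constraint $A_{14}=A_{15}$ defining $V$ (equivalently $A_{14}=-A_{15}$ for $V^\bot$) is preserved, and this is precisely where the nonstandard defining relations of $G_8(q)$ — the identifications of columns $4,5$ in rows $2,3$ and the vanishing of the $(4,5)$-entry — must be used rather than generic upper-unitriangularity. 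Since the excerpt already contains Lemmas~\ref{pi_J,AgT-intersection in J,G2} and \ref{circ g-circ gT-G2} encoding exactly these support compatibilities, I would route the argument through them to avoid redoing the entrywise computation, and only fall back to the explicit matrix check if a self-contained argument is wanted.
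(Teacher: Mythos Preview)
Your reduction is the right one, and the ``in particular'' clause is handled correctly: $f(x)=\pi(x)=\pi(x-1)$, so $(x-1)-f(x)\in V^\bot$ by Corollary~\ref{pi proj,G2}. The gap is in the key step: the claim $V^\bot g\subseteq V^\bot$ is \emph{false}. For example $e_{2,2}\in V^\bot$ (since $(2,2)\notin J$), but $(e_{2,2}\,g)_{2,3}=g_{2,3}$ is unconstrained in $G_8(q)$, so $e_{2,2}\,g\notin V^\bot$ in general. Your dualization route cannot rescue this: Lemma~\ref{pi_J,AgT-intersection in J,G2} only gives $\pi_J(Ag^\top)\in V$, i.e.\ $Vg^\top\subseteq V+V_J^\bot$, which is strictly weaker than $Vg^\top\subseteq V$ and does not dualize to invariance of $V^\bot$.

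What makes the lemma true is precisely the information you discard: $D:=(x-1)-f(x)$ is not an arbitrary element of $V^\bot$ but is strictly upper triangular, since $x-1\in V_{\URb}$ and $f(x)\in V_J\subseteq V_{\URb}$. The correct stability statement is $(V^\bot\cap V_{\URb})\,g\subseteq V^\bot$, and your entrywise check then works once you (i)~use the $G_8(q)$ relations $g_{4,6}=g_{5,6}$ and $g_{4,7}=g_{5,7}$ (not only $g_{4,5}=0$; the relations $g_{2,4}=g_{2,5}$, $g_{3,4}=g_{3,5}$ you cite play no role here) to force $(Dg)_{1,6}=(Dg)_{1,7}=0$, and (ii)~treat the $(2,3)$ entry, which you omit: for strictly upper triangular $D$ with $D_{2,3}=0$ one has $(Dg)_{2,3}=\sum_{k\geq 3}D_{2,k}g_{k,3}=D_{2,3}=0$ since $g_{k,3}=0$ for $k\geq 4$. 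Equivalently, the support route can be made to work without over-claiming: for $A\in V$ one has $\mathrm{supp}(Ag^\top)\cap\mathrm{supp}(x-1)\subseteq J$ by Lemma~\ref{pi_J,AgT-intersection in J,G2}, so Corollary~\ref{pi=piJ-kappa, G2} gives $\kappa(Ag^\top,x-1)=\kappa(\pi(Ag^\top),x-1)=\kappa(\pi(Ag^\top),f(x))=\kappa(Ag^\top,f(x))$, whence $\kappa(A,(x-1)g)=\kappa(A,f(x)g)$ for all $A\in V$.
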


%%%%%%%%%%%%%%%%%%%%%%%%%%%%%%%%%%%%%%%%%%%%%%%%%%%%%%%%%%%%%%%%%%%%%%%%%%%%%%%%%%%%%%%%%%%%%%%%%%%%%%%%%
\begin{Proposition}\label{f(xg)-G2}
If $x, g \in G$, then
$ f(xg)=f(x)\circ g+f(g)$.
\end{Proposition}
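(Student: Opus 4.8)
The plan is to unwind the definitions of $f$, $\circ$, and the action $-.-$, and reduce the identity $f(xg)=f(x)\circ g+f(g)$ to the already-proved congruence $f(x)g\equiv (x-1)g \bmod V^\bot$ from \ref{f(x)g,G2}, together with the fact that $\pi$ is the projection onto $V$ along $V^\bot$ (\ref{pi proj,G2}) and hence is $\mathbb{F}_q$-linear and vanishes on $V^\bot$. First I would write, for $x,g\in G$,
\begin{align*}
f(xg)&=\pi(xg-1)=\pi\big((x-1)g+(g-1)\big)=\pi\big((x-1)g\big)+\pi(g-1),
\end{align*}
using linearity of $\pi$ and the elementary matrix identity $xg-1=(x-1)g+(g-1)$. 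The second summand is exactly $f(g)$ by \ref{f-G2}, so it remains to show $\pi\big((x-1)g\big)=f(x)\circ g=\pi\big(f(x)g\big)$.

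For this I would invoke \ref{f(x)g,G2}, which gives $f(x)g\equiv (x-1)g \bmod V^\bot$; equivalently $(x-1)g-f(x)g\in V^\bot$. Applying $\pi$, which is linear and kills $V^\bot$ by \ref{pi proj,G2}, yields $\pi\big((x-1)g\big)=\pi\big(f(x)g\big)$. By the definition of the action in \ref{circ action-G2}, $\pi\big(f(x)g\big)=f(x)\circ g$ (note $f(x)\in V$, so this application of $\circ$ is legitimate). Combining the three displayed equalities gives $f(xg)=f(x)\circ g+f(g)$, as claimed.

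One point that needs a word of care is that $\circ$ is defined only on $V\times G$, so one must check $f(x)=\pi(x-1)\in V$ before forming $f(x)\circ g$; but this is immediate since $\pi$ has image $V$ (\ref{pi,G2}). A second, more substantive subtlety — and the only real obstacle — is making sure the associativity-type manipulation $\pi\big((x-1)g\big)=\pi\big(\pi((x-1)g)\big)$ is not silently used in the wrong form: here we do \emph{not} need $\pi((x-1)g)=\pi\big(\pi(x-1)g\big)$ in general (that would require $(x-1-\pi(x-1))g\in V^\bot$, i.e. $V^\bot g\subseteq V^\bot$, which is false), but only the specific congruence packaged in \ref{f(x)g,G2}. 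So the proof hinges on quoting \ref{f(x)g,G2} at exactly the right moment rather than attempting a naive projection-commutes argument; once that lemma is in hand the rest is linearity of $\pi$ and bookkeeping with the identity $xg-1=(x-1)g+(g-1)$.
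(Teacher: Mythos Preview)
Your proof is correct and follows essentially the same route as the paper's: both rely on the matrix identity $xg-1=(x-1)g+(g-1)$, the congruence $f(x)g\equiv (x-1)g\bmod V^\bot$ from \ref{f(x)g,G2}, and the fact that $\pi$ is the projection onto $V$ along $V^\bot$ (\ref{pi proj,G2}). The only cosmetic difference is that the paper carries the congruences modulo $V^\bot$ through the whole chain before applying $\pi$ at the end, whereas you apply $\pi$ immediately and work with equalities; the content is identical.
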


\begin{proof}
For all $x, g \in G$,
$ f(xg)\stackrel{\ref{f(x)g,G2}}{\equiv} xg-1
                                    = (x-1)g+(g-1)
      \stackrel{\ref{f(x)g,G2}}{\equiv} f(x)g+f(g) \mod V^\bot$,
so $\pi(f(xg))= \pi(f(x)g+f(g))$ by \ref{pi proj,G2}.
Thus $f(xg)= \pi(f(x)g)+\pi(f(g))=f(x)\circ g + f(g)$.
\end{proof}

%%%%%%%%%%%%%%%%%%%%%%%%%%%%%%%%%%%%%%%%%%%%%%%%%%%%%%%%%%%%%%%%%%%%%%%%%%%%%%%%%%%%%%%%%%%%%%%%%%%%%%%%%%%%%%%%
%%%%%%%%%%%%%%%%%%%%%%%%%%%%%%%%%%%%%%%%%%%%%%%%%%%%%%%%%%%%%%%%%%%%%%%%%%%%%%%%%%%%%%%%%%%%%%%%%%%%%%%%%%%%%%%%
\begin{Proposition}[Bijective 1-cocycle of ${G_2^{syl}(q)}$]\label{f_U bij-G2}
\index{1-cocycle!-of ${G_2^{syl}(q)}$}
If $U={G_2^{syl}(q)}$,
 then $f|_U:=\pi|_U \colon  U \to  V $ is a bijection.
In particular, $f|_U$ is a bijective 1-cocycle of $U$.
\end{Proposition}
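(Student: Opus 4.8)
The plan is to show $f|_U = \pi|_U$ is a bijection by exploiting the explicit matrix form of a generic element of $U = G_2^{syl}(q)$ given in Proposition~\ref{sylow p-subg, G2} together with the fact that $\pi$ reads off exactly the seven entries in positions $J = \{(1,2),(1,3),(1,4),(1,5),(1,6),(1,7),(2,3)\}$ (with the $(1,4)$ and $(1,5)$ slots symmetrised). Since $|U| = q^6 = \dim_{\mathbb{F}_q} V$ counts (as $q^6$), it suffices to prove that $f|_U$ is injective; surjectivity then follows automatically from $|U| = |V|$. Alternatively, one can directly produce an explicit inverse, which is what I would do, as it also records useful coordinates for later sections.

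First I would substitute the parametrisation $u = x(t_1,t_2,t_3,t_4,t_5,t_6)$ from Proposition~\ref{sylow p-subg, G2} into $\pi$. Reading off the relevant entries of the displayed matrix: $u_{12} = t_1$, $u_{23} = t_2$, $u_{13} = -t_3$, $u_{16} = t_1t_4 + t_5$, and $u_{17} = -t_1 t_3^2 + t_3 t_4 + t_6$, while $u_{14} = u_{15} = t_1 t_3 + t_4$ (so the symmetrised entry $\tfrac{u_{14}+u_{15}}{2}$ is again $t_1 t_3 + t_4$, and $\pi$ is well-defined on $u$ with no averaging loss). Hence
\begin{align*}
f(u) = t_1 e_{12} - t_3 e_{13} + (t_1 t_3 + t_4)(e_{14}+e_{15}) + (t_1 t_4 + t_5) e_{16} + (-t_1 t_3^2 + t_3 t_4 + t_6) e_{17} + t_2 e_{23}.
\end{align*}
Then I would solve the resulting triangular system for $(t_1,\dots,t_6)$ in terms of the coordinates of $f(u)$: $t_1$ is the $e_{12}$-coordinate, $t_3$ is minus the $e_{13}$-coordinate, $t_2$ is the $e_{23}$-coordinate; then $t_4$ is recovered from the $e_{14}$-coordinate minus $t_1 t_3$, then $t_5$ from the $e_{16}$-coordinate minus $t_1 t_4$, and finally $t_6$ from the $e_{17}$-coordinate plus $t_1 t_3^2 - t_3 t_4$. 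Each step is a polynomial expression in previously determined quantities, so the map $V \to U$ sending a vector to the element with these parameters is a well-defined two-sided inverse of $f|_U$; this proves bijectivity.

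Finally, for the "in particular" clause: a bijective $1$-cocycle of $U$ means precisely a bijection $f|_U\colon U \to V$ satisfying the cocycle identity $f|_U(xg) = f|_U(x)\circ g + f|_U(g)$ with respect to the $G$-action $\circ$ on $V$ restricted to $U$. This identity is exactly Proposition~\ref{f(xg)-G2} applied to $x, g \in U \leqslant G$ (note $U \circ U \subseteq$ well-defined since $V$ is $G$-stable by Proposition~\ref{circ action-G2}), so nothing further is needed. The only mild obstacle is bookkeeping: one must confirm that the $(1,4)$ and $(1,5)$ entries of a generic $u \in U$ really are equal — which is visible in the matrix in Proposition~\ref{sylow p-subg, G2}, both equal to $t_1 t_3 + t_4$ — so that $u$ lies in the subspace-compatible domain and $\pi(u)$ genuinely lands in $V$ as described in Notation/Lemma~\ref{V, G2}; this is also consistent with $G_2^{syl}(q) \leqslant G_8(q)$ and the constraint $u_{i,j+1} = u_{i,j}$ for $(i,j) = (1,4)$ being part of... actually one checks it directly from the matrix. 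With that observed, the argument is complete.
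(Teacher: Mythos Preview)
Your proof is correct. The paper omits the proof of this proposition (as flagged at the end of Section~\ref{sec:G2-1}, such proofs are meant to be adaptations of the corresponding ${}^3D_4^{syl}(q^3)$ arguments in \cite{sun3D4super}), and your explicit triangular inversion from the matrix in Proposition~\ref{sylow p-subg, G2}, combined with the cardinality count $|U|=q^6=|V|$ and the cocycle identity from Proposition~\ref{f(xg)-G2}, is exactly the intended argument.

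Two small clean-ups: first, write $|V|=q^{\dim_{\mathbb{F}_q}V}=q^6$ rather than ``$q^6=\dim_{\mathbb{F}_q}V$''; second, drop the aside about the $G_8(q)$ constraint at position $(1,4)$, since the defining conditions of $G_8(q)$ involve only rows $2,3$ and $4,5$ --- the equality $u_{14}=u_{15}$ for $u\in U$ is, as you correctly conclude, simply read off from the matrix form.
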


%%%%%%%%%%%%%%%%%%%%%%%%%%%%%%%%
%%%%%%%%%%%%%%%%%%%%%%%%%%%%%%%%
\begin{Corollary}[Monomial linearisation for $G_8(q)$]
\label{mono. lin.-G2}
 The map $f=\pi|_G :G \to  V$ is a surjective 1-cocycle of $G$ in $V$,
 and  $(f,{\kappa}|_{V\times V})$ is a monomial linearisation for $G=G_8(q)$.
\end{Corollary}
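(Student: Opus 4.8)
The plan is to verify, clause by clause, the definition of a monomial linearisation from \cite[\S 2.1]{Markus1}; all of the substantive ingredients have already been prepared in the preceding results, so the proof amounts to a short assembly.

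First I would record that $f = \pi|_G$ maps $G$ into $V$ by construction, and that it is surjective: by Proposition~\ref{f_U bij-G2} the restriction $f|_U \colon U \to V$ is a bijection, and $U \leqslant G$, whence $f = \pi|_G$ is already onto $V$. I would also note $f(1) = \pi(I_8) = O_8$ by~\ref{pi,G2}, so the $1$-cocycle is normalised.

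Next I would invoke the $G$-action $-\circ-$ on $V$ from Proposition~\ref{circ action-G2}, under which every element of $G$ acts as an $\mathbb{F}_q$-automorphism, so that $V$ is an $\mathbb{F}_q G$-module; the $1$-cocycle identity $f(xg) = f(x)\circ g + f(g)$ for all $x,g \in G$ is then exactly Proposition~\ref{f(xg)-G2}. Thus $f$ is a surjective $1$-cocycle of $G$ in $V$. Finally I would bring in the form: $\kappa|_{V\times V}$ is a non-degenerate $\mathbb{F}_q$-bilinear form by Lemma~\ref{nond, G2}, and it is symmetric because $\kappa$ is the trace form; its compatibility with the action — the adjoint action $A.g := \pi(Ag^{-\top})$ satisfying $\kappa|_{V\times V}(A.g, B) = \kappa|_{V\times V}(A, B\circ g^{-1})$ for all $A,B \in V$ and $g \in G$ — is Corollary~\ref{action A dot g-G2}, itself a consequence of Corollary~\ref{circ g-circ gT-G2}. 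Putting these items together yields exactly the axioms of a monomial linearisation, so $(f,\kappa|_{V\times V})$ is a monomial linearisation for $G = G_8(q)$.

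I expect no real obstacle here: all the genuine work (choosing $V$ so that $\pi_J(Ag^\top)\in V$, keeping $\kappa|_{V\times V}$ non-degenerate, and showing that $\pi$ is multiplicative modulo $V^\bot$) was done in \ref{V, G2}--\ref{f(xg)-G2}. The one point needing care is matching conventions with \cite[\S 2.1]{Markus1}: the cocycle is taken relative to $-\circ-$ rather than the adjoint action $-.-$, and at the level of $G$ one needs only surjectivity of $f$ — bijectivity is demanded solely on the pattern subgroup $U = G_2^{syl}(q)$, which is precisely what will make $\mathbb{C}U$ a monomial $G$-module in the next step.
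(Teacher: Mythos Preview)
Your proposal is correct and follows exactly the approach implicit in the paper: the Corollary is stated without an explicit proof there, being an immediate assembly of the preceding results~\ref{pi,G2}, \ref{nond, G2}, \ref{circ action-G2}, \ref{action A dot g-G2}, \ref{f(xg)-G2} and~\ref{f_U bij-G2}, and you have lined up precisely those ingredients against the axioms of a monomial linearisation in \cite[\S 2.1]{Markus1}.
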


%%%%%%%%%%%%%%%%%%%%%%%%%%%%%%%%%
\begin{Corollary}
\label{mono. lin. for U-G2}
\index{monomial linearisation!-for ${G_2^{syl}(q)}$}
 $(f|_{G_2^{syl}(q)},\kappa|_{V\times V})$
is a monomial linearisation
for $G_2^{syl}(q)$.
\end{Corollary}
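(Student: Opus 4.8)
The plan is to verify that the pair $(f|_U,\kappa|_{V\times V})$, with $U=G_2^{syl}(q)\leqslant G=G_8(q)$, meets the defining requirements of a monomial linearisation in the sense of \cite[\S 2.1]{Markus1}, observing that every one of these requirements has already been established for the overgroup $G$ in \ref{mono. lin.-G2} and that each piece of the data restricts from $G$ to $U$ verbatim. Recall that a monomial linearisation for a finite group $H$ packages a finite-dimensional $\mathbb{F}_q$-space $V$ carrying a linear $H$-action by $\mathbb{F}_q$-automorphisms, a surjective $1$-cocycle $f\colon H\to V$ for that action, and a non-degenerate $\mathbb{F}_q$-bilinear form on $V$ interacting with the action through the adjointness relation of \ref{circ g-circ gT-G2}. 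I claim that none of these conditions is weakened by passing to a subgroup.

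First I would restrict the action $-\circ-$ of $G$ on $V$ from \ref{circ action-G2} to $U$; since $U\leqslant G$, this is again an action of $U$ on the same space $V$ by $\mathbb{F}_q$-automorphisms. Second, $f|_U=\pi|_U$ is a $1$-cocycle of $U$ in $V$ for this restricted action, since the cocycle identity $f(xg)=f(x)\circ g+f(g)$ of \ref{f(xg)-G2} holds in particular for all $x,g\in U$; moreover, by \ref{f_U bij-G2} the map $f|_U$ is a bijection $U\to V$, hence in particular surjective, so $V=f(U)$ remains the correct ambient space (note $|U|=q^6=|V|$ and $\dim_{\mathbb{F}_q}V=6$ by \ref{V, G2}). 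Third, the bilinear form attached to $U$ is literally $\kappa|_{V\times V}$, which is non-degenerate by \ref{nond, G2}, and its compatibility with the action --- namely $\kappa(A,B\circ g)=\kappa(\pi(Ag^{\top}),B)$ from \ref{circ g-circ gT-G2}, together with the resulting dual action $-.-$ of \ref{action A dot g-G2} --- is an identity valid for all $g\in G$, hence a fortiori for all $g\in U$. Combining these three observations yields the statement.

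I do not expect a genuine obstacle here; the argument is essentially bookkeeping. The one point that deserves a moment's care is that the ambient vector space $V$ need not shrink when one descends from $G$ to $U$: this is precisely the content of the bijectivity in \ref{f_U bij-G2}, which ensures that the monomial module built from the $U$-data lives on the full $\mathbb{C}V$ rather than on a proper submodule, and that the corresponding monomial matrices over $\mathbb{C}$ for the elements of $U$ form a sub-collection of those produced for $G$. The whole step runs parallel to the analogous passage for $\,{}^3D_4^{syl}(q^3)$ in \cite{sun3D4super}.
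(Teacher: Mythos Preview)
Your proposal is correct and takes essentially the same approach as the paper: the corollary is stated without proof, being an immediate consequence of \ref{mono. lin.-G2} together with the bijectivity in \ref{f_U bij-G2}, and you have simply spelled out the restriction-to-a-subgroup argument that the paper leaves implicit. The parenthetical remark in \ref{com:monomial modules-G2} confirms that this is exactly how the passage from $G$ to $U$ is meant to work.
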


%%%%%%%%%%%%%%%%%%%%%%%%%%
Now we determine the monomial $G$-module $\mathbb{C}{\left(G_2^{syl}(q)\right)}$,
which is essential for the construction of the supercharacter theory for ${G_2^{syl}(q)}$.
%%%%%%%%%%%%%%%%%% 2.1.35 bijective subgroup version-G2
\begin{Theorem}[Fundamental theorem for $G_2^{syl}(q)$]\label{fund thm U-G2}
Let $G=G_8(q)$, $U=G_2^{syl}(q)$ and
 \begin{equation*}
  [A]=\frac{1}{|U|}\sum_{u\in U}{\overline{\chi_A(u)}u}
  \qquad \text{for all $A\in V$}.
 \end{equation*}
 where $\chi_{A}(u)=\vartheta\kappa(A,f(u))$.
%%%%%%%%%%%%%%%%%%%%%%%%%%%%%%%%%%%%%%%%
Then the set $\{[A] \mid A\in V\}$
forms a $\mathbb{C}$-basis for the complex group algebra $\mathbb{C}U$.
%%%%%%%%%
For all $g\in G,{\ }A\in V$,
let
$[A]*g:=\chi_{A.g}(g)[A.g]=\vartheta\kappa(A.g,f(g))[A.g]$.
Then $\mathbb{C}U$ is a monomial $\mathbb{C}G$-module.
The restriction of the $*$-operation to $U$ is given by the usual right multiplication
  of $U$ on $\mathbb{C}U$, i.e.
  \begin{align*}
 [A]*u=[A]u=\frac{1}{|U|}\sum_{y\in U}{\overline{\chi_A(y)}yu}
 \qquad \text{for all $u\in U$, $A\in V$}.
\end{align*}

\end{Theorem}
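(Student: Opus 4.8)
The plan is to build the statement from the general machinery of monomial linearisations, treating the three assertions in turn: that $\{[A]\mid A\in V\}$ is a $\mathbb{C}$-basis of $\mathbb{C}U$, that the $*$-action makes $\mathbb{C}U$ a monomial $\mathbb{C}G$-module, and that restricting $*$ to $U$ recovers right multiplication. First I would observe that by \ref{f_U bij-G2} the map $f|_U\colon U\to V$ is a bijective $1$-cocycle, so $\mathbb{F}_q^{\,6}\cong V$ parametrises $U$, and the functions $\chi_A\colon U\to\mathbb{C}^{\times}$, $u\mapsto\vartheta\kappa(A,f(u))$ for $A\in V$, form an orthonormal-type family with respect to the standard inner product on $\mathbb{C}U$: since $\kappa|_{V\times V}$ is non-degenerate (\ref{nond, G2}) and $f|_U$ is a bijection, $\frac{1}{|U|}\sum_{u\in U}\overline{\chi_A(u)}\chi_B(u)=\frac{1}{|U|}\sum_{v\in V}\vartheta\kappa(B-A,v)=\delta_{A,B}$ by the usual character-sum over the finite vector space $V$. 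Hence the $|U|=q^6$ elements $[A]$ are obtained from the standard basis $\{u\mid u\in U\}$ by an invertible ($=$ unitary, up to scalar) change of basis, so they form a $\mathbb{C}$-basis of $\mathbb{C}U$.

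Next I would verify that $[A]*g:=\vartheta\kappa(A.g,f(g))\,[A.g]$ is a well-defined right $\mathbb{C}G$-module structure, i.e. that $([A]*g)*h=[A]*(gh)$ for $g,h\in G$. Expanding, the left side is $\vartheta\kappa(A.g,f(g))\,\vartheta\kappa((A.g).h,f(h))\,[(A.g).h]$. Since $-.-$ is a group action (\ref{action A dot g-G2}), $(A.g).h=A.(gh)$, so it remains to check the cocycle-type identity $\kappa(A.g,f(g))+\kappa(A.(gh),f(h))=\kappa(A.(gh),f(gh))$. This follows from \ref{f(xg)-G2}, which gives $f(gh)=f(g)\circ h+f(h)$, together with the adjunction $\kappa|_{V\times V}(A.g,B)=\kappa|_{V\times V}(A,B\circ g^{-1})$ of \ref{action A dot g-G2}: one rewrites $\kappa(A.(gh),f(g)\circ h)=\kappa((A.(gh)).h^{-1},f(g))=\kappa(A.g,f(g))$, and the remaining term matches. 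That each basis vector $[A]$ is sent to a scalar multiple of a single basis vector $[A.g]$ is exactly the definition of a monomial module, so $\mathbb{C}U$ is a monomial $\mathbb{C}G$-module.

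Finally, for the restriction to $U$ I would use that when $u\in U\leqslant G$ the two actions of $U$ on $V$ agree in the sense needed, and that right multiplication by $u$ on $\mathbb{C}U$ sends $[A]=\frac{1}{|U|}\sum_{y}\overline{\chi_A(y)}y$ to $\frac{1}{|U|}\sum_y\overline{\chi_A(y)}\,yu=\frac{1}{|U|}\sum_{y}\overline{\chi_A(yu^{-1})}\,y$ after reindexing. Using $f(yu^{-1})$ expressed via the cocycle relation \ref{f(xg)-G2} and the inverse-cocycle formula, one computes $\chi_A(yu^{-1})=\vartheta\kappa(A,f(yu^{-1}))$ and reorganises it as $\overline{\chi_{A.u}(u)}\,\chi_{A.u}(y)$, which yields $[A]u=\vartheta\kappa(A.u,f(u))[A.u]=[A]*u$. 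The main obstacle I expect is bookkeeping the sign/scalar in this last step — keeping the conjugations, the action $A.u=\pi(Au^{-\top})$, and the cocycle relation consistent so that the scalar comes out exactly $\vartheta\kappa(A.u,f(u))$ rather than its inverse or conjugate; this is the one place where a genuine computation, rather than a formal manipulation, is needed, though it is entirely parallel to the corresponding ${}^3D_4$ argument in \cite{sun3D4super} and \cite{Markus1}.
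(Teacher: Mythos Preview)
Your argument is correct and follows the same route as the paper: the paper observes that $(f,\kappa|_{V\times V})$ is a monomial linearisation for $G$ with $f|_U$ bijective (via \ref{mono. lin.-G2} and \ref{f_U bij-G2}) and then simply invokes the general result \cite[2.1.35]{Markus1}, whereas you unpack that black box directly. Your three steps --- the orthogonality argument for the basis, the cocycle identity for the $G$-module structure, and the reindexing plus $f(u^{-1})=-f(u)\circ u^{-1}$ for the restriction to $U$ --- are exactly the ingredients of the proof of \cite[2.1.35]{Markus1} specialised to this case, so the approaches coincide.
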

%%%%%%%%%%%%%%%%%%%%%
\begin{proof}
By \ref{mono. lin.-G2}, $(f,\kappa|_{V\times V})$ is a monomial linearisation for $G$,
 satisfying that $f|_U$ is a bijective map.
By \ref{action A dot g-G2}, $A.u:=\pi(Au^{-\top})$.
Hence the theorem is proved by \cite[2.1.35]{Markus1}.
\end{proof}

%%%%%%%%%%%%%%%%%%%%%%%%%%%%%%%%%%%%%%%%%%%%%%%%%%%%%%%%%%%%%%%%%%%%%%%%%%%%%%%%%%%%%%
%%%%%%%%%%%%%%%%%%%%%
\begin{Comparison}[Monomial linearisations]
\label{com:monomial modules-G2}
%%%%%%%%%%%%%%%%%%%%
Let $U$ be
$A_n(q)$, $D_n^{syl}(q)$, ${^3}D_4^{syl}(q^3)$
or $G_2^{syl}(q)$,
$G$ an intermediate group of $U$,
$V_0:=V_{\Squarb}$,
$V$ a subspace of $V_0$,
$J:=\mathrm{supp}(V)$,
$f\colon G\to V$ a surjective 1-cocycle of $G$
such that $f|_U$ is injective,
$\kappa\colon V \times V\to \mathbb{F}_q \text{ (or $\mathbb{F}_{q^3}$)}$
a trace form
such that
$(f,\kappa|_{V\times V})$ is a monomial linearisation for $G$
(i.e. $(f|_U,\kappa|_{V\times V})$ is a monomial linearisation for $U$).
%%%%%%%%%%%
Then
the corresponding notations for
$A_n(q)$ (see \cite[2.2]{Markus1}),
$D_n^{syl}(q)$ (see \cite[3.1]{Markus1}),
${^3}D_4^{syl}(q^3)$ (see \cite[\S 4]{sun3D4super}),
and
$G_2^{syl}(q)$ (see \S \ref{sec: monomial G2-module})
are listed as follows:
\begin{align*}
\begin{array}{|l|l|l|l|l|l|l|}
\hline
%%%%%%%%%%%% 1
\multicolumn{1}{|c|}{U}
& \multicolumn{1}{c|}{G}
& \multicolumn{1}{c|}{V_0}
& \multicolumn{1}{c|}{J}
& \multicolumn{1}{c|}{V}
& \multicolumn{1}{c|}{f\colon G\to V}
& \multicolumn{1}{c|}{\kappa|_{V\times V}} \\\hline
%%%%% An
A_n(q)
& A_{n}(q)
& \mathrm{Mat}_{n\times n}(q)
& \UR
& V=V_{\URb}
& f(g)=\pi_{\URb}(g)=g-I_n
& \kappa|_{V\times V}
\\\hline
%%%%%% Dn
D_n^{syl}(q)
& A_{2n}(q)
& \mathrm{Mat}_{2n\times 2n}(q)
& {\UP}
& V=V_{\UPb}
& f(g)=\pi_{\UPb}(g)
& \kappa|_{V\times V}
\\\hline
%%%%%% 3D4
{^3}D_4^{syl}(q^3)
& G_8(q^3)
& \mathrm{Mat}_{8\times 8}(q^3)
& J
& V\neq V_J
& f(g)=\pi(g)\neq \pi_J(g)
& \kappa_q|_{V\times V}
\\\hline
%%%%%% G2
G_2^{syl}(q)
& G_8(q)
& \mathrm{Mat}_{8\times 8}(q)
& J
& V\neq V_J
& f(g)=\pi(g)\neq \pi_J(g)
& \kappa|_{V\times V}
\\\hline
\end{array}
\end{align*}
\end{Comparison}
%%%%%%%%%%%%%%%%%%%%%%

From now on, we mainly consider the regular right module $(\mathbb{C}U,*)_{\mathbb{C}U}=\mathbb{C}U_{\mathbb{C}U}$.

%-%-%-%-%-%-%-%-%-%-%-%-%-%-%-%-%-%-%-%
%                                     %
%         G2-orbit modules           %
%                                     %
%-%-%-%-%-%-%-%-%-%-%-%-%-%-%-%-%-%-%-%

%%%%%%%%%%%%%%%%%%%%%%%%%%%%%%%%%%%%%%%%%%%%%%%%%%%%%%%%%%%%%%%%%%%%%%%%%%%%%%%%%%%%%%%%%%%%%%%%%%%%%%%
%%%%%%%%%%%%%%%%%%%%%%%%%%%%%%%%%%%%%%%%%%%%%%%%%%%%%%%%%%%%%%%%%%%%%%%%%%%%%%%%%%%%%%%%%%%%%%%%%%%%%%%
\section{$G_2^{syl}(q)$-orbit modules}
\label{sec:U-orbit modules-G2}

Let $U:=G_2^{syl}(q)$,
$A_{ij}\in \mathbb{F}_q$,
$A_{ij}^*\in \mathbb{F}_q^*$ $(1\leq i, j \leq 8)$,
and
$t_i\in \mathbb{F}_q$,
$t_i^*\in \mathbb{F}_q^*$ $(i=1,2,\dots,6)$.
In this section,
we classify the $U$-orbit modules (\ref{prop:class orbit-G2}),
and obtain the stabilizers $\mathrm{Stab}_U(A)$ for all $A\in V$ (\ref{prop: G2-stab}).

%%%%%%%%%%%%%%%%%%%%%%%% orbit module
 Let $A\in V$, the $U$\textbf{-orbit module}
 \index{orbit module}
 associated to $A$ is
$\mathbb{C}\mathcal{O}_U([A])
:=\mathbb{C}\{[A]u \mid u\in U\}
=\mathbb{C}\{[A.u] \mid u\in U\}$.
Then $\mathbb{C}\mathcal{O}_U([A])$ has a $\mathbb{C}$-basis
$
\{[A.u] \mid  u\in U\}
          = \left\{[C] \mid C\in \mathcal{O}_U(A)\right\}$,
where
$\mathcal{O}_U(A):=\left\{A.g \mid  g\in U\right\}$
is the \textbf{orbit} of $A$ under the operation $-.-$ defined in \ref{action A dot g-G2}.
%%%%%%%%%%%%%%%%%%%%%%%%%
%%%%%%%%%%%%%%%%%%%%%%%%%
The \textbf{stabilizer} \index{stabilizer!-of $A$}
 $\mathrm{Stab}_U(A)$ of $A$ in $U$ is defined to be
$\mathrm{Stab}_U(A)=\{u\in U  \mid  A.u=A\}$,
%%%%%%%%%%%%%%%%%%%%%%%%%
then
 $\mathrm{dim}_{\mathbb{C}}\mathbb{C}\mathcal{O}_U([A])
 =|\mathcal{O}_U(A)|
 =\frac{|U|}{|\mathrm{Stab}_U(A)|}$.
%%%%%%%%%%
If $A,B\in V$, then
$\mathbb{C}\mathcal{O}_U([A])$ and $\mathbb{C}\mathcal{O}_U([B])$
are identical (if $A.u=B$ for some $u\in U$)
or their intersection is $\{0\}$.
%%%%%%%%%%%%%%%%%%%%%%%%
 Two $\mathbb{C}U$-modules having no nontrivial
 $\mathbb{C}U$-homomorphism between them are called
 \textbf{orthogonal}.

%%%%%%%%%%%%%%%%%%%%%%%%%%%%%%%%%%%%%%%%%%%%%%%%%%%%%%%%%%%%%%%%%%%%%%%%%%%%%%%%%%%%%%%
%%%%%%%%%%%%%%%%%%%%% G2 figures
\begin{Lemma}\label{G2-A.xi, figures}
Let $A\in V$, $y_i(t_i)\in U$ and $t_i \in \mathbb{F}_{q}$ with $i\in\{1,2,\dots,6\}$.
Then $A.y_i(t_i)$  and
the corresponding figures of moves
are obtained as follows:
\begin{alignat*}{3}
A.y_1(t_1)=& A.(x_{34}(t_1)x_{35}(t_1)),
& \qquad
A.y_3(t_3)=& A.(x_{24}(t_3)x_{25}(t_3)),
& \qquad
A.y_4(t_4)=& A.x_{26}(t_4),\\
%%%%%%%%%%%
A.y_2(t_2)=& A.x_{23}(t_2),
& \qquad
A.y_5(t_5)=& A,
& \qquad
A.y_6(t_6)=& A.
\end{alignat*}
\begin{align*}
%%%%%%%%%%%%%%%%%%%%%%%%%%%%%%%%%%%%%%%%%%%% A.x1
\begin{tikzpicture}[scale=0.65]
\draw[step=0.5cm, gray, very thin](-2, -2)grid(2, 2);
\draw[very thick] (-2,0)--(2,0);
\draw[very thick] (0,-2)--(0,2);
%%%%%% left up
\draw (-1.75,1.75) node{$\bullet$};
\draw (-1.25,1.25) node{$\bullet$};
\draw (-0.75,0.75) node{$\bullet$};
\draw (-0.25,0.25) node{$\bullet$};
%%%%%% right down
\draw (0.25,-0.25) node{$\bullet$};
\draw (0.75,-0.75) node{$\bullet$};
\draw (1.25,-1.25) node{$\bullet$};
\draw (1.75,-1.75) node{$\bullet$};
%%%%%% right up
\draw (1.75,1.75) node{$\bullet$};
\draw (1.25,1.25) node{$\bullet$};
\draw (0.75,0.75) node{$\bullet$};
\draw (0.25,0.25) node{$\bullet$};
%%%%%% rectangle
\foreach \x in {-1.5, -1,...,1.0}
{
\draw [very thick] (\x,1.5)   rectangle +(0.5,0.5);
}
\draw [very thick] (-1.0,1.0) rectangle +(0.5,0.5);
%%%%%%%%%%%% arrows
%%%%%%% t_1
\draw[->] (-1.25,2)--(-1.25, 2.25)--(-1.75,2.25)--(-1.75,2);
\draw (-1.5,2.5) node{$-t_1$};
\draw[->] (1.75,2)--(1.75, 2.25)--(1.25,2.25)--(1.25,2.03);
\draw(1.5,2.5) node{$t_1$};
%%%%%%% t_1^q
\draw[->] (-0.15,2)--(-0.15, 2.25)--(-0.75,2.25)--(-0.75,2.03);
\draw(-0.4,2.55) node{$-t_1$};
\draw[->] (0.75,2)--(0.75, 2.25)--(0.25,2.25)--(0.25,2.03);
\draw(0.5,2.55) node{$t_1$};
%%%%%%% t_1^{q^2}
\draw[->] (0.15,2.0)--(0.15, 3)--(-0.9,3)--(-0.9,2.03);
\draw(-0.65,3.4) node{$-t_1$};
\draw[->] (0.85,2.0)--(0.85, 3.5)--(-0.25,3.5)--(-0.25,3);
\draw(0.5,3.9) node{$t_1$};
%%%
%%%%%%%%%%%%%%%%%%%%%%%%
\draw(0,-2.5) node{$A.y_1(t_1)$};
\end{tikzpicture}
%%%%%%%%%%%%%%%%%%%%%%%%%%%%%%%%%%%%%%%%%%%% A.x3
\qquad
\begin{tikzpicture}[scale=0.65]
\draw[step=0.5cm, gray, very thin](-2, -2)grid(2, 2);
\draw[very thick] (-2,0)--(2,0);
\draw[very thick] (0,-2)--(0,2);
%%%%%% left up
\draw (-1.75,1.75) node{$\bullet$};
\draw (-1.25,1.25) node{$\bullet$};
\draw (-0.75,0.75) node{$\bullet$};
\draw (-0.25,0.25) node{$\bullet$};
%%%%%% right down
\draw (0.25,-0.25) node{$\bullet$};
\draw (0.75,-0.75) node{$\bullet$};
\draw (1.25,-1.25) node{$\bullet$};
\draw (1.75,-1.75) node{$\bullet$};
%%%%%% right up
\draw (1.75,1.75) node{$\bullet$};
\draw (1.25,1.25) node{$\bullet$};
\draw (0.75,0.75) node{$\bullet$};
\draw (0.25,0.25) node{$\bullet$};
%%%%%% rectangle
\foreach \x in {-1.5, -1,...,1.0}
{
\draw [very thick] (\x,1.5)   rectangle +(0.5,0.5);
}
\draw [very thick] (-1.0,1.0) rectangle +(0.5,0.5);
%%%%%%%%%%%% arrows
%%%%%%% t_3
\draw[->] (-0.75,2)--(-0.75, 2.25)--(-1.85,2.25)--(-1.85,2);
\draw (-1.6,2.5) node{$t_3$};
\draw[->] (1.9,2)--(1.9, 2.25)--(0.75,2.25)--(0.75,2.03);
\draw(1.8,2.5) node{$-t_3$};
%%%%%%% t_3^q
\draw[->] (-0.25,2)--(-0.25, 2.55)--(-1.15,2.55)--(-1.15,2.25);
\draw(-0.75,2.9) node{$-t_3$};
\draw[->] (1.15,2.25)--(1.15, 2.55)--(0.25,2.55)--(0.25,2.03);
\draw(0.75,2.9) node{$t_3$};
%%%%%%% t_3^{q^2}
\draw[->] (0.15,2.0)--(0.15, 3.3)--(-1.3,3.3)--(-1.3,2.25);
\draw(-0.75,3.7) node{$-t_3$};
\draw[->] (1.3,2.25)--(1.3, 3.55)--(-0.25,3.55)--(-0.25,3.3);
\draw(0.75,3.95) node{$t_3$};
%%%%%%%%%%%%%%%%%%%%%%%%
\draw(0,-2.5) node{$A.y_3(t_3)$};
\end{tikzpicture}
%%%%%%%%%%%%%%%%%%%%%%%%%%%%%%%%%%%%%%%%%%%% A.x4
\qquad
\begin{tikzpicture}[scale=0.65]
\draw[step=0.5cm, gray, very thin](-2, -2)grid(2, 2);
\draw[very thick] (-2,0)--(2,0);
\draw[very thick] (0,-2)--(0,2);
%%%%%% left up
\draw (-1.75,1.75) node{$\bullet$};
\draw (-1.25,1.25) node{$\bullet$};
\draw (-0.75,0.75) node{$\bullet$};
\draw (-0.25,0.25) node{$\bullet$};
%%%%%% right down
\draw (0.25,-0.25) node{$\bullet$};
\draw (0.75,-0.75) node{$\bullet$};
\draw (1.25,-1.25) node{$\bullet$};
\draw (1.75,-1.75) node{$\bullet$};
%%%%%% right up
\draw (1.75,1.75) node{$\bullet$};
\draw (1.25,1.25) node{$\bullet$};
\draw (0.75,0.75) node{$\bullet$};
\draw (0.25,0.25) node{$\bullet$};
%%%%%% rectangle
\foreach \x in {-1.5, -1,...,1.0}
{
\draw [very thick] (\x,1.5)   rectangle +(0.5,0.5);
}
\draw [very thick] (-1.0,1.0) rectangle +(0.5,0.5);
%%%%%%%%%%%% arrows
%%%%%%% t_4
\draw[->] (-0.25,2)--(-0.25, 2.25)--(-1.75,2.25)--(-1.75,2.0);
\draw (-0.8,2.5) node{$-t_4$};
\draw[->] (1.75,2)--(1.75, 2.25)--(0.25,2.25)--(0.25,2.03);
\draw(1.0,2.5) node{$t_4$};
%%%%%%% t_4^q
\draw[->] (0.75,2.25)--(0.75, 2.8)--(-1.25,2.8)--(-1.25,2.25);
\draw(-0.25,3.1) node{$-t_4$};
\draw[->] (1.25,2.25)--(1.25, 3.45)--(-0.75,3.45)--(-0.75,2.8);
\draw(0.6,3.75) node{$t_4$};
%%%%%%% t_4^{q^2}
\draw[->] (0.25,3.45)--(0.25, 3.75)--(-1.9,3.75)--(-1.9,2);
\draw(-1,4.15) node{$-t_4$};
\draw[->] (1.85,2)--(1.85, 4.25)--(-0.25,4.25)--(-0.25,3.75);
\draw(1,4.65) node{$t_4$};
%%%%%%%%%%%%%%%%%%%%%%%%
\draw(0,-2.5) node{$A.y_4(t_4)$};
\end{tikzpicture}
\end{align*}
%%%%%%%%%%%%%%%%%%%%%%%%%%%%%%
%%%%%%%%%%%%%%%%%%%%%%%%%%%%%%%%%%%%%%%%
%%%%%%%%% t4
\begin{align*}
%%%%%%%%%%%%%%%%%%%%%%%%%%%%%%%%%%%%%%%%%%%% A.x2
\begin{tikzpicture}[scale=0.65]
\draw[step=0.5cm, gray, very thin](-2, -2)grid(2, 2);
\draw[very thick] (-2,0)--(2,0);
\draw[very thick] (0,-2)--(0,2);
%%%%%% left up
\draw (-1.75,1.75) node{$\bullet$};
\draw (-1.25,1.25) node{$\bullet$};
\draw (-0.75,0.75) node{$\bullet$};
\draw (-0.25,0.25) node{$\bullet$};
%%%%%% right down
\draw (0.25,-0.25) node{$\bullet$};
\draw (0.75,-0.75) node{$\bullet$};
\draw (1.25,-1.25) node{$\bullet$};
\draw (1.75,-1.75) node{$\bullet$};
%%%%%% right up
\draw (1.75,1.75) node{$\bullet$};
\draw (1.25,1.25) node{$\bullet$};
\draw (0.75,0.75) node{$\bullet$};
\draw (0.25,0.25) node{$\bullet$};
%%%%%% rectangle
\foreach \x in {-1.5, -1,...,1.0}
{
\draw [very thick] (\x,1.5)   rectangle +(0.5,0.5);
}
\draw [very thick] (-1.0,1.0) rectangle +(0.5,0.5);
%%%%%% arrows
\draw[->] (-0.75,2)--(-0.75, 2.25)--(-1.25,2.25)--(-1.25,2.03);
\draw (-1.0,2.55) node{$-t_2$};
\draw[->] (1.25,2)--(1.25, 2.25)--(0.75,2.25)--(0.75,2.03);
\draw(1.0,2.55) node{$t_2$};
%%%%%%%%%%%%%%%%%%%%%%%%
\draw(0,-2.5) node{$A.y_2(t_2)$};
\end{tikzpicture}
%%%%%%%%%%%%%%%%%%%%%%%%%%%%%%%%%%%%%%%%%%%% A.x5
\qquad
\begin{tikzpicture}[scale=0.65]
\draw[step=0.5cm, gray, very thin](-2, -2)grid(2, 2);
\draw[very thick] (-2,0)--(2,0);
\draw[very thick] (0,-2)--(0,2);
%%%%%% left up
\draw (-1.75,1.75) node{$\bullet$};
\draw (-1.25,1.25) node{$\bullet$};
\draw (-0.75,0.75) node{$\bullet$};
\draw (-0.25,0.25) node{$\bullet$};
%%%%%% right down
\draw (0.25,-0.25) node{$\bullet$};
\draw (0.75,-0.75) node{$\bullet$};
\draw (1.25,-1.25) node{$\bullet$};
\draw (1.75,-1.75) node{$\bullet$};
%%%%%% right up
\draw (1.75,1.75) node{$\bullet$};
\draw (1.25,1.25) node{$\bullet$};
\draw (0.75,0.75) node{$\bullet$};
\draw (0.25,0.25) node{$\bullet$};
%%%%%% rectangle
\foreach \x in {-1.5, -1,...,1.0}
{
\draw [very thick] (\x,1.5)   rectangle +(0.5,0.5);
}
\draw [very thick] (-1.0,1.0) rectangle +(0.5,0.5);
%%%%%% arrows
\draw[->] (0.75,2)--(0.75, 2.25)--(-1.75,2.25)--(-1.75,2);
\draw (-0.25,2.5) node{$-t_5$};
\draw[->] (1.75,2)--(1.75, 2.85)--(-0.75,2.85)--(-0.75,2.25);
\draw(0.5,3.1) node{$t_5$};
%%%%%%%%%%%%%%%%%%%%%%%%
\draw(0,-2.5) node{$A.y_5(t_5)$};
\end{tikzpicture}
%%%%%%%%%%%%%%%%%%%%%%%%%%%%%%%%%%%%%%%%%%%% A.x6
\qquad
\begin{tikzpicture}[scale=0.65]
\draw[step=0.5cm, gray, very thin](-2, -2)grid(2, 2);
\draw[very thick] (-2,0)--(2,0);
\draw[very thick] (0,-2)--(0,2);
%%%%%% left up
\draw (-1.75,1.75) node{$\bullet$};
\draw (-1.25,1.25) node{$\bullet$};
\draw (-0.75,0.75) node{$\bullet$};
\draw (-0.25,0.25) node{$\bullet$};
%%%%%% right down
\draw (0.25,-0.25) node{$\bullet$};
\draw (0.75,-0.75) node{$\bullet$};
\draw (1.25,-1.25) node{$\bullet$};
\draw (1.75,-1.75) node{$\bullet$};
%%%%%% right up
\draw (1.75,1.75) node{$\bullet$};
\draw (1.25,1.25) node{$\bullet$};
\draw (0.75,0.75) node{$\bullet$};
\draw (0.25,0.25) node{$\bullet$};
%%%%%% rectangle
\foreach \x in {-1.5, -1,...,1.0}
{
\draw [very thick] (\x,1.5)   rectangle +(0.5,0.5);
}
\draw [very thick] (-1.0,1.0) rectangle +(0.5,0.5);
%%%%%% arrows
\draw[->] (1.25,2)--(1.25, 2.25)--(-1.75,2.25)--(-1.75,2);
\draw (-0.25,2.5) node{$-t_6$};
\draw[->] (1.75,2)--(1.75, 2.85)--(-1.25,2.85)--(-1.25,2.25);
\draw(0.25,3.1) node{$t_6$};
%%%%%%%%%%%%%%%%%%%%%%%%
\draw(0,-2.5) node{$A.y_6(t_6)$};
\end{tikzpicture}
\end{align*}
\end{Lemma}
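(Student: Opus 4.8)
The plan is to reduce everything to the fact, recorded in \ref{action A dot g-G2}, that $-.-$ is a right group action of $G=G_8(q)$ on $V$ with $A.g=\pi(Ag^{-\top})$. First I would factorise each root element of $U$: writing $y_r(t)=\exp(te_r)=I_8+te_r+\tfrac12t^2e_r^2$ and using $e_\alpha^2=-2e_{3,6}$, $e_{\alpha+\beta}^2=-2e_{2,7}$, $e_{2\alpha+\beta}^2=-2e_{1,8}$, $e_\beta^2=e_{3\alpha+\beta}^2=e_{3\alpha+2\beta}^2=0$ (equivalently, reading off the matrix form in \ref{sylow p-subg, G2}), a short matrix computation with the elements $x_{i,j}(t)=I_8+te_{i,j}-te_{9-j,9-i}$ gives
\begin{align*}
 y_1(t)&=x_{3,4}(t)\,x_{3,5}(t)\,x_{1,2}(t), & y_2(t)&=x_{2,3}(t),\\
 y_3(t)&=x_{2,4}(t)\,x_{2,5}(t)\,x_{1,3}(-t), & y_4(t)&=x_{2,6}(t)\,x_{1,4}(t)\,x_{1,5}(t),\\
 y_5(t)&=x_{1,6}(t), & y_6(t)&=x_{1,7}(t).
\end{align*}
Some care is needed here: the single factors $x_{3,4}(t)$, $x_{3,5}(t)$, $x_{2,4}(t)$, $x_{2,5}(t)$ do \emph{not} lie in $G_8(q)$, but one checks against the defining conditions of $G_8(q)$ that the products $x_{3,4}(t)x_{3,5}(t)$ and $x_{2,4}(t)x_{2,5}(t)$ do, and that all the remaining factors lie in $G_8(q)$ individually.

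The second step is to show that each ``corner'' factor $x_{1,j}(s)$ with $2\le j\le 7$ acts as the identity on the whole of $V$. Indeed $x_{1,j}(s)^{-\top}=I_8-s\,e_{j,1}+s\,e_{8,9-j}$, and since any $A\in V$ has zero entries in column $8$ we get $A.x_{1,j}(s)=\pi\bigl(A-s\,Ae_{j,1}\bigr)$; but $Ae_{j,1}$ is supported in column $1$, on which $\pi$ vanishes because no position $(k,1)$ lies in $J$ (so $\pi(e_{1,1})=\pi(e_{2,1})=0$), hence $A.x_{1,j}(s)=\pi(A)=A$. Feeding the factorisations into the relation $A.(gh)=(A.g).h$ and discarding the corner factors then delivers all six identities at once; in particular $A.y_5(t_5)=A.x_{1,6}(t_5)=A$ and $A.y_6(t_6)=A.x_{1,7}(t_6)=A$.

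Finally, the figures of moves would be read off by expanding, for each surviving factor $x_{a,b}(s)$, the linear map $B\mapsto\pi\bigl(Bx_{a,b}(s)^{-\top}\bigr)=\pi\bigl(B-s\,Be_{b,a}+s\,Be_{9-a,9-b}\bigr)$ on $V$: each factor contributes the two column operations ``column $b$ into column $a$'' and ``column $9-a$ into column $9-b$'', drawn relative to the boxes marking the positions of $J$, with the moves landing in column $1$ or column $8$ simply dropping off. I expect the main obstacle to be the bookkeeping at this last step, because $\pi$ identifies the coordinates $(1,4)$ and $(1,5)$ by averaging (see \ref{pi,G2}): a move hitting either of these positions is distributed with coefficient $\tfrac12$ onto both, and one must chain the two moves coming from $x_{3,4}(t_1),x_{3,5}(t_1)$ (respectively from $x_{2,4}(t_3),x_{2,5}(t_3)$) to watch the halves recombine into the clean effect shown in the pictures. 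Apart from this, the argument is routine but somewhat lengthy.
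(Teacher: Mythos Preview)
Your argument is correct. The paper does not supply a proof of this lemma at all: it is stated without proof, under the blanket remark at the end of \S\ref{sec:G2-1} that omitted proofs are adaptations of the corresponding ${}^3D_4^{syl}(q^3)$ statements in \cite{sun3D4super}. Your approach---factorising each $y_i(t)$ inside $D_4^{syl}(q)$ as a product of the $x_{a,b}(s)$, checking that the relevant products lie in $G_8(q)$ so that the right action $A.(gh)=(A.g).h$ from \ref{action A dot g-G2} applies, and then showing that every factor of the form $x_{1,j}(s)$ acts trivially on $V$ because $\pi$ kills column~$1$ and $A$ has zero column~$8$---is exactly the kind of direct computation one would expect, and it goes through cleanly. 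Your remark about the averaging in $\pi$ at positions $(1,4)$ and $(1,5)$ when reading off the figures is the right point of care, and indeed the paired factors $x_{3,4}(t_1)x_{3,5}(t_1)$ and $x_{2,4}(t_3)x_{2,5}(t_3)$ are precisely what make the halves recombine.
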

%%%%%%%%%%%%%%%
These figures describe the way of classifying the orbits.

%%%%%%%%%%%%%%%%%%%%%%%%%%%%%%%%%%%%%%%%%%%%%%%%%%%%%%%%%%%%%%%%%%%%%%%%%%%%%%%%%%%%%%%%%%%%%%%%%%
%%%%%%%%%%%%%%%%%%%%%%%%%%%%%%%%%%%
\begin{Lemma}[$G_2^{syl}(q)$-orbit modules]\label{prop: G2-orbit}
For $A \in V$,
the $U$-orbit module $\mathbb{C}\mathcal{O}_U([A])$ $(A\in V)$
is obtained as follows:
%%%%%%%%%%%% U orbit modules
\begin{align*}
&\mathbb{C}\mathcal{O}_U([A])\\
=& \mathbb{C} \Big\{
{%
\left[
\newcommand{\mc}[3]{\multicolumn{#1}{#2}{#3}}
\begin{array}{cccccccc}
\cline{2-7}
\mc{1}{c|}{}
%%(1,2)
& \mc{1}{c|}{
\begin{array}{l}
 A_{12}\\
-A_{13}t_2\\
% \\
-2A_{15}t_3\\
-2A_{16}t_1t_3\\
-2A_{17}t_2t_1t_3\\
% \\
-A_{17}t_3^2\\
-A_{16}t_4\\
-A_{17}t_2t_4\\
\end{array}
}
%%(1,3)
& \mc{1}{c|}{
\begin{array}{l}
A_{13}\\
-2A_{15}t_1\\
-A_{16}t_1^2\\
-A_{17}t_2t_1^2\\
% \\
+A_{17}t_4\\
\end{array}
}
%%(1,4)
& \mc{1}{c|}{
\begin{array}{l}
A_{15}\\
+A_{16}t_1\\
+A_{17}t_2t_1\\
+A_{17}t_3\\
\end{array}
}
%%(1,5)
& \mc{1}{c|}{
\begin{array}{l}
A_{15}\\
+A_{16}t_1\\
+A_{17}t_2t_1\\
+A_{17}t_3\\
\end{array}}
%%(1,6)
& \mc{1}{c|}{
\begin{array}{l}
A_{16}\\
+A_{17}t_2\\
\end{array}
}
%%(1,7)
& \mc{1}{c|}{A_{17}}
& \\\cline{2-7}
%%%% row 2
× & \mc{1}{c|}{} & \mc{1}{c|}{A_{23}} & × & × & × &  & ×\\\cline{3-3}
  \end{array}
\right]} \\
%%%%%%%%%
& \phantom{\mathbb{C} \Big\{}
{\ }\Big|{\ }
t_1, t_2, t_3, t_4\in \mathbb{F}_{q}\Big\}.
\end{align*}
\end{Lemma}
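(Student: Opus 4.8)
Since $\mathbb{C}\mathcal{O}_U([A])$ carries the $\mathbb{C}$-basis $\{[C]\mid C\in\mathcal{O}_U(A)\}$, the statement is equivalent to an explicit description of the orbit $\mathcal{O}_U(A)=\{A.u\mid u\in U\}$, and the plan is to obtain this by a direct (if bookkeeping-heavy) evaluation of a four-step composite action. First I would use \ref{sylow p-subg, G2} together with \ref{f_U bij-G2} to write an arbitrary $u\in U$ uniquely as $u=y_2(t_2)y_1(t_1)y_3(t_3)y_4(t_4)y_5(t_5)y_6(t_6)$ with $t_i\in\mathbb{F}_q$. Since $-.-$ is a right group action --- it is induced from the right action $-\circ-$ of \ref{circ action-G2} via \ref{action A dot g-G2} --- we get $A.u = A.y_2(t_2).y_1(t_1).y_3(t_3).y_4(t_4).y_5(t_5).y_6(t_6)$ (brackets from the left), and by Lemma \ref{G2-A.xi, figures} the last two factors $y_5(t_5),y_6(t_6)$ act as the identity on every element of $V$. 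Hence $\mathcal{O}_U(A)=\{A.y_2(t_2).y_1(t_1).y_3(t_3).y_4(t_4)\mid t_1,t_2,t_3,t_4\in\mathbb{F}_q\}$, which is already why the claimed parametrisation involves only $t_1,\dots,t_4$.

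The main work is the evaluation of this composite. By Lemma \ref{G2-A.xi, figures} each generator action agrees with the action of a single element of $G$ built from the matrices $x_{i,j}(\cdot)$: namely $B.y_2(t_2)=B.x_{23}(t_2)$, $B.y_1(t_1)=B.(x_{34}(t_1)x_{35}(t_1))$, $B.y_3(t_3)=B.(x_{24}(t_3)x_{25}(t_3))$ and $B.y_4(t_4)=B.x_{26}(t_4)$; and for $g\in G$ one has $B.g=\pi(Bg^{-\top})$ by \ref{action A dot g-G2}, which --- because $B\in V$ is supported in the first row together with the position $(2,3)$ --- is nothing but a short sequence of elementary column operations on $B$ followed by the projection $\pi$ of \ref{pi,G2}. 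I would apply these in the order $y_2(t_2),y_1(t_1),y_3(t_3),y_4(t_4)$ and record, after each step, the effect on the six coordinates $A_{12},A_{13},A_{15}\,(=A_{14}),A_{16},A_{17},A_{23}$; this yields exactly the displayed matrix. For instance $x_{23}(t_2)$ moves column $7$ into column $6$ and column $3$ into column $2$, so the $(1,6)$-entry becomes $A_{16}+A_{17}t_2$ and the $(1,2)$-entry starts accumulating $-A_{13}t_2$; then $x_{34}(t_1)x_{35}(t_1)$ adds $t_1$ times the current $(1,6)$-entry to the $(1,4)=(1,5)$-entry and alters the $(1,3)$-entry; and so on, while $A_{23}$ is never affected. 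Reading off the entries completes the proof.

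The delicate point, and the one I expect to be the main obstacle, is the interplay between the column operations and the symmetrising projection $\pi$, which replaces the pair $(A_{14},A_{15})$ by $\tfrac12(A_{14}+A_{15})$ in both slots. One must compute the inverse transposes of the \emph{combined} elements $x_{34}(t_1)x_{35}(t_1)$ and $x_{24}(t_3)x_{25}(t_3)$ --- which carry nontrivial quadratic corrections coming from $e_{34}e_{46}=e_{35}e_{56}=e_{36}$ and $e_{24}e_{47}=e_{25}e_{57}=e_{27}$ --- and then check that the resulting halves recombine into the \emph{integer} coefficients appearing in the statement (the $-2A_{15}t_1$, $-A_{16}t_1^2$, $-A_{17}t_2t_1^2$ in the $(1,3)$-entry; the $-2A_{16}t_1t_3$, $-A_{17}t_3^2$ in the $(1,2)$-entry; and so forth), with the correct signs, and that at no intermediate stage does a matrix entry outside $J$ survive --- this last fact being guaranteed by \ref{pi_J,AgT-intersection in J,G2}. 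A useful consistency check on the final matrix is that its four parameters are generically algebraically independent, in agreement with $|\mathcal{O}_U(A)|=|U|/|\mathrm{Stab}_U(A)|$ being $q^4$ precisely when $\mathrm{Stab}_U(A)=Y_5Y_6$.
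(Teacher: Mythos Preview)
Your approach is correct and is essentially the same as the paper's: the paper's proof reads ``By \ref{G2-A.xi, figures}, we calculate the orbit modules directly,'' and you have simply spelled out how that direct calculation goes, applying the generator actions from Lemma~\ref{G2-A.xi, figures} in the order $y_2,y_1,y_3,y_4$ and tracking the six coordinates. One small slip in your final ``consistency check'': from the stabilizer table (Proposition~\ref{prop: G2-stab}) no $A\in V$ actually has $\mathrm{Stab}_U(A)=Y_5Y_6$ --- the maximal orbit size is $q^3$, attained in family $\mathfrak{F}_6$ --- so the four parameters $t_1,\dots,t_4$ are never algebraically independent on the orbit; the statement describes a surjective, not bijective, parametrisation.
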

%%%%%%%%%%%
\begin{proof}
By \ref{G2-A.xi, figures}, we calculate the orbit modules directly.
\end{proof}

%%%%%%%%%%%%%%%%%%%%%%%%%%%%%%%%%%%%%%%%%%%%%%%%%%%%
%%%%%%%%%%%%% 2.2.18
 The elements of $V$  are called \textbf{patterns}.
 The monomial action of $G$ on $\mathbb{C}U$: $([A],g)\mapsto [A]*g$
 (e.g.  \ref{fund thm U-G2})
 and also the corresponding permutation operation on $V$: $(A,g)\mapsto A.g$
 (e.g. \ref{action A dot g-G2})
 are called
 \textbf{truncated column operation}.
%%%%%%%%%%%%%%%%%%%%%%%%% main condition
%% 3.2.4
Let $A\in V$.
Then
 $(i,j)\in J$ is a \textbf{main condition} \index{main condition} of $A$
if and only if $A_{ij}$ is the rightmost non-zero entry in the $i$-th row.
We set
$\mathrm{main}(A):=
 \{ (i,j)\in J  \mid  (i,j) \text{ is a main condition of }A\}$.
% %%%%%%%%%%%%%%%%%%%%%%
%%%%%%%%%%%%%%%%%%%%%% i-th main condition
 The coordinate $(i,j)$ is called \textbf{the $i$-th main condition},
if $(i,j)\in \mathrm{main}(A)$. Set
$
\mathrm{main}_i(A):= \{ (i,j)\in J \mid (i,j) \text{ is the $i$-th main condition of }A\}$.
%%%%%%%%%%%%%
Let $A\in V$ be a pattern.
Then $A$ is a \textbf{staircase pattern},
if the elements in $\mathrm{main}(A)$ lie in different columns.
Analogously, a $U$-orbit module $\mathbb{C}\mathcal{O}_U([A])$ is called a \textbf{staircase $U$-module},
if the elements in $\mathrm{main}(A)$ lie in different columns.
%%%%%%%%%%%%%%%%%%%%%%%%%%%%%%%%%%%%%%%%
%%%%%%% verge 3.2.8
The \textbf{verge} of $A\in V$ is
$
  \mathrm{verge}(A):=
  \sum_{(i,j)\in \mathrm{main}(A)}{A_{i,j}e_{i,j}}$.
%%%%%%%%%%%%%%%%%
The \textbf{$i$-th verge} of $A$ is
$\mathrm{verge}_i(A):=
  \sum_{(i,k)\in \mathrm{main}_i (A)}{A_{i,k}e_{i,k}}$.
%%%%%%%%%%%%%%%%%
The (staircase) pattern $A\in V$ is called the \textbf{(staircase) verge pattern},
if $A=\mathrm{verge}(A)$.
%%%%%%%%%%%%%%%%%%%%%%%%%%%%%%%%%%%%%%%%%%%%%%%%%%%%%%
%%%%%%% minor condition, core, core pattern
A \textbf{minor condition}
 \index{condition! minor-}
 of $A\in V$ is $(i,j)\in J$ $(j\leq 4)$,
if $(i,9-j)$ is a main condition.
Set
$\mathrm{minor}(A):=
 \{(i,j)\in J  \mid (i,j) \text{ is a minor condition of }A\}\subseteq J$.
%%%%%%%%%%%%%%%% core
The \textbf{core}
of $A\in V$ is denoted by
$\mathrm{core}(A):=\mathrm{main}(A) \dot{\cup} \mathrm{minor}(A)$.
%%%%%%%%%%%%%%%% core pattern
A (staircase) pattern $A\in V$ is a \textbf{(staircase) core pattern}
if $\mathrm{supp}(A) \subseteq \mathrm{core}(A)$.

%%%%%%%%%%%%%%%%%%%%%%
\begin{Notation}
Define the families of $U$-orbit modules as follows:
$\mathfrak{F}_6:= \{\mathbb{C}\mathcal{O}_U(A) \mid  A\in V,{\,}A_{17}\neq 0\}$,
$\mathfrak{F}_5:= \{\mathbb{C}\mathcal{O}_U(A) \mid A\in V,{\,} A_{16}\neq 0,{\,} A_{17}= 0\}$,
$\mathfrak{F}_4:= \{\mathbb{C}\mathcal{O}_U(A) \mid A\in V,{\,} A_{15}\neq 0,{\,} A_{16}=A_{17}= 0\}$,
$\mathfrak{F}_3:= \{\mathbb{C}\mathcal{O}_U(A) \mid A\in V,{\,} A_{13}\neq 0,{\,} A_{15}=A_{16}=A_{17}= 0\}$,
and
$ \mathfrak{F}_{1,2}:= \{\mathbb{C}\mathcal{O}_U(A) \mid A\in V,{\,} A_{13}= A_{15}=A_{16}=A_{17}= 0\}$.
For $A\in V$, we also say $A\in \mathfrak{F}_i$,
if $\mathbb{C}\mathcal{O}_U([A])\in \mathfrak{F}_i$.
\end{Notation}
%%%%%%%%%%%%%%%%%%%%%%%%%%%%%%%%%%%%%%%%%%%%%%%%%%%%%%%%%%%%%%%%%%%%%%%%%%%%%%%%%%%%%%%%%%%%%%%%%%
%%%%%%%%%%%%%%%%%%%%%%%
\begin{Proposition}[Classification of $G_2^{syl}(q)$-orbit modules]\label{prop:class orbit-G2}
%%%%%%%%%%%%%%%%%%%%%%%%%%%%%%%%%%%%%%%%%%%%%%%%%%%%%%%%%%%%%
Every $U$-orbit module is contained in one of the families
$\{\mathfrak{F}_{1,2},\mathfrak{F}_{3},\mathfrak{F}_{4}, \mathfrak{F}_{5},\mathfrak{F}_{6}\}$,
%%%%%%%%%%%%%%%%%%%%%%%%
and
%%%%%%%%%%%%%%%%%%%%%%%
\begin{align*}
\mathfrak{F}_6=& \{\mathbb{C}\mathcal{O}_U([A_{12}e_{12}+A_{23}e_{23}+A_{17}^*e_{17}])
         \mid A_{12},A_{23}\in \mathbb{F}_{q}, {\ } A_{17}^*\in \mathbb{F}_{q}^*\},\\
\mathfrak{F}_5=& \{\mathbb{C}\mathcal{O}_U([A_{13}e_{13}+A_{23}e_{23}+A_{16}^*e_{16}])
         \mid A_{13},A_{23}\in \mathbb{F}_{q}, {\ } A_{6}^*\in \mathbb{F}_{q}^*\},\\
\mathfrak{F}_4=& \{\mathbb{C}\mathcal{O}_U([A_{23}e_{23}+{A_{15}^*}(e_{14}+e_{15})])
         \mid A_{23}\in \mathbb{F}_{q}, {\ } A_{15}^*\in \mathbb{F}_{q}^*\},\\
\mathfrak{F}_3=& \{\mathbb{C}\mathcal{O}_U([A_{23}e_{23}+A_{13}^*e_{13}])
         \mid A_{23}\in \mathbb{F}_{q}, {\ } A_{13}^*\in \mathbb{F}_{q}^*\},\\
\mathfrak{F}_{1,2}=& \{\mathbb{C}\mathcal{O}_U([A_{12}e_{12}+A_{23}e_{23}])
         \mid A_{12},A_{23}\in \mathbb{F}_{q}\}.
\end{align*}
%%%%%%%%%%%%%%%%%%%%%%%%
The dimensions of $U$-orbit modules are determined.
In particular, every $U$-orbit module of
families $\mathfrak{F}_{1,2}$, $\mathfrak{F}_{4}$, $\mathfrak{F}_{5}$ and $\mathfrak{F}_{6}$
contains one and only one staircase core pattern,
and
every $U$-orbit module of family $\mathfrak{F}_{3}$
contains precisely one core pattern.
\end{Proposition}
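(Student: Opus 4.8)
\noindent The plan is to extract everything from the explicit description of the orbit $\mathcal{O}_U(A)$ furnished by Lemma~\ref{prop: G2-orbit}, which displays every element of the orbit as a matrix whose non-zero entries are concrete polynomials in the parameters $t_1,t_2,t_3,t_4$ and the coordinates $A_{12},A_{13},A_{15},A_{16},A_{17},A_{23}$. The first step is to check that the five families $\mathfrak{F}_{1,2},\mathfrak{F}_3,\mathfrak{F}_4,\mathfrak{F}_5,\mathfrak{F}_6$, which partition $V$ according to the leading non-zero coordinate in the list $A_{17},A_{16},A_{15},A_{13}$, are invariant under the operation $-.-$, so that every $U$-orbit module lies in exactly one of them. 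This is read directly off the formula: the $(1,7)$-entry is always $A_{17}$; the $(1,6)$-entry is $A_{16}+A_{17}t_2$, hence equals $A_{16}$ once $A_{17}=0$; the common $(1,4)=(1,5)$-entry collapses to $A_{15}$ once $A_{16}=A_{17}=0$; and the $(1,3)$-entry collapses to $A_{13}$ once $A_{15}=A_{16}=A_{17}=0$.

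\noindent Next I would produce the stated representatives by a triangular normalisation inside Lemma~\ref{prop: G2-orbit}. In $\mathfrak{F}_6$ one sets $t_2:=-A_{16}/A_{17}$ to kill the $(1,6)$-entry, then $t_3:=-A_{15}/A_{17}$ to kill the $(1,4)$- and $(1,5)$-entries (the $t_1$-terms dropping out because $A_{16}+A_{17}t_2=0$), then solves for $t_4$ to kill the $(1,3)$-entry; one checks that after these substitutions the $(1,2)$-entry no longer involves $t_1$, so the residual freedom in $t_1$ produces no new matrices, the orbit has size $q^3$, and the unique representative of the shape $A'_{12}e_{12}+A_{23}e_{23}+A_{17}e_{17}$ results. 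The analogous and now linear bookkeeping treats $\mathfrak{F}_5$ (normalise $t_1$ so the $(1,5)$-entry vanishes, then $t_4$ for the $(1,2)$-entry; the $(1,3)$-entry becomes a prescribed function of the leading data), $\mathfrak{F}_4$ (normalise $t_1$ for the $(1,3)$-entry, then $t_3$ for the $(1,2)$-entry), $\mathfrak{F}_3$ (only $t_2$ survives, normalising the $(1,2)$-entry), and $\mathfrak{F}_{1,2}$ (the orbit is already a single matrix). Counting the matrix entries that remain free in each case yields the dimension of every orbit module, equivalently the order of $\mathrm{Stab}_U(A)$ recorded in~\ref{prop: G2-stab}.

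\noindent For the uniqueness statements I would compute $\mathrm{main}$, $\mathrm{minor}$ and $\mathrm{core}$ for the representative in each family: the row-$1$ main condition is $(1,7)$, with minor $(1,2)$, in $\mathfrak{F}_6$; $(1,6)$, with minor $(1,3)$, in $\mathfrak{F}_5$; $(1,5)$, with minor $(1,4)$, in $\mathfrak{F}_4$; $(1,3)$, with no minor lying in $J$, in $\mathfrak{F}_3$; and $(1,2)$, with no minor, in $\mathfrak{F}_{1,2}$; in each case $(2,3)$ is the row-$2$ main condition whenever $A_{23}\neq0$. An orbit element is a core pattern precisely when every entry at a position of $J$ outside $\mathrm{core}(A)$ vanishes; writing this out through Lemma~\ref{prop: G2-orbit} and using the triangular structure already exploited shows these equations pin the normalising parameters down to a single matrix, so each orbit contains exactly one core pattern. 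Because the row-$1$ main condition and $(2,3)$ lie in distinct columns for $\mathfrak{F}_{1,2},\mathfrak{F}_4,\mathfrak{F}_5,\mathfrak{F}_6$, this core pattern is a staircase pattern; in $\mathfrak{F}_3$, however, the main conditions $(1,3)$ and $(2,3)$ share column $3$, so the unique core pattern of an $\mathfrak{F}_3$-orbit with $A_{23}\neq0$ is not a staircase pattern, which is exactly why that family is singled out. The main obstacle is the quadratic bookkeeping in $\mathfrak{F}_6$: one must verify carefully that after normalisation the $(1,2)$-entry loses its $t_1$-dependence, so that the orbit has size $q^3$ rather than $q^4$, and that the core-pattern equations there remain simultaneously and uniquely solvable; the other four families are linear in the surviving parameters and nearly immediate, and one may throughout follow the parallel computations for ${}^3D_4^{syl}(q^3)$ in \cite{sun3D4super}.
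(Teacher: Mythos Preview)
Your proposal is correct and follows essentially the same approach as the paper. The paper's proof also works directly from the explicit orbit description in Lemma~\ref{prop: G2-orbit}, carries out the triangular normalisation in detail only for $\mathfrak{F}_6$ (choosing exactly the element $u=y(t_1,-A_{16}/A_{17}^*,-A_{15}/A_{17}^*,-(A_{13}-2A_{15}t_1)/A_{17}^*,t_5,t_6)$ that your substitutions produce), observes that the resulting staircase core pattern $C$ depends only on $A$, and then dismisses the remaining families with ``Similarly, all of the statements are proved.''
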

%%%%%%%%%%%%%%%%%%%%%%%
\begin{proof}
Let $A=(A_{ij})\in V$ with $A_{17}=A_{17}^*\in \mathbb{F}_q^*$.
Then
\begin{align*}
&\mathbb{C}\mathcal{O}_U([A])
=\mathbb{C} \left\{
{%
\left[
\newcommand{\mc}[3]{\multicolumn{#1}{#2}{#3}}
\begin{array}{cccccccc}
\cline{2-7}
\mc{1}{c|}{}
%%(1,2)
& \mc{1}{c|}{
\begin{array}{l}
 A_{12}\\
+\frac{A_{13}A_{16}+A_{15}^2}{A_{17}^*}\\
-\frac{B_{13}B_{16}+B_{15}^2}{A_{17}^*}
\end{array}
}
%%(1,3)
& \mc{1}{c|}{B_{13}}
%%(1,4)
& \mc{1}{c|}{B_{15}}
%%(1,5)
& \mc{1}{c|}{B_{15}}
%%(1,6)
& \mc{1}{c|}{B_{16}}
%%(1,7)
& \mc{1}{c|}{A^*_{17}}
& \\\cline{2-7}
%%%% row 2
× & \mc{1}{c|}{} & \mc{1}{c|}{A_{23}} & × & × & × &  & ×\\\cline{3-3}
  \end{array}
\right]}
%%%%%%%%%
{\, }\middle|{\, }
B_{13},B_{15}, B_{16} \in \mathbb{F}_{q}\right\}.
\end{align*}
%%%%%%%%%%%%%%%%%%%%%%%%%
Thus $\mathrm{dim}_{\mathbb{C}}\mathbb{C}\mathcal{O}_U([A])=q^3$.
%%%%%%%%%%%%%%%%%%%%%%%
Let $u:=y(t_1,
-\frac{A_{16}}{A_{17}^*},
-\frac{A_{15}}{A_{17}^*},
-\frac{A_{13}-2A_{15}t_1}{A_{17}^*},
t_5,t_6)\in U $.
Then there is a staircase core pattern
\begin{align*}
C:=A.u=
 {\newcommand{\mc}[3]{\multicolumn{#1}{#2}{#3}}
\begin{array}{cccccccc}\cline{2-7}
\mc{1}{c|}{}
& \mc{1}{c|}{
\rule{0pt}{15pt} A_{12}+\frac{A_{13}A_{16}+A_{15}^2}{A_{17}^*}}
& \mc{1}{c|}{} & \mc{1}{c|}{}
                 & \mc{1}{c|}{} & \mc{1}{c|}{} & \mc{1}{c|}{A^*_{17}} & \\\cline{2-7}
× & \mc{1}{c|}{} & \mc{1}{c|}{A_{23}} & × & × & × &  & ×\\\cline{3-3}
  \end{array}}
  \in \mathcal{O}_U(A).
\end{align*}
Thus $\mathcal{O}_U(C)=\mathcal{O}_U(A)$
and $\mathbb{C}\mathcal{O}_U([A])=\mathbb{C}\mathcal{O}_U([C])$.
%%%%%%%%%%%%%%%%%%
Since $C$ only depends on $A$,
the staircase core pattern is determined uniquely.
Thus
\begin{align*}
\mathfrak{F}_6=\{\mathbb{C}\mathcal{O}_U([D_{12}e_{12}+D_{23}e_{23}+D_{17}^*e_{17}])
         \mid D_{12},D_{23}\in \mathbb{F}_{q}, {\,} D_{17}^*\in \mathbb{F}_{q}^*\}.
\end{align*}
Similarly, all of the statements are proved.
\end{proof}

%%%%%%%%%%%%%%%%%%%%%%%%%%%%%%%%%%%%%%%%%%%%%%%%%%%%
\begin{Remark}\label{G2-staircase, F3}
 Let $A\in V$.
In \ref{prop:class orbit-G2},
% and \ref{prop: G2-stab},
the orbit modules $\mathbb{C}\mathcal{O}_U([A])$
are staircase modules
except that $\mathbb{C}\mathcal{O}_U([A]) \subseteq \mathfrak{F}_3$
when $A_{2,3}\neq 0$.
\end{Remark}

%%%%%%%%%%%%%%%%%%%%%%%%%%%%%%%%%%%%%%%%%%%%%%%%%%%%%%%%%%%%%%%%%%%%%%%%%%%%%%%%%%%%%%%%%%%%%%%%%%%%%%%
%%%%%%%%%%%%% Stab_U(A)
\begin{Proposition}[$G_2^{syl}(q)$-stabilizer]\label{prop: G2-stab}
%%%%%%%%%%%%%%%%%%%%%%%%%%
If $A\in V$,
then $\mathrm{Stab}_U(A)$ is established in Table \ref{table:G2-stab}.
%%%%%%%%%%%%%%%%%%%%%%%
\begin{table}[!htp]
\caption{$G_2^{syl}(q)$-stabilizers}
\label{table:G2-stab}
\begin{align*}
 \begin{array}{|c|l|l|}
\hline
& \multicolumn{1}{c|}{A\in V}
& \multicolumn{1}{c|}{\mathrm{Stab}_U(A)} \\\hline
%%%%%%%%%%% family 6
\mathfrak{F}_6 &
{%
\newcommand{\mc}[3]{\multicolumn{#1}{#2}{#3}}
\begin{array}{cccccccc}\cline{2-7}
\mc{1}{c|}{} & \mc{1}{c|}{A_{12}} & \mc{1}{c|}{A_{13}} & \mc{1}{c|}{A_{15}}
                        & \mc{1}{c|}{A_{15}} & \mc{1}{c|}{A_{16}} & \mc{1}{c|}{A^*_{17}} & \\\cline{2-7}
× & \mc{1}{c|}{} & \mc{1}{c|}{A_{23}} & × & × & × &  & ×\\\cline{3-3}
\end{array}
}%
&
\rule{0pt}{21pt}
\begin{array}{c}
y(t_1,0,-\frac{A_{16}t_1}{A^*_{17}},\frac{2A_{15}t_1+A_{16}t_1^{2}}{A^*_{17}},t_5,t_6)\\
\forall{\ }t_1,t_5,t_6\in \mathbb{F}_{q}
\end{array}
\\\hline
%%%%%%%%%%% family 5
\mathfrak{F}_5 &
{%
\newcommand{\mc}[3]{\multicolumn{#1}{#2}{#3}}
\begin{array}{cccccccc}\cline{2-7}
\mc{1}{c|}{} & \mc{1}{c|}{A_{12}} & \mc{1}{c|}{A_{13}} & \mc{1}{c|}{A_{15}}
                        & \mc{1}{c|}{A_{15}} & \mc{1}{c|}{A_{16}^*} & \mc{1}{c|}{×} & \\\cline{2-7}
× & \mc{1}{c|}{} & \mc{1}{c|}{A_{23}} & × & × & × &  & ×\\\cline{3-3}
  \end{array}
}%
&
\rule{0pt}{21pt}
\begin{array}{c}
y(0,t_2,t_3,\frac{-A_{13}t_2-2A_{15}t_3}{A^*_{16}},t_5,t_6)\\
\forall{\ }t_2,t_3,t_5,t_6\in \mathbb{F}_{q}
\end{array}
\\\hline
%%%%%%%%%%% family 4
\mathfrak{F}_4 &
{%
\newcommand{\mc}[3]{\multicolumn{#1}{#2}{#3}}
\begin{array}{cccccccc}\cline{2-7}
\mc{1}{c|}{} & \mc{1}{c|}{A_{12}} & \mc{1}{c|}{A_{13}} & \mc{1}{c|}{A^*_{15}}
                        & \mc{1}{c|}{A_{15}^*} & \mc{1}{c|}{×} & \mc{1}{c|}{×} & \\\cline{2-7}
× & \mc{1}{c|}{} & \mc{1}{c|}{A_{23}} & × & × & × &  & ×\\\cline{3-3}
  \end{array}
}%
& \rule{0pt}{20pt}
\begin{array}{c}
y(0,{t}_2,\frac{-A_{13}t_2}{2A_{15}^*},t_4,t_5,t_6)\\
 \forall{\ }{t}_2,t_4, t_5,t_6\in \mathbb{F}_{q}
\end{array}
\\\hline
%%%%%%%%%%% family 3
\mathfrak{F}_3 &
\rule{0pt}{18pt}
{%
\newcommand{\mc}[3]{\multicolumn{#1}{#2}{#3}}
\begin{array}{cccccccc}\cline{2-7}
\mc{1}{c|}{} & \mc{1}{c|}{A_{12}} & \mc{1}{c|}{A_{13}^*} & \mc{1}{c|}{×}
                        & \mc{1}{c|}{×} & \mc{1}{c|}{×} & \mc{1}{c|}{×} & \\\cline{2-7}
× & \mc{1}{c|}{} & \mc{1}{c|}{A_{23}} & × & × & × &  & ×\\\cline{3-3}
  \end{array}
}
& Y_1Y_3Y_4Y_5Y_6
\\\hline
%%%%%%%%%%% family 1,2
\mathfrak{F}_{1,2} &
\rule{0pt}{18pt}
{%
\newcommand{\mc}[3]{\multicolumn{#1}{#2}{#3}}
\begin{array}{cccccccc}\cline{2-7}
\mc{1}{c|}{} & \mc{1}{c|}{A_{12}} & \mc{1}{c|}{×} & \mc{1}{c|}{×} & \mc{1}{c|}{×} & \mc{1}{c|}{×} & \mc{1}{c|}{×} & \\\cline{2-7}
× & \mc{1}{c|}{} & \mc{1}{c|}{A_{23}} & × & × & × &  & ×\\\cline{3-3}
  \end{array}
}
& U
\\\hline
 \end{array}
\end{align*}
where $A_{13}^*, A_{15}^*, A_{16}^*, A_{17}^*\in \mathbb{F}_{q}^*$.
\end{table}
\end{Proposition}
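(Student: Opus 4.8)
The plan is to read the stabilizer equations directly off the explicit orbit description in \ref{prop: G2-orbit}. Since $A.y_5(t_5)=A.y_6(t_6)=A$ by \ref{G2-A.xi, figures} and the action is a right group action, for $u=y(t_1,t_2,t_3,t_4,t_5,t_6)\in U$ the matrix $A.u$ is precisely the one displayed in \ref{prop: G2-orbit}, in which $t_5,t_6$ do not occur; in particular $Y_5Y_6\leqslant\mathrm{Stab}_U(A)$ for every $A\in V$. Hence $u\in\mathrm{Stab}_U(A)$ if and only if the entries of $A.u$ and of $A$ agree at the positions $(1,2),(1,3),(1,4),(1,6),(1,7),(2,3)$. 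The entries at $(1,7)$ and $(2,3)$ are $A_{17}$ and $A_{23}$ and impose nothing, while the remaining four give the system
\begin{align*}
A_{17}t_2&=0,\\
A_{16}t_1+A_{17}t_1t_2+A_{17}t_3&=0,\\
2A_{15}t_1+A_{16}t_1^2+A_{17}t_1^2t_2-A_{17}t_4&=0,\\
A_{13}t_2+2A_{15}t_3+2A_{16}t_1t_3+2A_{17}t_1t_2t_3+A_{17}t_3^2+A_{16}t_4+A_{17}t_2t_4&=0
\end{align*}
in the unknowns $t_1,t_2,t_3,t_4$, with $t_5,t_6$ unconstrained.

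Next I would split into the five families of \ref{prop:class orbit-G2}, i.e.\ according to which of $A_{17},A_{16},A_{15},A_{13}$ is the leftmost nonzero entry among $(1,7),(1,6),(1,4),(1,3)$, all four vanishing in the family $\mathfrak{F}_{1,2}$. In each family the system above is solved by successive elimination in the order $t_2,t_3,t_1,t_4$: each of the four equations either determines one unknown in terms of the earlier ones and of the entries of $A$, or becomes vacuous. Carrying this out in the five cases reproduces exactly the parametrisations of Table~\ref{table:G2-stab}: for $\mathfrak{F}_{1,2}$ all four equations are vacuous, so $\mathrm{Stab}_U(A)=U$; for $\mathfrak{F}_3$ only $t_2=0$ survives, giving the pattern subgroup $Y_1Y_3Y_4Y_5Y_6$ (a subgroup by the commutator relations of \ref{commutator-G2}); and for $\mathfrak{F}_4,\mathfrak{F}_5,\mathfrak{F}_6$ one gets the displayed families $y(0,t_2,-A_{13}t_2/(2A_{15}^{*}),t_4,t_5,t_6)$, $y(0,t_2,t_3,(-A_{13}t_2-2A_{15}t_3)/A_{16}^{*},t_5,t_6)$ and $y(t_1,0,-A_{16}t_1/A_{17}^{*},(2A_{15}t_1+A_{16}t_1^{2})/A_{17}^{*},t_5,t_6)$ respectively.

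The one step that is not pure bookkeeping, and the main obstacle, is the case $\mathfrak{F}_6$: there the first three equations already force $t_2=0$, $t_3=-A_{16}t_1/A_{17}^{*}$ and $t_4=(2A_{15}t_1+A_{16}t_1^{2})/A_{17}^{*}$, and one must verify that substituting these into the fourth equation yields an identity. A short computation shows that after substitution the coefficient of $t_1$ is $-2A_{15}A_{16}+2A_{15}A_{16}=0$ and the coefficient of $t_1^{2}$ is $-2A_{16}^{2}+A_{16}^{2}+A_{16}^{2}=0$, so the equation holds for all $t_1$; this is exactly what leaves $t_1$ (together with $t_5,t_6$) free in $\mathrm{Stab}_U(A)$ and gives $|\mathrm{Stab}_U(A)|=q^{3}$. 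As a built-in consistency check, the parameter counts across the five families give $|\mathrm{Stab}_U(A)|=q^{3},q^{4},q^{4},q^{5},q^{6}$, hence $|\mathcal{O}_U(A)|=|U|/|\mathrm{Stab}_U(A)|=q^{3},q^{2},q^{2},q,1$, in agreement with the orbit dimensions from \ref{prop:class orbit-G2}. Alternatively, since $\mathrm{Stab}_U(A.v)=v^{-1}\mathrm{Stab}_U(A)v$, one could run the computation only on the staircase core representatives of \ref{prop:class orbit-G2} and then conjugate back; but because the system above already carries a general $A$ as a parameter, handling the five families directly costs nothing extra.
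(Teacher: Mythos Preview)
Your argument is correct and is exactly the ``straightforward calculation'' the paper alludes to in its one-line proof; you have simply made the computation explicit by reading the four scalar equations off the orbit formula in \ref{prop: G2-orbit} and solving them family by family. The only nontrivial point is the one you flag in $\mathfrak{F}_6$, namely that the $(1,2)$-equation collapses to an identity once $t_2,t_3,t_4$ have been eliminated, and your coefficient check there is right.
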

%%%%%%%%%%%%%%%%%%%
\begin{proof}
The stabilizers are obtained by straightforward calculation.
\end{proof}

%%%%%%%%%%%%%%%%%%%%%%%%%%%%%%%%%%%%%%%%%%%%%%%%%%%%%%%%
%%%%%%%%%%%%%%%%%%%%%
\begin{Comparison}
\label{com:classification and stabilizer-G2}
\begin{itemize}
\setlength\itemsep{0em}
\item [(1)] (Classification of orbit modules).
Every (staircase) $G_2^{syl}(q)$-orbit module
has one and only one (staircase) core pattern (see \ref{prop:class orbit-G2}),
which does not hold for (staircase) ${^3}D_4^{syl}(q^3)$-orbit modules (e.g. the family $\mathfrak{F}_3$ of \cite[5.12]{sun3D4super}).
%%%%%%%%%%%%%%%%%%%
 \item [(2)] (Stabilizer).
Every (staircase) $A_n(q)$-orbit module has a basis element whose stabilizer is a pattern subgroup
(see \cite[\S 3.3]{yan2}).
This does not hold
for ${^3}D_4^{syl}(q^3)$-orbit modules
(see \cite[5.12]{sun3D4super})
or
for $G_2^{syl}(q)$-orbit modules (e.g. the family $\mathfrak{F}_5$ of \ref{prop: G2-stab}).
\end{itemize}
\end{Comparison}

%%%%%%%%%%%%%%%%%%%%%%%%%%%%%%%%%%%%%%%%%%%%%%%%%%%%%%%%

%-%-%-%-%-%-%-%-%-%-%-%-%-%-%-%-%-%-%-%-%-%-%-%-%-%-%
%                                                   %
%      Homomorphisms between orbit modules          %
%                                                   %
%-%-%-%-%-%-%-%-%-%-%-%-%-%-%-%-%-%-%-%-%-%-%-%-%-%-%

%%%%%%%%%%%%%%%%%%%%%%%%%%%%%%%%%%%%%%%%%%%%%%%%%%%%%%%%%%%%%%%%%%%%%%%%%%%%%%%%%%%%%%%%%%%%%%%%%%%%%%%%%%%%%%%%%%%%%%%%%%%%%%%%%%%%
%%%%%%%%%%%%%%%%%%%%%%%%%%%%%%%%%%%%%%%%%%%%%%%%%%%%%%%%%%%%%%%%%%%%%%%%%%%%%%%%%%%%%%%%%%%%%%%%%%%%%%%%%%%%%%%%%%%%%%%%%%%%%%%%%%
\section{Homomorphisms between orbit modules}
\label{sec:homom. between orbit modules-G2}
%%%%%%%%%%%%%%%%%%%%%%%%%%%%%%%%%%%%%%%
Let $U:=G_2^{syl}(q)$,
$t_i\in \mathbb{F}_q$,
$t_i^*\in \mathbb{F}_q^*$ $(i=1,2,\dots,6)$,
and $A_{ij}\in \mathbb{F}_q$,
$A_{ij}^*\in \mathbb{F}_q^*$ $(1\leq i, j \leq 8)$.
In this section,
we show that
every $U$-orbit module is isomorphic to a staircase orbit module (\ref{G2-orbit to staircase}).
%%%%%%%
Then some irreducible modules are determined,
and any two orbit modules are orthogonal
when the $1$st verges are different (\ref{G2-orth.}).
%%%%%%%%%%%%%%%%%%
%%%%%%%%%%%%%%%%%%%%%%%%%%%%%%%%%%%%%%%%%%%%%%%%%%%%

This property is well known:
every $\varphi \in {\mathrm{End}_{\mathbb{C}U}(\mathbb{C}U)}$ is of the form
$
 \lambda_a \colon  {\mathbb{C}U} \to  {\mathbb{C}U}: y \mapsto ay$,
for a unique $a\in {\mathbb{C}U}$.
%%%%%%%%%%%%%%%%%%%%%%%%%%%%%%%%%%%%%%%%%%%%%%%%%%%%%
%%% 3.3.1
If $g\in U$ and $A\in V$, then
$
 \lambda_g|_{\mathbb{C}\mathcal{O}_U([A])} \colon
 {\mathbb{C}\mathcal{O}_U([A])} \to  \mathrm{Im ( \lambda_g|_{\mathbb{C}\mathcal{O}_U([A])}) }
 = g{\mathbb{C}\mathcal{O}_U([A])}
$
is a ${\mathbb{C}U}$-isomorphism,
and $\lambda_g([ A ])
={\frac{1}{|U|}} \sum_{y\in U}{\overline{\vartheta\kappa(g^{-\top}A, y)}}y$.
%%%%%%%%%%%%%%%%%%%
Let $A=(A_{ij})\in V$ and $y:=y(t_1,t_2,t_3,t_4,t_5,t_6)\in U$.
Then
$ \pi(y^{-\top}A))=A-t_1A_{13}e_{23}$.

%%%%%%%%%%%%%%%%%%%%%%%%%%%%%%%%%%
%% 3.3.5   truncated row operation
\begin{Definition/Lemma}
\label{def: row operation-G2}
The map
$
 U\times V  \to  V: (u,A)\mapsto u.A:=\pi(u^{-\top}A)
$
is a (left) group action called the {\textbf{truncated row operation}},
and the elements of $U$ act as $\mathbb{F}_q$-automorphisms on $V$.
\end{Definition/Lemma}
%%%%%%%%%%%%%%%%%%%%%%%%%%%%

%%%%%%%%%%%%%%%%%%%%%%%%%%%%%
\begin{Corollary}\label{G2-prop x.A}
Let $A\in V$ and $y:=y(t_1,t_2,t_3,t_4,t_5,t_6)\in U$.
Then
$ y.A=A-t_1A_{13}e_{23} $.
%%%%%%%%%%
In particular,
$y_1(t_1).A=A-t_1A_{13}e_{23}$
and
$y_i(t_i).A=A$ for all $i\in \{2,3,4,5,6\}$.
\end{Corollary}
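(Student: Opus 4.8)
The plan is to derive the general formula $y.A=A-t_1A_{13}e_{23}$ from the explicit matrix form of the elements of $U$, and then read off the two special cases by specialising the parameters $(t_1,\dots,t_6)$. By Definition/Lemma~\ref{def: row operation-G2} the truncated row operation is $y.A=\pi(y^{-\top}A)$, so the first assertion is precisely the identity $\pi(y^{-\top}A)=A-t_1A_{13}e_{23}$ recorded immediately before that Definition/Lemma; for completeness I indicate below why that identity holds, so that the Corollary is self-contained.

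First I would use that $A\in V$ is supported in $J$, so that only rows $1$ and $2$ of $A$ are non-zero, namely $A=\sum_{j=2}^{7}A_{1j}e_{1j}+A_{23}e_{23}$ with $A_{14}=A_{15}$. Put $M:=y^{-\top}$, a lower unitriangular matrix, so that $(MA)_{ij}=M_{i1}A_{1j}+M_{i2}A_{2j}=(y^{-1})_{1i}A_{1j}+(y^{-1})_{2i}A_{23}\delta_{j3}$. Since $M$ is lower triangular, row $1$ of $MA$ equals row $1$ of $A$, every entry of $MA$ lying in rows $\ge 3$ sits outside $J$, and in row $2$ one has $(MA)_{2j}=A_{2j}+(y^{-1})_{12}A_{1j}$. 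From the matrix of $y=x(t_1,\dots,t_6)$ in~\ref{sylow p-subg, G2} (equivalently, from the standard inversion formula for a unitriangular matrix) one gets $(y^{-1})_{12}=-t_1$, hence $(MA)_{23}=A_{23}-t_1A_{13}$ while the entries $(MA)_{2j}$ with $j\ne 3$ again lie outside $J$. Applying $\pi$ --- which averages the $(1,4)$- and $(1,5)$-entries (harmless here, since $A_{14}=A_{15}$ and these entries are untouched) and deletes everything not in $J$ --- therefore yields $y.A=\pi(MA)=A-t_1A_{13}e_{23}$.

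For the particular cases I would just specialise $(t_1,\dots,t_6)$. By the convention $y(t_1,\dots,t_6)=y_2(t_2)y_1(t_1)y_3(t_3)y_4(t_4)y_5(t_5)y_6(t_6)$ fixed before~\ref{sylow p-subg, G2}, together with $y_i(0)=I_8$, we have $y_1(t_1)=y(t_1,0,0,0,0,0)$ and, for $i\in\{2,3,4,5,6\}$, $y_i(t_i)=y(0,\dots,0,t_i,0,\dots,0)$ with first coordinate equal to $0$. Substituting into the formula just obtained gives $y_1(t_1).A=A-t_1A_{13}e_{23}$ and $y_i(t_i).A=A-0\cdot A_{13}e_{23}=A$. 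Alternatively, one may first establish these six generator statements by the same short matrix computations and then recover the general formula from the group-action property of Definition/Lemma~\ref{def: row operation-G2}, using $y.A=y_2(t_2).\bigl(y_1(t_1).(\cdots(y_6(t_6).A)\cdots)\bigr)$.

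There is essentially no obstacle: the whole content is the bookkeeping of the middle paragraph, namely verifying that left multiplication by $y^{-\top}$ alters $A$ --- after the projection $\pi$ --- only in its $(2,3)$-entry, and that $J$ meets row $2$ solely at $(2,3)$. The one place that calls for a little care is that $\pi$ acts at the positions $(1,4),(1,5)$ by a genuine average and not by a coordinate projection; this causes no trouble here because $A\in V$ already satisfies $A_{14}=A_{15}$ and those two entries are never disturbed.
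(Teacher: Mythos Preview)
Your proof is correct and follows the same approach as the paper: the paper records the identity $\pi(y^{-\top}A)=A-t_1A_{13}e_{23}$ just before Definition/Lemma~\ref{def: row operation-G2} and then states the Corollary as an immediate consequence, while you supply the explicit bookkeeping (support of $A$ in $J$, lower unitriangularity of $y^{-\top}$, and $(y^{-1})_{12}=-t_1$) that justifies that identity. The specialisation to the generators $y_i(t_i)$ is handled the same way.
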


%%%%%%%%%%%%%%%%%%%%%%%%%%%%%%%%%%%%%%%%%%%%%%%%%%%%%%%%%%%%%%%%%%%%%%%%%%%%%%%%%%%%%%%%%%%%%%%%%%%%%%%%%%%%%%%%
%%% 3.3.10
\begin{Remark}
Let $A\in V$ and $g,u\in U$.
In general $g.(A.u)\neq (g.A).u$.
For example:
if $t_1,t_4\in \mathbb{F}_{q}^*$ and
$A=A_{17}^*e_{17}$ ($A_{17}^*\in \mathbb{F}_{q}^*$),
then
$\left(y_1(t_1).A\right).y_4(t_4)=A+t_4A_{17}^*e_{13}$
and
$y_1(t_1).\left(A.y_4(t_4)\right)=A+t_4A_{17}^*e_{13}-t_1t_4A_{17}^*e_{23}$.
So $\left(y_1(t_1).A\right).y_4(t_4)\neq y_1(t_1).\left(A.y_4(t_4)\right)$.
\end{Remark}

%%%%%%%%%%%%%%%%%%%%%%%%%%%%%%%%%%%%
\begin{Lemma}\label{G2-g[B] obit, Lemma}
Let $B:=B_{12}e_{12}+B_{13}e_{13}+B_{23}e_{23}\in V$,
$g:=y(t_1,t_2,t_3,t_4,t_5,t_6)\in U$ and $y\in U$.
Then
$\vartheta\kappa(g^{-\top}B, y-1)=\chi_{g.B}(y)$.
In particular,
$\vartheta\kappa(y_1(t_1)^{-\top}B, y-1)=\chi_{y_1(t_1).B}(y)$
for all $t_1\in \mathbb{F}_{q}$.
\end{Lemma}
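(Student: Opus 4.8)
The plan is to unwind all the definitions and reduce the asserted identity to a single support computation. First I would use Definition/Lemma \ref{def: row operation-G2} to rewrite $g.B = \pi(g^{-\top}B)$, and then Theorem \ref{fund thm U-G2} together with Notation \ref{f-G2} to write $\chi_{g.B}(y) = \vartheta\kappa(g.B, f(y)) = \vartheta\kappa(\pi(g^{-\top}B), \pi(y))$. Since $\pi$ is $\mathbb{F}_q$-linear and $\pi(I_8) = O_8$ (Notation/Lemma \ref{pi,G2}), we may replace $\pi(y)$ by $\pi(y-1)$. Thus the lemma follows once I show
\[
 \kappa(g^{-\top}B,\, y-1) = \kappa\bigl(\pi(g^{-\top}B),\, \pi(y-1)\bigr),
\]
and by Corollary \ref{pi=piJ-kappa, G2} this in turn holds provided $\mathrm{supp}(g^{-\top}B) \cap \mathrm{supp}(y-1) \subseteq J$.

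To establish that inclusion I would observe that $B = B_{12}e_{12} + B_{13}e_{13} + B_{23}e_{23}$ is supported only in columns $2$ and $3$; since $g \in U \leqslant A_8(q)$ the matrix $g^{-1}$ is upper unitriangular, so $g^{-\top}$ is lower unitriangular, and left multiplication by a lower triangular matrix preserves the set of occupied columns, whence $\mathrm{supp}(g^{-\top}B)$ still lies in columns $2$ and $3$. On the other side, $y \in U \leqslant A_8(q)$ is upper unitriangular, so $\mathrm{supp}(y-1) \subseteq \{(i,j) : 1\leq i < j \leq 8\}$. Intersecting the two sets leaves only $(1,2)$, $(1,3)$ and $(2,3)$, all of which lie in $J$. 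Feeding this back through Corollary \ref{pi=piJ-kappa, G2} gives $\vartheta\kappa(g^{-\top}B, y-1) = \vartheta\kappa(g.B, f(y)) = \chi_{g.B}(y)$, and the ``in particular'' assertion is simply the specialization $t_2 = \dots = t_6 = 0$.

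The computation is short, and the one place that needs care is the support step: it genuinely uses that $B$ is supported on $\{(1,2),(1,3),(2,3)\}$ and not on an arbitrary pattern in $V$. If $B$ had a nonzero entry in some column $j \geq 4$, then $g^{-\top}B$ would in general acquire entries in positions such as $(2,j)$ or $(3,j)$; these appear in $\mathrm{supp}(y-1)$ (as one reads off the explicit matrix in \ref{sylow p-subg, G2}) but not in $J$, so the passage to $\kappa|_{V\times V}$ via Corollary \ref{pi=piJ-kappa, G2} would break down. Hence the restricted shape of $B$ is exactly what makes the argument go through, and no further case analysis is needed.
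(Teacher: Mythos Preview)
Your proof is correct. The paper does not give an explicit proof of this lemma (it falls under the blanket remark at the end of Section~\ref{sec:G2-1} that omitted proofs are adapted from the ${}^3D_4^{syl}(q^3)$ case), so there is nothing to compare against directly; your unwinding of the definitions together with the support argument $\mathrm{supp}(g^{-\top}B)\cap\mathrm{supp}(y-1)\subseteq\{(1,2),(1,3),(2,3)\}\subseteq J$ is exactly the right way to do it, and your closing remark on why the special form of $B$ is essential is a nice complement.
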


%%%%%%%%%%%%%%%%%%%%%%%%%%%%%%%%%%%%%%%%%%%%%%%%%%%%%%%%%%%%%%%%%%%%%%%%%%%%%%%%%%%%%%%%%%%%%%%%%%%%%%%%
%% prop. 3.3.11
\begin{Proposition}\label{G2-g[B] orbit,Prop}
If $g\in U$ and
$A:=A_{12}e_{12}+A_{13}e_{13}+A_{23}e_{23}\in V$,
then
\begin{displaymath}
 \lambda_g([B])=\chi_{g.B}(g)[g.B] \qquad \text{ for all } B\in \mathcal{O}_U(A).
\end{displaymath}
\end{Proposition}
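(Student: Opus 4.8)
The plan is to unwind $\lambda_g([B])$ directly from the definition of the basis vectors in the fundamental theorem \ref{fund thm U-G2}, reindex the group-algebra sum by left translation, and then recognise the resulting coefficients by means of the cocycle relation together with the explicit formula \ref{G2-g[B] obit, Lemma}.

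First I would observe that every $B\in\mathcal{O}_U(A)$ has the restricted shape $B=B_{12}e_{12}+B_{13}e_{13}+B_{23}e_{23}$ (with $B_{13}=A_{13}$, $B_{23}=A_{23}$): since $A=A_{12}e_{12}+A_{13}e_{13}+A_{23}e_{23}$ has vanishing entries in positions $(1,4),(1,5),(1,6),(1,7)$, the orbit description in \ref{prop: G2-orbit} (equivalently the families $\mathfrak F_{1,2}$ and $\mathfrak F_3$ of \ref{prop:class orbit-G2}) shows that a truncated column operation can only alter the $(1,2)$-entry. In particular $B\in V$ is exactly of the form to which \ref{G2-g[B] obit, Lemma} applies, and $g.B=\pi(g^{-\top}B)\in V$ by \ref{def: row operation-G2}, so $[g.B]$ is a genuine basis vector of $\mathbb{C}U$.

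Next, from $[B]=\frac1{|U|}\sum_{u\in U}\overline{\chi_B(u)}\,u$ and the substitution $u=g^{-1}y$ one gets $\lambda_g([B])=g[B]=\frac1{|U|}\sum_{y\in U}\overline{\chi_B(g^{-1}y)}\,y$, so it suffices to prove $\chi_B(g^{-1}y)=\overline{\chi_{g.B}(g)}\,\chi_{g.B}(y)$ for every $y\in U$. To establish this I would write $\chi_B(g^{-1}y)=\vartheta\kappa(B,f(g^{-1}y))$, replace $f(g^{-1}y)$ by $g^{-1}y-1$ (legitimate because $B\in V$ is orthogonal to $V^\bot$ and $f(g^{-1}y)\equiv g^{-1}y-1\pmod{V^\bot}$ by \ref{f(x)g,G2}), then use $g^{-1}y-1=g^{-1}(y-g)$ together with the adjunction $\kappa(B,g^{-1}M)=\kappa(g^{-\top}B,M)$ for the trace form to get $\kappa\bigl(B,f(g^{-1}y)\bigr)=\kappa(g^{-\top}B,y-1)-\kappa(g^{-\top}B,g-1)$. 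Applying \ref{G2-g[B] obit, Lemma} once with the given $y$ and once with $y:=g$ turns the two terms on the right, under $\vartheta$, into $\chi_{g.B}(y)$ and $\chi_{g.B}(g)$; combined with $\overline{\vartheta(t)}=\vartheta(-t)$ this yields $\chi_B(g^{-1}y)=\chi_{g.B}(y)\,\overline{\chi_{g.B}(g)}$. Taking conjugates and substituting back gives $\lambda_g([B])=\chi_{g.B}(g)\cdot\frac1{|U|}\sum_{y\in U}\overline{\chi_{g.B}(y)}\,y=\chi_{g.B}(g)\,[g.B]$, as desired.

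I do not expect a genuine obstacle here: once \ref{G2-g[B] obit, Lemma} is in hand this is a short bookkeeping computation. The two points that need care are (i) verifying that $B$ is of the special form $B_{12}e_{12}+B_{13}e_{13}+B_{23}e_{23}$, which is exactly why the hypothesis pins $A$ down to the family $\mathfrak F_{1,2}\cup\mathfrak F_3$ and is the reason \ref{G2-g[B] obit, Lemma} is phrased only for such $B$; and (ii) tracking the complex conjugation, so that the surviving factor is $\chi_{g.B}(g)$ and not its conjugate — this comes precisely from the minus sign in $\kappa(g^{-\top}B,y-g)=\kappa(g^{-\top}B,y-1)-\kappa(g^{-\top}B,g-1)$ together with $\overline{\vartheta(t)}=\vartheta(-t)$.
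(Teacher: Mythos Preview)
Your proof is correct and follows exactly the route the paper sets up: the paper omits the proof here (referring to the analogous ${}^3D_4$ argument), but the preceding Lemma \ref{G2-g[B] obit, Lemma} together with the displayed formula $\lambda_g([A])=\frac{1}{|U|}\sum_{y\in U}\overline{\vartheta\kappa(g^{-\top}A,y)}\,y$ are provided precisely so that one can carry out the reindexing-and-splitting computation you describe. Your two care points---that $B\in\mathcal{O}_U(A)$ retains the shape $B_{12}e_{12}+A_{13}e_{13}+A_{23}e_{23}$ needed to invoke \ref{G2-g[B] obit, Lemma}, and the sign bookkeeping giving $\chi_{g.B}(g)$ rather than its conjugate---are exactly the right ones.
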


%%%%%%%%%%%%%%%%%%%%%%%%%%%%%%%%
\begin{Corollary}\label{right action A23 image-G2}
If $g\in U$ and $A:=A_{12}e_{12}+A_{13}e_{13}+A_{23}e_{23}\in V$,
then
$\mathrm{im}(\lambda_g|_{\mathbb{C}\mathcal{O}_{U}([A])})
=\mathbb{C}\mathcal{O}_{U}([g.A])$,
and
$g.(B.u)= (g.B).u$ for all $B\in \mathcal{O}_U(A)$ and $u\in U$.
\end{Corollary}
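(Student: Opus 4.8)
The plan is to derive both assertions from Proposition~\ref{G2-g[B] orbit,Prop} together with the monomial module structure of $\mathbb{C}U$ recorded in the Fundamental theorem~\ref{fund thm U-G2}; no generator-by-generator computation is needed. Fix $A:=A_{12}e_{12}+A_{13}e_{13}+A_{23}e_{23}$ as in the hypothesis and note that $\mathcal{O}_U(A)$ is $U$-stable, so $B.u\in\mathcal{O}_U(A)$ for every $B\in\mathcal{O}_U(A)$ and $u\in U$; hence Proposition~\ref{G2-g[B] orbit,Prop} applies to each such $B.u$ as well.

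I would prove the identity $g.(B.u)=(g.B).u$ first, by evaluating $\lambda_g([B]*u)\in\mathbb{C}U$ in two ways. Since $\lambda_g$ is left multiplication by $g\in U$ and, by~\ref{fund thm U-G2}, the operation $*u$ with $u\in U$ is right multiplication, associativity gives $\lambda_g([B]*u)=\lambda_g([B])*u$. Computing the right-hand side by first applying Proposition~\ref{G2-g[B] orbit,Prop} to $B$ and then the $*$-rule $[C]*u=\chi_{C.u}(u)[C.u]$ of~\ref{fund thm U-G2} yields $\lambda_g([B]*u)=\chi_{g.B}(g)\,\chi_{(g.B).u}(u)\,[(g.B).u]$. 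Computing instead by first rewriting $[B]*u=\chi_{B.u}(u)[B.u]$ and then applying Proposition~\ref{G2-g[B] orbit,Prop} to $B.u\in\mathcal{O}_U(A)$ yields $\lambda_g([B]*u)=\chi_{B.u}(u)\,\chi_{g.(B.u)}(g)\,[g.(B.u)]$. Both expressions are nonzero scalar multiples (the $\chi$-values are roots of unity) of single basis vectors of $\mathbb{C}U$; since $\{[C]\mid C\in V\}$ is a $\mathbb{C}$-basis by~\ref{fund thm U-G2}, comparing them forces $(g.B).u=g.(B.u)$ (and, as a by-product, equality of the two coefficients).

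Granting this, the image is described immediately. The set $\{[B]\mid B\in\mathcal{O}_U(A)\}$ is a $\mathbb{C}$-basis of $\mathbb{C}\mathcal{O}_U([A])$, and Proposition~\ref{G2-g[B] orbit,Prop} gives $\lambda_g([B])=\chi_{g.B}(g)[g.B]$ with $\chi_{g.B}(g)\neq0$, so $\mathrm{im}(\lambda_g|_{\mathbb{C}\mathcal{O}_U([A])})=\mathbb{C}\{[g.B]\mid B\in\mathcal{O}_U(A)\}$. Writing $B=A.v$ and using the identity just proved, $g.B=g.(A.v)=(g.A).v$, hence $\{g.B\mid B\in\mathcal{O}_U(A)\}=\{(g.A).v\mid v\in U\}=\mathcal{O}_U(g.A)$, and therefore $\mathrm{im}(\lambda_g|_{\mathbb{C}\mathcal{O}_U([A])})=\mathbb{C}\mathcal{O}_U([g.A])$.

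I do not expect a real obstacle: the argument is bookkeeping inside the monomial module. The two points that need attention are keeping $B.u$ inside $\mathcal{O}_U(A)$, so that Proposition~\ref{G2-g[B] orbit,Prop} stays applicable after the right translation, and tracking the nonzero coefficients $\chi_{g.B}(g)$, $\chi_{B.u}(u)$, \dots, so that the last step is a genuine comparison of basis vectors. The only $G_2$-specific input is Proposition~\ref{G2-g[B] orbit,Prop}, which already builds in the special shape of $A$; the Remark after Corollary~\ref{G2-prop x.A} shows that without a hypothesis of this kind the identity $g.(B.u)=(g.B).u$ simply fails.
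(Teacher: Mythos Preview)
Your proof is correct. The paper omits its own proof of this corollary, deferring (via the blanket remark after Comparison~\ref{com:sylow-G2}) to the analogous argument for ${}^3D_4^{syl}(q^3)$ in \cite{sun3D4super}; your bookkeeping argument inside the monomial module---using associativity of left and right multiplication together with Proposition~\ref{G2-g[B] orbit,Prop} and the basis comparison afforded by Theorem~\ref{fund thm U-G2}---is exactly the standard route in this framework and is what the omitted proof would amount to.
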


%%%%%%%%%%%%%%%%%%%%%%%%%%%%%%%%%%%%
%%%%%%%%%%%%
\begin{Corollary}\label{G2-delete A23}
Let  $A:=A_{12}e_{12}+A_{23}e_{23}+A_{13}^*e_{13}\in V$,
and $y_1(t_1)\in U$
such that $t_1A_{13}^*=A_{23}$.
Then $\mathbb{C}\mathcal{O}_{U}([A])   \cong  \mathbb{C}\mathcal{O}_{U}([y_1(t_1).A])
= \mathbb{C}\mathcal{O}_{U}([A-A_{23}e_{23}])$,
%%%%%%%%%%%%%%%
i.e.
\begin{align*}
%%%%%%%%%%%%%%%%
\mathbb{C}\mathcal{O}_{U}({
\left[%
\newcommand{\mc}[3]{\multicolumn{#1}{#2}{#3}}
\begin{array}{cccccccc}\cline{2-7}
\mc{1}{c|}{} & \mc{1}{c|}{A_{12}} & \mc{1}{c|}{A_{13}^*} & \mc{1}{c|}{×} & \mc{1}{c|}{×} & \mc{1}{c|}{×} & \mc{1}{c|}{×} & \\\cline{2-7}
× & \mc{1}{c|}{} & \mc{1}{c|}{A_{23}} & × & × & × &  & ×\\\cline{3-3}
     \end{array}
\right]}%
)
& \cong  \mathbb{C}\mathcal{O}_{U}({
\left[%
\newcommand{\mc}[3]{\multicolumn{#1}{#2}{#3}}
\begin{array}{cccccccc}\cline{2-7}
\mc{1}{c|}{} & \mc{1}{c|}{A_{12}} & \mc{1}{c|}{A_{13}^*} & \mc{1}{c|}{×} & \mc{1}{c|}{×} & \mc{1}{c|}{×} & \mc{1}{c|}{×} & \\\cline{2-7}
× & \mc{1}{c|}{} & \mc{1}{c|}{0} & × & × & × &  & ×\\\cline{3-3}
     \end{array}
\right]}%
).
\end{align*}
\end{Corollary}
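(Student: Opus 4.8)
The plan is to reduce the claim to the machinery for $\mathfrak{F}_3$-type patterns established immediately above: Corollary~\ref{G2-prop x.A} (which computes $y.A$ for such $A$), Corollary~\ref{right action A23 image-G2} (which identifies the image of $\lambda_g$ on the orbit module), and the general fact, recalled in the running text, that for $g\in U$ the restriction $\lambda_g|_{\mathbb{C}\mathcal{O}_U([A])}$ is a $\mathbb{C}U$-isomorphism onto its image. I would first note explicitly that $A=A_{12}e_{12}+A_{13}^{*}e_{13}+A_{23}e_{23}$ is precisely of the shape $A_{12}e_{12}+A_{13}e_{13}+A_{23}e_{23}$ (with $A_{13}=A_{13}^{*}$) to which those results apply, so that all the objects below stay inside $V$ and of the expected type.

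Next I would fix the scalar. Since $A_{13}^{*}\in\mathbb{F}_q^{*}$, the element $t_1:=A_{23}/A_{13}^{*}\in\mathbb{F}_q$ is well defined and $y_1(t_1)\in U$, matching the hypothesis $t_1A_{13}^{*}=A_{23}$. By Corollary~\ref{G2-prop x.A},
\[
 y_1(t_1).A=A-t_1A_{13}^{*}e_{23}=A-A_{23}e_{23},
\]
which already gives the equality $\mathbb{C}\mathcal{O}_U([y_1(t_1).A])=\mathbb{C}\mathcal{O}_U([A-A_{23}e_{23}])$, i.e. the orbit module of the pattern obtained from $A$ by deleting the $(2,3)$-entry.

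It then remains to produce the isomorphism. Taking $g:=y_1(t_1)$ in Corollary~\ref{right action A23 image-G2} yields $\mathrm{im}\big(\lambda_{y_1(t_1)}|_{\mathbb{C}\mathcal{O}_U([A])}\big)=\mathbb{C}\mathcal{O}_U([y_1(t_1).A])$; since $\lambda_{y_1(t_1)}|_{\mathbb{C}\mathcal{O}_U([A])}$ is a $\mathbb{C}U$-isomorphism onto its image, we get
\[
 \mathbb{C}\mathcal{O}_U([A])\ \cong\ \mathbb{C}\mathcal{O}_U([y_1(t_1).A])\ =\ \mathbb{C}\mathcal{O}_U([A-A_{23}e_{23}]),
\]
and writing $A$ and $A-A_{23}e_{23}$ in matrix form (only the $(1,2)$, $(1,3)$ and $(2,3)$ entries possibly non-zero, with the $(2,3)$-entry killed in the target) gives the displayed equation.

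There is essentially no hard step here: everything is an application of the three cited results. The only point that needs a line of care is the bookkeeping at the start — checking that $A$ lies in the class of patterns supported on $\{(1,2),(1,3),(2,3)\}$ for which Corollary~\ref{right action A23 image-G2} and the isomorphism statement were proved, and that $y_1(t_1).A$ computed via Corollary~\ref{G2-prop x.A} again lies in $V$, so that both sides of the asserted isomorphism are genuinely $U$-orbit modules of the expected form.
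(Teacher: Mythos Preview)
Your proposal is correct and follows exactly the intended approach: the paper states this as a corollary without explicit proof because it is an immediate consequence of Corollary~\ref{G2-prop x.A} (to compute $y_1(t_1).A=A-A_{23}e_{23}$), Corollary~\ref{right action A23 image-G2} (to identify the image of $\lambda_{y_1(t_1)}$), and the general fact that $\lambda_g$ restricts to a $\mathbb{C}U$-isomorphism onto its image. Your bookkeeping remarks are apt but not really at risk here.
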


%%% 3.3.15
\begin{Corollary}\label{G2-orbit to staircase}
Every $U$-orbit module is isomorphic to a (not necessarily unique) staircase module,
and the isomorphism is given by the left multiplication by a group element.
\end{Corollary}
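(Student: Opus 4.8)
The plan is to reduce the statement to the single family where it is not already evident. By Proposition~\ref{prop:class orbit-G2} together with Remark~\ref{G2-staircase, F3}, every $U$-orbit module in one of the families $\mathfrak{F}_{1,2}$, $\mathfrak{F}_4$, $\mathfrak{F}_5$, $\mathfrak{F}_6$, and also every module in $\mathfrak{F}_3$ whose (unique) core pattern has vanishing $(2,3)$-entry, is already a staircase module; for these the identity element of $U$ gives the required isomorphism and there is nothing to do. So I would isolate the remaining case: an orbit module $\mathbb{C}\mathcal{O}_U([A])$ in $\mathfrak{F}_3$ with core pattern $A=A_{12}e_{12}+A_{23}e_{23}+A_{13}^{*}e_{13}$, $A_{13}^{*}\in\mathbb{F}_q^{*}$ and $A_{23}\neq 0$, which is not staircase since then $\mathrm{main}(A)=\{(1,3),(2,3)\}$ and these two positions lie in the same column.

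For that case I would invoke Corollary~\ref{G2-delete A23}, which is exactly the tool designed for it: taking $t_1:=A_{23}(A_{13}^{*})^{-1}\in\mathbb{F}_q$, the map $\lambda_{y_1(t_1)}$ of left multiplication by the group element $y_1(t_1)$ restricts — by Proposition~\ref{G2-g[B] orbit,Prop} and Corollary~\ref{right action A23 image-G2}, using $y_1(t_1).A=A-t_1A_{13}^{*}e_{23}=A-A_{23}e_{23}$ from Corollary~\ref{G2-prop x.A} — to a $\mathbb{C}U$-isomorphism from $\mathbb{C}\mathcal{O}_U([A])$ onto $\mathbb{C}\mathcal{O}_U([A-A_{23}e_{23}])$. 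Since $A-A_{23}e_{23}=A_{12}e_{12}+A_{13}^{*}e_{13}$ still lies in $\mathfrak{F}_3$ but now has zero $(2,3)$-entry, its orbit module is a staircase module by the first step (its orbit contains the staircase core pattern $A_{13}^{*}e_{13}$, the $(1,2)$-entry being absorbed by a truncated column operation in the parameter $t_2$ as in Lemma~\ref{prop: G2-orbit}). Composing, $\mathbb{C}\mathcal{O}_U([A])$ is isomorphic, via left multiplication by $y_1(t_1)$, to a staircase module, which completes the proof; uniqueness is not asserted, and indeed fails, because the various orbit modules in $\mathfrak{F}_3$ with different nonzero $(2,3)$-entries are all carried onto the same staircase module in this way.

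The hard part is not in this corollary at all: the substance has already been done in setting up the left-multiplication machinery for orbit modules supported in rows $1$ and $2$, namely Proposition~\ref{G2-g[B] orbit,Prop}, Corollary~\ref{right action A23 image-G2} and Corollary~\ref{G2-delete A23}, which pin down precisely how $\lambda_g$ acts and show the $(2,3)$-entry can be cleared without disturbing the rest. The only genuine check that remains is the easy observation that after clearing this entry one really does land on a staircase core pattern, i.e.\ that no new collision of columns among main conditions is introduced; in $\mathfrak{F}_3$ this is immediate, since the surviving support lies only in row $1$ at columns $2$ and $3$, leaving the single main condition $(1,3)$.
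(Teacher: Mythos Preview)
Your proof is correct and follows exactly the approach implicit in the paper: the corollary is stated without proof immediately after Corollary~\ref{G2-delete A23}, relying on Remark~\ref{G2-staircase, F3} to isolate $\mathfrak{F}_3$ with $A_{23}\neq 0$ as the only non-staircase case, and on Corollary~\ref{G2-delete A23} to handle it via left multiplication by $y_1(t_1)$. Your write-up simply makes these two ingredients explicit.
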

%%%%%%%%%%%%%%%%%%%%%%

%%%%%%%%%%%%%%%%%%%%%%%%
\begin{Lemma}\label{G2-x5 A17}
Let $A\in V$ with $A_{17}=A_{17}^* \in \mathbb{F}_q^*$,
$y_5(s_5)\in U$ and $s_5\in \mathbb{F}_q$.
Then
\begin{align*}
\lambda_{y_5(s_5)}([A])=\vartheta(s_5A_{16})[A+s_5{A_{17}^*}e_{23}].
\end{align*}
\end{Lemma}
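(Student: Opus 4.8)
The statement to be proved is Lemma \ref{G2-x5 A17}: for $A\in V$ with $A_{17}=A_{17}^*\in\mathbb F_q^*$ and $y_5(s_5)\in U$, one has $\lambda_{y_5(s_5)}([A])=\vartheta(s_5A_{16})[A+s_5A_{17}^*e_{23}]$. Since $\lambda_g$ for $g\in U$ is the map $y\mapsto gy$ on $\mathbb C U$, and we already know from the discussion preceding \ref{def: row operation-G2} that $\lambda_g([A])=\frac1{|U|}\sum_{y\in U}\overline{\vartheta\kappa(g^{-\top}A,y)}\,y$, the whole computation reduces to understanding the pattern $\pi(y_5(s_5)^{-\top}A)$ together with a scalar correction coming from the difference between $\kappa(g^{-\top}A,y)$ and $\kappa(f(y_5(s_5).A + \cdots),\,\cdot)$. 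The plan is to run this through the same machinery used in \ref{G2-g[B] orbit,Prop}, but now tracking the extra scalar $\vartheta(s_5A_{16})$ that appears because $g=y_5(s_5)$ acts nontrivially via the monomial structure.

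First I would compute the truncated row operation $y_5(s_5).A=\pi(y_5(s_5)^{-\top}A)$. By \ref{G2-prop x.A} we know $y_i(t_i).A=A$ for $i\in\{2,3,4,5,6\}$, so naively $y_5(s_5).A=A$; the point is that $\lambda_{y_5(s_5)}$ is \emph{not} just the row operation — it also permutes basis elements $[\,\cdot\,]$ according to the monomial $\mathbb C G$-action, and the image basis element is $[A+s_5A_{17}^*e_{23}]$, not $[A]$. So the correct thing to do is to write $g=y_5(s_5)$, expand $\lambda_g([A])=\frac1{|U|}\sum_{y}\overline{\vartheta\kappa(g^{-\top}A,y)}\,y$, and then re-index the sum $y\mapsto y$ to recognize the right-hand side as a scalar multiple of $[A+s_5A_{17}^*e_{23}]=\frac1{|U|}\sum_y\overline{\chi_{A+s_5A_{17}^*e_{23}}(y)}\,y$. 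Concretely, I would verify the identity of characters
\begin{align*}
\vartheta\kappa\big(y_5(s_5)^{-\top}A,\,y\big)=\vartheta(s_5A_{16})\,\chi_{A+s_5A_{17}^*e_{23}}(y)\qquad\text{for all }y\in U,
\end{align*}
using $\chi_B(y)=\vartheta\kappa(B,f(y))$ and the explicit matrix form of $f(y)=\pi(y-I_8)$ from \ref{sylow p-subg, G2} and \ref{pi,G2}. The key input is that $y_5(s_5)^{-\top}A$ differs from $\pi$ of itself only in entries outside $J$ plus a correction in the $(2,3)$ position proportional to $A_{17}^*$, and that pairing the out-of-$J$ part of $y_5(s_5)^{-\top}A$ against $f(y)$ for $y\in U$ produces exactly the constant $\vartheta(s_5A_{16})$ (independent of $y$) because $U$'s matrix entries in the relevant positions are, up to sign, the parameter $t_5$ which pairs against $A_{16}$. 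This mirrors Lemma \ref{G2-g[B] obit, Lemma} but for $g=y_5(s_5)$ rather than $g=y_1(t_1)$.

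The main obstacle, as usual in this setup, is bookkeeping: one must be careful that $\kappa$ is only non-degenerate on $V\times V$ (via \ref{nond, G2}), so all pairings $\kappa(y_5(s_5)^{-\top}A,y)$ with $y\in U$ must be reduced using \ref{pi=piJ-kappa, G2} and \ref{pi_J,AgT-intersection in J,G2} to pairings inside $V_J$ before the formula is legible; the scalar $\vartheta(s_5A_{16})$ is precisely the contribution of the $A_{16}$-entry of $A$ against the $t_5$-coordinate of $y$, which falls outside $J$ and hence must be handled by hand rather than absorbed into $\pi$. Once the character identity above is established, the lemma follows immediately by comparing the two expansions coefficient-by-coefficient over the basis $\{y:y\in U\}$ of $\mathbb C U$, and noting $[A+s_5A_{17}^*e_{23}]$ is a legitimate basis element since $A+s_5A_{17}^*e_{23}\in V$ (its support is still contained in $J$ and the constraint $A_{14}=A_{15}$ is untouched).
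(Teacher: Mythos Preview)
Your approach is the same as the paper's: expand $\lambda_{y_5(s_5)}([A])$ via the formula $\lambda_g([A])=\frac{1}{|U|}\sum_{y\in U}\overline{\vartheta\kappa(g^{-\top}A,y)}\,y$, compute $y_5(s_5)^{-\top}A$ explicitly, and regroup to recognise a scalar times $[A+s_5A_{17}^*e_{23}]$.

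Two details in your write-up need correction, however. First, your displayed character identity has a sign error. Since $y_5(s_5)^{-\top}=I_8-s_5e_{61}+s_5e_{83}$, one finds (discarding entries of row~6 that pair to zero against every $y\in U$) that $y_5(s_5)^{-\top}A \equiv A - s_5A_{16}e_{66} - s_5A_{17}^*e_{67}$, and hence $\kappa(y_5(s_5)^{-\top}A,\,y)=\kappa(A,y)-s_5A_{16}-s_5A_{17}^*y_{67}$. Thus the \emph{un}-conjugated identity carries the factor $\vartheta(-s_5A_{16})$, not $\vartheta(s_5A_{16})$; the sign only flips to the stated $\vartheta(s_5A_{16})$ after taking the complex conjugate in the definition of $[A]$. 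Second, your explanation of where the scalar comes from is wrong: it does \emph{not} arise from pairing $A_{16}$ against the $t_5$-coordinate of $y$, but from the diagonal entry $y_{66}=1$ paired against $(y_5(s_5)^{-\top}A)_{66}=-s_5A_{16}$, which is indeed independent of $y$. Likewise, the $e_{23}$ term is not a ``correction in the $(2,3)$ position'' of $y_5(s_5)^{-\top}A$ (that matrix has nothing new in row~$2$); it appears because for $y\in U$ one has $y_{67}=-y_{23}$ by the explicit matrix in \ref{sylow p-subg, G2}, so $-s_5A_{17}^*y_{67}=s_5A_{17}^*y_{23}=\kappa(s_5A_{17}^*e_{23},y)$. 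With these two fixes your argument is exactly the paper's computation.
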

%%%%%%%%%%%%%%%%%%%
%%%%%%%%%%%%%%%%%%%
\begin{proof}
Let $A\in V$ with $A_{17}=A_{17}^* \in \mathbb{F}_q^*$
and $y_5(s_5)\in U$, then
\begin{align*}
&\lambda_{y_5(s_5)}([A])
=
{\frac{1}{|U|}} \sum_{y\in U}{\overline{\vartheta\kappa(y_5(s_5)^{-\top}A, y)}}y
%%%%
{=}
{\frac{1}{|U|}} \sum_{y\in U}{\overline{\vartheta\kappa(A-s_5A_{16}e_{66}-s_5{A_{17}^*}e_{67}, y)}}y\\
%%%%%%%%%%%%%%%%%%%%%%%%%%%%%%%%
%%%%%
{=}&
\vartheta(s_5A_{16})\cdot
{\frac{1}{|U|}} \sum_{y\in U}{\big(\overline{\vartheta\kappa(A, y)}
\cdot \overline{\vartheta(-s_5{A_{17}^*}y_{67})}\big)}y\\
%%%%%%%%%%%%%%%%%%%%%%%%%%%%%%
%%%%%
{=}&
\vartheta(s_5A_{16})\cdot
{\frac{1}{|U|}} \sum_{y\in U}{\big(\overline{\vartheta\kappa(A, y)}
\cdot \overline{\vartheta(s_5{A_{17}^*}y_{23})}\big)}y
%%%%%
{=}
\vartheta(s_5A_{16})\cdot
{\frac{1}{|U|}} \sum_{y\in U}{\overline{\vartheta\kappa(A+s_5{A_{17}^*}e_{23}, y)}}y\\
%%%%%%%%%%%%%%%%%%%%%%%%%%
%%%%%
{=}&
\vartheta(s_5A_{16})\cdot
{\frac{1}{|U|}} \sum_{y\in U}{\overline{\vartheta\kappa(A+s_5{A_{17}^*}e_{23}, f(y))}}y
%%%%%
{=}
\vartheta(s_5A_{16}) [A+s_5{A_{17}^*}e_{23}].
%%%%%%%%%%%%%%%%%%%%%%
\end{align*}
\end{proof}

%%%%%%%%%%%%%%%%%%%%%%%%%%%%%%%%%%%%%%%%%%%%%%%%%%%%%%%%%%%%%%%%%%%%%%%%%%
%%%%%%%%%%%%%%%%%%%%%%%%
\begin{Proposition}\label{G2-delete A23, A17}
Let $A,B\in V$, $A_{17}=A_{17}^* \in \mathbb{F}_q^*$, and
\begin{align*}
A:= {%
\newcommand{\mc}[3]{\multicolumn{#1}{#2}{#3}}
\begin{array}{cccccccc}\cline{2-7}
\mc{1}{c|}{} & \mc{1}{c|}{A_{12}} & \mc{1}{c|}{A_{13}} & \mc{1}{c|}{A_{15}}
& \mc{1}{c|}{A_{15}} & \mc{1}{c|}{A_{16}} & \mc{1}{c|}{A_{17}^*} & \\\cline{2-7}
%%%%%%%%%%
× & \mc{1}{c|}{} & \mc{1}{c|}{A_{23}} & × & × & × &  & ×\\\cline{3-3}
     \end{array}
},
\quad
B:=
{%
\newcommand{\mc}[3]{\multicolumn{#1}{#2}{#3}}
\begin{array}{cccccccc}\cline{2-7}
\mc{1}{c|}{} & \mc{1}{c|}{A_{12}} & \mc{1}{c|}{A_{13}} & \mc{1}{c|}{A_{15}}
& \mc{1}{c|}{A_{15}} & \mc{1}{c|}{A_{16}} & \mc{1}{c|}{A_{17}^*} & \\\cline{2-7}
%%%%%%%%%%
× & \mc{1}{c|}{} & \mc{1}{c|}{0} & × & × & × &  & ×\\\cline{3-3}
     \end{array}
}.
\end{align*}
Then
$ \mathbb{C}\mathcal{O}_{U}([A])  \cong  \mathbb{C}\mathcal{O}_{U}([B])$.
\end{Proposition}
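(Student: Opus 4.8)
The plan is to produce an explicit $\mathbb{C}U$-isomorphism implemented by left multiplication by one group element, exactly as in Corollary \ref{G2-delete A23} but using the root subgroup $Y_5$ in place of $Y_1$. Since $A_{17}=A_{17}^*\in\mathbb{F}_q^*$, I would set $s_5:=-A_{23}/A_{17}^*\in\mathbb{F}_q$ and $g:=y_5(s_5)\in U$. By Lemma \ref{G2-x5 A17}, $\lambda_g([A])=\vartheta(s_5A_{16})\,[A+s_5A_{17}^*e_{23}]$. The matrices $A$ and $B$ of the statement agree in every coordinate of $J$ except at $(2,3)$, where $A$ has $A_{23}$ and $B$ has $0$; and $A+s_5A_{17}^*e_{23}$ changes exactly the $(2,3)$-entry of $A$ to $A_{23}+s_5A_{17}^*=0$, so $A+s_5A_{17}^*e_{23}=B$ (both are in $V$ by \ref{V, G2}, since zeroing the $(2,3)$-entry preserves the defining conditions $\mathrm{supp}\subseteq J$ and $A_{14}=A_{15}$). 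Hence $\lambda_g([A])=\vartheta(s_5A_{16})\,[B]$, a nonzero scalar multiple of $[B]$.

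Next I would invoke the module-theoretic framework of Theorem \ref{fund thm U-G2}. Working in the regular right module $\mathbb{C}U_{\mathbb{C}U}$ fixed at the end of Section \ref{sec: monomial G2-module}, left multiplication $\lambda_g$ by the group element $g\in U$ is an endomorphism of the right $\mathbb{C}U$-module $\mathbb{C}U$, and it is bijective since $g$ is a unit in $\mathbb{C}U$. By definition $\mathbb{C}\mathcal{O}_U([A])=\mathbb{C}\{[A]u\mid u\in U\}=[A]\,\mathbb{C}U$ is the right $\mathbb{C}U$-submodule generated by $[A]$, and similarly $\mathbb{C}\mathcal{O}_U([B])=[B]\,\mathbb{C}U$. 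Because $\lambda_g$ commutes with right multiplication, $\lambda_g\big(\mathbb{C}\mathcal{O}_U([A])\big)=\lambda_g([A])\,\mathbb{C}U=\vartheta(s_5A_{16})\,[B]\,\mathbb{C}U=[B]\,\mathbb{C}U=\mathbb{C}\mathcal{O}_U([B])$. Restricting the injective map $\lambda_g$ to $\mathbb{C}\mathcal{O}_U([A])$ therefore yields a $\mathbb{C}U$-isomorphism onto $\mathbb{C}\mathcal{O}_U([B])$, which is the assertion $\mathbb{C}\mathcal{O}_U([A])\cong\mathbb{C}\mathcal{O}_U([B])$.

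There is no serious obstacle here once Lemma \ref{G2-x5 A17} is available; the proof is essentially a one-line application plus bookkeeping. The only points that genuinely need care are: (i) that $A+s_5A_{17}^*e_{23}$ is literally the matrix $B$ displayed in the statement, which is the short verification above; and (ii) that the coefficient $\vartheta(s_5A_{16})$ is invertible, so that the image of $\mathbb{C}\mathcal{O}_U([A])$ under $\lambda_g$ is all of $[B]\,\mathbb{C}U$ rather than a proper submodule. I would also remark that this is the $\mathfrak{F}_6$-analogue of Corollary \ref{G2-delete A23} (the corresponding statement for family $\mathfrak{F}_3$), with $y_5$ playing the role of $y_1$ and Lemma \ref{G2-x5 A17} replacing Proposition \ref{G2-g[B] orbit,Prop} and Corollary \ref{right action A23 image-G2}.
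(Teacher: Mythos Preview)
Your proof is correct and follows essentially the same approach as the paper: both set $s_5=-A_{23}/A_{17}^*$, take $g=y_5(s_5)$, and use Lemma \ref{G2-x5 A17} to see that $\lambda_g$ carries $\mathbb{C}\mathcal{O}_U([A])$ isomorphically onto $\mathbb{C}\mathcal{O}_U([B])$. The only cosmetic difference is that the paper applies Lemma \ref{G2-x5 A17} to every basis element $[C]$ with $C\in\mathcal{O}_U(A)$ and observes $C+s_5A_{17}^*e_{23}\in\mathcal{O}_U(B)$, whereas you apply it once at the generator $[A]$ and then invoke cyclicity $\mathbb{C}\mathcal{O}_U([A])=[A]\,\mathbb{C}U$ together with the fact that $\lambda_g$ is a right $\mathbb{C}U$-automorphism; the two arguments are equivalent.
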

%%%%%%%%%%%%%%%%%%%%%%%%%%%
%%%%%%%%%%%%%%%%%%%%%%%%%%%
\begin{proof}
Let $C\in \mathcal{O}_U(A)$ and $s_5:=-\frac{A_{23}}{A_{17}^*}\in \mathbb{F}_q$.
By \ref{G2-x5 A17}, we get
% $y_5(s_5)\in U$.
$\lambda_{y_5(s_5)}([C])=\vartheta(s_5C_{16})[C+s_5{A_{17}^*}e_{23}]$,
where $C+s_5{A_{17}^*}e_{23}\in \mathcal{O}_U(B)$.
%%%%%%%%%%%%%%%
Thus $\mathbb{C}\mathcal{O}_{U}([A])
\cong  \mathbb{C}\mathcal{O}_{U}([B])$.
\end{proof}

%%%%%%%%%%%%%%%%%%%%%%%%%%%%%%%%%%%%%%%%%%%%%%%%%%%%%%%%%%%%%%%%
%%%%%%% hook
Let $1\leq i \leq 8$.
Then the $i$\textbf{-th hook} of $J$ is
$H_i:=\{(a,b) \in J  \mid  b=i \text{ or } a=9-i\}$.
In particular, $H_7=\{(1,7), (2,3)\}$.
Let $A\in V$.
Then $A$ is called
\textbf{hook-separated},
if on every hook $H_i$ of $J$ lies at most one main condition of $A$.
The hook-separated patterns are always the staircase patterns.
%%%
If $A\in V$ be hook-separated,
then $\mathbb{C}\mathcal{O}_U([A])$ is called
a \textbf{hook-separated staircase module}.

%%%%%%%%%%%%%%%%%%%%%%%%%%%%%%%%%%%%%
\begin{Proposition}\label{G2-classification, hook-sep.}
 Every $U$-orbit module is isomorphic to a
 hook-separated staircase module.
\end{Proposition}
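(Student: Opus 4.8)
The plan is to read off the statement from the classification of $U$-orbit modules in \ref{prop:class orbit-G2} together with the two isomorphisms that erase the $(2,3)$-entry, \ref{G2-delete A23} and \ref{G2-delete A23, A17}. The first thing I would do is pin down which hooks can actually obstruct hook-separation. Since $J$ is supported in rows $1$ and $2$, and a pattern in $V$ has at most one main condition per (nonzero) row, the hooks $H_1=\emptyset$, $H_2=\{(1,2)\}$, $H_4=\{(1,4)\}$, $H_5=\{(1,5)\}$, $H_6=\{(1,6)\}$ and $H_8=\{(1,j)\mid 2\le j\le 7\}$ each meet $\mathrm{main}(A)$ in at most one element automatically; the only hooks that can carry two main conditions are $H_3=\{(1,3),(2,3)\}$ and $H_7=\{(1,7),(2,3)\}$. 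Hence a staircase pattern $A\in V$ fails to be hook-separated precisely when $A_{23}\neq 0$ (so $(2,3)\in\mathrm{main}(A)$) and in addition the row-$1$ main condition of $A$ sits at $(1,3)$ (which forces $A\in\mathfrak{F}_3$) or at $(1,7)$ (which forces $A\in\mathfrak{F}_6$).

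Next I would go through the five families of \ref{prop:class orbit-G2} using the distinguished staircase core pattern (resp. core pattern for $\mathfrak{F}_3$) as representative; recall by \ref{G2-staircase, F3} that $\mathfrak{F}_3$ with $A_{23}\neq 0$ is the only non-staircase case. For $\mathfrak{F}_{1,2}$, $\mathfrak{F}_4$ and $\mathfrak{F}_5$ the representatives $A_{12}e_{12}+A_{23}e_{23}$, $A_{23}e_{23}+A_{15}^{*}(e_{14}+e_{15})$ and $A_{13}e_{13}+A_{23}e_{23}+A_{16}^{*}e_{16}$ have their row-$1$ main condition at $(1,2)$, $(1,5)$ and $(1,6)$ respectively, none of which lies on $H_3$ or $H_7$, so these are already hook-separated and nothing is to be done. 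For $A\in\mathfrak{F}_3$: if $A_{23}=0$ the core pattern $A_{12}e_{12}+A_{13}^{*}e_{13}$ is already hook-separated; if $A_{23}\neq 0$, applying \ref{G2-delete A23} (with $t_1=A_{23}/A_{13}^{*}$, using \ref{G2-prop x.A}) gives $\mathbb{C}\mathcal{O}_U([A])\cong\mathbb{C}\mathcal{O}_U([A-A_{23}e_{23}])$, and $A-A_{23}e_{23}=A_{12}e_{12}+A_{13}^{*}e_{13}$ is a hook-separated staircase core pattern. For $A\in\mathfrak{F}_6$: by \ref{G2-delete A23, A17} we may pass to the isomorphic module whose representative has $A_{23}=0$, and then row $2$ is zero and the only row-$1$ main condition is $(1,7)\in H_7$, so no hook carries two main conditions. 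Since each reduction step is a $\mathbb{C}U$-module isomorphism realised by left multiplication by a group element (as in \ref{G2-orbit to staircase}), composing them yields the asserted isomorphism onto a hook-separated staircase module, exhausting all five families.

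The hard part will be essentially the bookkeeping: correctly computing the hooks $H_1,\dots,H_8$ of $J$, checking that $H_3$ and $H_7$ are the only two that can ever contain two main conditions, and verifying that after applying \ref{G2-delete A23} in $\mathfrak{F}_3$ and \ref{G2-delete A23, A17} in $\mathfrak{F}_6$ the surviving main conditions land on pairwise distinct hooks. All of the genuinely module-theoretic content — that these deletions are isomorphisms and that every orbit module is already (isomorphic to) a staircase one — is supplied by the earlier results, so the proof should be short.
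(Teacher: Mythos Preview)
Your proposal is correct and follows essentially the same route as the paper's proof, which simply cites \ref{G2-orbit to staircase} and \ref{G2-delete A23, A17}; you have unpacked these two citations into the explicit family-by-family check and the hook computation. One minor imprecision: in your characterisation of when a \emph{staircase} pattern fails to be hook-separated, the case ``row-$1$ main condition at $(1,3)$ with $A_{23}\neq 0$'' cannot actually occur for a staircase pattern, since $(1,3)$ and $(2,3)$ lie in the same column; so for staircase patterns only $H_7$ is an obstruction, and your use of \ref{G2-delete A23} in $\mathfrak{F}_3$ is really doing the job of \ref{G2-orbit to staircase} (producing a staircase pattern, which then automatically is hook-separated) rather than resolving a hook collision.
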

%%%%%%%%%%%%%
\begin{proof}
 By \ref{G2-orbit to staircase}, every $U$-orbit module is isomorphic to a
 staircase module.
 By \ref{G2-delete A23, A17}, we get the desired conclusion.
\end{proof}

%% 3.3.18
\begin{Corollary}
Let $A\in V$ be a staircase pattern and $S$ an irreducible constituent of $\mathbb{C}\mathcal{O}_U([A])$.
Then there exists a hook-separated
core
pattern $C$, such that $S$ is a constituent of
$\mathbb{C}\mathcal{O}_U([C])$.
\end{Corollary}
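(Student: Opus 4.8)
The plan is to pass, in two reductions, from $\mathbb{C}\mathcal{O}_U([A])$ to the orbit module of a hook-separated core pattern carrying the same irreducible constituents.

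First I would invoke \ref{G2-classification, hook-sep.}: there is a hook-separated pattern $B\in V$ and a $\mathbb{C}U$-isomorphism $\mathbb{C}\mathcal{O}_U([A])\cong\mathbb{C}\mathcal{O}_U([B])$, obtained by composing the left-multiplication isomorphisms of \ref{G2-orbit to staircase}, \ref{G2-delete A23} and \ref{G2-delete A23, A17}. Since $\mathbb{C}U$ is semisimple, isomorphic modules have the same irreducible constituents, so $S$ is (up to isomorphism) an irreducible constituent of $\mathbb{C}\mathcal{O}_U([B])$.

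Next I would apply \ref{prop:class orbit-G2} to the orbit of $B$: the orbit $\mathcal{O}_U(B)$ contains a core pattern $C$, whence $\mathbb{C}\mathcal{O}_U([B])=\mathbb{C}\mathcal{O}_U([C])$ and $S$ is a constituent of $\mathbb{C}\mathcal{O}_U([C])$. The remaining task is to show that $C$ is hook-separated. For this I would use the explicit orbit description \ref{prop: G2-orbit}: it shows that the $(2,3)$-entry and the position of the rightmost nonzero entry in row $1$ are constant along a $U$-orbit, so $\mathrm{main}$ is an orbit invariant and $\mathrm{main}(C)=\mathrm{main}(B)$. Since hook-separation of a pattern depends only on its set of main conditions and $B$ is hook-separated, $C$ is hook-separated as well, which finishes the argument.

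The one point that genuinely needs care is this last step. The core pattern furnished by \ref{prop:class orbit-G2} is \emph{not} hook-separated for every orbit: in $\mathfrak{F}_3$ (resp.\ $\mathfrak{F}_6$) with nonzero $(2,3)$-entry the two main conditions lie on the common hook $H_3$ (resp.\ $H_7$). Hence one cannot simply quote \ref{prop:class orbit-G2}; the hook-separation of $C$ has to be inherited from that of $B$ through the equality $\mathrm{main}(C)=\mathrm{main}(B)$, which in turn rests on \ref{prop: G2-orbit}.
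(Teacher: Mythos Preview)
Your argument is correct and is essentially the route the paper has in mind: the paper states this corollary without proof, relying on \ref{G2-classification, hook-sep.} to pass to a hook-separated orbit module and on \ref{prop:class orbit-G2} to locate a core pattern in that orbit. Your explicit verification that $\mathrm{main}$ is a $U$-orbit invariant (via \ref{prop: G2-orbit}) and hence that the core pattern $C$ inherits hook-separation from $B$ fills in the one step the paper leaves tacit; this is exactly the right point to flag, since \ref{prop:class orbit-G2} by itself only guarantees a (staircase) core pattern, not a hook-separated one.
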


%% 3.3.19
\begin{Corollary}
\label{irr. module-G2}
Every irreducible $\mathbb{C}U$-module is
 a constituent of
some
hook-separated staircase module.
\end{Corollary}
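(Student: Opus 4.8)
The plan is to obtain this corollary as a quick consequence of two facts already established: the Fundamental Theorem \ref{fund thm U-G2}, which exhibits the regular module $\mathbb{C}U_{\mathbb{C}U}$ as a direct sum of $U$-orbit modules, and Proposition \ref{G2-classification, hook-sep.}, which says that each $U$-orbit module is isomorphic to a hook-separated staircase module. Nothing new needs to be computed; the argument is a two-line reduction.

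First I would record the decomposition of the regular module. By \ref{fund thm U-G2} the set $\{[A]\mid A\in V\}$ is a $\mathbb{C}$-basis of $\mathbb{C}U$ and the $*$-action of $U$ on $\mathbb{C}U$ is ordinary right multiplication, with $[A]u$ a scalar multiple of $[A.u]$ for every $u\in U$; hence each $\mathbb{C}\mathcal{O}_U([A])$ is a $\mathbb{C}U$-submodule of $\mathbb{C}U_{\mathbb{C}U}$. Since distinct $U$-orbits give orbit modules that are either equal or meet in $\{0\}$ (Section \ref{sec:U-orbit modules-G2}), choosing a transversal $\mathcal{T}\subseteq V$ for the $U$-orbits on $V$ under the operation $-.-$ yields
\[
  \mathbb{C}U_{\mathbb{C}U}=\bigoplus_{A\in\mathcal{T}}\mathbb{C}\mathcal{O}_U([A]).
\]
Now let $S$ be an arbitrary irreducible $\mathbb{C}U$-module. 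As $\mathbb{C}$ has characteristic $0$, $S$ is isomorphic to a constituent of the regular module $\mathbb{C}U_{\mathbb{C}U}$, so by the displayed decomposition $S$ is a constituent of $\mathbb{C}\mathcal{O}_U([A])$ for some $A\in\mathcal{T}$. Finally, $\mathbb{C}\mathcal{O}_U([A])\cong\mathbb{C}\mathcal{O}_U([C])$ for some hook-separated staircase pattern $C$ by \ref{G2-classification, hook-sep.} (which is built from \ref{G2-orbit to staircase} and \ref{G2-delete A23, A17}), and therefore $S$ is a constituent of the hook-separated staircase module $\mathbb{C}\mathcal{O}_U([C])$.

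I do not expect a genuine obstacle here; the only point deserving a sentence of justification is that the orbit modules tile the regular module, and this is precisely the content of the monomial-module structure furnished by \ref{fund thm U-G2}. If one wished to land at a \emph{core} pattern rather than merely a hook-separated staircase pattern, the Corollary immediately preceding this one refines the last step, but for the statement as given the argument above is complete.
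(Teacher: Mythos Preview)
Your proof is correct and is precisely the intended argument: the paper states the corollary without an explicit proof, relying on exactly the two ingredients you cite, namely the orbit-module decomposition of the regular module from \ref{fund thm U-G2} and Proposition \ref{G2-classification, hook-sep.}. Your exposition in fact spells out more detail than the paper does.
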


%%%%%%%%%%%%%%%%%%%%%%%%%%%%%%%%%%%%%%%%%%%%%%%%%%%%%%%%%
% 3.3.20
Let $A, B \in V$,
$\mathrm{Stab}_U(A, B):=\mathrm{Stab}_U(A)\cap \mathrm{Stab}_U(B)$,
$\psi_A$ be the character of $\mathbb{C}\mathcal{O}_U([A])$
and $\psi_B$ denote the character of $\mathbb{C}\mathcal{O}_U([B])$.
Then
$\mathrm{Hom}_{\mathbb{C}U}(\mathbb{C}\mathcal{O}_U([A]),\mathbb{C}\mathcal{O}_U([B]))=\{0\}$
if and only if for all $C\in \mathcal{O}_U(A)$ and $D\in \mathcal{O}_U(B)$ holds
$\mathrm{Hom}_{\mathrm{Stab}_U(C, D)}(\mathbb{C}[C],\mathbb{C}[D])=\{0\}$.
In particular,
\begin{align*}
&\mathrm{dim}_{\mathbb{C}} \mathrm{Hom}_{\mathbb{C}U}(\mathbb{C}\mathcal{O}_U([A]),\mathbb{C}\mathcal{O}_U([B]))
    = \langle \psi_A, \psi_B\rangle_{U}\\
   = &\sum_{\substack{C\in \mathcal{O}_U(A)\\ D\in \mathcal{O}_U(B)}}
   \frac{|\mathrm{Stab}_U(C, D)|}{|U|} \Big(\mathrm{dim}_{\mathbb{C}}
                          \mathrm{Hom}_{\mathrm{Stab}_U(C, D)}(\mathbb{C}[C],\mathbb{C}[D])\Big).
\end{align*}
%%%%%%%%%%%%%%%%%%%%%%%%%%%%%%%%%%%%%%%%%%%%%%%%%%%%%%%%%%%%%%%%%%%%%%%%
Thus
$\mathrm{Hom}_{\mathbb{C}U}(\mathbb{C}\mathcal{O}_U([A]),\mathbb{C}\mathcal{O}_U([B]))=\{0\}$
if and only if
$\mathrm{Hom}_{\mathrm{Stab}_U(A, D)}(\mathbb{C}[ A ],\mathbb{C}[D])=\{0\}$
for all $D\in \mathcal{O}_U(B)$ (\cite[\S 3.3]{Markus1}).
% %%%%%%%%%%%%%%%%%%%%%%%%%%%%%%%
%%%%%%%%%%%%%%%%%%%%%
%% 3.3.21
We have
$\langle \psi_A, \psi_B\rangle_{U}
=\sum_{D\in \mathcal{O}_U(B)}
    \frac{|\mathrm{Stab}_U(A, D)|}
    {|\mathrm{Stab}_U(A)|}{\langle \chi_A, \chi_D\rangle_{\mathrm{Stab}_U(A, D)}}$,
where
$\chi_A$ and $\chi_D$ are the characters of the $\mathbb{C}\mathrm{Stab}_U(A, D)$-modules
$\mathbb{C}[A]$ and $\mathbb{C}[D]$ respectively.

%%%%%%%%%%%%%%%%%%%%%%%%%%%%%%%%%%%%%%%%%%%%%%%%%%%%%%%%%%%%%%%%%%%%%%%%%
%%%%%%%%%%%%%%%%%%%%%%%%%%%%%%%%%%%%%%%%%%%%%%%%%%%%%%%%%%%%%%%%%%%%%%%%%
%%%%%% orthogonal properties
\begin{Proposition}\label{G2-orth.}
Every $U$-orbit module is isomorphic to a hook-separated staircase module
in Table \ref{Table: hook sep.-G2},
and they satisfy the following properties.
\begin{itemize}
\setlength\itemsep{0em}
%%%%%%%%%%%%%% 1
\item [(1)]
Let $A,B\in V$.
If $\mathrm{verge}_1(A)\neq \mathrm{verge}_1(B)$,
$
 \mathrm{Hom}_{\mathbb{C}U}(\mathbb{C}\mathcal{O}_U([A]),\mathbb{C}\mathcal{O}_U([B]))=\{0\}$.
In particular, if $\mathbb{C}\mathcal{O}_U([A])\in \mathfrak{F}_i$,
$\mathbb{C}\mathcal{O}_U([B])\in \mathfrak{F}_j$ and $i\neq j$,
then
$
 \mathrm{Hom}_{\mathbb{C}U}(\mathbb{C}\mathcal{O}_U([A]),\mathbb{C}\mathcal{O}_U([B]))=\{0\}$.
%%%%%%%%%%%%%% 2
\item [(2)]
In the family $\mathfrak{F}_{1,2}$, the $q^2$ hook-separated staircase modules
are irreducible
and pairwise orthogonal.
%%%%%%%%%%%%%% 3
\item [(3)]
In the family $\mathfrak{F}_3$, the $(q-1)$ hook-separated staircase modules
are irreducible
and pairwise orthogonal.
%and are pairwise orthogonal.
%%%%%%%%%%%%%% 4
\item [(4)]
In the family $\mathfrak{F}_{4}$, $\mathfrak{F}_{5}$ and $\mathfrak{F}_{6}$,
the hook-separated staircase modules
are reducible.
\end{itemize}
%%%%%%%%%%%%%%%%%%%%%%%
\begin{table}[!htp]
\caption{Hook-separated staircase $G_2^{syl}(q)$-modules}
\label{Table: hook sep.-G2}
\begin{align*}
\begin{array}{|c|l|c|c|}\hline
\text{Family}
& \multicolumn{1}{c|}{\mathbb{C}\mathcal{O}_U([A]){\ } (A\in V)}
& \mathrm{dim}_{\mathbb{C}}\mathbb{C}\mathcal{O}_U([A])
& \text{Irreducible}\\\hline
\hline
%%%%%%%% family 6
\mathfrak{F}_6
& \rule{0pt}{20pt}
\mathbb{C}\mathcal{O}_U\Big(
{\left[%
\newcommand{\mc}[3]{\multicolumn{#1}{#2}{#3}}
\begin{array}{cccccccc}\cline{2-7}
\mc{1}{c|}{} & \mc{1}{c|}{A_{12}} & \mc{1}{c|}{×} & \mc{1}{c|}{×} & \mc{1}{c|}{×} & \mc{1}{c|}{×} & \mc{1}{c|}{A_{17}^*} & \\\cline{2-7}
× & \mc{1}{c|}{} & \mc{1}{c|}{0} & × & × & × & & ×\\\cline{3-3}
\end{array}
\right]}\Big)
  & q^{3}
  & \text{NO}\\\hline
%%%%%%%% family 5
\mathfrak{F}_5
& \rule{0pt}{20pt}
\mathbb{C}\mathcal{O}_U\Big(
{\left[%
\newcommand{\mc}[3]{\multicolumn{#1}{#2}{#3}}
\begin{array}{cccccccc}\cline{2-7}
\mc{1}{c|}{} & \mc{1}{c|}{×} & \mc{1}{c|}{A_{13}} & \mc{1}{c|}{×} & \mc{1}{c|}{×} & \mc{1}{c|}{A_{16}^*} & \mc{1}{c|}{×} & \\\cline{2-7}
× & \mc{1}{c|}{} & \mc{1}{c|}{A_{23}} & × & × & × &  & ×\\\cline{3-3}
\end{array}
\right]}\Big)
& q^{2}
& \text{NO}\\\hline
%%%%%%%% family 4
\mathfrak{F}_4
& \rule{0pt}{20pt}
\mathbb{C}\mathcal{O}_U\Big(
{\left[%
\newcommand{\mc}[3]{\multicolumn{#1}{#2}{#3}}
\begin{array}{cccccccc}\cline{2-7}
\mc{1}{c|}{} & \mc{1}{c|}{×} & \mc{1}{c|}{×}
%%%%%% (1-8)
& \mc{1}{c|}{{A_{15}^*}} & \mc{1}{c|}{A_{15}^*} & \mc{1}{c|}{×} & \mc{1}{c|}{×} & \\\cline{2-7}
× & \mc{1}{c|}{} & \mc{1}{c|}{A_{23}} & × & × & × &  & ×\\\cline{3-3}
\end{array}
\right]}\Big)
 & q^2
 & \text{NO} \\\hline
%%%%%%%% family 3
\mathfrak{F}_3
& \rule{0pt}{20pt}
\mathbb{C}\mathcal{O}_U\Big(
{\left[%
\newcommand{\mc}[3]{\multicolumn{#1}{#2}{#3}}
\begin{array}{cccccccc}\cline{2-7}
\mc{1}{c|}{}
%%% (1,2-3)
 & \mc{1}{c|}{×} & \mc{1}{c|}{A_{13}^*}
& \mc{1}{c|}{×} & \mc{1}{c|}{×} & \mc{1}{c|}{×} & \mc{1}{c|}{×} & \\\cline{2-7}
%%% row 2
× & \mc{1}{c|}{} & \mc{1}{c|}{0} & × & × & × &  & ×\\\cline{3-3}
\end{array}
\right]}\Big)
& q
& \text{YES}  \\\hline
%%%%%%%% family F_1,2
\mathfrak{F}_{1,2}
& \rule{0pt}{20pt}
\mathbb{C}\mathcal{O}_U\Big(
{\left[%
\newcommand{\mc}[3]{\multicolumn{#1}{#2}{#3}}
\begin{array}{cccccccc}\cline{2-7}
\mc{1}{c|}{} & \mc{1}{c|}{A_{12}} & \mc{1}{c|}{×} & \mc{1}{c|}{×} & \mc{1}{c|}{×} & \mc{1}{c|}{×} & \mc{1}{c|}{×} & \\\cline{2-7}
× & \mc{1}{c|}{} & \mc{1}{c|}{A_{23}} & × & × & × &  & ×\\\cline{3-3}
\end{array}
\right]}\Big)
 & 1
 & \text{YES} \\\hline
 \end{array}
\end{align*}
where $A_{13}^*, A_{15}^*,
A_{16}^*,A_{17}^* \in \mathbb{F}_{q}^*$.
\end{table}
\end{Proposition}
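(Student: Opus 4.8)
The first sentence is \ref{G2-classification, hook-sep.} made explicit: by \ref{G2-classification, hook-sep.} every $U$-orbit module is isomorphic to a hook-separated staircase module, and combining the orbit classification \ref{prop:class orbit-G2} with the reductions \ref{G2-delete A23} and \ref{G2-delete A23, A17} one sees that a representative can be chosen of exactly one of the five shapes of Table \ref{Table: hook sep.-G2}; counting free parameters gives $q(q-1)$, $q^2(q-1)$, $q(q-1)$, $q-1$ and $q^2$ modules in $\mathfrak{F}_6,\mathfrak{F}_5,\mathfrak{F}_4,\mathfrak{F}_3,\mathfrak{F}_{1,2}$. For the rest I use the criterion recalled before the statement — $\mathrm{Hom}_{\mathbb{C}U}(\mathbb{C}\mathcal{O}_U([A]),\mathbb{C}\mathcal{O}_U([B]))=\{0\}$ iff for all $C\in\mathcal{O}_U(A)$, $D\in\mathcal{O}_U(B)$ there is $u\in\mathrm{Stab}_U(C,D)$ with $\kappa(C-D,f(u))\neq 0$ — and the inner-product formula, which writes $\langle\psi_A,\psi_A\rangle_U$ as $1$ (the $D=A$ term) plus $\sum_{D\neq A}\frac{|\mathrm{Stab}_U(A,D)|}{|\mathrm{Stab}_U(A)|}\langle\chi_A,\chi_D\rangle_{\mathrm{Stab}_U(A,D)}$, where each $\mathbb{C}[C]$ is the one-dimensional $\mathrm{Stab}_U(C)$-module $u\mapsto\vartheta\kappa(C,f(u))$. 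From the matrix of \ref{sylow p-subg, G2} one reads off $f(y_1(t))=te_{12}$, $f(y_2(t))=te_{23}$, $f(y_3(t))=-te_{13}$, $f(y_4(t))=t(e_{14}+e_{15})$, $f(y_5(t))=te_{16}$ and $f(y_6(t))=te_{17}$.

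For (1): by \ref{prop: G2-orbit} the leading row-$1$ entry is constant along each orbit — if the first main condition of $A$ is in column $k$, then along $\mathcal{O}_U(A)$ the row-$1$ entries in columns $>k$ stay $0$ and the $(1,k)$-entry is unchanged. Assuming $\mathrm{verge}_1(A)\neq\mathrm{verge}_1(B)$, I descend through columns $7,6,5,3,2$ using the subgroups $Y_6$, $Y_5Y_6$, $Y_4Y_5Y_6$, $Y_1Y_3Y_4Y_5Y_6$ of $U$: each is normal (its set of roots is closed under adding positive roots of $\Phi_{G_2}^+$) and lies in $\mathrm{Stab}_U(C)$ for every $C$ in an orbit of the families reached at that step — for $Y_5Y_6$ because $A.y_5(t)=A.y_6(t)=A$ by \ref{G2-A.xi, figures}, for the others because they are contained in the core stabilisers of Table \ref{table:G2-stab} and normality propagates this to all conjugate stabilisers. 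Then: if $A_{17}\neq B_{17}$, $y_6(t)\in\mathrm{Stab}_U(C,D)$ gives $\kappa(C-D,f(y_6(t)))=t(A_{17}-B_{17})\neq0$; otherwise $A_{17}=B_{17}=0$ and, if $A_{16}\neq B_{16}$, $y_5(t)$ gives $t(A_{16}-B_{16})\neq0$; otherwise also $A_{16}=B_{16}=0$ and, if $A_{15}\neq B_{15}$, $y_4(t)\in Y_4Y_5Y_6$ gives $2t(A_{15}-B_{15})\neq0$; otherwise also $A_{15}=B_{15}=0$ and, if $A_{13}\neq B_{13}$, $y_3(t)\in Y_1Y_3Y_4Y_5Y_6$ gives $-t(A_{13}-B_{13})\neq0$; the only remaining case has $A,B\in\mathfrak{F}_{1,2}$, so $\mathrm{Stab}_U(A)=\mathrm{Stab}_U(B)=U$, the orbits are singletons, $A_{12}\neq B_{12}$, and $y_1(t)$ separates them. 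The ``in particular'' clause follows because representatives of different families have their first main condition in different columns.

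For (2) and (3): by Table \ref{table:G2-stab} a module in $\mathfrak{F}_{1,2}$ has $\mathrm{Stab}_U(A)=U$, so $\mathcal{O}_U(A)=\{A\}$, hence $\mathbb{C}\mathcal{O}_U([A])=\mathbb{C}[A]$ is irreducible of dimension $1$; two distinct ones are separated by $y_1(t)$ if $A_{12}\neq B_{12}$ and by $y_2(t)$ if $A_{23}\neq B_{23}$, so the $q^2$ of them are pairwise orthogonal. For $\mathfrak{F}_3$ the representative $A_{13}^*e_{13}$ has $\mathrm{Stab}_U=Y_1Y_3Y_4Y_5Y_6$, so $\dim_{\mathbb{C}}\mathbb{C}\mathcal{O}_U([A])=q$; its orbit is $\{-A_{13}^*t\,e_{12}+A_{13}^*e_{13}\mid t\in\mathbb{F}_q\}$, and for $D\neq A$ in it, $A-D\in\mathbb{F}_q^*\,e_{12}$ while $\mathrm{Stab}_U(A,D)=Y_1Y_3Y_4Y_5Y_6$ (normal, so the two stabilisers coincide) contains $y_1(s)$ with $f(y_1(s))_{12}=s$, so $\langle\chi_A,\chi_D\rangle_{\mathrm{Stab}_U(A,D)}=0$; hence $\langle\psi_A,\psi_A\rangle_U=1$ and the module is irreducible, while pairwise orthogonality of the $q-1$ of them is immediate from (1).

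For (4): for each of $\mathfrak{F}_4,\mathfrak{F}_5,\mathfrak{F}_6$ I produce, from the core representative $A$ and \ref{prop: G2-orbit}, an element $D\neq A$ of $\mathcal{O}_U(A)$ whose $\mathrm{Stab}_U(A,D)$-character agrees with that of $A$, which forces $\langle\psi_A,\psi_A\rangle_U\geq2$, i.e.\ reducibility. Explicitly, take $D:=A.y_1(t)$ with $t\neq0$ when $A\in\mathfrak{F}_4$ and $D:=A.y_4(t)$ with $t\neq0$ when $A\in\mathfrak{F}_5\cup\mathfrak{F}_6$; then $A-D$ is a nonzero scalar multiple of $e_{13}$ (for $\mathfrak{F}_4,\mathfrak{F}_6$) or of $e_{12}$ (for $\mathfrak{F}_5$), and the displayed form of $\mathrm{Stab}_U(A)$ in Table \ref{table:G2-stab} shows that the $(1,3)$- (resp.\ $(1,2)$-) entry of $f(u)$ vanishes for all $u\in\mathrm{Stab}_U(A)\supseteq\mathrm{Stab}_U(A,D)$, so $\kappa(A-D,f(u))=0$ identically. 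The main obstacle is purely administrative: verifying the normality claims of (1) against $\Phi_{G_2}^+$, evaluating $f$ on these subgroups and on the stabilisers from \ref{sylow p-subg, G2}, and in (3)--(4) pinning down $\mathrm{Stab}_U(A,D)$ precisely enough to control the off-diagonal terms — all routine but lengthy, and exactly the place where the arguments of \cite{sun3D4super} are to be adapted.
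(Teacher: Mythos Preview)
Your proof is correct and takes a genuinely different route from the paper in two places.

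For (1), the paper proceeds by direct inner-product computation: it picks hook-separated core patterns $A,B$ in each pair of families, writes down $\mathrm{Stab}_U(A,C)$ for $C\in\mathcal{O}_U(B)$ from Table~\ref{table:G2-stab}, and evaluates $\langle\chi_A,\chi_C\rangle$ as a character sum over $\mathbb{F}_q$ that vanishes unless the leading row-$1$ entries agree (the model case written out is $\mathfrak{F}_4$ against $\mathfrak{F}_4$). Your descent through the normal subgroups $Y_6,\ Y_5Y_6,\ Y_4Y_5Y_6,\ Y_1Y_3Y_4Y_5Y_6$ is cleaner and coordinate-free: once you observe that these subgroups sit inside every stabiliser at the relevant stage and that $f$ hits $e_{17},e_{16},e_{14}+e_{15},e_{13}$ on them, orthogonality is immediate. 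The paper's computation buys the extra information that, within $\mathfrak{F}_4$, $\mathrm{Hom}_{\mathbb{C}U}(\mathbb{C}\mathcal{O}_U([A]),\mathbb{C}\mathcal{O}_U([B]))\neq\{0\}$ precisely when $A_{15}^*=B_{15}^*$ (regardless of $A_{23},B_{23}$), which feeds into Remark~\ref{hook-separated intersect-G2}.

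For (4), the paper is not uniform: for $\mathfrak{F}_4$ and $\mathfrak{F}_5$ it sums over the whole orbit to get $\langle\psi_A,\psi_A\rangle_U=2q-1>1$, but for $\mathfrak{F}_6$ it argues by contradiction via a global dimension count (an irreducible of dimension $q^3$ would force $q^6\leq |U|-q^2$). Your single-witness argument --- taking $D=A.y_1(t)$ in $\mathfrak{F}_4$ and $D=A.y_4(t)$ in $\mathfrak{F}_5,\mathfrak{F}_6$, so that $A-D$ is supported on a coordinate killed by $f(\mathrm{Stab}_U(A))$ --- handles all three families the same way and avoids the global count. The paper's exact value $2q-1$ is again used in Remark~\ref{hook-separated intersect-G2} to exhibit two $\mathfrak{F}_4$-modules that are neither orthogonal nor isomorphic, which your argument does not directly recover.
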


%%%%%%%%%%%%%%%%%%%%%%%%%%%%%%%%%%%%%
\begin{proof}
By \ref{G2-orbit to staircase}
and \ref{G2-classification, hook-sep.},
every $U$-orbit module is isomorphic to a hook-separated staircase module in Table \ref{Table: hook sep.-G2}.
\begin{itemize}
\setlength\itemsep{0em}
\item [(a)]
Let $A=A_{15}^*(e_{15}+e_{14})+A_{23}e_{23}\in \mathfrak{F}_4$,
$B=B_{15}^*(e_{15}+e_{14})+B_{23}e_{23}\in \mathfrak{F}_4$
(i.e. $A_{15}^*, B_{15}^*\in \mathbb{F}_q^*$)
and $C\in \mathcal{O}_U(B)$.
%%%%%%%%%%%%%%%%%%%%%%%%%%%%%%%%%%%%% stabilizer of A and C
By \ref{prop: G2-stab},
$
\mathrm{Stab}_U(A)
% \stackrel{A_{13}=0}
{=}Y_2Y_4Y_5Y_6$,
so
$
\mathrm{Stab}_U(A, C)
=\left\{
\begin{array}{ll}
 Y_2Y_4Y_5Y_6, & C_{13}=0\\
 Y_4Y_5Y_6, & C_{13}\neq 0\\
\end{array}
\right.$.
%%%%%%%%%%%%%%%%%%%%
We calculate the inner product:
%%%%%%%%%%%%%%%%%%%%
\begin{align*}
& \langle \chi_A, \chi_C\rangle_{\mathrm{Stab}_U(A, C)}
   =\frac{1}{|{\mathrm{Stab}_U(A, C)}|}\sum_{y\in {\mathrm{Stab}_U(A, C)}} \vartheta \kappa(A-C, f(y))\\
=&\frac{1}{|{\mathrm{Stab}_U(A, C)}|}
  \sum_{y\in {\mathrm{Stab}_U(A, C)}}   \vartheta \kappa
  \left(
{%
\newcommand{\mc}[3]{\multicolumn{#1}{#2}{#3}}
\begin{array}{c|c|cccc}\hline
\mc{1}{|c|}{-C_{12}} & -C_{13} & \mc{1}{c|}{A_{15}^*-B_{15}^*}
                               & \mc{1}{c|}{A_{15}^*-B_{15}^*} & \mc{1}{c|}{×} & \mc{1}{c|}{×}\\\hline
× & A_{23}-B_{23} & × & × & × & ×\\\cline{2-2}
\end{array}
}%
,
f(y)\right).
\end{align*}
%%%%%%%%%%%%%%%%%%
{If $C_{13}=0$, then}
%%%%%%%%%%%%%%%%%%
%%%%%%%%%%%%%%%%%%%%%%%%%%%%%%%%%%%
\begin{align*}
0\neq & \mathrm{dim}_\mathbb{C} \mathrm{Hom}_{\mathrm{Stab}_U(A, C)}(\mathbb{C}[ A ],\mathbb{C}[ C ])
     = \langle \chi_A, \chi_C\rangle_{\mathrm{Stab}_U(A, C)}\\
=&\frac{1}{|Y_2Y_4Y_5Y_6|}\sum_{ t_2,t_4, t_5,t_6\in \mathbb{F}_{q} }
         \vartheta \Big((A_{23}-B_{23})t_2+2(A_{15}^*-B_{15}^*)t_4\Big)\\
=&\bigg(\frac{1}{q}\sum_{t_2\in \mathbb{F}_{q}}  \vartheta \Big((A_{23}-B_{23})t_2\Big)\bigg)
\bigg(\frac{1}{q}\sum_{ t_4\in \mathbb{F}_{q}} \vartheta \Big(2(A_{15}^*-B_{15}^*)t_4\Big)\bigg)\\
\iff &  \{B_{23}= A_{23}\} \wedge \{B^*_{15}=A^*_{15}\}.
\end{align*}
%%%%%%%%%%%%%%%%%%
{If $C_{13}\neq 0$, then}
%%%%%%%%%%%%%%%%%%
%%%%%%%%%%%%%%%%%%%%%%%%%%%%%%%%5
$ 0\neq
    \langle \chi_A, \chi_C\rangle_{\mathrm{Stab}_U(A, C)}
=\frac{1}{q}\sum_{ t_4\in \mathbb{F}_{q}} \vartheta \Big(2(A_{15}^*-B_{15}^*)t_4\Big)
 \iff    B^*_{15}=A^*_{15}$.
%%%%%%%%%%%%%%%%%%
Thus
%%%%%%%%%%%%%%%%%%
$
\mathrm{Hom}_{\mathrm{Stab}_U(A, C)}(\mathbb{C}[ A ],\mathbb{C}[ C ])\neq\{0\}
 \iff  \langle \chi_A, \chi_C\rangle_{\mathrm{Stab}_U(A, C)}\neq 0 \quad (i.e. =1)
 \iff  \big\{\{B_{23}= A_{23}\} \wedge \{B^*_{15}=A^*_{15}\}\big\} \wedge \{B^*_{15}=A^*_{15}\}
 \iff  B^*_{15}=A^*_{15}$.
%%%%%%%%%%%%%%%%%%
{Thus}
%%%%%%%%%%%%%%%%%%
$
 \mathrm{Hom}_{\mathbb{C}U}(\mathbb{C}\mathcal{O}_U([A]),\mathbb{C}\mathcal{O}_U([B]))=\{0\}
\iff  B^*_{15}\neq A^*_{15}$.
%%%%%%%%%%%%%%%%%%%%%%%%%%%%%%%%%%%%
%%%%%%%%%%%%% orthogonal
\item[(b)]
Let $A\in \mathfrak{F}_i$ and  $B\in \mathfrak{F}_j$,
$\psi_A$ denote the character of $\mathbb{C}\mathcal{O}_U([A])$
and  $\psi_B$ the character of $\mathbb{C}\mathcal{O}_U([B])$.
In the similar way to (a), we calculate $ \langle \psi_A, \psi_B\rangle_{U}$.
Then the statement of (1) is proved.
%%%%%%%%%%%%%%%%%%%%%%%%%%%%%%%%%%
%%%%%%%%%%%%%% irreducible of F4
\item[(c)]
Let $A,B\in V$ be hook-separated staircase core patterns of the family $\mathfrak{F}_4$.
Let $D\in \mathcal{O}_U(A)$ and $\psi_A$ denote the character of $\mathbb{C}\mathcal{O}_U([A])$.
By (a),  we have $\mathbb{C}[A] \cong \mathbb{C}[A]$
as $\mathbb{C}\mathrm{Stab}_U(A,D)$-modules.
%%%%%%%%%%%%%%%%%%%%%%%%%%%%%%
Then
%%%%%%%%%%%%%%%%%%%%%%%%%%%%%%
\begin{align*}
& \mathrm{dim}_{\mathbb{C}}
\mathrm{Hom}_{\mathbb{C}U}(\mathbb{C}\mathcal{O}_U([A]),\mathbb{C}\mathcal{O}_U([A]))
= \langle \psi_A, \psi_A\rangle_{U}\\
{=}& \sum_{D\in \mathcal{O}_U(A)}
   \frac{|\mathrm{Stab}_U(A, D)|}{|\mathrm{Stab}_U(A)|}
   \mathrm{dim}_{\mathbb{C}}\mathrm{Hom}_{\mathrm{Stab}_U(A, D)}(\mathbb{C}[ A ],\mathbb{C}[ D ])
  =\frac{q^4\cdot q}{q^4}+\frac{q^3\cdot (q-1)q}{q^4}\\
=& 2q-1 > 1.
\end{align*}
%%%%%%%%%%%%%%%%%%
Thus,
$\mathbb{C}\mathcal{O}_U([A])$ is not irreducible.
%%%%%%%%%%%%%%%%%%%%%%%%%%%%%%%%%%%%%%%%%%
%%%%%%% irreducible F5
\item[(d)] Let $A\in V$ be a hook-separated staircase core pattern of the family $\mathfrak{F}_5$.
In the similar way to (c), $\mathbb{C}\mathcal{O}_U([A])$ is not irreducible.
%%%%%%%%%%%%%%%%%%%%%%%%%%%%%%%%%%%%%%%%%%
%%%%%%% irreducible F3
\item[(e)]
Let $A,B\in V$ be hook-separated staircase core patterns of the family $\mathfrak{F}_3$ and $A\neq B$.
We have $\langle \psi_A, \psi_A\rangle_{U}=1$ and $\langle \psi_A, \psi_B\rangle_{U}=0$.
Thus the statement of (3) is proved.
%%%%%%%%%%%%%%%%%%%%%%%%%%%%%%%%%%%%%%%%%%
%%%%%%% irreducible F1,2
\item[(f)]
The $q^2$ hook-separated staircase modules of $\mathfrak{F}_{1,2}$ are of dimension $1$,
so they are irreducible.
They are pairwise orthogonal by calculating $ \langle \psi_A, \psi_B\rangle_{U}$
(c.f. (a)).
%%%%%%%%%%%%%%%%%%%%%%%%%%%%%%%%%%%%%%%%%%
%%%%%%% irreducible F6
\item[(g)]
Let $A\in V$ be a hook-separated staircase core pattern of the family $\mathfrak{F}_6$.
Then the orbit module $\mathbb{C}\mathcal{O}_U([A])$ is reducible.
Suppose it is irreducible.
Then by (1) and (2) we get
$
\big(\dim_{\mathbb{C}} \mathbb{C}\mathcal{O}_U([A])\big)^2=q^{6}< |U|-q^2=q^{6}-q^2$.
This is a contradiction.
Thus the orbit modules of the family $\mathfrak{F}_6$ are reducible.
\end{itemize}
\end{proof}

%%%%%%%%%%%%%%%%%%%%%%%%%%%%%%%%%%%%%%%%%%%%%%%%%%%%%%%%%%%%%
%%%%%%%%% remark orth.
\begin{Remark}
\label{hook-separated intersect-G2}
\begin{itemize}
\setlength\itemsep{0em}
\item [(1)]
The proof of the reducible properties of families $\mathfrak{F}_4$ and $\mathfrak{F}_5$ of $G_2^{syl}(q)$
(i.e. (c) and (d) of the proof of \ref{G2-orth.})
is different from that of ${^3}D_4^{syl}(q^3)$ (see \cite[6.15]{sun3D4super}).
\item [(2)]
There exist two  hook-separated staircase modules such that
they are neither orthogonal nor isomorphic.
For example:
if $A,B\in V$ be hook-separated staircase core patterns of the family $\mathfrak{F}_4$
with $A_{15}^*=B_{15}^*$ and $A_{23}\neq B_{23}$,
then
$ \langle \psi_A, \psi_A\rangle_{U}
 =\langle \psi_B, \psi_B\rangle_{U}=2q-1 $
but
$ \langle \psi_A, \psi_B\rangle_{U}=q-1\notin\{0,{\,} 2q-1\}$,
so $\mathbb{C}\mathcal{O}_U([A])$ and $\mathbb{C}\mathcal{O}_U([B])$
are neither orthogonal nor isomorphic.
\end{itemize}
\end{Remark}

%%%%%%%%%%%%%%%%%%%%%%%%%%%%%%%%%%%%%%%%%%%%%%%%%%%%%%%%
%%%%%%%%%%%%%%%%%%%%%%%
\begin{Comparison}
\label{com:classification staircase U-modules-G2}
\begin{itemize}
\setlength\itemsep{0em}
\item [(1)] (Classification of staircase $U$-modules).
Every $G_2^{syl}(q)$-orbit module is isomorphic to a staircase $U$-module
(see  \ref{G2-orbit to staircase}).
%%%%%%%%%%%%%%%%%%%%%%%%%%%%%%
\item[(2)] (Irreducible $U$-modules).
Every irreducible $G_2^{syl}(q)$-module is a constituent of some
hook-separated staircase module  (see \ref{irr. module-G2}).
\end{itemize}
The two properties also hold for ${^3}D_4^{syl}(q^3)$-modules (see \cite[6.17]{sun3D4super}).
\end{Comparison}

%-%-%-%-%-%-%-%-%-%-%-%-%-%-%-%-%-%
%                                 %
%       A partition of G2         %
%                                 %
%-%-%-%-%-%-%-%-%-%-%-%-%-%-%-%-%-%

%%%%%%%%%%%%%%%%%%%%%%%%%%%%%%%%%%%%%%%%%%%%%%%%%%%%%%%%%%%%%%%%%%%%%%%%%%%%%%%%%%%%%%%%%%%%%%%%%%%%%
%%%%%%%%%%%%%%%%%%%%%%%%%%%%%%%%%%%%%%%%%%%%%%%%%%%%%%%%%%%%%%%%%%%%%%%%%%%%%%%%%%%%%%%%%%%%%%%%%%%%%
\section{A partition of $G_2^{syl}(q)$}
\label{partition of U-G2}
Let $G:=G_8(q)$, $U:=G_2^{syl}(q)$,
and $t_i\in \mathbb{F}_q$,
$t_i^*\in \mathbb{F}_q^*$ $(i=1,2,\dots,6)$.
In this section,
a partition of $G_2^{syl}(q)$ is determined (see \ref{superclass:ci ti-G2})
which is a set of the superclasses proved in Section \ref{sec: supercharacter theories-G2}.
%%%%%%%

%%%%%%%%%%%%%%%%%%%%%%%%%%%%%%%%%%%%%%%%%%
\begin{Lemma}
The set
$ V_G:=G-1=\{ g-1\mid g\in G \} $
is a nilpotent associative $\mathbb{F}_q$-algebra (G is an algebra group).
\end{Lemma}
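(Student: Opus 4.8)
The plan is to identify $V_G$ with an explicit $\mathbb{F}_q$-subspace of the nilpotent associative $\mathbb{F}_q$-algebra $\mathfrak{n}$ of strictly upper triangular $8\times 8$ matrices, and then to verify that this subspace is closed under matrix multiplication. Since $\mathfrak{n}^{8}=O_8$, nilpotency of $V_G$ will be automatic once it is known to be a subalgebra of $\mathfrak{n}$; and together with the tautological set equality $G_8(q)=I_8+V_G$ and the identity $(I_8+A)(I_8+B)=I_8+(A+B+AB)$, this exhibits $G_8(q)$ as an algebra group.

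First I would observe that for $u\in G_8(q)$ the passage $u\mapsto u-I_8$ alters only the diagonal entries, none of which appears in the defining relations of the Definition/Lemma above. Hence
\[
V_G=\bigl\{\,A=(A_{i,j})\in\mathfrak{n}\ \bigm|\ A_{4,5}=0,\ A_{2,5}=A_{2,4},\ A_{3,5}=A_{3,4},\ A_{4,6}=A_{5,6},\ A_{4,7}=A_{5,7}\,\bigr\}.
\]
These are homogeneous linear conditions on the entries, with pairwise distinct ``pivot'' coordinates $(4,5),(2,5),(3,5),(4,6),(4,7)$, so $V_G$ is an $\mathbb{F}_q$-subspace of $\mathfrak{n}$ of dimension $\binom{8}{2}-5=23$, consistent with $|G_8(q)|=q^{23}$. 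In particular $V_G$ is closed under addition and $\mathbb{F}_q$-scalar multiplication, and associativity is inherited from $\mathrm{Mat}_{8\times 8}(\mathbb{F}_q)$.

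The one substantive step, and the main obstacle, is closure under multiplication: for $A,B\in V_G$ I must check that $AB$ still satisfies all five defining relations. Using $(AB)_{i,j}=\sum_{i<k<j}A_{i,k}B_{k,j}$, the positions $(4,5),(3,4),(3,5),(5,6),(4,6)$ give either empty sums or single terms killed by one of the relations $A_{4,5}=0$, $B_{4,5}=0$; for example $(AB)_{4,6}=A_{4,5}B_{5,6}=0=(AB)_{5,6}$ and $(AB)_{3,5}=A_{3,4}B_{4,5}=0=(AB)_{3,4}$. The two genuinely nontrivial identities are $(AB)_{2,5}=A_{2,3}B_{3,5}+A_{2,4}B_{4,5}=A_{2,3}B_{3,4}=(AB)_{2,4}$, using $B_{3,5}=B_{3,4}$ and $B_{4,5}=0$, and $(AB)_{4,7}=A_{4,5}B_{5,7}+A_{4,6}B_{6,7}=A_{5,6}B_{6,7}=(AB)_{5,7}$, using $A_{4,5}=0$ and $A_{4,6}=A_{5,6}$. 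Thus $AB\in V_G$, so $V_G$ is an associative $\mathbb{F}_q$-subalgebra of $\mathfrak{n}$, hence nilpotent, and $G_8(q)=I_8+V_G$ is an algebra group.

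I do not expect any real difficulty beyond the bookkeeping of these five entrywise checks. Alternatively, one may simply quote the corresponding fact for the intermediate group in the ${}^3D_4$ case, since $G_8(q)$ is cut out of $A_8(q)$ by the same pattern of linear relations (cf.\ the proof of Proposition 3.3 of \cite{sun3D4super}).
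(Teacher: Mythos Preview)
Your proof is correct. The paper does not give an explicit proof of this lemma; it falls under the blanket remark at the end of Section~\ref{sec:G2-1} that omitted proofs are adaptations of the corresponding statements for ${}^3D_4^{syl}(q^3)$ in \cite{sun3D4super}. Your direct entrywise verification that $V_G$ is a multiplicatively closed $\mathbb{F}_q$-subspace of the strictly upper triangular matrices is exactly what such an adaptation amounts to, and your closing sentence already acknowledges this alternative route.
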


%%%%%%%%%%%%%%%%%%%%%%%%%%%%%%%%%%%%%%%%%%%%%
\begin{Notation/Lemma}
If $g\in G$ and $u\in U$,
then set
$
G(g-1)G:= \{x(g-1)y \mid x,y\in G\}\subseteq V_G$,
$C_g^G:= \{1+x(g-1)y \mid x,y\in G\}=1+G(g-1)G \subseteq G$,
and
$C_u^U:= \{1+x(u-1)y \mid x,y\in G\}\cap U\subseteq C_u^G$.
\end{Notation/Lemma}

%%%%%%%%%%%%%%%%%%%%%%%%%%%%%%%%%%%%%%%%%%%%%%%
\begin{Lemma}\label{G2,biorbit-G}
If $g,h\in G$,
then the following statements are equivalent:
(1) There exist $x,y\in G$, such that $g-1=x(h-1)y$,
(2) $C_g^G=C_h^G$,
(3) $g\in C_h^G$.
%%%%%%%%%%%%%%%%%%%%%%%%%%%%%%%%%%%%%%%%%%%%%%%%
 The set $\{C_g^G \mid g\in G\}$ forms a partition of $G$
 with respect to the equivalence relations.
%%%%%%%%
If $g\in G$, then $C_g^G$ is a union of conjugacy classes of $G$.
\end{Lemma}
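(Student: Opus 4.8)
The plan is to reduce every assertion to an elementary manipulation inside the nilpotent associative algebra $V_G = G-1$, using two structural facts: that $G$ is a group under matrix multiplication, so every $x\in G$ is invertible with $x^{-1}\in G$ and $Gx = xG = G$; and that $V_G$ is closed under multiplication, so that $x(g-1)y \in V_G$ for all $x,y\in G$ and hence $C_g^G = 1 + G(g-1)G$ genuinely lies in $G = 1+V_G$ (this is where the algebra-group hypothesis is used). With this in place, the three conditions (1)--(3) are just three rephrasings of ``$g-1 \in G(h-1)G$''.

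First I would handle (1)$\Leftrightarrow$(3): by definition $C_h^G = 1 + G(h-1)G$, so $g\in C_h^G$ says exactly that $g-1\in G(h-1)G$, i.e. that there exist $x,y\in G$ with $g-1 = x(h-1)y$. The implication (2)$\Rightarrow$(3) is immediate, since $g = 1 + 1\cdot(g-1)\cdot 1 \in C_g^G = C_h^G$. The one step with genuine content is (3)$\Rightarrow$(2): assuming $g-1 = x(h-1)y$ with $x,y\in G$, I would prove $G(g-1)G = G(h-1)G$. The inclusion $\subseteq$ follows from $a(g-1)b = (ax)(h-1)(yb)$ together with $ax, yb\in G$; the reverse inclusion uses invertibility, rewriting $h-1 = x^{-1}(g-1)y^{-1}$ with $x^{-1},y^{-1}\in G$ and arguing symmetrically. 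Equality of $G(g-1)G$ and $G(h-1)G$ then yields $C_g^G = C_h^G$.

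For the partition claim, I would note that the relation $g\sim h \Leftrightarrow C_g^G = C_h^G$ is trivially an equivalence relation, and that by (2)$\Leftrightarrow$(3) its class through $h$ is $\{g\in G \mid g\in C_h^G\} = C_h^G$ (using $C_h^G\subseteq G$); since $h\in C_h^G$ always, the sets $\{C_g^G \mid g\in G\}$ partition $G$. Finally, for stability under conjugation: if $h\in C_g^G$, write $h-1 = a(g-1)b$ with $a,b\in G$; then for any $z\in G$ we have $zhz^{-1}-1 = z(h-1)z^{-1} = (za)(g-1)(bz^{-1}) \in G(g-1)G$, so $zhz^{-1}\in C_g^G$, whence $C_g^G$ is a union of $G$-conjugacy classes. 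No real obstacle arises here: the only points needing care are the well-definedness $C_g^G\subseteq G$ (the algebra-group structure) and the use of invertibility of both multipliers in (3)$\Rightarrow$(2); as the paper indicates, the argument runs parallel to the corresponding step for ${^3}D_4^{syl}(q^3)$.
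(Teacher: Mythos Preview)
Your proposal is correct and is precisely the standard argument one expects here; the paper in fact omits the proof entirely, referring to the parallel statement for ${^3}D_4^{syl}(q^3)$ in \cite{sun3D4super}, and your write-up is exactly the adaptation intended. The key points you identify---closure of $V_G$ under multiplication so that $C_g^G\subseteq G$, and invertibility of $x,y$ for the reverse inclusion in (3)$\Rightarrow$(2)---are the only places requiring any care, and you handle them cleanly.
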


%%%%%%%%%%%%%%%%%%%%%%%%%%%%%%%%%%%%%%%%%%%%%%%
\begin{Lemma}\label{G2,biorbit-U}
If $u,v\in U$, then the following statements are equivalent:
(1) There exist $x,y\in G$, such that $u-1=x(v-1)y$,
(2) $C_u^U=C_v^U$,
(3) $u\in C_v^U$.
% %%%%%%%%%%%%%%%%%%%%%%%%%%%%%%%%%%%%%%%%%%%%%%%%
 The set $\{C_u^U \mid u\in U\}$ forms a partition of $U$
 with respect to the equivalence relations.
%%%%%
If $u\in U$, then $C_u^U$ is a union of conjugacy classes of $U$.
\end{Lemma}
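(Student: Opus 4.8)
The plan is to reduce every part of the statement to the facts about $G$ already established in \ref{G2,biorbit-G}, using only the defining identity $C_u^U = C_u^G\cap U$ (this is how $C_u^U$ was defined) together with $U\leqslant G$. Nothing in the argument is specific to $G_2$: it is purely formal and works for any subgroup $U$ of the algebra group $G$, so I expect no serious obstacle — only the bookkeeping of invoking \ref{G2,biorbit-G} with the correct roles and of checking that intersecting with $U$ is compatible with the biorbit relation.

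First I would prove the cycle of implications. For $(1)\Rightarrow(2)$: since $u,v\in U\subseteq G$, applying \ref{G2,biorbit-G} with $(g,h)=(u,v)$ gives $C_u^G=C_v^G$, and intersecting both sides with $U$ yields $C_u^U=C_v^U$. For $(2)\Rightarrow(3)$: taking $x=y=1=I_8\in G$ shows $u=1+(u-1)\in\{1+x(u-1)y\mid x,y\in G\}$, and since $u\in U$ we get $u\in C_u^U=C_v^U$. For $(3)\Rightarrow(1)$: from $u\in C_v^U\subseteq C_v^G$ together with the implication $(3)\Rightarrow(1)$ of \ref{G2,biorbit-G}, there are $x,y\in G$ with $u-1=x(v-1)y$.

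Next I would verify the partition claim. The relation $u\sim v:\Longleftrightarrow C_u^U=C_v^U$ is patently reflexive, symmetric and transitive. By the previous paragraph each $u$ lies in its own class $C_u^U$, and if $w\in C_u^U$ then, reading this as statement $(3)$ for the pair $(w,u)$ and applying $(3)\Rightarrow(2)$, one gets $C_w^U=C_u^U$; hence two classes that share a point coincide, so $\{C_u^U\mid u\in U\}$ is a partition of $U$.

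Finally, for the last assertion: if $w=gug^{-1}$ with $g\in U$, then $w-1=g(u-1)g^{-1}$ with $g,g^{-1}\in U\subseteq G$, so $w\in\{1+x(u-1)y\mid x,y\in G\}$, and being in $U$ it lies in $C_u^U$; thus the $U$-conjugacy class of $u$ is contained in $C_u^U$, and since $C_w^U=C_u^U$ for every $w\in C_u^U$, the block $C_u^U$ is a union of $U$-conjugacy classes. The only mild subtlety, as noted, is the interplay between ``$\cap U$'' and the biorbit equivalence, which is immediate once one recalls $C_u^U=C_u^G\cap U$; the whole proof is essentially the one given for \ref{G2,biorbit-G}, restricted to $U$.
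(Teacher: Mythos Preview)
Your proof is correct and is exactly the natural argument the paper has in mind: the paper omits the proof of this lemma (as announced at the end of Section~\ref{sec:G2-1}, such omitted proofs are adaptations of the corresponding ${}^3D_4^{syl}(q^3)$ statements), and the expected argument is precisely the reduction to \ref{G2,biorbit-G} via $C_u^U=C_u^G\cap U$ that you carry out.
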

%%%%%%%%%%%%%%%%%%%%%%%%%%%%%%%%%%%%%%%%%%%%%
We obtain a partition of $G_2^{syl}(q)$
by straightforward calculation.
\begin{Proposition}[A partition of $G_2^{syl}(q)$]\label{partition-G2}
The $C_u^U$ $(u\in U)$ are given in Table \ref{table:partition-G2}.
%%%%%%%%%%%%%%%%%%%%%%%%%%%%%%%%%%%%
\begin{table}[!htp]
\caption{A partition of $G_2^{syl}(q)$}
\label{table:partition-G2}
\begin{align*}
\begin{array}{|l|l|c|}\hline
%% row 1
\multicolumn{1}{|c|}{u \in U}
& \multicolumn{1}{c|}{C_u^U}
& \rule{0pt}{13pt}
\begin{array}{c}
|C_u^U| \\
\end{array}
\\\hline
%% row 2
I_8 & y(0,0,0,0,0,0) & 1\\\hline
%% row 3
y_6(t_6^*), {\ }
t_6^*\in \mathbb{F}_q^*
& y(0,0,0,0,0,t_6^*) & 1\\\hline
%% row 4
y_5(t_5^*), {\ }
t_5^*\in \mathbb{F}_q^*
& y(0,0,0,0,t_5^*,s_6),{\ }
s_6\in \mathbb{F}_q
& q\\\hline
%% row 5
y_4(t_4^*), {\ }
t_4^*\in \mathbb{F}_q^*
& y(0,0,0,t_4^*,s_5,s_6),{\ }
s_5,s_6\in \mathbb{F}_q
& q^2\\\hline
%% row 6
y_3(t_3^*), {\ }
t_3^*\in \mathbb{F}_q^*
& y(0,0,t_3^*,s_4,s_5,s_6),{\ }
s_4, s_5,s_6\in \mathbb{F}_q
& q^3\\\hline
%\hline
%% row 7
y_2(t_2^*)y_4(t_4^*), {\ }
t_2^*,t_4^*\in \mathbb{F}_q^*
& y(0,t_2^*,s_3,t_4^*-\frac{s_3^{2}}{t_2^*},s_5,s_6),{\ }
 s_3, s_5,s_6\in \mathbb{F}_q
&  q^3 \\\hline
%% row 8
y_2(t_2^*)y_5(t_5), {\ }
t_2^*\in \mathbb{F}_q^*, {\ }
t_5\in \mathbb{F}_q
& y(0,t_2^*,s_3,-\frac{s_3^{2}}{t_2^*},t_5+\frac{s_3^{3}}{{t_2^*}^2},s_6),{\ }
s_3, s_6\in \mathbb{F}_q
& q^2 \\\hline
%\hline
%% row 9
%% row 10
y_1(t_1^*), {\ }
t_1^*\in \mathbb{F}_q^*
& y(t_1^*,0,s_3,s_4,s_5,s_6),{\ }
s_3, s_4,s_5,s_6\in \mathbb{F}_q
& q^4 \\\hline
%% row 11
y_2(t_2^*)y_1(t_1^*), {\ }
t_1^*,t_2^*\in \mathbb{F}_q^*
& y(t_1^*,t_2^*,s_3,s_4,s_5,s_6),{\ }
 s_3,s_4, s_5,s_6\in \mathbb{F}_q
& q^4\\\hline
\end{array}
\end{align*}
\end{table}
\end{Proposition}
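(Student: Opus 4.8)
The plan is to compute each block $C_u^U=\bigl(1+G(u-1)G\bigr)\cap U$ directly for the representatives $u$ listed in Table \ref{table:partition-G2}, and then to verify that these blocks exhaust $U$. The main tool is that $G=G_8(q)$ is an algebra group generated by the root subgroups $\dot{Y}_{i,j}$ with $(i,j)\in\ddot{J}$, so that left multiplication of $u-1$ by a generator $\dot{x}_{i,j}(t)$ performs the (possibly ``tied'', as dictated by the definition of $G_8(q)$) elementary row operation ``add $t$ times row $j$ to row $i$'', while right multiplication performs the corresponding column operation. Hence $1+G(u-1)G$ is precisely the set of matrices obtained from $u$ by arbitrary sequences of such row and column operations, and $C_u^U$ is the subset of those that happen to lie in $U$, i.e.\ to have the matrix shape of some $y(s_1,\dots,s_6)$ as in \ref{sylow p-subg, G2}.

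First I would record $u-1$ in matrix form for every representative, reading off its entries from \ref{sylow p-subg, G2}. Then, for a general element $v=y(t_1,\dots,t_6)\in U$, I would run a case analysis organised by the first non-vanishing parameter in the order $t_1,t_2,t_3$; the parameters $t_4,t_5,t_6$ never play the role of a ``leading term'' and are always absorbed into the lower-order coordinates by the allowed operations. In each case the row and column operations clear all but a canonical set of entries and bring $v-1$ into the shape of one of the nine representatives; conversely, starting from a representative $u$, I would determine exactly which $v\in U$ are reachable, which pins down the description of $C_u^U$ recorded in the table.

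The step I expect to be the main obstacle is the explicit biorbit computation in the two ``mixed'' families, with representatives $y_2(t_2^*)y_4(t_4^*)$ and $y_2(t_2^*)y_5(t_5)$. Because the parametrisation of $U$ in \ref{sylow p-subg, G2} is genuinely non-linear --- it involves monomials such as $t_1t_3$, $t_1^2t_2$, $t_2t_5$, $t_1^3t_2$ and $t_2^2t_1^3$ --- clearing the entries produced by one monomial reintroduces others, and the net effect is exactly the ``curved'' constraints among the free parameters shown in the table (the entries $t_4^*-\tfrac{s_3^2}{t_2^*}$ and $t_5+\tfrac{s_3^3}{{t_2^*}^2}$). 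Carefully tracking all the induced matrix entries, and checking that no further reduction is possible once these forms are reached, is the delicate part; it runs parallel to the corresponding computation for ${}^3D_4^{syl}(q^3)$ in \cite{sun3D4super}, whose analogous arguments can be adapted.

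Finally I would close the argument with the bookkeeping step. The nine families have pairwise incompatible shapes --- the pattern of which $t_i$ are non-zero already distinguishes them --- so the listed sets $C_u^U$ are pairwise disjoint, and summing the cardinalities in the last column of Table \ref{table:partition-G2} gives $q^6=|U|$. Combined with Lemma \ref{G2,biorbit-U}, which guarantees that the sets $C_u^U$ $(u\in U)$ form a partition of $U$, this shows that the representatives in the table are exhaustive, completing the proof.
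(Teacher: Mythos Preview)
Your proposal is correct and matches the paper's approach: the paper simply states that the partition is obtained ``by straightforward calculation'' and omits all details, deferring implicitly to the analogous computation for ${}^3D_4^{syl}(q^3)$ in \cite{sun3D4super}. Your outline of that calculation --- row/column operations in $G_8(q)$, case analysis on the leading nonzero $t_i$, careful treatment of the two mixed families, and a final cardinality check --- is exactly the content the paper leaves to the reader.
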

%%%%%%%%%%%%%%%%

%%%%%%%%%%%%%%%%%%%%%%%%%%%%%%%%%%%%%%%%%%%%%%%%%%%%%%%%%%%%%%%%%%%
\begin{Notation/Lemma}\label{superclass:ci ti-G2}
Set
 \begin{align*}
  & C_6(t_6^*):= C_{y_6(t_6^*)}^U,\quad
  C_5(t_5^*):= C_{y_5(t_5^*)}^U,\quad
  C_4(t_4^*):= C_{y_4(t_4^*)}^U,\quad
  C_3(t_3^*):= C_{y_3(t_3^*)}^U,\\
%%%%%%%
  & C_2(t_2^*):= \big(\bigcup_{t_4^*\in \mathbb{F}_q^*}^{.}{ C_{y_2(t_2^*)y_4(t_4^*)}^U}\big)
                \dot{\bigcup}
                \big(\bigcup_{t_5\in \mathbb{F}_q}^{.}{ C_{y_2(t_2^*)y_5(t_5)}^U}\big),\\
%%%%%%%
  & C_{1}(t_1^*):=C_{y_1(t_1^*)}^U,\quad
  C_{1,2}(t_1^*,t_2^*):= C_{y_2(t_2^*)y_1(t_1^*)}^U,\quad
  C_0:=\{1_U\}=\{ I_8 \}.
 \end{align*}
Note that these sets form a partition of $U$, denoted by $\mathcal{K}$.
\end{Notation/Lemma}

%%%%%%%%%%%%%%%%%%%%%%%%%%%%%%%%%%%%%%%%%%%%%%%%%
%%%%%%%%%%%%%%%%%%%%%%%%%%%%%%%%%%%%%%%%%%%%%%%%%%%%%%%%
%%%%%%%%%%%%%%%%%%%%%
\begin{Comparison}[Superclasses]
\label{com:superclasses-G2}
The superclasses of $G_2^{syl}(q)$
are determined by $C_u^U=\{I_{8}+x(u-I_{8})y\mid x,y \in G_8(q)\}\cap G_2^{syl}(q)$ for all $u\in G_2^{syl}(q)$
(see \ref{partition-G2},
\ref{superclass:ci ti-G2} and \ref{supercharacter theory-G2}).
This construction is analogous
to that of ${^3}D_4^{syl}(q^3)$ (see \cite[\S7]{sun3D4super}).
\end{Comparison}

%-%-%-%-%-%-%-%-%-%-%-%-%-%-%-%-%-%-%-%-%-%-%
%                                           %
%        supercharacter theory for G2       %
%                                           %
%-%-%-%-%-%-%-%-%-%-%-%-%-%-%-%-%-%-%-%-%-%-%

%%%%%%%%%%%%%%%%%%%%%%%%%%%%%%%%%%%%%%%%%%%%%%%%%%%%%%%%%%%%%%%%%%%%%%%%%%%%%%%%%%%%%%%%%%%%%%%%%%%%
%%%%%%%%%%%%%%%%%%%%%%%%%%%%%%%%%%%%%%%%%%%%%%%%%%%%%%%%%%%%%%%%%%%%%%%%%%%%%%%%%%%%%%%%%%%%%%%%%%%%
\section{A supercharacter theory for $G_2^{syl}(q)$}
\label{sec: supercharacter theories-G2}

In this section, we determine a supercharacter theory for $G_2^{syl}(q)$ (\ref{supercharacter theory-G2}),
and establish the supercharacter table for $G_2^{syl}(q)$ in
Table \ref{table:supercharacter table-G2}.
%%%%%%%%%%%
Let $U:=G_2^{syl}(q)$,
$t_i\in \mathbb{F}_q$,
$t_i^*\in \mathbb{F}_q^*$ $(i=1,2,\dots,6)$,
and $A_{ij}\in \mathbb{F}_q$,
$A_{ij}^*\in \mathbb{F}_q^*$ $(1\leq i, j \leq 8)$.
%%%%%%%%%%%%%%%%%%%%%%%%%%%%
%%%%%%%%%%%%%%%%%%%%%%%%%%%%
\begin{Definition}[{\cite[\S 2]{di}}/{\cite[3.6.2]{Markus1}}]
\label{supercharacter theory}
Let $G$ be a finite group.
Suppose that $\mathcal{K}$ is a partition of $G$
and that $\mathcal{X}$ is a set of (nonzero) complex characters of $G$,
such that
\begin{itemize}
 \setlength\itemsep{0em}
 \item [(a)] $|\mathcal{X}|=|\mathcal{K}|$,
 \item [(b)] every character $\chi \in \mathcal{X}$ is constant on each member of $\mathcal{K}$,
 \item [(c)] the elements of $\mathcal{X}$ are pairwise orthogonal and
  \item[(d)] the set $\{1\}$ is a member of $\mathcal{K}$.
\end{itemize}
Then $(\mathcal{X},\mathcal{K})$
is called a \textbf{supercharacter theory} for $G$.
% if one of the three statements holds.
We refer to the elements of $\mathcal{X}$ as \textbf{supercharacters},
and to the elements of $\mathcal{K}$ as \textbf{superclasses}
of $G$.
A $\mathbb{C}G$-module is called a $\mathbb{C}G$-\textbf{supermodule},
if it affords a supercharacter of $G$.
\end{Definition}

%%%%%%%%%%%%%%%%%%%%%%%%%%%%%%%%%%%%%%%%%%%%%%
\begin{Notation/Lemma}\label{notation:supermodules-G2}
For $A=(A_{ij})\in V$, we set
%%%%%%%%%%%%
\begin{align*}
  M{(A_{12}e_{12}+A_{23}e_{23})}:
= \mathbb{C}\mathcal{O}_{U}([A_{12}e_{12}+A_{23}e_{23}])
=\mathbb{C}[A_{12}e_{12}+A_{23}e_{23}],
\end{align*}

%%%%%%%%%%%%%%%%%%%%%%%%%%%%%%%%%%%
%%%%%%%% M_{(13)}
\begin{align*}
 M{(A_{13}^*e_{13})}:=&
{\mathbb{C}\left\{
\left[{\newcommand{\mc}[3]{\multicolumn{#1}{#2}{#3}}
\begin{array}{cccccccc}\cline{2-7}
%%%%(1,1)
\mc{1}{c|}{}
%%%% (1,2)
& \mc{1}{c|}{A_{12}}
%%%% (1,3)
& \mc{1}{c|}{{A_{13}^*}}
%%%% (1,4)
& \mc{1}{c|}{×}
%%%%(1,5-8)
& \mc{1}{c|}{×} & \mc{1}{c|}{×} & \mc{1}{c|}{×} & \\\cline{2-7}
%%%% row 2
× & \mc{1}{c|}{} & \mc{1}{c|}{} & × & × & × & & ×\\\cline{3-3}
\end{array}
}
\right]
  {\,}\middle|{\,}
  A_{12}\in \mathbb{F}_{q}
\right\}
}
%%%%%%%%%%%%%%%%%%%%%%%%
= \mathbb{C}\mathcal{O}_{U}([A_{13}^*e_{13}]),
\end{align*}
%%%%%%%%%%%%%%%%%%%%%%%%%%%%%%%%% M_{(15)}
\begin{align*}
M{({A_{15}^*}(e_{14}+e_{15}))}:=
&
\mathbb{C}\left\{
\left[{\newcommand{\mc}[3]{\multicolumn{#1}{#2}{#3}}
\begin{array}{cccccccc}\cline{2-7}
%%%%(1,1)
\mc{1}{c|}{}
%%%% (1,2)
& \mc{1}{c|}{A_{12}}
%%%% (1,3)
& \mc{1}{c|}{A_{13}}
%%%% (1,4)
& \mc{1}{c|}{{{A_{15}^*}}}
%%%%(1,5-8)
& \mc{1}{c|}{{A_{15}^*}} & \mc{1}{c|}{×} & \mc{1}{c|}{×} & \\\cline{2-7}
%%%%%%%% row 2
× & \mc{1}{c|}{} & \mc{1}{c|}{A_{23}} & × & × & × &  & ×\\\cline{3-3}
\end{array}
}
\right]
 {\,}\middle|{\,}
A_{12},A_{13},A_{23}\in \mathbb{F}_{q}
\right\}
\\
%%%%%%%%%%%%
=& \bigoplus_{A_{23}\in \mathbb{F}_{q}}
\mathbb{C}\mathcal{O}_{U}([{A_{15}^*}(e_{14}+e_{15})+A_{23}e_{23}]),
\end{align*}
%%%%%%%%%%%%%%%%%%%%%%%%%%%% M_{(16)}
\begin{align*}
 M{(A_{16}^*e_{16})}:=
&
{\mathbb{C}\left\{
\left[{\newcommand{\mc}[3]{\multicolumn{#1}{#2}{#3}}
\begin{array}{cccccccc}\cline{2-7}
%%%%(1,1)
\mc{1}{c|}{}
%%%% (1,2)
& \mc{1}{c|}{A_{12}}
%%%% (1,3)
& \mc{1}{c|}{A_{13}}
%%%% (1,4)
& \mc{1}{c|}{A_{15}}
%%%%(1,5-8)
& \mc{1}{c|}{A_{15}} & \mc{1}{c|}{{A_{16}^*}} & \mc{1}{c|}{×} & \\\cline{2-7}
%%%%%% row 2
× & \mc{1}{c|}{} & \mc{1}{c|}{A_{23}} & × & × & × &  & ×\\\cline{3-3}
\end{array}
}
\right]
 {\,}\middle|{\,}
A_{12},A_{13},A_{15},A_{23}\in \mathbb{F}_{q}
\right\}
}\\
%%%%%%%%%%%%%%
=& \bigoplus_{A_{13},A_{23}\in \mathbb{F}_{q}}
\mathbb{C}\mathcal{O}_{U}([A_{16}^*e_{16}+A_{13}e_{13}+A_{23}e_{23}]),
\end{align*}
%%%%%%%%%%%%%%%%%%%%%%%%%%%%% M_{(17)}
\begin{align*}
 M{(A_{17}^*e_{17})}:=
&
{\mathbb{C}\left\{
\left[{\newcommand{\mc}[3]{\multicolumn{#1}{#2}{#3}}
\begin{array}{cccccccc}\cline{2-7}
%%%%(1,1)
\mc{1}{c|}{}
%%%% (1,2)
& \mc{1}{c|}{A_{12}}
%%%% (1,3)
& \mc{1}{c|}{A_{13}}
%%%% (1,4)
& \mc{1}{c|}{A_{15}}
%%%%(1,5-8)
& \mc{1}{c|}{A_{15}} & \mc{1}{c|}{A_{16}} & \mc{1}{c|}{{A_{17}^*}} & \\\cline{2-7}
%%%%%% row 2
× & \mc{1}{c|}{} & \mc{1}{c|}{} & × & × & × &  & ×\\\cline{3-3}
\end{array}
}\right]
 {\,}\middle|{\,}
A_{12},A_{13},A_{15},A_{16}\in \mathbb{F}_{q}
\right\}
}\\
%%%%%%%%%%%%%
=& \bigoplus_{A_{12}\in \mathbb{F}_{q}}
\mathbb{C}\mathcal{O}_{U}([A_{17}^*e_{17}+A_{12}e_{12}]).
\end{align*}
Denote by $\mathcal{M}$
the set of all of the above $\mathbb{C}U$-modules.
\end{Notation/Lemma}

%%%%%%%%%%%%%%%%%%%%%%%%%%%%%%%%%%%%%%
\begin{Lemma}\label{notation:supermodules and G-module-G2}
Let $A=(A_{ij})\in V$ and $G:=G_8(q)$.
Then
all $G$-orbit modules are irreducible,
and every $U$-module in $\mathcal{M}$ is
a direct sum of restrictions of some $G_8(q)$-orbit modules to $G_2^{syl}(q)$
as follows:
 \begin{align*}
%%%%%%%% M_{(0)}
 & M{(A_{12}e_{12}+A_{23}e_{23})}=
  \mathrm{Res}^G_U \mathbb{C}\mathcal{O}_{G}([A_{12}e_{12}+A_{23}e_{23}]),
 \quad
%%%%%%%% M_{(13)}
 M{(A_{13}^*e_{13})}=
 \mathrm{Res}^G_U \mathbb{C}\mathcal{O}_{G}([A_{13}^*e_{13}]),\\
%%%%%%%%%%%%%%%%%%%%%%%%%%%%%%%%% M_{(15)}
& M{(A_{15}^*(e_{14}+e_{15}))}=
  \bigoplus_{A_{23}\in \mathbb{F}_{q}}
\mathrm{Res}^G_U \mathbb{C}\mathcal{O}_{G}
([A_{15}^*(e_{14}+e_{15})+A_{23}e_{23}]),\\
%%%%%%%%%%%%%%%%%%%%%%%%%%%% M_{(16)}
&  M{(A_{16}^*e_{16})}= \bigoplus_{A_{23}\in \mathbb{F}_{q}}
\mathrm{Res}^G_U \mathbb{C}\mathcal{O}_{G}
([A_{16}^*e_{16}+A_{23}e_{23}]),
 \quad
%%%%%%%%%%%%%%%%%%%%%%%%%%%%% M_{(17)}
 M{(A_{17}^*e_{17})}=
 \mathrm{Res}^G_U \mathbb{C}\mathcal{O}_{G}
([A_{17}^*e_{17}]).
\end{align*}
\end{Lemma}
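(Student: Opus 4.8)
The statement has two parts: that every $G$-orbit module on $\mathbb{C}U$ is irreducible, and the five displayed decomposition identities. I would treat them in this order.

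\emph{Irreducibility of the $G$-orbit modules.} By \ref{fund thm U-G2} together with the general theory of monomial linearisations (\cite[\S 2.1]{Markus1}), for $A\in V$ the group $G$ permutes the lines $\mathbb{C}[C]$ with $C\in\mathcal{O}_G(A)$ transitively via $[A]*g=\vartheta\kappa(A.g,f(g))[A.g]$, the stabiliser of the line $\mathbb{C}[A]$ is $\mathrm{Stab}_G(A)$, and $h\mapsto\chi_A(h)=\vartheta\kappa(A,f(h))$ is a linear character of $\mathrm{Stab}_G(A)$; hence $\mathbb{C}\mathcal{O}_G([A])\cong\mathrm{Ind}_{\mathrm{Stab}_G(A)}^G(\chi_A)$ as $\mathbb{C}G$-modules. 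The plan is first to list the $G$-orbits on $V$ — obtained exactly as the $U$-orbits in \ref{prop:class orbit-G2}, by running the generators $\dot{x}_{i,j}(t)$ of $G$ through the truncated column operation (the $G$-analogue of \ref{G2-A.xi, figures}) — together with $\mathrm{Stab}_G(A)$ and $\mathcal{O}_G(A)$ for a representative $A$ of each orbit, and then to verify irreducibility through the inner-product formula recalled just before \ref{G2-orth.}:
\[
\langle\psi_A,\psi_A\rangle_G=\sum_{D\in\mathcal{O}_G(A)}\frac{|\mathrm{Stab}_G(A,D)|}{|\mathrm{Stab}_G(A)|}\,\langle\chi_A,\chi_D\rangle_{\mathrm{Stab}_G(A,D)},
\]
where $\langle\chi_A,\chi_D\rangle_{\mathrm{Stab}_G(A,D)}=\frac{1}{|\mathrm{Stab}_G(A,D)|}\sum_{y}\vartheta\kappa(A-D,f(y))$ equals $1$ if $A-D$ is $\kappa$-orthogonal to $f(\mathrm{Stab}_G(A,D))$ and equals $0$ otherwise. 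The term $D=A$ contributes $1$, so it suffices to exhibit, for every $D\in\mathcal{O}_G(A)$ with $D\neq A$, a root subgroup $\dot{Y}_{i,j}\leqslant\mathrm{Stab}_G(A,D)$ on which $t\mapsto\kappa(A-D,f(\dot{x}_{i,j}(t)))$ is a non-trivial additive character of $\mathbb{F}_q$; then every off-diagonal summand vanishes and $\langle\psi_A,\psi_A\rangle_G=1$. This is a short case-by-case computation of the same kind as in the proof of \ref{G2-orth.}, and it parallels the corresponding statement for ${}^3D_4^{syl}(q^3)$ in \cite{sun3D4super}.

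\emph{The decomposition identities.} Since $U\leqslant G$ and, by \ref{fund thm U-G2}, the $*$-action of $G$ on $\mathbb{C}U$ restricts to ordinary right multiplication of $U$ on $\mathbb{C}U$, the module $\mathrm{Res}^G_U\mathbb{C}\mathcal{O}_G([A])$ is the $\mathbb{C}U$-submodule of $\mathbb{C}U$ with $\mathbb{C}$-basis $\{[C]\mid C\in\mathcal{O}_G(A)\}$; as $\mathcal{O}_G(A)$ is stable under $U$, it is a disjoint union of $U$-orbits, so $\mathrm{Res}^G_U\mathbb{C}\mathcal{O}_G([A])=\bigoplus_i\mathbb{C}\mathcal{O}_U([C_i])$ whenever $\mathcal{O}_G(A)=\dot{\bigcup}_i\mathcal{O}_U(C_i)$. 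It then remains, for a representative of each of the five relevant types, to compute $\mathcal{O}_G(A)$, split it into $U$-orbits and compare with the definitions in \ref{notation:supermodules-G2}. Using the explicit action of $G$ on $V$ one finds: for $A=A_{12}e_{12}+A_{23}e_{23}$ that $\mathcal{O}_G(A)=\{A\}$ (both $e_{12}$ and $e_{23}$ are fixed after the projection $\pi$), hence $\mathrm{Res}^G_U\mathbb{C}\mathcal{O}_G([A])=\mathbb{C}[A]=M(A_{12}e_{12}+A_{23}e_{23})$; for $A=A_{13}^*e_{13}$ that $\mathcal{O}_G(A)=\{A_{12}e_{12}+A_{13}^*e_{13}\mid A_{12}\in\mathbb{F}_q\}=\mathcal{O}_U(A)$, hence $\mathrm{Res}^G_U\mathbb{C}\mathcal{O}_G([A])=M(A_{13}^*e_{13})$; for $A=A_{15}^*(e_{14}+e_{15})+A_{23}e_{23}$ that $\mathcal{O}_G(A)=\mathcal{O}_U(A)$, so $\mathrm{Res}^G_U\mathbb{C}\mathcal{O}_G([A])=\mathbb{C}\mathcal{O}_U([A])$ and summing over $A_{23}$ gives $M(A_{15}^*(e_{14}+e_{15}))$; for $A=A_{16}^*e_{16}+A_{23}e_{23}$ that $G$ fixes the $(2,3)$-entry while freeing the $(1,2)$-, $(1,3)$- and $(1,4)$-entries, so $\mathcal{O}_G(A)=\dot{\bigcup}_{A_{13}}\mathcal{O}_U(A_{16}^*e_{16}+A_{13}e_{13}+A_{23}e_{23})$ and summing over $A_{23}$ yields $M(A_{16}^*e_{16})$; and for $A=A_{17}^*e_{17}$ that $G$ fixes the $(2,3)$-entry (here $0$) while freeing the remaining first-row entries, so $\mathcal{O}_G(A)=\dot{\bigcup}_{A_{12}}\mathcal{O}_U(A_{17}^*e_{17}+A_{12}e_{12})$ and $\mathrm{Res}^G_U\mathbb{C}\mathcal{O}_G([A_{17}^*e_{17}])=M(A_{17}^*e_{17})$. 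This gives the five equalities; irreducibility of the $G$-orbit modules appearing on the right-hand sides is the first part.

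\emph{Where the difficulty lies.} Almost all of the work is in the first part: computing the $G$-orbits and the stabilisers $\mathrm{Stab}_G(A)$ and $\mathrm{Stab}_G(A,D)$ inside the $q^{23}$-element group $G_8(q)$ (many free coordinates), and then checking the vanishing $\langle\chi_A,\chi_D\rangle_{\mathrm{Stab}_G(A,D)}=0$ for every $D\neq A$ in each orbit. Once the explicit $G$-orbits are in hand, the decomposition identities of the second part reduce to comparing bases and are routine.
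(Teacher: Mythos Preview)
The paper omits the proof of this lemma entirely, relying on the blanket statement at the end of Section~\ref{sec:G2-1} that omitted proofs are adaptations of the corresponding arguments for ${}^3D_4^{syl}(q^3)$ in \cite{sun3D4super}. Your proposal is correct and spells out precisely what such an adaptation looks like: computing the $G_8(q)$-orbits on $V$ by running the generators $\dot{x}_{i,j}(t)$ through the truncated column operation, checking irreducibility via the inner-product criterion (the $G$-analogue of the formula before \ref{G2-orth.}), and then matching each $G$-orbit against the explicit bases in \ref{notation:supermodules-G2} using the $U$-orbit classification of \ref{prop:class orbit-G2}. Your concrete orbit computations (e.g.\ that the $(2,3)$-entry is fixed under all of $G$, and that for $A_{17}^*e_{17}$ the remaining first-row entries are freed) are accurate and are exactly the kind of case check the paper's methods dictate; so this is essentially the intended argument, carried out in detail rather than cited.
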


%%%%%%%%%%%%%%%%%%%%%%%%%%%%%%%%%%%%%%%%%%%%%%%%%%%%%%%%%%
%%%%%%%%%%%%%%%%%%%%%%%%%%%%%%%%%%%%%%%
\begin{Notation}\label{set of supercharacters-G2}
For $M\in \mathcal{M}$, the complex character
of the $\mathbb{C}U$-module $M$ is denoted by $\Psi_M$.
We set
$\mathcal{X}:=\left\{\Psi_M  {\,}\middle|{\,}  M\in \mathcal{M} \right\}$.
\end{Notation}

%%%%%%%%%%%%%%%%%%%%%%%%%%%%%%%%%%%%%%%%%
\begin{Corollary}
Let $A=(A_{ij})\in V$, and $\psi_A$ be the character of $\mathbb{C}\mathcal{O}_U([A])$.
%%%%%%%%%%%%%%%%%%
Then
 \begin{alignat*}{2}
 & \Psi_{M(A_{12}e_{12}+A_{23}e_{23})}= \psi_{A_{12}e_{12}+A_{23}e_{23}},
 & \quad
 & \Psi_{M(A_{13}^*e_{13})}= {\psi_{A_{13}^*e_{13}}},\\
 %%%%%%
 & \Psi_{M({A_{15}^*}(e_{14}+e_{15}))}
                         = \sum_{A_{23}\in \mathbb{F}_q}
                           {\psi_{A_{23}e_{23}+{A_{15}^*}(e_{14}+e_{15})}},
&\quad
& \Psi_{M(A_{16}^*e_{16})}= \sum_{A_{13},A_{23}\in \mathbb{F}_q}
                           {\psi_{A_{13}e_{13}+A_{23}e_{23}+A_{16}^*e_{16}}},\\
%%%%%%
& \Psi_{M(A_{17}^*e_{17})}= \sum_{A_{12}\in \mathbb{F}_q}
                           {\psi_{A_{12}e_{12}+A_{17}^*e_{17}}}.
 \end{alignat*}
\end{Corollary}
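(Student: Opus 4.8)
The plan is to deduce this directly from the explicit direct‑sum decompositions of the modules in $\mathcal{M}$ already recorded in \ref{notation:supermodules-G2}, using nothing more than the additivity of characters on direct sums. First I would recall that $\Psi_M$ is, by \ref{set of supercharacters-G2}, the character afforded by the $\mathbb{C}U$-module $M\in\mathcal{M}$, while $\psi_B$ is the character afforded by the orbit module $\mathbb{C}\mathcal{O}_U([B])$, as in the statement. The one observation that genuinely has to be made is that each sum of orbit modules written in \ref{notation:supermodules-G2} is an \emph{internal direct sum} of $\mathbb{C}U$-submodules of $\mathbb{C}U$: indeed $\{[C]\mid C\in V\}$ is a $\mathbb{C}$-basis of $\mathbb{C}U$ by \ref{fund thm U-G2}, and distinct $U$-orbits in $V$ are disjoint, so the orbit modules attached to distinct orbits intersect only in $\{0\}$ and their $\mathbb{C}$-spans add directly. (Here one also invokes \ref{prop:class orbit-G2} to see that, for instance, the patterns $A_{15}^*(e_{14}+e_{15})+A_{23}e_{23}$ lie in pairwise distinct $U$-orbits as $A_{23}$ ranges over $\mathbb{F}_q$, and similarly in the $\mathfrak{F}_5$ and $\mathfrak{F}_6$ cases.)

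Then I would simply take characters of both sides of each decomposition. For $M(A_{12}e_{12}+A_{23}e_{23})=\mathbb{C}\mathcal{O}_U([A_{12}e_{12}+A_{23}e_{23}])$ and $M(A_{13}^*e_{13})=\mathbb{C}\mathcal{O}_U([A_{13}^*e_{13}])$ the decomposition has a single summand, so $\Psi_{M(A_{12}e_{12}+A_{23}e_{23})}=\psi_{A_{12}e_{12}+A_{23}e_{23}}$ and $\Psi_{M(A_{13}^*e_{13})}=\psi_{A_{13}^*e_{13}}$ are immediate. For the remaining three modules, reading off from \ref{notation:supermodules-G2}
\[
M(A_{15}^*(e_{14}+e_{15}))=\bigoplus_{A_{23}\in\mathbb{F}_q}\mathbb{C}\mathcal{O}_U([A_{15}^*(e_{14}+e_{15})+A_{23}e_{23}]),
\]
\[
M(A_{16}^*e_{16})=\bigoplus_{A_{13},A_{23}\in\mathbb{F}_q}\mathbb{C}\mathcal{O}_U([A_{16}^*e_{16}+A_{13}e_{13}+A_{23}e_{23}]),
\]
\[
M(A_{17}^*e_{17})=\bigoplus_{A_{12}\in\mathbb{F}_q}\mathbb{C}\mathcal{O}_U([A_{17}^*e_{17}+A_{12}e_{12}]),
\]
and summing characters term by term yields exactly the three displayed formulas.

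There is essentially no obstacle here: once \ref{notation:supermodules-G2} is in hand the statement is a formality. The only step that deserves a sentence is the verification that the sums appearing there are direct over $\mathbb{C}U$, which rests on the basis property of \ref{fund thm U-G2} together with the classification \ref{prop:class orbit-G2} (and, if one prefers the $G$-module picture, on \ref{notation:supermodules and G-module-G2}); after that, additivity of characters completes the argument.
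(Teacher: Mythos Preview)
Your proposal is correct and matches the paper's intent: the corollary is stated there without proof, as an immediate consequence of the direct-sum decompositions in \ref{notation:supermodules-G2} together with the definition of $\Psi_M$ in \ref{set of supercharacters-G2}. Your extra sentence justifying that the sums are genuinely direct (via \ref{fund thm U-G2} and \ref{prop:class orbit-G2}) is a welcome addition that the paper leaves implicit.
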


%%%%%%%%%%%%%%%%%%%%%%%%%%%%%%%%%%%%%%%%%%%%%%%%%%%%%%%%%%%%%%%%%%%%%%%%%%%%%%%%%%%%%%%%%%%%%
%%%%%%%%%%%%%%%%%%% Supercharacter theory of G2
\begin{Proposition}[Supercharacter theory for $G_2^{syl}(q)$]\label{supercharacter theory-G2}
$(\mathcal{X},\mathcal{K})$ is a supercharacter theory for Sylow $p$-subgroup $G_2^{syl}(q)$,
where $\mathcal{K}$ is defined in \ref{superclass:ci ti-G2},
and $\mathcal{X}$ is defined in \ref{set of supercharacters-G2}.
%%%%%%%%%%%%%%%% \index
\index{supercharacter theory!-for $G_2^{syl}(q)$}
%%%%%%%%%%%%%%%%% nomenclature
\nomenclature{$(\mathcal{X},\mathcal{K})$}{a supercharacter theory for $G_2^{syl}(q)$ \nomrefpage}%
%%%%%%%%%%%%%%%%%
\end{Proposition}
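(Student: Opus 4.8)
The plan is to verify the four axioms (a)--(d) of Definition~\ref{supercharacter theory} for the pair $(\mathcal{X},\mathcal{K})$. Axiom (d) is immediate, as $C_0=\{1_U\}$ is a block of $\mathcal{K}$ by \ref{superclass:ci ti-G2}. For axiom (c) I would argue from \ref{G2-orth.} together with the description of $\mathcal{M}$ in \ref{notation:supermodules-G2}: every $M\in\mathcal{M}$ is a direct sum of $U$-orbit modules all lying in a single family $\mathfrak{F}_{1,2},\mathfrak{F}_3,\mathfrak{F}_4,\mathfrak{F}_5$ or $\mathfrak{F}_6$ and all having one common first verge (respectively $A_{12}e_{12}+A_{23}e_{23}$, $A_{13}^*e_{13}$, $A_{15}^*e_{15}$, $A_{16}^*e_{16}$ or $A_{17}^*e_{17}$). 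Hence for $M\neq M'$ either the families or the first verges differ, and \ref{G2-orth.}(1) yields $\langle\Psi_M,\Psi_{M'}\rangle_U=0$, the pairs inside $\mathfrak{F}_{1,2}$ and inside $\mathfrak{F}_3$ being handled by \ref{G2-orth.}(2) and (3); in particular the $\Psi_M$ are pairwise distinct and nonzero, so $M\mapsto\Psi_M$ is a bijection. Axiom (a) then reduces to the count
\[
|\mathcal{X}|=|\mathcal{M}|=q^{2}+4(q-1)=q^{2}+4q-4=1+6(q-1)+(q-1)^{2}=|\mathcal{K}|,
\]
the last equality being read off from \ref{superclass:ci ti-G2}.

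The substance of the proof is axiom (b): each $\Psi_M$ is constant on each block of $\mathcal{K}$. By \ref{notation:supermodules and G-module-G2} each $\Psi_M$ is a sum of restrictions $\mathrm{Res}^{G}_{U}$ of characters of $G_8(q)$-orbit modules $\mathbb{C}\mathcal{O}_G([B])$ with $B\in V$. Since $f(g)-(g-1)\in V^{\bot}$ by \ref{f(x)g,G2} and $\mathrm{supp}(B)\subseteq J$, the phase defining $[B]$ satisfies $\vartheta\kappa(B,f(g))=\vartheta\kappa(B,g-1)$ for all $g\in G$, so $\mathbb{C}\mathcal{O}_G([B])$ coincides with the Diaconis--Isaacs supermodule of the algebra group $G_8(q)$ attached to the functional $n\mapsto\kappa(B,n)$ on $V_G=G-1$, and hence its character is constant on every biorbit set $C_g^G=1+G(g-1)G$ (the analogue for $G_8(q)$ of the corresponding fact used for ${^3}D_4^{syl}(q^3)$ in \cite{sun3D4super}). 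Because $C_u^U=C_u^G\cap U$ by \ref{G2,biorbit-U}, restriction to $U$ makes $\Psi_M$ constant on every set $C_u^U$, $u\in U$; this settles axiom (b) for every block of $\mathcal{K}$ except the blocks $C_2(t_2^*)$.

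The step I expect to be the main obstacle is the block $C_2(t_2^*)$, which is a proper union of several of the sets $C_u^U$ --- the $C_{y_2(t_2^*)y_4(t_4^*)}^U$, $t_4^*\in\mathbb{F}_q^*$, together with the $C_{y_2(t_2^*)y_5(t_5)}^U$, $t_5\in\mathbb{F}_q$ --- so that constancy of $\Psi_M$ across the pieces is not formal. From the explicit orbit modules of \ref{prop: G2-orbit} and \ref{prop:class orbit-G2} and the stabilizers of \ref{prop: G2-stab} I would show that on $C_2(t_2^*)$ one has $\Psi_M\equiv\vartheta(A_{23}t_2^*)$ when $M=M(A_{12}e_{12}+A_{23}e_{23})$ (this $\Psi_M$ is a linear character, and on $C_2(t_2^*)$ its value depends only on the matrix entries $(u_{12},u_{23})=(0,t_2^*)$), and $\Psi_M\equiv0$ for every other $M\in\mathcal{M}$. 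The vanishing arises for two distinct reasons: for $M$ in the families $\mathfrak{F}_4$ and $\mathfrak{F}_5$ the summation over the $(2,3)$-coordinate built into the definition of $M$ in \ref{notation:supermodules-G2} contributes a factor $\sum_{A_{23}\in\mathbb{F}_q}\vartheta(A_{23}u_{23})=\sum_{A_{23}\in\mathbb{F}_q}\vartheta(A_{23}t_2^*)=0$ (as $t_2^*\neq0$); for $M$ in the families $\mathfrak{F}_3$ and $\mathfrak{F}_6$ the stabilizers $\mathrm{Stab}_U(C)$ of the patterns $C$ occurring in $M$ consist of elements with trivial $y_2$-component, whereas every $u\in C_2(t_2^*)$ has non-trivial $y_2$-component, so $\Psi_M$ vanishes there. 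Running this computation for all $M$ against all the superclass representatives of Table~\ref{table:partition-G2} simultaneously also produces the supercharacter table, Table~\ref{table:supercharacter table-G2}. Apart from this $C_2(t_2^*)$-computation --- in which the averaging over the extra matrix coordinates that was deliberately built into $\mathcal{M}$ is precisely what cancels the dependence on the non-superclass directions --- the argument is routine and runs parallel to that for ${^3}D_4^{syl}(q^3)$ in \cite{sun3D4super}.
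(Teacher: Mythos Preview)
Your verification of axioms (a), (c), (d) matches the paper's proof. For axiom~(b), however, you take a genuinely different route from the paper.

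The paper establishes (b) by brute force: for a general element $y=y(t_1,t_2,t_3,t_4,t_5,t_6)$ lying in a given block of~$\mathcal{K}$ it evaluates $\Psi_M(y)=\sum_{C,\ y\in\mathrm{Stab}_U(C)}\chi_C(y)$ directly from the stabilizer list in \ref{prop: G2-stab}, checks that the result depends only on the superclass parameters, and thereby fills in Table~\ref{table:supercharacter table-G2}. The $\mathfrak{F}_4$ case is worked out in full and the remaining cases are declared ``similar''.

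Your approach is more structural: you observe via \ref{notation:supermodules and G-module-G2} that each $\Psi_M$ is a sum of restrictions of characters of $G_8(q)$-orbit modules $\mathbb{C}\mathcal{O}_G([B])$, identify these with Diaconis--Isaacs right-orbit modules of the algebra group $G_8(q)$ (so that their characters are automatically constant on each $C_u^G$, hence on each $C_u^U=C_u^G\cap U$), and thereby reduce axiom~(b) to the single block $C_2(t_2^*)$, the only member of~$\mathcal{K}$ that is a proper union of sets $C_u^U$. This identification is correct, but note that it rests on more than the equality $\vartheta\kappa(B,f(g))=\vartheta\kappa(B,g-1)$ that you cite: one also needs $\mathrm{Stab}_G(B)=G_{\lambda_B}$, which follows from \ref{pi_J,AgT-intersection in J,G2} (the projected action on $V$ and the dual $G$-action on $V_G^*$ agree because $\pi_J(Bg^{-\top})\in V$ for $B\in V$, $g\in G$). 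Your treatment of $C_2(t_2^*)$ then coincides with the paper's computation; in particular the vanishing for $\mathfrak{F}_4,\mathfrak{F}_5$ via the $(2,3)$-coordinate sum is exactly the $\sum_{C_{23}}\vartheta(C_{23}t_2^*)=0$ step in the paper's displayed calculation. Your route is shorter and explains \emph{why} only $C_2(t_2^*)$ needs work; the paper's route is self-contained and simultaneously produces the full supercharacter table.
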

%%%%%
\begin{proof}
 By \ref{superclass:ci ti-G2}, $\mathcal{K}$ is a partition of $U$.
We know that $\mathcal{X}$ is a set of nonzero complex characters of $U$.
%%%%%%%%%%%%%%%%%%%%
\begin{itemize}
\setlength\itemsep{0em}
%%%%%%%%%%%%%%%%%%%%% [a]
 \item [(a)] {\it Claim that $|\mathcal{X}|=|\mathcal{K}|$}.
%%%%%%%%
By \ref{superclass:ci ti-G2}, \ref{notation:supermodules-G2} and \ref{set of supercharacters-G2},
$|\{\Psi_{M{(A_{17}^*e_{17})}}  \mid   A_{17}^*\in \mathbb{F}_q^* \}|
=|\{{M{(A_{17}^*e_{17})}}  \mid   A_{17}^*\in \mathbb{F}_q^* \}|
=|\{C_6(t_6^*) \mid t_6^* \in \mathbb{F}_q^* \}|$.
Similarly, we obtain $|\mathcal{X}|=|\mathcal{K}|$.
%%%%%%%%%%%%%%%%%%%%% [b]
 \item [(b)] {\it Claim that the characters $\chi \in \mathcal{X}$ are
              constant on the members of $\mathcal{K}$}.
Let $A \in \mathfrak{F}_4$ and
 \begin{align*}
 \mathcal{B}_{15}(A_{15}^*):=
 \left\{
 {\newcommand{\mc}[3]{\multicolumn{#1}{#2}{#3}}
\begin{array}{cccccccc}\cline{2-7}
%%%%(1,1)
\mc{1}{c|}{}
%%%% (1,2)
& \mc{1}{c|}{C_{12}}
%%%% (1,3)
& \mc{1}{c|}{C_{13}}
%%%% (1,4)
& \mc{1}{c|}{{{A_{15}^*}}}
%%%%(1,5-8)
& \mc{1}{c|}{{A_{15}^*}} & \mc{1}{c|}{×} & \mc{1}{c|}{×} & \\\cline{2-7}
%%%%%%%% row 2
× & \mc{1}{c|}{} & \mc{1}{c|}{C_{23}} & × & × & × &  & ×\\\cline{3-3}
\end{array}
}
{\, }\middle|{\, }
C_{12},C_{13},C_{23}\in \mathbb{F}_{q}
\right\}.
\end{align*}
%%%%%% calculate
If $y\in U$, then
 \begin{align*}
 \Psi_{M({A_{15}^*}(e_{14}+e_{15}))}(y)=
     \sum_{\substack{C\in \mathcal{B}_{15}(A_{15}^*)\\C.y=C}}{\chi_C(y)}
 =\sum_{\substack{C\in \mathcal{B}_{15}(A_{15}^*)\\ y\in \mathrm{Stab}_U(C)}}{\chi_C(y)}.
 \end{align*}
%%%%%% y=y(0,0,0,t_4,t_5,t_6)
 If $y=y(0,0,0,t_4,t_5,t_6)\in C_0\cup C_4(t_4^*)\cup C_5(t_5^*)\cup C_6(t_6^*) \subseteq \mathcal{K}$,
 then
 $y\in \mathrm{Stab}_U(C)$ for all $C\in \mathcal{B}_{15}(A_{15}^*)$ by \ref{prop: G2-stab}.
 Thus
\begin{align*}
\Psi_{M({A_{15}^*}(e_{14}+e_{15}))}(y)=&
               \sum_{C\in \mathcal{B}_{15}(A_{15}^*)}{\chi_C(y)}
            = \sum_{C\in \mathcal{B}_{15}(A_{15}^*)}
               {\vartheta(2{A_{15}^*}t_4)}
            =q^3\cdot{\vartheta(2{A_{15}^*}t_4)}.
\end{align*}
%%%%%%%% y(t1^*,0)
If $y\in C_1(t_1^*)\cup C_{1,2}(t_1^*,t_2^*)\cup C_3(t_3^*) \subseteq \mathcal{K}$,
then $y \notin \mathrm{Stab}_U(C)$ for all $C\in \mathcal{B}_{15}(A_{15}^*)$ by \ref{prop: G2-stab}.
Thus $
\Psi_{M({A_{15}^*}(e_{14}+e_{15}))}(y)=0$.

%%%%%%%% y=y(0,t_2^*,s_3,s_4,s_5,s_6)
If $y=y(0,t_2^*,s_3,s_4,s_5,s_6)\in C_2(t_2^*) \subseteq \mathcal{K}$,
then by \ref{prop: G2-stab}
\begin{align*}
&   \Psi_{M({A_{15}^*}(e_{14}+e_{15}))}(y)
=  \sum_{\substack{C\in \mathcal{B}_{15}(A_{15}^*)
              \\C_{13}=-\frac{2A_{15}^*s_3}{t_2^*}}}{\chi_C(y)}\\
= & \sum_{C_{12},C_{23}\in \mathbb{F}_{q}}{
   \vartheta \kappa \left({\,}
{%
\newcommand{\mc}[3]{\multicolumn{#1}{#2}{#3}}
\begin{array}{c|c|cccc}\hline
%%%%%% (1, 2-3)
\mc{1}{|c|}{C_{12}}
& -\frac{2A_{15}^*s_3}{t_2^*}
%%%%%% (1,4-8)
& \mc{1}{c|}{{A_{15}^*}} & \mc{1}{c|}{{A_{15}^*}} & \mc{1}{c|}{×} & \mc{1}{c|}{×}\\\hline
× & C_{23} & × & × & × & ×\\\cline{2-2}
\end{array}
}%
,{\,}
{%
\newcommand{\mc}[3]{\multicolumn{#1}{#2}{#3}}
\begin{array}{c|c|cccc}\hline
%%% (1, 2-4)
\mc{1}{|c|}{0} & -s_3 & \mc{1}{c|}{s_4}
%%% (1,5-8)
& \mc{1}{c|}{s_4} & \mc{1}{c|}{*} & \mc{1}{c|}{*}\\\hline
× & t_2^* & × & × & × & ×\\\cline{2-2}
\end{array}
}%
{\,}\right)}\\
%%%%%%%%%%%%
= & \sum_{C_{12},C_{23}\in \mathbb{F}_{q}}{
   \vartheta (C_{23}t_2^*+\frac{2A_{15}^*s_3^2}{t_2^*}
          +2A_{15}^*s_4)}
=  q\cdot {\vartheta(\frac{2A_{15}^*s_3^2}{t_2^*}+2{A_{15}^*}s_4)}
     \cdot \sum_{C_{23}\in \mathbb{F}_{q}}{\vartheta(C_{23}t_2^*)}
= 0.
\end{align*}
Similarly,  we calculate the other values of the Table \ref{table:supercharacter table-G2}.
Thus the claim is proved.
%%%%%%%%%%%%%%%%%%%%% [c]
 \item [(c)] The elements of $\mathcal{X}$ are pairwise orthogonal by \ref{G2-orth.}.
%%%%%%%%%%%%%%%%%%%%% [d]
 \item [(d)] The set $\{I_8\}$ is a member of $\mathcal{K}$.
\end{itemize}
By
% \ref{pre-supercharacter theory} and
\ref{supercharacter theory},
 $(\mathcal{X},\mathcal{K})$ is a supercharacter theory for $G_2^{syl}(q)$.
\end{proof}

%%%%%%%%%%%%%%%%%%%%%%%%%%%%%%%%%%%%%%%%%%%%%%%%%%%%
\begin{table}[!htp]
\caption{Supercharacter table of $G_2^{syl}(q)$ for $p>2$}
\label{table:supercharacter table-G2}
%%%%%%%%%%%%%%
{\tiny
% \footnotesize
\begin{align*}
\renewcommand\arraystretch{1.5}
\begin{array}{l|cccccccc}
%\hline
%%%%%%%%%%%%%%%%% row 0
×
& C_0
& C_1(t_1^*)
& C_2(t_2^*)
& C_{1,2}(t_1^*,t_2^*)
& C_3(t_3^*)
& C_4(t_4^*)
& C_5(t_5^*)
& C_6(t_6^*)\\
\hline
%%%%%%%%%%%%%%%%% row M0
\Psi_{M{(0)}}
& 1 & 1 & 1 & 1 & 1 & 1 & 1 & 1\\
%%%%%%%%%%%%%%%%% row M_{12}
\Psi_{M{(A_{12}^*e_{12})}}
& 1
& \vartheta (A_{12}^*t_1^*)
& 1
& \vartheta (A_{12}^*t_1^*)
& 1 & 1 & 1 & 1 \\
%%%%%%%%%%%%%%%%% row M_{23}
\Psi_{M{(A_{23}^*e_{23})}}
& 1
& 1
& \vartheta (A_{23}^*t_2^*)
& \vartheta (A_{23}^*t_2^*)
& 1 & 1 & 1 & 1 \\
%%%%%%%%%%%%%%%%% row M_{12,23}
\Psi_{M{(A_{12}^*e_{12}+A_{23}^*e_{23})}}
& 1
& \vartheta (A_{12}^*t_1^*)
& \vartheta (A_{23}^*t_2^*)
& \begin{array}{l}
\vartheta (A_{12}^*t_1^*)\\
\cdot \vartheta (A_{23}^*t_2^*)
\end{array}
& 1 & 1 & 1 & 1
\\
%%%%%%%%%%%%%%%%% row M_{13}
\Psi_{M{(A_{13}^*e_{13})}}
& q
& 0 & 0 & 0
& \begin{array}{l}
\vartheta(-A_{13}^*t_{3}^*)\\
  \cdot q
  \end{array}
& q & q & q
\\
%%%%%%%%%%%%%%%%% row M_{14,15}
\Psi_{M{({A_{15}^*}(e_{14}+e_{15}))}}
& q^3
& 0 & 0 & 0
& 0
& \begin{array}{l}
\vartheta(2{A_{15}^*}t_4^*)\\\
\cdot  q^3
  \end{array}
& q^3 & q^3
\\
%%%%%%%%%%%%%%%%% row M_{16}
 \Psi_{M{(A_{16}^*e_{16})}}
& q^{4}
& 0 & 0 & 0
& 0
& 0
& \begin{array}{l}
  \vartheta(A_{16}^*t_{5}^*)\\
  \cdot q^{4}
  \end{array}
& q^{4}
\\
%%%%%%%%%%%%%%%%% row M_{17}
\Psi_{M{(A_{17}^*e_{17})}}
& q^{4}
& 0 & 0 & 0
& 0 & 0 & 0
& \begin{array}{l}
  \vartheta(A_{17}^*t_{6}^*)\\
  \cdot q^{4}
  \end{array}
\end{array}
\end{align*}
}

\end{table}

%%%%%%%%%%%%%%%%%%%%%%%%%%%%%%
\begin{Corollary}
The number of the supercharacters $G_2^{syl}(q)$ is
$|\mathcal{X}|=|\mathcal{M}|=|\mathcal{K}|
=q^2+4q-4
=(q-1)^2+6(q-1)+1$.
\end{Corollary}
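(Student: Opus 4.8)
The plan is to obtain the equalities $|\mathcal{X}|=|\mathcal{M}|=|\mathcal{K}|$ from results already proved, and then finish with an elementary count. For the first equality, by \ref{set of supercharacters-G2} the assignment $M\mapsto\Psi_M$ maps $\mathcal{M}$ onto $\mathcal{X}$, and it is injective because the characters $\Psi_M$ ($M\in\mathcal{M}$) are pairwise orthogonal and nonzero by \ref{G2-orth.}, hence pairwise distinct; one may also read their distinctness directly off the rows of Table \ref{table:supercharacter table-G2}. The second equality $|\mathcal{M}|=|\mathcal{K}|$ is condition (a) of Definition \ref{supercharacter theory}, which was checked family-by-family in part (a) of the proof of \ref{supercharacter theory-G2}.

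It then remains to count, say, $|\mathcal{K}|$. First I would list the members of $\mathcal{K}$ from \ref{superclass:ci ti-G2}: the one superclass $C_0$; for each $i\in\{1,2,3,4,5,6\}$ the $q-1$ superclasses $C_i(t_i^*)$, $t_i^*\in\mathbb{F}_q^*$; and the $(q-1)^2$ superclasses $C_{1,2}(t_1^*,t_2^*)$, $t_1^*,t_2^*\in\mathbb{F}_q^*$. These are pairwise distinct: by \ref{partition-G2} and Table \ref{table:partition-G2} the representatives lie in distinct cells $C_u^U$ — the families $C_0,C_1,\dots,C_6$ are separated by which of the coordinates $(t_1,\dots,t_6)$ is the first to be nonzero, $C_{1,2}$ is characterised by having both $t_1,t_2$ nonzero, and within a family the relevant coordinate equals the parameter (for instance every constituent of $C_2(t_2^*)$ has $t_2=t_2^*$). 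Since $\mathcal{K}$ partitions $U$, these exhaust $\mathcal{K}$, so $|\mathcal{K}|=1+6(q-1)+(q-1)^2$.

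Expanding, $1+6(q-1)+(q-1)^2=q^2+4q-4=(q-1)^2+6(q-1)+1$, which is the claimed value. I do not expect any real obstacle here; the one point needing a line of care is that the parametrizations in \ref{superclass:ci ti-G2} and \ref{notation:supermodules-G2} are injective in their parameters, which is immediate from the displayed tables. As an independent check one can count $\mathcal{M}$ instead (by \ref{notation:supermodules-G2}): the $q^2$ modules $M(A_{12}e_{12}+A_{23}e_{23})$ together with the four families of $q-1$ modules $M(A_{13}^*e_{13})$, $M(A_{15}^*(e_{14}+e_{15}))$, $M(A_{16}^*e_{16})$, $M(A_{17}^*e_{17})$ again give $q^2+4(q-1)=q^2+4q-4$.
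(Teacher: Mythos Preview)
Your proof is correct and is essentially the natural argument the paper has in mind; the corollary is stated without proof as an immediate consequence of \ref{superclass:ci ti-G2}, \ref{notation:supermodules-G2}, \ref{set of supercharacters-G2} and \ref{supercharacter theory-G2}, and your family-by-family count is exactly the intended verification.
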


%%%%%%%%%%%%%%%%%%%%%%%%
\begin{Definition}
 Let $A$ be a staircase pattern.
 Then  the \textbf{verge module} of $A$
 is the right $\mathbb{C}U$-module
 $\mathbb{C}\mathcal{V}(A)=\mathbb{C}\text{-span}\{[B] \mid B\in V,{\,} \mathrm{verge}(B)=\mathrm{verge}(A)\}$,
 %%%
 and the \textbf{first verge module} of $A$ is
 the right $\mathbb{C}U$-module
 $\mathbb{C}\mathcal{V}_1(A)=\mathbb{C}\text{-span}\{[B] \mid B\in V,{\,} \mathrm{verge}_1(B)=\mathrm{verge}_1(A)\}
 \supseteq \mathbb{C}\mathcal{V}(A)$.
\end{Definition}

%%%%%%%%%%%%%%%%%%%%%%%%%%%%%%%%%%%%%%%%%%%%%%%%%%%%%%%%
%%%%%%%%%%%%%%%%%%%%%
\begin{Comparison}[Supercharacters]
\label{com:supercharacter theory-G2}
Every supercharacter of the families $\mathfrak{F}_{1,2}$, $\mathfrak{F}_3$ and $\mathfrak{F}_6$
for $G_2^{syl}(q)$
is afforded by the verge module of some
staircase pattern,
and every supercharacter of the families $\mathfrak{F}_{4}$ and $\mathfrak{F}_5$
for $G_2^{syl}(q)$
is afforded by the first verge module of some staircase pattern
(see \ref{notation:supermodules-G2} and \ref{supercharacter theory-G2}).
%%%%%%%%%%%%%%%%
These also hold for the supercharacters of ${^3}D_4^{syl}(q^3)$
except the supercharacters of the family $\mathfrak{F}_{3}$
(see \cite[8.3 and 8.7]{sun3D4super})
\end{Comparison}

%%%%%%%%%%%%%%%%%%%%%%%%%%%%%%%%%%%%%%%%%%%%%%%%%%%%%%%%%%%%%%%%%%%%%%%%%%%%%%%%%%%%%%%%%%
%%%%%%%%%%%%%%%%%%%%%%%%%%%%%%%%%%%%%%%%%%%%%%%%%%%%%%%%%%%%%%%%%%%%%%%%%%%%%%%%%%%%%%%%%%%%
\section{Conjugacy classes}
\label{sec:conjugacy classes-G2}
In this section, we determine the conjugacy classes of $G_2^{syl}(q)$
(see \ref{prop:conjugacy classes-G2,p not 3}),
and establish the relations between the superclasses and the conjugacy classes of $G_2^{syl}(q)$
(see \ref{relations super- and conj. classes-G2}).
Let $U:=G_2^{syl}(q)$,
$\mathrm{char}{\,\mathbb{F}_q}=p>3$,
and $t_i\in \mathbb{F}_q$,
$t_i^*\in \mathbb{F}_q^*$ $(i=1,2,\dots,6)$.

%%%%%%%%%%%%%%%%%%%%%%%%%%%%%%%%%%%%%%%%%%%%%%%%%%%%
If $y,u\in U$, then the conjugate of $x$ by $u$ is
${^u}y:=uyu^{-1}$,
and the conjugacy class of $u$ is
${^U}y:=\left\{vyv^{-1} {\,}\middle|{\,}  v\in U \right\}$.
By the commutator relations, we obtain the following conjugate elements.
%%%%%%%%%%%%%%%%%%%%%%%%%%%%%%%%%%%%%%%%%%%%%%%%%%%%%%%%%%%%%%%%%%%%%
%%%%%%%%%%%%%%%%%%%%% conjugacy class, p \neq 3
\begin{Lemma}\label{prop:conjugacy classes of x_i-G2,p not 3}
Let $\mathrm{char}{\,\mathbb{F}_q}=p>3$, $u:=y(r_1,r_2,r_3,r_4,r_5,r_6)\in U$ and $y_i(t_i)\in U$.
Then
\begin{align*}
{^u}y_6(t_6)=&y_6(t_6),\qquad
{^u}y_5(t_5)=y_5(t_5)\cdot y_6(r_2t_5),\\
{^u}y_4(t_4)=& y_4(t_4)\cdot y_5(3r_1t_4)
                      \cdot y_6(3r_1r_2t_4+3r_3t_4),\\
%%%%%%%%%%%%%%
{^u}y_3(t_3)=&y_3(t_3)\cdot y_4(2r_1t_3)
                      \cdot y_5(3r_1^{2}t_3)
                      \cdot y_6(3r_1^{2}r_2t_3
                               -3r_1t_3^{2}
                               -3t_3r_4),\\
{^u}y_2(t_2)=&y_2(t_2)\cdot y_3(-r_1t_2)
                      \cdot y_4(-t_2r_1^{2})
                      \cdot y_5(-t_2r_1^{3})
                      \cdot y_6(-t_2r_5-t_2^2r_1^{3}-t_2r_1^{3}r_2),\\
{^u}y_1(t_1)=&y_1(t_1)\cdot y_3(r_2t_1)
                      \cdot y_4(-r_2t_1^{2}-2t_1r_3)
                      \cdot y_5(r_2t_1^{3}
                                 -6r_1r_3t_1
                                 +3r_3t_1^{2}
                                 -3t_1r_4
                                 )\\
                      & \cdot y_6(2r_2^2t_1^{3}
                                  -6r_1r_2r_3t_1
                                  +3r_2r_3t_1^{2}
                                  -3r_2r_4t_1
                                  -3t_1r_3^{2} ),
\end{align*}
and
\begin{align*}
{^u}\big(y_3(t_3)y_5(t_5)\big)
                     =&y_3(t_3)\cdot y_4(2r_1t_3)
                      \cdot y_5(t_5+3r_1^{2}t_3)
                      \cdot y_6(r_2t_5+3r_1^{2}r_2t_3
                               -3r_1t_3^{2}
                               -3t_3r_4),\\
{^u}\big(y_2(t_2)y_4(t_4)y_5(t_5)\big)
                      = &y_2(t_2)\cdot y_3(-r_1t_2)
                      \cdot y_4(t_4-t_2r_1^{2})
                      \cdot y_5(t_5-t_2r_1^{3}+3r_1t_4)\\
                      & \cdot y_6(-t_2r_5-t_2^2r_1^{3}-t_2r_1^{3}r_2
                          +3r_1r_2t_4
                          +3r_3t_4
                          +r_2t_5),\\
% %%%%%%%%%%%%%%%%%%%%%%%%%%%%% x6
{^u}\big(y_2(t_2)y_1(t_1)\big)
 =&y_2(t_2)y_1(t_1)\cdot y_3(r_2t_1-r_1t_2)
                      \cdot y_4(-r_2t_1^{2}-2t_1r_3
                                 -t_2r_1^{2}+2t_1t_2r_1)\\
                      & \cdot y_5(r_2t_1^{3}
                                 -6r_1r_3t_1
                                 +3r_3t_1^{2}
                                 -3t_1r_4
%%%%%%%%%%%%%%%%%%%%%%%%%%
%                       &\phantom{\cdot y_5(}
                                 -t_2r_1^{3}
                                -3r_1t_1^{2}t_2
                                +3t_1t_2r_1^{2} )\\
                      & \cdot y_6(2r_2^2t_1^{3}
                                  -6r_1r_2r_3t_1
                                  +3r_2r_3t_1^{2}
                                  -3r_2r_4t_1
                                  -3t_1r_3^{2} \\
%%%%%%%%%%%%%%%%%%%%%%% x2
                      &\phantom{\cdot y_6(}
                                  -t_2r_5-t_2^2r_1^{3}-t_2r_1^{3}r_2
                              -6r_1r_2t_1^{2}t_2
                              +3t_1t_2r_1^{2}r_2
%                       &\phantom{\cdot y_6(}
                              +3r_1^{2}t_1t_2^2 ).
\end{align*}

\end{Lemma}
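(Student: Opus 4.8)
The plan is to derive every displayed formula from the commutator relations of Lemma~\ref{commutator-G2} by exploiting the height filtration of $U=G_2^{syl}(q)$. Write $U_{\ge k}$ for the subgroup generated by the root subgroups $Y_i$ attached to roots of height $\ge k$; then $U_{\ge 2}=Y_3Y_4Y_5Y_6$, $U_{\ge 3}=Y_4Y_5Y_6$, $U_{\ge 4}=Y_5Y_6$ and $U_{\ge 5}=Y_6$, and the relations of~\ref{commutator-G2} show that each $U_{\ge k}$ is normal in $U$ with $[U,U_{\ge k}]\subseteq U_{\ge k+1}$. In particular $Y_6$ is central, which gives ${}^{u}y_6(t_6)=y_6(t_6)$ at once. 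For any root-subgroup generator $y_i(t_i)$ sitting at height $h$ one has ${}^{u}y_i(t_i)\in y_i(t_i)\,U_{\ge h+1}$, so it suffices to pin down the correction factor modulo successively higher pieces of the filtration.

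First I would treat the single generators $y_i(t_i)$ in order of decreasing height. Using the decomposition $u=y_2(r_2)y_1(r_1)y_3(r_3)y_4(r_4)y_5(r_5)y_6(r_6)$ and the identity ${}^{ab}x={}^{a}({}^{b}x)$, I conjugate $y_i(t_i)$ successively by $y_6(r_6),y_5(r_5),\dots,y_1(r_1),y_2(r_2)$. Each step uses $y_j(r_j)\,y_i(t_i)\,y_j(r_j)^{-1}=y_i(t_i)\cdot[y_i(t_i),y_j(r_j)^{-1}]$, where $[y_i(t_i),y_j(r_j)^{-1}]$ is read off from~\ref{commutator-G2} (it is trivial unless the two roots involved add up to a root); the resulting correction lies in $U_{\ge h+1}$, and conjugating it by the remaining factors only spawns further terms in $U_{\ge h+2}$. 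Since the filtration has length~$5$, the process stabilises after a bounded number of steps, and the coefficients appearing---$2r_1t_3$, $3r_1t_4$, $3r_1^2t_3$, and so on---are precisely the structure constants $N_{\cdot,\cdot}$ of~\ref{commutator-G2}; the hypothesis $p>3$ is exactly what guarantees that these integer coefficients do not collapse and that the $\tfrac12 t^2$-contributions behave as written.

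Next I would handle the three products $y_3(t_3)y_5(t_5)$, $y_2(t_2)y_4(t_4)y_5(t_5)$ and $y_2(t_2)y_1(t_1)$. Since conjugation is a homomorphism, ${}^{u}$ of a product is the product of the ${}^{u}$ of the factors, so I substitute the formulas already obtained and then re-collect the many root-subgroup factors back into the standard order $y_2y_1y_3y_4y_5y_6$, again using~\ref{commutator-G2}. The only genuinely new interactions are moves of $[y_1(\cdot),y_2(\cdot)]$-type (feeding $Y_3,Y_4,Y_5,Y_6$) and of $[y_3(\cdot),y_4(\cdot)]$- and $[y_2(\cdot),y_5(\cdot)]$-type (feeding $Y_6$), all of which live in higher filtration pieces, so the rewriting again terminates quickly.

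The main obstacle is purely the bookkeeping of signs and cross-terms: inverting products of root-subgroup elements and reordering them repeatedly, in a non-abelian group, spawns numerous correction terms (the $-6r_1r_3t_1$, $+3r_2r_3t_1^2$, $+3r_1^2t_1t_2^2$ contributions and their siblings), and it is easy to lose one. I expect ${}^{u}\bigl(y_2(t_2)y_1(t_1)\bigr)$ to be the most delicate case, since it mixes the two height-$1$ generators and therefore produces contributions in every higher root subgroup; organising that computation by working modulo $U_{\ge k}$ for $k=2,3,4,5$ in turn keeps it under control. Beyond ensuring these coefficients are the correct integers, no further use of $p>3$ is needed (in characteristic~$3$ the formulas genuinely degenerate, which is why that case is excluded).
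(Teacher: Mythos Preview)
Your proposal is correct and matches the paper's approach exactly: the paper simply says ``By the commutator relations, we obtain the following conjugate elements'' and states the lemma without further proof, so the content is precisely the systematic application of Lemma~\ref{commutator-G2} that you outline. Your organisation via the height filtration $U_{\ge k}$ and successive conjugation by the factors of $u$ is the natural way to carry this out and is implicitly what the paper has in mind.
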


%%%%%%%%%%%%%%%%%%%%%%%%%%%%%%%%%%%%%%%%%%%%%%%%%%%%%%%%%%%%%%%%%%%%%%%%%%%
\begin{Proposition}[Conjugacy classes of $G_2^{syl}(q)$]\label{prop:conjugacy classes-G2,p not 3}
If $\mathrm{char}{\,\mathbb{F}_q}=p>3$,
then the conjugacy classes of $G_2^{syl}(q)$ are
listed in Table \ref{table:conjugacy classes-G2,p not 3}.
\begin{table}[!htp]
\caption{Conjugacy classes of $G_2^{syl}(q)$ for $p>3$}
\label{table:conjugacy classes-G2,p not 3}
\begin{align*}
\begin{array}{|l|l|c|}\hline
%% row 1
\multicolumn{1}{|c|}{
\rule{0pt}{13pt}
\begin{array}{c}
\text{Representatives } y \in U \\
\end{array}
}
& \multicolumn{1}{c|}{\text{Conjugacy Classes } {^Uy}}
& |{^Uy}|
\\\hline
%% row 2
I_8  & y(0,0,0,0,0,0) & 1\\\hline
%% row 3
%\begin{array}{c}
y_6(t_6^*), {\ }
t_6^*\in \mathbb{F}_q^*
%\end{array}
& y(0,0,0,0,0,t_6^*) & 1\\\hline
%% row 4
y_5(t_5^*), {\ }
t_5^*\in \mathbb{F}_q^*
&
y(0,0,0,0,t_5^*,s_6), {\ }
 s_6\in \mathbb{F}_q
& q\\
\hline
%% row 5
y_4(t_4^*), {\ }
t_4^*\in \mathbb{F}_q^*
& y(0,0,0,t_4^*,s_5,s_6), {\ }
 s_5,s_6\in \mathbb{F}_q
& q^2\\
\hline
%% row 6
y(0,0,t_3^*,0,t_5,0),{\ }
t_3^*\in \mathbb{F}_{q}^*, {\, }
t_5\in \mathbb{F}_q
&
y(0,0,t_3^*,s_4,\hat{s}_5,s_6), {\ }
s_4, s_6\in \mathbb{F}_q
& q^2\\
\hline
%% row 7
y(0, t_2^*, 0, t_4, t_5, 0), {\ }
t_2^*\in \mathbb{F}_{q}^*, {\, }
t_4,t_5\in \mathbb{F}_q
&
y(0,t_2^*,s_3,\hat{s}_4,\hat{s}_5,s_6), {\ }
s_3, s_6\in \mathbb{F}_q
&  q^2 \\
%% row 8
\hline
%% row 9
%% row 10-a
y(t_1^*,0,0,0,0,t_6), {\ }
t_1^*\in \mathbb{F}_{q}^*, {\, }
t_6\in \mathbb{F}_q
& y(t_1^*,0,s_3,s_4,s_5,\hat{s}_6),{\ }
s_3,s_4,s_5\in \mathbb{F}_q
& q^3 \\\hline
%% row 11
y(t_1^*, t_2^*,0,0,0,0), {\ }
t_1^*,t_2^*\in \mathbb{F}_{q}^*
&
y(t_1^*,t_2^*,s_3,s_4,s_5,s_6), {\ }
s_3,s_4, s_5,s_6\in \mathbb{F}_q
& q^4\\\hline
\end{array}
\end{align*}
where $\hat{s}_{-}$ is determined by some of $t_{-}^*$, $t_{-}$ and $s_{-}$.
\end{table}
\end{Proposition}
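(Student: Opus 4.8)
The plan is to read off the conjugacy classes directly from the conjugation formulas collected in Lemma~\ref{prop:conjugacy classes of x_i-G2,p not 3}, organising the computation as a nested case analysis on the pair $(t_1,t_2)$ for a general element $y=y(t_1,t_2,t_3,t_4,t_5,t_6)$. The structural input, all read off from Lemma~\ref{commutator-G2}, is that $Y_6$ is central, that $Y_5Y_6$, $Y_4Y_5Y_6$, $Y_3Y_4Y_5Y_6$, $Y_1Y_3Y_4Y_5Y_6$ and $Y_2Y_3Y_4Y_5Y_6$ are normal in $U$, and hence that conjugation is trivial on $U/[U,U]$, so that $t_1$ and $t_2$ are automatically class invariants. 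A useful point is that the compound formulas in Lemma~\ref{prop:conjugacy classes of x_i-G2,p not 3} for ${}^u\!\big(y_3(t_3)y_5(t_5)\big)$, ${}^u\!\big(y_2(t_2)y_4(t_4)y_5(t_5)\big)$ and ${}^u\!\big(y_2(t_2)y_1(t_1)\big)$ already return the conjugate in normal form, so in every case that actually arises no re-collection into normal form is required.

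First I would treat $t_1=t_2=0$, where $y\in Y_3Y_4Y_5Y_6$, and sub-divide on the leading nonzero coordinate. If $t_3=t_3^*\neq0$, conjugating by $y_1(r_1)$ shifts $t_4$ by $2r_1t_3^*$ (legitimate since $2\in\mathbb{F}_q^*$ as $p>3$), so $t_4$ can be cleared; once $t_4=0$ only conjugations with $r_1=0$ preserve this, and then $t_5$ is fixed while $t_6$ shifts by $r_2t_5-3t_3^*r_4$ (legitimate since $3\in\mathbb{F}_q^*$), so $t_6$ can be cleared and $t_5$ survives, giving the representatives $y(0,0,t_3^*,0,t_5,0)$. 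If $t_3=0$, $t_4\neq0$, conjugation by $y_1(r_1)$ then $y_3(r_3)$ clears $t_5$ then $t_6$, giving $y_4(t_4^*)$; if $t_3=t_4=0$, $t_5\neq0$, conjugation by $y_2(r_2)$ clears $t_6$, giving $y_5(t_5^*)$; and $t_3=t_4=t_5=0$ leaves the central elements, i.e. $1$ and $y_6(t_6^*)$. The three cases with $(t_1,t_2)\neq(0,0)$ go the same way: the $y_3$-coefficient $r_2t_1-r_1t_2$ has a nonzero direction, so $t_3$ can be cleared; then the $-2t_1r_3$ (respectively $-t_1r_3$) term clears $t_4$, the $-3t_1r_4$ term clears $t_5$, and — when $t_2\neq0$ — the $-t_2r_5$ term clears $t_6$; one tracks how each step perturbs the later coordinates through the quadratic and cubic terms in $r_1,r_2$. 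This yields $y(0,t_2^*,0,t_4,t_5,0)$, $y(t_1^*,0,0,0,0,t_6)$ and $y(t_1^*,t_2^*,0,0,0,0)$, with $t_4,t_5$, respectively $t_6$, respectively nothing, surviving.

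For each representative I would then compute the class size (equivalently the centraliser order) from the same formulas and match it against Table~\ref{table:conjugacy classes-G2,p not 3}; the forced entries $\hat s_-$ in that table are precisely the residual terms left in the $t_4,t_5,t_6$-coefficients after the free parameters $s_-$ have been chosen. Non-conjugacy of distinct table entries follows from the invariants above: the image in $U/[U,U]$ separates $(t_1,t_2)$, and within each superclass $C_3(t_3^*)$, $C_2(t_2^*)$, $C_1(t_1^*)$ of \ref{superclass:ci ti-G2} the displayed surviving coordinates ($t_5$, then $(t_4,t_5)$, then $t_6$) are shown to be fixed by the whole stabiliser, not just by the one-parameter subgroups used in the reduction. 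As consistency checks the resulting partition refines the superclass partition $\mathcal{K}$, as it must by \ref{G2,biorbit-U}, and the number of classes agrees with the count obtained algorithmically in \cite{GMR2016, GR2009}.

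The main obstacle will be the bookkeeping in the cases $(t_1,t_2)\neq(0,0)$: clearing $t_3$ perturbs $t_4$, $t_5$ and $t_6$ through the nonlinear terms in Lemma~\ref{prop:conjugacy classes of x_i-G2,p not 3}, and one has to check carefully both that the subsequent normalisations remain available and that the claimed invariants are genuinely fixed by the full stabiliser. The hypothesis $p>3$ is used exactly here and in the case $t_1=t_2=0$, to invert the coefficients $2r_1t_3$, $2t_1r_3$, $3r_1t_4$, $3t_1r_4$ and so on; for $p=3$ the commutators of Lemma~\ref{commutator-G2} degenerate and the classification is genuinely different, which is why the statement is restricted to $p>3$.
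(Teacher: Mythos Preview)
Your proposal is correct and follows essentially the same approach as the paper: both rely on the conjugation formulas of Lemma~\ref{prop:conjugacy classes of x_i-G2,p not 3} to determine the classes by direct computation. The paper's proof is more terse --- it works out only the case $y_1(t_1^*)y_6(t_6)$ by reading off the orbit coordinates $a_3,a_4,a_5,a_6$ from the compound formula, observes that $a_6$ is determined once $a_3,a_4,a_5$ are fixed, and declares the remaining cases analogous --- whereas you give a more systematic normalisation procedure (reduce an arbitrary element to a representative, then verify distinctness via invariants); but the underlying computation is the same.
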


%%%%%%%%%%%%%%%%%%%%%%%%%%%%%
\begin{proof}
Let
$u:=y(r_1,r_2,r_3,r_4,r_5,r_6)\in U$,
$0\neq t_1\in \mathbb{F}_{q}^*$, $t_6\in  \mathbb{F}_{q}$,
%%%%%%%%
and
$y(a_1,a_2,a_3,a_4,a_5,a_6):={^u}\big(y_1(t_1)y_6(t_6)\big)$.
%%%%%%%%%%%%%
Then by \ref{prop:conjugacy classes of x_i-G2,p not 3},
$a_1= t_1$,
$a_2= 0$,
$a_3= r_2t_1$,
$a_4= -r_2t_1^{2}-2r_3t_1$,
  %%%
$a_5= r_2t_1^{3}
       -6r_1r_3t_1
       +3r_3t_1^{2}
       -3t_1r_4$,
$a_6= t_6+r_2^2t_1^{3}+r_2a_5-3r_3^2t_1$.
If $a_3$, $a_4$ and $a_5$ are fixed,
then $a_6$ is determined uniquely.
Hence the conjugacy classes of $y_1(t_1)y_6(t_6)$ is
\begin{align*}
{^U}\big(y_1(t_1)y_6(t_6)\big)=
 \left\{y(t_1,0,s_3,s_4,s_5,\hat{s}_6) \mid s_3,s_4,s_5\in\mathbb{F}_{q} \right\}.
\end{align*}
By \ref{prop:conjugacy classes of x_i-G2,p not 3}, the other conjugacy classes are determined analogously.
\end{proof}

%%%%%%%%%%%%%%%%%%%%%%%%%%%%%%%%%%%%%%%%%%%%%%%%%%%%%%%%%%%%%%%%%%%
\begin{Corollary}[Superclasses and conjugacy classes]\label{relations super- and conj. classes-G2}
Let $t_i\in \mathbb{F}_q$, $t_i^*\in \mathbb{F}_q*$ $(i=1,2,\dots, 6)$.
Then the relations between the superclasses and the conjugacy classes are determined.
 \begin{align*}
  C_6(t_6^*)=& {^U}y_6(t_6^*),\quad
    C_5(t_5^*)= {^U}y_5(t_5^*),\quad
    C_4(t_4^*)= {^U}y_4(t_4^*),\\
  C_3(t_3^*)=& \bigcup_{t_5\in \mathbb{F}_q}^{.}{{^U}\big(y_3(t_3^*)y_5(t_5)\big)},\quad
     C_2(t_2^*)= \bigcup_{t_4,t_5\in \mathbb{F}_q}^{.}{{^U}\big(y_2(t_2^*)y_4(t_4)y_5(t_5)\big)},\\
  C_1(t_1^*)=&  \bigcup_{t_6\in \mathbb{F}_q}^{.}{{^U}\big(y_1(t_1^*)y_6(t_6)\big)},\quad
     C_{1,2}(t_1^*,t_2^*)= {^U}\big(y_2(t_2^*)y_1(t_1^*)\big),\quad
     C_0 = \{1_U\}=\{ 1 \}.
 \end{align*}
Note that the superclasses $ C_1(t_1^*)$, $C_2(t_2^*)$
and $ C_3(t_3^*)$  are not conjugacy classes,
but the other superclasses are conjugacy classes.
\end{Corollary}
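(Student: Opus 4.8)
The plan is to compare the partition $\mathcal{K}$ of $U$ into superclasses, listed in Table \ref{table:partition-G2} and assembled in \ref{superclass:ci ti-G2}, with the partition of $U$ into conjugacy classes from Proposition \ref{prop:conjugacy classes-G2,p not 3}, exploiting the fact from Lemma \ref{G2,biorbit-U} that each set $C_u^U$ is a union of conjugacy classes of $U$. Each $C_i(t_i^*)$ is, by \ref{superclass:ci ti-G2}, one of the sets $C_u^U$ of Table \ref{table:partition-G2} — or, in the case of $C_2(t_2^*)$, a disjoint union of several of them — hence still a union of conjugacy classes; and the elements $y_6(t_6^*)$, $y_5(t_5^*)$, $y_4(t_4^*)$, $y_3(t_3^*)y_5(t_5)$, $y_2(t_2^*)y_4(t_4)y_5(t_5)$, $y_1(t_1^*)y_6(t_6)$, $y_2(t_2^*)y_1(t_1^*)$ visibly lie in the corresponding set of Table \ref{table:partition-G2}. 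Therefore every conjugacy class occurring on the right-hand side of a claimed identity is contained in the superclass on the left, and it remains to prove the reverse inclusions, which I would do by a cardinality count.

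First I would settle the cases that turn out to be single conjugacy classes. For $C_6(t_6^*)$, $C_5(t_5^*)$, $C_4(t_4^*)$ and $C_{1,2}(t_1^*,t_2^*)$ one reads off from Table \ref{table:conjugacy classes-G2,p not 3} that the conjugacy class of the chosen representative has cardinality $1$, $q$, $q^2$ and $q^4$ respectively, which coincides with $|C_i(t_i^*)|$ in Table \ref{table:partition-G2}; together with the inclusion noted above this forces equality, so these four superclasses are conjugacy classes.

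For the remaining cases I would extract class invariants from the explicit conjugation formulas of Lemma \ref{prop:conjugacy classes of x_i-G2,p not 3}. Writing a conjugate of $y_3(t_3^*)y_5(t_5)$ as $y(0,0,t_3^*,a_4,a_5,a_6)$, the formula gives $a_4=2r_1t_3^*$ and $a_5=t_5+3r_1^{2}t_3^*$, so that $t_5=a_5-3a_4^2/(4t_3^*)$ is constant along ${^U}(y_3(t_3^*)y_5(t_5))$ while (using $p>3$) $a_4$ and $a_6$ sweep out all of $\mathbb{F}_q$; hence the classes ${^U}(y_3(t_3^*)y_5(t_5))$ for $t_5\in\mathbb{F}_q$ are pairwise disjoint, each of size $q^2$, and their disjoint union has $q\cdot q^2=q^3=|C_3(t_3^*)|$ elements, so it equals $C_3(t_3^*)$. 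The same device recovers $t_6$ from a conjugate of $y_1(t_1^*)y_6(t_6)$ as a fixed expression in $a_3,a_4,a_5$, giving $q$ disjoint classes of size $q^3$ whose union is $C_1(t_1^*)$, and recovers both $t_4$ and $t_5$ from a conjugate of $y_2(t_2^*)y_4(t_4)y_5(t_5)$ as fixed expressions in $a_3,a_4,a_5$, giving $q^2$ disjoint classes of size $q^2$ whose union is $C_2(t_2^*)$. Finally, since $|C_3(t_3^*)|=q^3>q^2$, $|C_1(t_1^*)|=q^4>q^3$ and $|C_2(t_2^*)|=q^4>q^2$, each of $C_3(t_3^*)$, $C_1(t_1^*)$, $C_2(t_2^*)$ contains more than one conjugacy class, while the others are single conjugacy classes.

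The main obstacle I expect is the bookkeeping in the last paragraph: beyond the invariant that forces disjointness, one must check that along each conjugacy class the unconstrained coordinates genuinely range over all of $\mathbb{F}_q$, so that the disjoint union of the listed classes coincides with the full parametrised set of Table \ref{table:partition-G2} rather than a proper subset. This is routine from the conjugation formulas of \ref{prop:conjugacy classes of x_i-G2,p not 3}, but it relies on the hypothesis $p>3$ through the divisions by $2$, $3$ and $4$.
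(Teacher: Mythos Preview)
Your proposal is correct and follows essentially the same approach the paper implicitly takes: the corollary is stated without proof in the paper, as a direct comparison of Table~\ref{table:partition-G2} with Table~\ref{table:conjugacy classes-G2,p not 3}, using that each $C_u^U$ is a union of conjugacy classes (Lemma~\ref{G2,biorbit-U}). Your cardinality count and invariant extraction make this comparison explicit; note that the disjointness of the classes on the right-hand sides is already guaranteed by Table~\ref{table:conjugacy classes-G2,p not 3}, since the elements $y_3(t_3^*)y_5(t_5)$, $y_2(t_2^*)y_4(t_4)y_5(t_5)$, $y_1(t_1^*)y_6(t_6)$ appear there as distinct representatives, so the invariant argument, while correct, is not strictly needed.
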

%%%%%%%%%%%%%%%%%%%%%%%%%%%%%%%%%%%%%%%%%%%%%%%
\begin{Comparison}[Conjugacy classes]
\label{com:conjugacy classes-G2}
The classification of conjugacy classes of $G_2^{syl}(q)$
is similar to that of ${{^3D}_4^{syl}}(q^3)$ (see \cite[\S3]{sun1}).
\end{Comparison}

%-%-%-%-%-%-%-%-%-%-%-%-%-%-%-%-%-%-%-%-%
%                                       %
%          Irreducible characters       %
%                                       %
%-%-%-%-%-%-%-%-%-%-%-%-%-%-%-%-%-%-%-%-%

%%%%%%%%%%%%%%%%%%%%%%%%%%%%%%%%%%%%%%%%%%%%%%%%%%%%%%%%%%%%%%%%%%%%%%%%%%%%%%%%%%%%%%%%%%%%%%%%%%%%
%%%%%%%%%%%%%%%%%%%%%%%%%%%%%%%%%%%%%%%%%%%%%%%%%%%%%%%%%%%%%%%%%%%%%%%%%%%%%%%%%%%%%%%%%%%%%%%%%%%%
\section{Irreducible characters}
\label{sec:irreducible characters-G2}
%%%%%%%%%%%%
In this section,
we construct irreducible characters of $G_2^{syl}(q)$
(see \ref{construction of irr. char. of G2})
by Clifford's Theorem (see \cite{CR1}),
%%%%%%%
and determine the character table of $G_2^{syl}(q)$
in Table \ref{table:character table-G2,p not 3}.
%%%%%%

%%%%%%%%%%%%%%%%%%%%%
Let $G$ be a finite group,
$N$ a normal subgroup of $G$,
and
$K$ a field.
%%%%%%%%
Let $\mathrm{Irr}(G)$ be the set of all complex irreducible characters of $G$,
and $\mathrm{triv}_{G}$  the trivial character of $G$.
If $H$ is a subgroup of $G$,
$\chi\in \mathrm{Irr}(G)$ and $\lambda \in \mathrm{Irr}(H)$,
then
we denote by $\mathrm{Ind}_H^G{\lambda}$ the character induced from $\lambda$,
and denote by $\mathrm{Res}^G_H{\chi}$ the restriction of $\chi$ to $H$.
The center of $G$ is denoted by $Z(G)$.
The kernel of $\chi$ is
              $\ker{\chi}=\{g\in G \mid \chi(g)=\chi(1)\}$.
The commutator subgroup of $G$ is
             $G'=\left<{\,} [x,y] \mid  x,y\in G {\,}\right>$,
           where $[x,y]=x^{-1}y^{-1}xy$.
%%%%%%%%%%%%%%%%%%%%%%
If $\lambda \in \mathrm{Irr}(N)$,
then the inertia group in $G$ is
$I_G(\lambda)=\{g\in G \mid  \lambda^g=\lambda \}$
where $\lambda^g(n)=\lambda(gng^{-1})$ for all $n\in N$.
In particular, $N\unlhd I_G(\lambda) \leqslant G$.
%%%%%
Let $\mathrm{char}{\,\mathbb{F}_q}=p>3$,
$U:=G_2^{syl}(q)$,
$t_i\in \mathbb{F}_q$,
$t_i^*\in \mathbb{F}_q^*$
$(i=1,2,\dots,6)$,
and $A_{ij}\in \mathbb{F}_q$,
$A_{ij}^*\in \mathbb{F}_q^*$ $(1\leq i, j \leq 8)$.

%%%%%%%%%%%%%%%%% 2.1.24
 Let $\vartheta \colon \mathbb{F}_q^+ \to  \mathbb{C}^*$ denote a fixed nontrivial
 linear character of the additive group $\mathbb{F}_q^+$
 of $\mathbb{F}_q$
 once and for all.
 In particular, $\sum_{x\in \mathbb{F}_q^+}{\vartheta(x)}=0$.
% %%%%%%%%%%%%%%%%%%%%%%%%%%%%%%%%%%%%%%%%%%%%%
Let $b\in \mathbb{F}_{q}$ and
$\vartheta_b \colon  \mathbb{F}_q^+ \to  \mathbb{C}^*:y\mapsto \vartheta(by)$.
Then
$\mathrm{Irr}(\mathbb{F}_{q}^+)= \{\vartheta_b \mid  b\in \mathbb{F}_{q}\}$.
%%%%%%%%%%%%%%%%%%%%%%%%%%%%%%%%%%
 Let $G$ be a finite group, $Z(G)\subseteq N\trianglelefteq G$,
 and $\chi \in \mathrm{Irr}(G)$.
 Let $\lambda\in \mathrm{Irr}(N)$
 such that $\langle \mathrm{Res}^G_N\chi,  \lambda\rangle_N=e >0$.
 Then
$\left(\mathrm{Res}^G_N\chi \right)(g)=e \frac{|G|}{|I_G(\lambda)|}\lambda(g)$
 for all $g\in Z(G)$,
and $g\notin \ker \chi \iff  g \notin \ker \lambda$.
In particular,  if $X \leqslant Z(G)$, then  $X \nsubseteq \ker\chi$
if and only if $X \nsubseteq \ker\lambda$.

%%%%%%%%%%%%%%%%%%%%%%%%%%%%%%%%%%
%%%%%%%%%%%%%%%%%%%%%%%%%%%%%%%%%%%
\begin{Lemma}
If $Y_i\leqslant U$, then
$
  Z(U)= Y_6,\
  Z({Y_6}\backslash U)= \bar{Y}_5,\
  Z({Y_5Y_6}\backslash U)= \bar{Y}_4,\
  Z({Y_4Y_5Y_6}\backslash U)= \bar{Y}_3$,
and ${Y_4Y_4Y_5Y_6}\backslash U$ is abelian.
\end{Lemma}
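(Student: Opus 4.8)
The plan is to extract from the commutator relations of Lemma~\ref{commutator-G2} a central series for $U$, which yields every ``$\supseteq$'' part of the statement at once, and then to eliminate the unwanted coordinates with a short list of test commutators.

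First I would set $U_1:=U$, $U_2:=Y_3Y_4Y_5Y_6$, $U_3:=Y_4Y_5Y_6$, $U_4:=Y_5Y_6$, $U_5:=Y_6$ and $U_6:=1$. A glance at Lemma~\ref{commutator-G2} shows that every $[Y_i,Y_j]$ lies in the product of the root subgroups indexed by roots of strictly larger height, so each $U_k$ is a normal subgroup of $U$ and $[U,U_k]\subseteq U_{k+1}$ for $1\le k\le 5$; in particular $[U,U]\subseteq U_2$. Hence $U_k/U_{k+1}$ is central in $U/U_{k+1}$ for each $k$, which already gives $Y_6\subseteq Z(U)$, $\bar Y_5\subseteq Z(U/Y_6)$, $\bar Y_4\subseteq Z(U/Y_5Y_6)$, $\bar Y_3\subseteq Z(U/Y_4Y_5Y_6)$, and the abelianness of $U/Y_3Y_4Y_5Y_6$ (this being the quotient written $Y_3Y_4Y_5Y_6\backslash U$ in the statement).

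For the reverse inclusions I would take a generic $u=y(t_1,\dots,t_6)$ whose image in the quotient under consideration is central, and knock out the parameters in the order $t_2,t_1,t_3,t_4,t_5$, stopping as soon as the surviving parameters all lie in the prescribed root subgroup. The bookkeeping stays light: every commutator that appears already lies in the centre of the quotient at hand, so the identity $[ab,c]=[a,c]^{b}[b,c]$ degenerates to an ordinary product and all conjugations are trivial. For $Z(U)$ one computes $[u,y_5(s)]=y_6(t_2s)$ (forcing $t_2=0$), then $[u,y_4(s)]=y_5(3t_1s)\,y_6(3t_3s)$ (forcing $t_1=t_3=0$), then $[u,y_3(s)]=y_6(-3t_4s)$ (forcing $t_4=0$), then $[u,y_2(s)]=y_6(-t_5s)$ (forcing $t_5=0$), leaving $u\in Y_6$. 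In the three quotients the same scheme applies: there $t_1$ is first removed via $[u,y_4(s)]$ (or $[u,y_3(s)]$), after which $[u,y_1(s)]$ equals $y_3(t_2s)\,y_4(-t_2s^{2}-2t_3s)\,y_5(t_2s^{3}+3t_3s^{2}-3t_4s)$ modulo $Y_6$, so reading off the coefficients of $s$ in successive slots clears $t_2$, then $t_3$, then $t_4$, exactly as far as the relevant quotient requires.

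I do not expect a genuine obstacle: the only thing needing care is choosing the right test commutator at each stage and ordering the factors of $u$ so that no term coming from $[y_1,y_2]$ or $[y_1,y_3]$ is overlooked. Since the five nontrivial relations of Lemma~\ref{commutator-G2}, together with the vanishing of all remaining commutators, are completely explicit and the central-series structure kills the conjugations, the whole argument reduces to a short, mechanical verification.
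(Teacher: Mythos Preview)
Your approach is exactly the one the paper takes: its entire proof reads ``By the commutator relations, we get the centers of the groups.'' You have simply filled in the details, and your central--series setup together with the test commutators is correct.

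One small point to tidy up: in the quotient $U/Y_4Y_5Y_6$ your stated recipe of first removing $t_1$ via $[u,y_4(s)]$ or $[u,y_3(s)]$ does not work, since both of these commutators already lie in $Y_4Y_5Y_6$ and hence vanish in that quotient. There one should instead read off $t_2=0$ from $[u,y_1(s)]\equiv y_3(t_2 s)$ (valid modulo $Y_4Y_5Y_6$ regardless of $t_1$, since the conjugation defect lands in $Y_4Y_5Y_6$) and then use $[u,y_2(s)]\equiv y_3(-t_1 s)$ to force $t_1=0$. With this adjustment the argument goes through.
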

%%%%%%%%%%%%%%
\begin{proof}
By the commutator relations,
we get the centers of the groups.
\end{proof}

%%%%%%%%%%%%%%%%%%%%%%%%%%%%%%%%%%%%%%%%%%%%%%%%%%%%%%%%%%%%%%%%%%%%%%%%%%
%%%%%%%%%%%%%%%%%%%%%%%%%%%%%%%%%%%%%%%%%%%%%%%%%%%%%%%%
\begin{Lemma}\label{some inertia groups N-G2}
Let $T:=Y_2Y_3Y_4Y_5Y_6$,
$N:=Y_4Y_5Y_6$,
and $H:=Y_1Y_4Y_5Y_6$.
 \begin{itemize}
 \item[(1)]
The subgroup $N$ is abelian,
$N\trianglelefteq U$, $T\trianglelefteq U$ and $H\leqslant U$.
%%%%%%%%%%%%
\item[(2)] Let $\lambda \in \mathrm{Irr}(N)$ and $\mathrm{Res}^N_{Y_6}\lambda \neq \mathrm{triv}_{Y_6}$.
If $\lambda$ satisfies that $\mathrm{Res}^N_{Y_5}\lambda = \mathrm{triv}_{Y_5}$, then
$I_{U}(\lambda)=\{u\in U \mid \lambda^u=\lambda\}=H$.
%%%%%%%%%%%%%
\item[(3)]
If $\lambda \in \mathrm{Irr}(N)$,
% and $\mathrm{Res}^N_{Y_6}\lambda \neq \mathrm{triv}_{Y_6}$,
then the inertia group is
$
 I_{T}(\lambda)=
 {\left\{
 \begin{array}{ll}
  T & \text{if }\mathrm{Res}^N_{Y_6}{\lambda} =\mathrm{triv}_{Y_6}\\
  N & \text{if }\mathrm{Res}^N_{Y_6}{\lambda} \neq \mathrm{triv}_{Y_6}
 \end{array}
 \right.}$.
%%%%%%%%%%%%
\item[(4)]  If $\lambda \in \mathrm{Irr}(N)$,
% and $\mathrm{Res}^N_{Y_6}\lambda \neq \mathrm{triv}_{Y_6}$,
then the inertia group is
$
 I_{H}(\lambda)=
 {\left\{
 \begin{array}{ll}
  H & \text{if }\mathrm{Res}^N_{Y_5}{\lambda} =\mathrm{triv}_{Y_5}\\
  N & \text{if }\mathrm{Res}^N_{Y_5}{\lambda} \neq \mathrm{triv}_{Y_5}
 \end{array}
 \right.}$.
%%%%%%%%%%%%%%
\item [(5)]
If $\psi \in \mathrm{Irr}(T)$ and $Y_6=Z(T)\nsubseteq \ker\psi$,
then the inertia group is
$
 I_{U}(\psi)=\{u\in U \mid \psi^u=\psi\}=U$.
\end{itemize}
\end{Lemma}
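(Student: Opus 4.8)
The plan is to read everything off the commutator relations of \ref{commutator-G2}, the conjugation formulas of \ref{prop:conjugacy classes of x_i-G2,p not 3}, and elementary Clifford theory, using throughout that $\mathrm{char}\,\mathbb{F}_q=p>3$, so $2$ and $3$ are units in $\mathbb{F}_q$. Part~(1) is immediate: among $Y_4,Y_5,Y_6$ every commutator is trivial, so $N$ is abelian with $N\cong(\mathbb{F}_q^{+})^{3}$; among $Y_1,Y_4,Y_5,Y_6$ the only nontrivial commutator is $[Y_1,Y_4]\subseteq Y_5$, so $H=\langle Y_1,Y_4,Y_5,Y_6\rangle=Y_1Y_4Y_5Y_6$ is a pattern subgroup; and by \ref{prop:conjugacy classes of x_i-G2,p not 3} the conjugate of $y_i(t_i)$ by any element of $U$ is a product of $y_j$'s with $j\geq i$, so $Y_4Y_5Y_6$ and $Y_2Y_3Y_4Y_5Y_6$ are conjugation-stable, i.e. $N\trianglelefteq U$ and $T\trianglelefteq U$.

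For (2)--(4), since $N$ is abelian I would write $\lambda\in\mathrm{Irr}(N)$ as $\lambda\bigl(y_4(t_4)y_5(t_5)y_6(t_6)\bigr)=\vartheta(a_4t_4+a_5t_5+a_6t_6)$, so that $\mathrm{Res}^N_{Y_6}\lambda\neq\mathrm{triv}_{Y_6}$ means $a_6\neq 0$ and $\mathrm{Res}^N_{Y_5}\lambda=\mathrm{triv}_{Y_5}$ means $a_5=0$. For $u=y(r_1,\dots,r_6)$ the formulas of \ref{prop:conjugacy classes of x_i-G2,p not 3} give ${}^uy_6(t_6)=y_6(t_6)$, ${}^uy_5(t_5)=y_5(t_5)y_6(r_2t_5)$ and ${}^uy_4(t_4)=y_4(t_4)y_5(3r_1t_4)y_6\bigl((3r_1r_2+3r_3)t_4\bigr)$; evaluating $\lambda$ shows that $\lambda^u$ always has the same $Y_6$-restriction $a_6$, while $\lambda^u=\lambda$ reduces to two linear equations in $r_1,r_2,r_3$ with coefficients among $a_6,\ 3a_6,\ 3a_5$. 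Using $p>3$ and the relevant nonvanishing, these equations force precisely the right coordinates of $u$ to vanish; matching the outcome against $H=\{r_2=r_3=0\}$, $T=\{r_1=0\}$ and $N=\{r_1=r_2=r_3=0\}$ in the $y(r_1,\dots,r_6)$-parametrisation, and observing that the reverse inclusions follow from the same computation, yields (2), and also (3) and (4) after the obvious case split on whether $a_6$, respectively $a_5$, vanishes.

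For (5), let $\psi\in\mathrm{Irr}(T)$ with $Y_6=Z(T)\nsubseteq\ker\psi$; note $Z(T)=Y_6\subseteq N\trianglelefteq T$. By the central-subgroup principle recalled before the lemma, every constituent $\lambda$ of $\mathrm{Res}^T_N\psi$ has $\mathrm{Res}^N_{Y_6}\lambda\neq\mathrm{triv}_{Y_6}$, so part~(3) gives $I_T(\lambda)=N$, and Clifford's theorem then forces $\psi=\mathrm{Ind}_N^T\lambda$ with $\mathrm{Res}^T_N\psi=\sum_{\mu\in\mathcal{O}_T(\lambda)}\mu$, where $\mathcal{O}_T(\lambda)$ is the $T$-orbit of $\lambda$. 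The parameter computation of the previous paragraph (with $r_1=0$) identifies $\mathcal{O}_T(\lambda)$ with the full set of $\mu\in\mathrm{Irr}(N)$ satisfying $\mathrm{Res}^N_{Y_6}\mu=\mathrm{Res}^N_{Y_6}\lambda$; since $Y_6\subseteq Z(U)$, conjugation by any $u\in U$ fixes $Y_6$ pointwise and hence preserves this set, and together with $N\trianglelefteq U$ this gives $\mathrm{Res}^T_N(\psi^u)=\sum_{\mu\in\mathcal{O}_T(\lambda)}\mu^u=\mathrm{Res}^T_N\psi$. Thus $\psi^u$ also lies over $\lambda$, and since the Clifford correspondent over $\lambda$ is unique (as $I_T(\lambda)/N$ is trivial) we conclude $\psi^u=\psi$ for all $u\in U$, i.e. $I_U(\psi)=U$.

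The step I expect to be the main obstacle is (5): it is the only part genuinely needing Clifford theory, and one must check with some care that $Y_6\nsubseteq\ker\psi$ really forces $\mathrm{Res}^N_{Y_6}\lambda\neq\mathrm{triv}_{Y_6}$ on the constituents, and that $\mathcal{O}_T(\lambda)$ is the full fibre over $\mathrm{Res}^N_{Y_6}\lambda$ rather than a proper subset of it. The remainder is a direct, if mildly lengthy, substitution into the commutator and conjugation formulas, with $p>3$ used only to invert the structure constants $2$ and $3$.
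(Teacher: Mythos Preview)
Your argument is correct and is precisely the kind of direct computation from the commutator relations of \ref{commutator-G2} and the conjugation formulas of \ref{prop:conjugacy classes of x_i-G2,p not 3} that the paper intends; note that the paper actually omits the proof of this lemma altogether, treating it as routine (cf.\ the remark after \ref{com:sylow-G2} that omitted proofs are adaptations of the ${}^3D_4$ case). One tiny imprecision: the $Y_4$-condition $3a_5r_1+3a_6r_1r_2+3a_6r_3=0$ is not literally linear in $r_1,r_2,r_3$ because of the $r_1r_2$ term, but this is harmless since in each of (2)--(4) either $r_1=0$, $r_2=0$, or $a_6=0$ already holds, so the cross term drops out before you solve.
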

%%%%%%%%%%%%%%%%%%%%%%%%%%%%
We determine the irreducible characters of the abelian group $N:=Y_4Y_5Y_6$.
\begin{Lemma}\label{irr. char. X456-3D4}
 Let  $A_{17}, A_{16},A_{15}\in \mathbb{F}_q$and
$       \lambda^{A_{17}, A_{16}, A_{15}}(y_4(t_4)y_5(t_5)y_6(t_6))
          :=\vartheta(A_{17}t_6)
          \cdot \vartheta(A_{16}t_5)
          \cdot \vartheta(2A_{15}t_4)$.
Then
$\mathrm{Irr}(N)
 = \left\{ \lambda^{A_{17}, A_{16}, A_{15}}
      {\,}\middle|{\,} A_{17}, A_{16}, A_{15}\in \mathbb{F}_q\right\}$.
\end{Lemma}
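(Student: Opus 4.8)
The plan is to identify $N=Y_4Y_5Y_6$ explicitly as an internal direct product and then read off its character group. First I would note that, by the commutator table \ref{commutator-G2}, none of the non-trivial commutators of $G_2^{syl}(q)$ involves only indices from $\{4,5,6\}$, so $Y_4$, $Y_5$, $Y_6$ commute pairwise; together with the unique normal form $y(t_1,\dots,t_6)$ of \ref{sylow p-subg, G2} (which gives $Y_i\cap Y_jY_k=\{I_8\}$ for distinct $i,j,k\in\{4,5,6\}$) this exhibits $N$ as the internal direct product $Y_4\times Y_5\times Y_6$, with each $Y_i\cong\mathbb{F}_q^+$ via $y_i(t)\leftrightarrow t$. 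In particular $|N|=q^3$, consistent with $N$ being abelian as recorded in \ref{some inertia groups N-G2}.

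Next I would apply the two standard facts recalled just before the lemma: $\mathrm{Irr}(\mathbb{F}_q^+)=\{\vartheta_b\mid b\in\mathbb{F}_q\}$, and the irreducible characters of a finite direct product of abelian groups are precisely the outer products of irreducible characters of the factors. Hence every $\mu\in\mathrm{Irr}(N)$ has the shape $\mu(y_4(t_4)y_5(t_5)y_6(t_6))=\vartheta(b_4t_4)\,\vartheta(b_5t_5)\,\vartheta(b_6t_6)$ for a unique triple $(b_4,b_5,b_6)\in\mathbb{F}_q^3$, and conversely each such triple defines a (well-defined, since written in the direct-product coordinates) linear character of $N$; so $|\mathrm{Irr}(N)|=q^3$.

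Finally I would match parametrizations: putting $b_6=A_{17}$, $b_5=A_{16}$ and $b_4=2A_{15}$ turns the formula above into the defining formula for $\lambda^{A_{17},A_{16},A_{15}}$, so each $\lambda^{A_{17},A_{16},A_{15}}$ is an irreducible character of $N$. Since $\mathrm{char}\,\mathbb{F}_q=p>3$, in particular $p\neq 2$, the map $x\mapsto 2x$ is a bijection of $\mathbb{F}_q$; hence as $(A_{17},A_{16},A_{15})$ ranges over $\mathbb{F}_q^3$ the triple $(A_{17},A_{16},2A_{15})$ also ranges over $\mathbb{F}_q^3$, and therefore the characters $\lambda^{A_{17},A_{16},A_{15}}$ exhaust $\mathrm{Irr}(N)$ with no repetitions. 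The only slightly delicate points — and the closest thing here to an obstacle — are verifying the internal direct product decomposition from the commutator table and noticing that the innocuous factor $2$ is precisely what makes the hypothesis $p\neq2$ relevant; everything else is routine bookkeeping.
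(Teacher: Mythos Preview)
Your proposal is correct. The paper states this lemma without proof, treating it as an immediate consequence of $N$ being abelian (recorded in \ref{some inertia groups N-G2}) and the description of $\mathrm{Irr}(\mathbb{F}_q^+)$ recalled just before the lemma; your argument simply spells out these routine details, and the only substantive observation you add---that $p\neq 2$ is what makes the parametrization via $A_{15}\mapsto 2A_{15}$ bijective---is already covered by the paper's standing hypothesis that $p$ is odd.
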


%%%%%%%%%%%%%%%%%%%%%%%%%%%%
Now we determine the irreducible characters of the subgroup
$H=Y_1Y_4Y_5Y_6$ of $U$.
%%%%%%%%%%%%%%%%%%%%%%%%%%% character of H
\begin{Lemma}\label{irr. char. X1456-G2}
Let $H=Y_1Y_4Y_5Y_6$ and $\tilde{\chi}\in \mathrm{Irr}(H)$.
\begin{itemize}
 \item [(1)] If $Y_5\subseteq \ker\tilde{\chi}$,
then set
$\bar{H}_{146}:={{Y_5}\backslash H} \cong \bar{Y}_1\bar{Y}_4\bar{Y}_6$,
$\bar{\chi}^{A_{17}, A_{15}, A_{12}}\in \mathrm{Irr}(\bar{H}_{146})$,
\begin{align*}
\bar{\chi}^{A_{17}, A_{15}, A_{12}}(\bar{y}_1(t_1)\bar{y}_4(t_4)\bar{y}_6(t_6))
          :=\vartheta(A_{17}t_6)
          \cdot \vartheta(2A_{15}t_4)
          \cdot \vartheta(A_{12}t_1),
\end{align*}
 and $\tilde{\chi}^{A_{17}, A_{15}, A_{12}}$ be the lift of
 $\bar{\chi}^{A_{17}, A_{15}, A_{12}}$ to $H$.
 Thus
 \begin{align*}
 \mathrm{Irr}(H)_1:=& \{\tilde{\chi}\in \mathrm{Irr}(H)  \mid  Y_5\subseteq \ker\tilde{\chi}\}
 = \{\tilde{\chi}^{A_{17}, A_{15}, A_{12}}\in \mathrm{Irr}(H)
       \mid  A_{17},A_{15},A_{12}\in \mathbb{F}_{q}\}.
 \end{align*}
%%%%%%%%%%%%%%%%%%%%%%%
 \item [(2)] If $Y_5\nsubseteq \ker\tilde{\chi}$,
then
$
 \mathrm{Irr}(H)_2:= \{\tilde{\chi}\in \mathrm{Irr}(H)  \mid  Y_5\nsubseteq \ker\tilde{\chi}\}
 = \{\mathrm{Ind}_N^H\lambda^{A_{17}, A_{16}^*, 0}
       \mid  A_{17}\in \mathbb{F}_q, A_{16}^*\in \mathbb{F}_q^*\}$.
\end{itemize}
Thus, $\mathrm{Irr}(H)=\mathrm{Irr}(H)_1 \dot{\cup} \mathrm{Irr}(H)_2$,
i.e. $H$ has $q^3$ linear characters
and $(q-1)q$ irreducible characters of degree $q$.
%%%%%%%%%%%%%%%%%%%%%%%%%%%%%%%%%%%%%%%%%%
Let $y:=y(t_1,0,0,t_4,t_5,t_6)\in H=Y_1Y_4Y_5Y_6$ be a representative of one conjugacy class of $H$.
Then the character table of $H$ is shown in Table \ref{table:character table-H-G2}.
\begin{table}[!htp]
\caption{Character table of $H=Y_1Y_4Y_5Y_6$}
\label{table:character table-H-G2}
{
% \small
% \footnotesize
% \scriptsize
\begin{center}
\begin{tabular}{c|ccc}
$|^Hy|$
& $1$
& $q$
& $q$\\
%% row 1
\begin{tabular}{c}
%%%%%%
% {Representatives}  \\
$y$
%   $y_1(t_1)y_4(t_4)y_5(t_5) y_6(t_6)$
\end{tabular}
&  $y_5(t_5)y_6(t_6)$
&  $y_4(t_4^*) y_6(t_6)$
& $y_1(t_1^*)y_4(t_4) y_6(t_6)$
\\\hline
%% row 2
$\tilde{\chi}^{A_{17}, A_{15}, A_{12}}$
& $\vartheta(A_{17}t_6)$
& $\vartheta(A_{17}t_6+2A_{15}t_4^*)$
& $\vartheta(A_{17}t_6+2A_{15}t_4+A_{12}t_1^*)$
\\
%% row 3
$\mathrm{Ind}_N^H\lambda^{A_{17}, A_{16}^*, 0}$
&$q\cdot \vartheta(A_{17}t_6+A_{16}^*t_5)$
& $0$
& $0$\\
\end{tabular}
\end{center}
}%
\end{table}
% \end{Corollary}
\end{Lemma}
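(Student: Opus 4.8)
The plan is to analyze $\mathrm{Irr}(H)$ for $H = Y_1Y_4Y_5Y_6$ by splitting according to whether $Y_5 \subseteq \ker\tilde\chi$, then to compute the character values on the three conjugacy-class representatives. First I would note that $N = Y_4Y_5Y_6$ is abelian and normal in $H$ (by \ref{some inertia groups N-G2}(1)), so every $\tilde\chi \in \mathrm{Irr}(H)$ lies above some $\lambda \in \mathrm{Irr}(N)$, and by \ref{irr. char. X456-3D4} we have $\lambda = \lambda^{A_{17},A_{16},A_{15}}$. By \ref{some inertia groups N-G2}(4), $I_H(\lambda) = H$ if $A_{16} = 0$ (i.e. $\mathrm{Res}^N_{Y_5}\lambda = \mathrm{triv}_{Y_5}$) and $I_H(\lambda) = N$ otherwise. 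These two cases give exactly the dichotomy $\mathrm{Irr}(H) = \mathrm{Irr}(H)_1 \,\dot\cup\, \mathrm{Irr}(H)_2$.

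For part (1), when $Y_5 \subseteq \ker\tilde\chi$, the character factors through $\bar H_{146} := H/Y_5 \cong \bar Y_1\bar Y_4\bar Y_6$. Since the commutator relations of \ref{commutator-G2} show that modulo $Y_5$ the generators $y_1, y_4, y_6$ commute (the only relevant commutator $[y_1(t_1),y_4(t_4)] = y_5(3t_1t_4)$ becomes trivial), $\bar H_{146}$ is abelian of order $q^3$, hence has exactly $q^3$ linear characters, which are precisely the $\bar\chi^{A_{17},A_{15},A_{12}}$ as claimed; lifting back gives $\mathrm{Irr}(H)_1$. For part (2), when $Y_5 \nsubseteq \ker\tilde\chi$, the underlying $\lambda = \lambda^{A_{17},A_{16}^*, A_{15}}$ has $A_{16}^* \ne 0$ and inertia group $N$; by Clifford theory $\tilde\chi = \mathrm{Ind}_N^H \lambda$ is irreducible of degree $[H:N] = q$. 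It remains to check that one may normalize $A_{15}$ to $0$: conjugating $\lambda^{A_{17},A_{16}^*,A_{15}}$ by a suitable $y_1(t_1)$ shifts the $A_{15}$-parameter (via $[y_1(t_1),y_4(t_4)] = y_5(3t_1t_4)$, which feeds the $A_{16}^*$-coordinate into the $Y_4$-slot), so every $H$-orbit on the relevant characters of $N$ contains one with $A_{15} = 0$; distinct $(A_{17}, A_{16}^*)$ give non-conjugate $\lambda$, hence distinct induced characters. This yields $(q-1)q$ irreducible characters of degree $q$, and counting $q^3 \cdot 1 + (q-1)q \cdot q^2 = q^4 = |H|$ confirms completeness.

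Finally, for the character table I would evaluate on the three representatives $y_5(t_5)y_6(t_6)$, $y_4(t_4^*)y_6(t_6)$ and $y_1(t_1^*)y_4(t_4)y_6(t_6)$ (these represent the conjugacy classes of $H$ of sizes $1$, $q$, $q$ respectively, which one reads off from \ref{prop:conjugacy classes of x_i-G2,p not 3} restricted to $H$). The linear character $\tilde\chi^{A_{17},A_{15},A_{12}}$ evaluates directly by the definition of the lift, giving $\vartheta(A_{17}t_6 + 2A_{15}t_4 + A_{12}t_1)$ with the appropriate parameters zeroed on each representative. For $\mathrm{Ind}_N^H\lambda^{A_{17},A_{16}^*,0}$, I use the induced-character formula: it vanishes off $N$ (so is $0$ on the last two representatives, since $y_4(t_4^*)$ and $y_1(t_1^*)$ are not $H$-conjugate into $N$ — here one checks that the $Y_4$- and $Y_1$-parts cannot be removed by conjugation when $t_4^* \ne 0$ or $t_1^* \ne 0$), and on $y_5(t_5)y_6(t_6) \in N$ it equals $\sum_{h \in H/N} \lambda^h(y_5(t_5)y_6(t_6)) = q\cdot\vartheta(A_{17}t_6 + A_{16}^*t_5)$, because conjugation by the coset representatives $y_1(t_1)$ fixes the $Y_5$- and $Y_6$-coordinates of elements of $N$.

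The main obstacle I expect is the bookkeeping in part (2): verifying precisely which $H$-orbit on $\mathrm{Irr}(N)$ each induced character comes from, and confirming that the $A_{15}$-normalization is both possible and consistent with the parameter count — this requires carefully tracking how $[y_1(t_1), y_4(t_4)] = y_5(3t_1t_4)$ propagates through the action $\lambda \mapsto \lambda^{y_1(t_1)}$. The vanishing of the induced characters off $N$ is routine once one observes that $H/N \cong Y_1$ and no nontrivial element of $Y_1$ conjugates an element with nonzero $Y_1$-part into $N$; all other computations are direct applications of \ref{commutator-G2} and the definition of $\vartheta$.
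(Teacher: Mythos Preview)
Your overall approach is correct and matches what the paper does implicitly (the paper omits the proof, deferring to the parallel $^3D_4$ argument, but the Clifford-theoretic route via $N=Y_4Y_5Y_6$ and \ref{some inertia groups N-G2}(4) is exactly the intended one). The decomposition into $\mathrm{Irr}(H)_1$ and $\mathrm{Irr}(H)_2$, the abelianness of $H/Y_5$, the $A_{15}$-normalisation, and the degree count are all fine.

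There is one genuine slip in the character-table argument. You claim that $\mathrm{Ind}_N^H\lambda^{A_{17},A_{16}^*,0}$ vanishes on $y_4(t_4^*)y_6(t_6)$ because this element ``is not $H$-conjugate into $N$''. That is false: $y_4(t_4^*)y_6(t_6)\in Y_4Y_6\subseteq N$ already. The induced character does \emph{not} automatically vanish there; you must actually compute the sum over coset representatives. Using \ref{prop:conjugacy classes of x_i-G2,p not 3}, ${}^{y_1(r_1)}\!\bigl(y_4(t_4^*)y_6(t_6)\bigr)=y_4(t_4^*)y_5(3r_1t_4^*)y_6(t_6)$, so
\[
\bigl(\mathrm{Ind}_N^H\lambda^{A_{17},A_{16}^*,0}\bigr)\bigl(y_4(t_4^*)y_6(t_6)\bigr)
=\vartheta(A_{17}t_6)\sum_{r_1\in\mathbb{F}_q}\vartheta\bigl(3A_{16}^*t_4^*\,r_1\bigr)=0,
\]
since $3A_{16}^*t_4^*\neq 0$ (here $p>3$). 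So the table entry is correct, but your stated reason is not; fix the justification for that column. The vanishing on $y_1(t_1^*)y_4(t_4)y_6(t_6)$ \emph{is} for the reason you give (the $Y_1$-coordinate is conjugation-invariant in $H$, so no conjugate lies in $N$).
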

%%%%%%%%%%%%%%%%%%%%%%%%%%%%%%%%%%%%%%%
We obtain the irreducible characters of the
normal subgroup $T=Y_2Y_3Y_4Y_5Y_6$ of $U$.
%%%%%%%%%%%%%%%%%%%%%%%%%%% character of H
\begin{Lemma}\label{irr. char. X23456-G2}
If $T=Y_2Y_3Y_4Y_5Y_6$ and ${\psi}\in \mathrm{Irr}(T)$,
then $T'=Y_6$.
\begin{itemize}
 \item [(1)] If $Y_6\subseteq \ker{\psi}$,
let  $\bar{H}_{2345}:={{Y_6}\backslash T} \cong \bar{Y}_2\bar{Y}_3\bar{Y}_4\bar{Y}_5$,
     $\bar{\chi}^{A_{16}, A_{15},A_{13}, A_{23}}\in \mathrm{Irr}(\bar{H}_{2345})$,
     \begin{align*}
       & \bar{\chi}^{A_{16}, A_{15},A_{13}, A_{23}}
         (\bar{y}_2(t_2)\bar{y}_3(t_3)\bar{y}_4(t_4)\bar{y}_5(t_5))
           :=\vartheta(A_{16}t_5)
          \cdot \vartheta(2A_{15}t_4)
          \cdot \vartheta(-A_{13}t_3)
          \cdot \vartheta(A_{23}t_2),
      \end{align*}
 and ${\psi}^{A_{16}, A_{15},A_{13}, A_{23}}$ be the lift of
 $\bar{\chi}^{A_{16}, A_{15},A_{13}, A_{23}}$ to $T$.
Thus
\begin{align*}
 \mathrm{Irr}(T)_1:=& \{{\psi}\in \mathrm{Irr}(T)  \mid  Y_6\subseteq \ker{\psi}\}
 = \{{\psi}^{A_{16}, A_{15},A_{13}, A_{23}}
       \mid  A_{16},A_{15},A_{13}, A_{23}\in \mathbb{F}_{q}\}.
 \end{align*}
%%%%%%%%%%%%%%%%%%%%%%%
 \item [(2)] If $Y_6\nsubseteq \ker{\psi}$,
then
$\mathrm{Irr}(T)_2:= \{\psi\in \mathrm{Irr}(T)  \mid  Y_6\nsubseteq \ker\psi\}
 = \{\mathrm{Ind}_N^T\lambda^{A_{17}^*, 0, 0}
       \mid  A_{17}\in \mathbb{F}_q^* \}$.
\end{itemize}
Thus, $\mathrm{Irr}(T)=\mathrm{Irr}(T)_1 \dot{\cup} \mathrm{Irr}(T)_2$, i.e. $T$ has $q^4$ linear characters
and $(q-1)$ irreducible characters of degree $q^2$.
%%%%%%%%%%%%%%%%%%%%%%%%%%%%%%%%%%%%%%%%%%
 If $y:=y(0,t_2,t_3,t_4,t_5,t_6)\in T=Y_2Y_3Y_4Y_5Y_6$ is a representative of one conjugacy class of $T$,
 then the character table of $T$ is the one in Table \ref{table:character table-T-G2}.
\begin{table}[!htp]
\caption{Character table of $T=Y_2Y_3Y_4Y_5Y_6$}
\label{table:character table-T-G2}
{\small
%\tiny %% pt12
% \scriptsize
\begin{center}
\begin{tabular}{c|ccccc}
$|^Ty|$
& $1$
& $q$
& $q$
& $q$
& $q$\\
%% row 1
\begin{tabular}{c}
%%%%%%
$y$
\end{tabular}
&  $y_6(t_6)$
&  $y_5(t_5^*)$
&  $y_4(t_4^*) y_5(t_5)$
&  $y_3(t_3^*)y_4(t_4) y_5(t_5)$
&  \begin{tabular}{l}
    $y_2(t_2^*)y_3(t_3)$\\
    $\cdot y_4(t_4) y_5(t_5)$
   \end{tabular}
\\\hline
%% row 2
${\psi}^{A_{16}, A_{15},A_{13}, A_{23}}$
& $1$
& $\vartheta(A_{16}t_5^*)$
& \begin{tabular}{l}
     $\vartheta(A_{16}t_5)$\\
     $\cdot \vartheta(2A_{15}t_4^*)$\\
   \end{tabular}
& \begin{tabular}{l}
     $\vartheta(A_{16}t_5)$\\
     $\cdot \vartheta(2A_{15}t_4)$\\
     $\cdot \vartheta(-A_{13}t_3^*)$\\
   \end{tabular}
& \begin{tabular}{l}
     $\vartheta(A_{16}t_5)$\\
     $\cdot \vartheta(2A_{15}t_4)$\\
     $\cdot \vartheta(-A_{13}t_3)$\\
     $\cdot \vartheta(A_{23}t_2^*)$
   \end{tabular}
\\
%% row 3
$\psi^{A_{17}^*}$
& $q^2\cdot \vartheta(A_{17}^*t_6)$
& $0$
& $0$
& $0$
& $0$\\
\end{tabular}
\end{center}
}%
\end{table}
% \end{Corollary}
\end{Lemma}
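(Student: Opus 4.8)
The plan is to split $\mathrm{Irr}(T)$ according to whether the central subgroup $Y_6$ lies in the kernel, and to use Clifford theory with the abelian normal subgroup $N=Y_4Y_5Y_6$ of \ref{some inertia groups N-G2}(1) in the non-trivial case. First I would read off from \ref{commutator-G2} that the only non-trivial commutator among the generators $y_2,\dots,y_6$ of $T$ is $[y_2(s),y_5(t)]=y_6(st)$; hence $T'\leqslant Y_6$, and taking $s=1$ also $Y_6\leqslant T'$, so $T'=Y_6$. In particular $T/Y_6$ is abelian, so the linear characters of $T$ are exactly those with $Y_6\subseteq\ker\psi$, and there are $|T/Y_6|=q^4$ of them.

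In the case $Y_6\subseteq\ker\psi$ the character $\psi$ is inflated from a character of the abelian quotient $\bar H_{2345}=Y_6\backslash T\cong\bar Y_2\bar Y_3\bar Y_4\bar Y_5$, which is elementary abelian of order $q^4$. Its linear characters are precisely the products of linear characters of the four coordinate subgroups $\bar Y_i\cong\mathbb{F}_q^+$; writing each such product in the stated coordinates $(A_{16},A_{15},A_{13},A_{23})\in\mathbb{F}_q^4$, where the factor $\vartheta(2A_{15}t_4)$ and the sign in $\vartheta(-A_{13}t_3)$ are only a renormalisation compatible with the earlier conventions for $\lambda^{A_{17},A_{16},A_{15}}$ of \ref{irr. char. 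X456-3D4}, produces the $q^4$ members $\psi^{A_{16},A_{15},A_{13},A_{23}}$ of $\mathrm{Irr}(T)_1$. Well-definedness is immediate since the quotient is abelian, so there is nothing to check beyond that the assignment is a homomorphism coordinatewise.

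In the case $Y_6\nsubseteq\ker\psi$, choose $\lambda\in\mathrm{Irr}(N)$ below $\mathrm{Res}^T_N\psi$. Since $Y_6\leqslant Z(U)\leqslant Z(T)$ and $Y_6\nsubseteq\ker\psi$, the restriction $\mathrm{Res}^N_{Y_6}\lambda$ is non-trivial, so \ref{some inertia groups N-G2}(3) gives $I_T(\lambda)=N$, and Clifford theory forces $\psi=\mathrm{Ind}_N^T\lambda$, irreducible of degree $[T:N]=q^2$. To identify the parameter set I would compute the $T$-action on $\mathrm{Irr}(N)=\{\lambda^{A_{17},A_{16},A_{15}}\}$ from the conjugation formulas of \ref{prop:conjugacy classes of x_i-G2,p not 3}: conjugation by $y_2(r_2)$ fixes $Y_4,Y_6$ and sends $y_5(t)\mapsto y_5(t)y_6(r_2t)$, hence $A_{16}\mapsto A_{16}+A_{17}r_2$; conjugation by $y_3(r_3)$ fixes $Y_5,Y_6$ and sends $y_4(t)\mapsto y_4(t)y_6(3r_3t)$, hence $A_{15}\mapsto A_{15}+\tfrac{3}{2}A_{17}r_3$ (here $p>3$ makes $2,3$ invertible). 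When $A_{17}=A_{17}^*\neq 0$ these moves reach every pair $(A_{16},A_{15})$, so the $T$-orbit of $\lambda^{A_{17}^*,A_{16},A_{15}}$ equals $\{\lambda^{A_{17}^*,B_{16},B_{15}}:B_{16},B_{15}\in\mathbb{F}_q\}$, of full size $q^2=[T:N]$, with representatives $\lambda^{A_{17}^*,0,0}$; distinct $A_{17}^*$ give distinct central characters, hence inequivalent induced characters. This yields $\mathrm{Irr}(T)_2=\{\mathrm{Ind}_N^T\lambda^{A_{17}^*,0,0}\mid A_{17}^*\in\mathbb{F}_q^*\}$, of size $q-1$, and $q^4\cdot 1+(q-1)\cdot(q^2)^2=q^5=|T|$ confirms the list is exhaustive.

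For the character table, the linear characters are evaluated by reducing the five class representatives modulo $Y_6$ and reading off the coordinates, which reproduces the rows of Table \ref{table:character table-T-G2}. For $\psi^{A_{17}^*}:=\mathrm{Ind}_N^T\lambda^{A_{17}^*,0,0}$, since $N\trianglelefteq T$ the induced character vanishes on $T\setminus N$, which accounts for the last three columns (whose representatives involve $y_3$ or $y_2$); on $N$ one has $\psi^{A_{17}^*}(n)=\sum_{t\in T/N}\lambda^t(n)$, so on the central element $y_6(t_6)$ all summands agree and give $q^2\vartheta(A_{17}^*t_6)$, while on $y_5(t_5^*)$ one gets $\sum_{B_{16},B_{15}}\vartheta(B_{16}t_5^*)=q\sum_{B_{16}}\vartheta(B_{16}t_5^*)=0$ since $t_5^*\neq 0$, and similarly for $y_4(t_4^*)y_5(t_5)$. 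The main obstacle is the bookkeeping: getting the signs and the factor $3$ in the conjugation action right, and confirming that the five representatives of Table \ref{table:character table-T-G2} form a transversal of the conjugacy classes of $T$ (which follows from \ref{prop:conjugacy classes of x_i-G2,p not 3} restricted to $T$); once that is in place the rest is a routine check, most of the Clifford-theoretic work being already done in \ref{some inertia groups N-G2}.
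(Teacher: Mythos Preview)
Your argument is correct and follows exactly the Clifford-theoretic route the paper sets up in Lemmas \ref{some inertia groups N-G2} and \ref{irr. char. X456-3D4} (the paper itself omits the proof of this lemma, referring to the analogous ${}^3D_4$ computation). One small slip: you assert that the only non-trivial commutator among $y_2,\dots,y_6$ is $[y_2(s),y_5(t)]=y_6(st)$, but Lemma \ref{commutator-G2} also gives $[y_3(t_3),y_4(t_4)]=y_6(3t_3t_4)$; this does not affect your conclusion $T'=Y_6$, since both commutators land in $Y_6$ and $[y_2(1),y_5(t)]=y_6(t)$ already suffices for the reverse inclusion, but the claim as stated is inaccurate.
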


%%%%%%%%%%%%%%%%%%%%%%%%%%%%%%%%%%%%%%%%%%%%%%%%%%%%%%%%
Now we give the constructions of the irreducible characters of $G_2^{syl}(q)$.
%%%%%%%%%%%%%%%%%%%%%%%%%%%%%%%%%%%%%%%%%%%%%%%%
\begin{Proposition}\label{construction of irr. char. of G2}
Let $U=G_2^{syl}(q)$, $\mathrm{char}{\,\mathbb{F}_q}=p>3$,
and
$A_{ij}\in \mathbb{F}_q$,
$A_{ij}^*\in \mathbb{F}_q^*$ $(1\leq i, j \leq 8)$.
 \begin{itemize}
%%%%%%%%%%% lin
  \item [(1)]
  Let
$\bar{U}:={Y_3Y_4Y_5Y_6}\backslash U=\bar{Y}_2\bar{Y}_1$,
$\bar{\chi}_{lin}^{A_{12},A_{23}}\in \mathrm{Irr}(\bar{U})$,
$\bar{\chi}_{lin}^{A_{12},A_{23}}(\bar{y}_2(t_2)\bar{y}_1(t_1))
         :=\vartheta(A_{12}t_1)\cdot \vartheta(A_{23}t_2)$,
and  $\chi_{lin}^{A_{12},A_{23}}$
  be the lift of $\bar{\chi}_{lin}^{A_{12},A_{23}}$ to $U$.
Then
\begin{align*}
\mathfrak{F}_{lin}
    := \{\chi\in \mathrm{Irr}(U) \mid  Y_3Y_4Y_5Y_6\subseteq \ker{\chi}\}
    = \{\chi_{lin}^{A_{12},A_{23}}  \mid A_{12}, A_{23}\in \mathbb{F}_{q}\}.
\end{align*}
%%%%%%%%%%% A13
  \item [(2)]
  Let
  $\bar{U}:={Y_4Y_5Y_6}\backslash U=\bar{Y}_2\bar{Y}_1\bar{Y}_3$,
  $\bar{H}:=\bar{Y}_1\bar{Y}_3$,
  $\bar{\chi}_{3,q}^{A_{13},{A}_{12}}\in \mathrm{Irr}(\bar{H})$,
  $\bar{\chi}_{3,q}^{A_{13},{A}_{12}}(\bar{y}_1(t_1)\bar{y}_3(t_3))
         :=\vartheta({A}_{12}t_1-A_{13}t_3)$,
and
  $\chi_{3,q}^{A_{13}^*}$
  be the lift of $\mathrm{Ind}_{\bar{H}}^{\bar{U}} \bar{\chi}_{3,q}^{A_{13}^*,0}$ to $U$.
Then
  \begin{align*}
   \mathfrak{F}_{3}
    :=& \{\chi\in \mathrm{Irr}(U) \mid  Y_4Y_5Y_6\subseteq \ker{\chi},{\ }Y_3 \nsubseteq \ker{\chi}\}
    = \{\chi_{3,q}^{A_{13}^*}
     \mid A_{13}^*\in \mathbb{F}_{q}^*\}.
  \end{align*}
%%%%%%%%%%% A14,15
  \item [(3)]
  Let
  $\bar{U}:={Y_5Y_6}\backslash U=\bar{Y}_2\bar{Y}_1\bar{Y}_3\bar{Y}_4$,
  $\bar{H}:=\bar{Y}_2\bar{Y}_3\bar{Y}_4$,
  $\bar{\chi}_{4,q}^{A_{15},A_{23},A_{13}}\in \mathrm{Irr}(\bar{H})$,
  \begin{align*}
  \bar{\chi}_{4,q}^{A_{15},A_{23},A_{13}}(\bar{y}_2(t_2)\bar{y}_3(t_3)\bar{y}_4(t_4))
         :=\vartheta(A_{23}t_2)\cdot \vartheta(-A_{13}t_3)
            \cdot \vartheta(2A_{15}t_4),
  \end{align*}
and $\chi_{4,q}^{A_{15}^*,A_{23}}$
be the lift of $\mathrm{Ind}_{\bar{H}}^{\bar{U}} \bar{\chi}_{4,q}^{A_{15}^*,A_{23},0}$ to $U$.
Then
  \begin{align*}
   \mathfrak{F}_{4}
    :=& \{\chi\in \mathrm{Irr}(U) \mid  Y_5Y_6\subseteq \ker{\chi},{\ }Y_4 \nsubseteq \ker{\chi}\}
    = \{\chi_{4,q}^{A_{15}^*,A_{23}}
     \mid A_{15}^*\in \mathbb{F}_{q}^*, A_{23}\in \mathbb{F}_{q}\}.
  \end{align*}
%%%%%%%%%%% A16
  \item [(4)]
  Let
$\bar{U}:={Y_6}\backslash U=\bar{Y}_2\bar{Y}_1\bar{Y}_3\bar{Y}_4\bar{Y}_5$,
$\bar{H}:=\bar{Y}_2\bar{Y}_3\bar{Y}_4\bar{Y}_5$,
$\bar{\chi}_{5,q}^{A_{16},A_{23},A_{13},A_{15}}\in \mathrm{Irr}(\bar{H})$,
\begin{align*}
  &\bar{\chi}_{5,q}^{A_{16},A_{23},A_{13},A_{15}}
           (\bar{y}_2(t_2)\bar{y}_3(t_3)\bar{y}_4(t_4)\bar{y}_5(t_5))
         := \vartheta(A_{23}t_2)\cdot \vartheta(-A_{13}t_3)
           \cdot \vartheta(2A_{15}t_4)
           \cdot \vartheta(A_{16}t_5),
\end{align*}
and $\chi_{5,q}^{A_{16}^*,A_{23},A_{13}}$
  be the lift of $\mathrm{Ind}_{\bar{H}}^{\bar{U}} \bar{\chi}_{5,q}^{A_{16}^*,A_{23},A_{13},0}$ to $U$.
Then
  \begin{align*}
   \mathfrak{F}_{5}
    :=& \{\chi\in \mathrm{Irr}(U) \mid  Y_6\subseteq \ker{\chi},{\ }Y_5 \nsubseteq \ker{\chi}\}
    = \{\chi_{5,q}^{A_{16}^*,A_{23},A_{13}}
     \mid A_{16}^*\in \mathbb{F}_{q}^*, A_{23}, A_{13}\in \mathbb{F}_{q}\}.
  \end{align*}
%%%%%%%%%%% A17
  \item [(5)]
  Let
  $ H:=Y_1Y_4Y_5Y_6$,
  $ \bar{H}:={Y_4Y_5}\backslash H \cong \bar{Y}_1\bar{Y}_6$,
  $ \bar{\chi}_{6,q^2}^{A_{17},A_{12}}\in \mathrm{Irr}(\bar{H})$,
  and
  \begin{align*}
   \bar{\chi}_{6,q^2}^{A_{17},A_{12}}(\bar{y}_1(t_1)\bar{y}_6(t_6))
    :=\vartheta(A_{12}t_1)\cdot \vartheta(A_{17}t_6).
  \end{align*}
Let $\tilde{\chi}_{6,q^2}^{A_{17},A_{12}}$
   denote the lift of $\bar{\chi}_{6,q^2}^{A_{17},A_{12}}$
    from $\bar{H}$ to $H$,
and $\chi_{6,q^2}^{A_{17}^*,A_{12}}:=\mathrm{Ind}_{H}^{U} \tilde{\chi}_{6,q^2}^{A_{17}^*,A_{12}}$.
Then
  \begin{align*}
   \mathfrak{F}_{6}
    :=& \{\chi\in \mathrm{Irr}(U) \mid  Y_6 \nsubseteq \ker{\chi}\}
    = \{ \chi_{6,q^2}^{A_{17}^*,A_{12}}
     \mid A_{17}^*\in \mathbb{F}_{q}^*, A_{12}\in \mathbb{F}_{q}\}.
  \end{align*}
 \end{itemize}
Hence $\mathrm{Irr}(U)=\mathfrak{F}_{lin}\dot{\cup} \mathfrak{F}_{3}
                        \dot{\cup} \mathfrak{F}_{4} \dot{\cup} \mathfrak{F}_{5}\dot{\cup} \mathfrak{F}_{6}$.
\end{Proposition}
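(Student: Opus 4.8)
The plan is to split $\mathrm{Irr}(U)$ into five families according to which of the normal subgroups $Y_6$, $Y_5Y_6$, $Y_4Y_5Y_6$, $Y_3Y_4Y_5Y_6$ of $U$ is contained in the kernel of a given $\chi$, and to analyse each family by Clifford theory. Here $U$ and the quotients $U/Y_6$, $U/(Y_5Y_6)$, $U/(Y_4Y_5Y_6)$ have centres $Y_6=Z(U)$, $\bar Y_5$, $\bar Y_4$, $\bar Y_3$ and $U/(Y_3Y_4Y_5Y_6)$ is abelian, all read off from the commutator relations \ref{commutator-G2}. The partition is forced: if $Y_6\nsubseteq\ker\chi$ put $\chi$ in $\mathfrak{F}_6$; otherwise $\chi$ inflates from $U/Y_6$, and if $Y_5\nsubseteq\ker\chi$ put it in $\mathfrak{F}_5$; otherwise $\chi$ inflates from $U/(Y_5Y_6)$, and so on, the terminal case $Y_3Y_4Y_5Y_6\subseteq\ker\chi$ being $\mathfrak{F}_{lin}$. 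These five families are pairwise disjoint and exhaust $\mathrm{Irr}(U)$, so it remains to identify each of them with the displayed list; the identity $\sum_{\chi}\chi(1)^2=q^6$ serves as a running consistency check.

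For $\mathfrak{F}_{lin}$ one first shows $U'=Y_3Y_4Y_5Y_6$: all commutators of \ref{commutator-G2} lie in $Y_3Y_4Y_5Y_6$, while, using $p>3$, the pure relations $[y_1,y_4]=y_5(3t_1t_4)$ and $[y_3,y_4]=y_6(3t_3t_4)$ and then $[y_1,y_3]$ and $[y_1,y_2]$ produce all of $Y_6$, $Y_5$, $Y_4$, $Y_3$ in turn. Hence $U/U'\cong\mathbb{F}_q^{+}\times\mathbb{F}_q^{+}$ via $y(t_1,\dots,t_6)\mapsto(t_1,t_2)$, and its characters are the $\bar\chi^{A_{12},A_{23}}_{lin}$; lifting gives $\mathfrak{F}_{lin}$. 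The families $\mathfrak{F}_3$, $\mathfrak{F}_4$, $\mathfrak{F}_5$ are handled by a common pattern: pass to $\bar U:=U/(Y_4Y_5Y_6)$, $U/(Y_5Y_6)$, $U/Y_6$ respectively; read off from \ref{commutator-G2} an abelian normal subgroup $\bar H$ of index $q$ (namely $\bar Y_1\bar Y_3$, $\bar Y_2\bar Y_3\bar Y_4$, $\bar Y_2\bar Y_3\bar Y_4\bar Y_5$) and the linear characters $\bar\chi_{i,q}$ of $\bar H$ as in the statement; check that the stabiliser in $\bar U$ of such a character equals $\bar H$ as soon as the coordinate $A_{13}^{*}$ (resp. $A_{15}^{*}$, resp. $A_{16}^{*}$) is nonzero, so that $\mathrm{Ind}_{\bar H}^{\bar U}$ of it is irreducible of degree $q$; and finally compute the $\bar Y_1$-conjugation orbits on the relevant sets of linear characters of $\bar H$ (here $p>3$ is used again, to invert the coefficients $2$ and $3$ of \ref{commutator-G2}) to see that the \emph{normalised} parameter lists in items (2)--(4) meet each orbit exactly once. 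Inflating back to $U$ yields $\mathfrak{F}_3$, $\mathfrak{F}_4$, $\mathfrak{F}_5$.

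For $\mathfrak{F}_6$, since $Y_6=Z(U)\nsubseteq\ker\chi$ we have $\mathrm{Res}^U_{Y_6}\chi=\chi(1)\vartheta_{A_{17}^{*}}$ with $A_{17}^{*}\ne 0$. Restricting further to the abelian normal subgroup $N=Y_4Y_5Y_6$ and invoking \ref{irr. char. X456-3D4}, $\mathrm{Res}^U_N\chi$ is a sum of $U$-conjugates of some $\lambda^{A_{17}^{*},A_{16},A_{15}}$; conjugating by a suitable $y_2(s)$ removes $A_{16}$ (this uses $[y_2,y_5]=y_6(t_2t_5)$ and $A_{17}^{*}\ne 0$) and then by a suitable $y_3(s)$ removes $A_{15}$ (using $[y_3,y_4]=y_6(3t_3t_4)$ and $p>3$), so $\chi$ lies over $\lambda:=\lambda^{A_{17}^{*},0,0}$. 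By \ref{some inertia groups N-G2}(2), $I_U(\lambda)=H=Y_1Y_4Y_5Y_6$; as $H/N$ is cyclic, $\lambda$ extends to $H$, the $q$ extensions being exactly the $\tilde\chi^{A_{17}^{*},A_{12}}_{6,q^2}$, and the Clifford correspondence identifies $\mathrm{Irr}(U\mid\lambda)$ with $\{\mathrm{Ind}_H^U\tilde\chi^{A_{17}^{*},A_{12}}_{6,q^2}\mid A_{12}\in\mathbb{F}_q\}=\{\chi^{A_{17}^{*},A_{12}}_{6,q^2}\}$, all of degree $[U:H]=q^2$. Running over $A_{17}^{*}\in\mathbb{F}_q^{*}$ (distinct central characters give disjoint sets) produces $\mathfrak{F}_6$, and $\sum_{\chi}\chi(1)^2=q^6$ confirms that no characters are missing.

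The main obstacle is the bookkeeping in families $\mathfrak{F}_3$--$\mathfrak{F}_6$: one must pin down each inertia/stabiliser group on the nose so the induced characters are genuinely irreducible, compute the $y_1$-conjugation action on the pertinent linear characters, and check that the chosen representatives --- with the extra coordinates $A_{13}$, $A_{15}$, $A_{16}$ set to $0$ --- hit every conjugacy orbit precisely once, so the parameter lists are complete and irredundant. It is exactly in these orbit computations that $p>3$ enters, via invertibility of $2$ and $3$ in $\mathbb{F}_q$; for $p=3$ the commutator structure of $U$ (compare the second half of \ref{commutator-G2}) is different and the whole scheme would have to be redone.
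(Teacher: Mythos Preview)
Your proposal is correct and follows essentially the same Clifford-theoretic approach as the paper, which likewise partitions $\mathrm{Irr}(U)$ by the chain $Y_6\subseteq Y_5Y_6\subseteq Y_4Y_5Y_6\subseteq Y_3Y_4Y_5Y_6$ and only details the hard case $\mathfrak{F}_6$. The one minor difference is that for $\mathfrak{F}_6$ the paper first passes through the intermediate normal subgroup $T=Y_2Y_3Y_4Y_5Y_6$ and invokes \ref{irr. char. X23456-G2} to see that any $\chi$ with $Y_6\nsubseteq\ker\chi$ lies over $\lambda^{A_{17}^*,0,0}$, whereas you reach the same conclusion by explicitly conjugating an arbitrary $\lambda^{A_{17}^*,A_{16},A_{15}}$ by $y_2(s)$ and $y_3(s)$; both routes then finish identically via $I_U(\lambda^{A_{17}^*,0,0})=H$ from \ref{some inertia groups N-G2}(2) and the Clifford correspondence.
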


% %%%%%%%%%%%%%%%%%%%%%%%%%%%%%
 \begin{proof}
 Let $\chi \in \mathrm{Irr}(U)$.
 We prove the hard case:
% %%%%%%%%%%%%%%%%%%%%%%%%%%%%%%%%%%%%%%%%%%%%%%%%%%%%%%%%%%%%%%%%%%%%%%%%%%%%
% %%%%%%%%%%%%%%%%%%%%%%%%%%%%%%%%%%%%%%
% \item [(5)]
  Family $\mathfrak{F}_{6}$, where $Y_6\nsubseteq \ker\chi$.
 %%%%% A17
 Let $T=Y_2Y_3Y_4Y_5Y_6$,  $N=Y_4Y_5Y_6$,
 and $\chi\in \mathrm{Irr}(U)$ such that $Y_6\nsubseteq \ker(\chi)$.
 Then $Z(T)=Z(U)=Y_6$.
 If $\psi\in \mathrm{Irr}(T)$ and
 $\langle \psi, \mathrm{Res}^U_T \chi \rangle_T >0$,
  then $Y_6\nsubseteq \ker \psi$.
 %%%%%%%%%%%%%
 %%%%%%%%%%%%%%%% \lambda
 Let $\lambda^{A_{17}, A_{16}, A_{15}}\in \mathrm{Irr}(N)$ and
 $\psi^{A_{17}^*}:=\mathrm{Ind}_N^T{\lambda^{A_{17}^*,0,0}}$.
 %%%%%%%%%%%%%%%%%%%%%%%
 Then
 by \ref{irr. char. X23456-G2},
 we have
 $     \{\psi\in \mathrm{Irr}(T) \mid  Y_6 \nsubseteq \ker{\psi}\}
     = \{\mathrm{Ind}_N^T{\lambda^{A_{17}^*,0,0}} \mid A_{17}^*\in \mathbb{F}_{q}^*\}
     = \{\psi^{A_{17}^*} \mid A_{17}^*\in \mathbb{F}_{q}^*\}$.
 %%%%%%%%%%%%%%%%%%%%%%%%%%%%%%%
 By (5) of \ref{some inertia groups N-G2}, we have $I_U(\psi^{A_{17}^*})=U$,
 so $\mathrm{Res}^{U}_T{\chi}=z^*\psi^{A_{17}^*}$ for some $z^*\in \mathbb{N}^*$.
 Thus
 \begin{align*}
  \mathfrak{F}_6
 =& \{\chi\in \mathrm{Irr}(U) \mid Y_6 \nsubseteq \ker\chi\}
 = \bigcup _{\substack{\psi\in \mathrm{Irr}(T)\\ Y_6 \nsubseteq \ker\psi}}
    \{\chi\in \mathrm{Irr}(U) \mid   \langle \chi, \mathrm{Ind}_T^U\psi \rangle_U>0 \}\\
 {=}
  & \bigcup _{A_{17}^*\in \mathbb{F}_q^*}\{\chi\in \mathrm{Irr}(U)
      \mid   \langle \chi ,
                \mathrm{Ind}_T^U{\psi^{A_{17}^*}} \rangle_U>0\}
 {=}
   \bigcup _{A_{17}^*\in \mathbb{F}_q^*}\{\chi\in \mathrm{Irr}(U)
      \mid   \langle \chi , \mathrm{Ind}_N^U{\lambda^{A_{17}^*,0,0}}\rangle_U>0\}.
 %                  \langle\chi, \mathrm{Ind}^U_N{}\rangle_U >0\}.
 \end{align*}
 %%%%%%%%%%%%%%%%%%%%%%%%%%%%%%%%%%%%%%%%%%%%%%%%%%%%%%
 %%%%%%%%%%%% H
If $H={Y}_1{Y}_4{Y}_5{Y}_6$,
 then $H'=Y_5$ and $Z(H)=Y_4Y_5\trianglelefteq H$.
 %%%%%%
 Let $\tilde{\chi}^{A_{17},A_{15},A_{12}}\in \mathrm{Irr}(H)$
 as in (1) of \ref{irr. char. X1456-G2}.
 For all $y_4(t_4)y_5(t_5)y_6(t_6)\in N$,
 \begin{align*}
    & \left(\mathrm{Res}^H_N \tilde{\chi}^{A_{17}^*,0,A_{12}}\right)(y_4(t_4)y_5(t_5)y_6(t_6))
  =  \tilde{\chi}^{A_{17}^*,0,A_{12}}(y_4(t_4)y_5(t_5)y_6(t_6))\\
  = & \bar{\chi}(\bar{y}_4(t_4)\bar{y}_6(t_6))
  =  \vartheta(A_{17}^*t_6)
  =  \lambda^{A_{17}^*,0,0}(y_4(t_4)y_5(t_5)y_6(t_6)).
 \end{align*}
 Thus $\mathrm{Res}^H_N {\tilde{\chi}^{A_{17}^*,0,A_{12}}}=\lambda^{A_{17}^*,0,0}$
    { for all } $A_{12}\in \mathbb{F}_{q}$.
 By (4) of \ref{some inertia groups N-G2}, we have
        $I_H(\lambda^{A_{17}^*,0,0})=H$.
Thus $\mathrm{Ind}_N^H{\lambda^{A_{17}^*,0,0}}
          =\sum_{A_{12}\in \mathbb{F}_q}{\tilde{\chi}^{A_{17}^*,0,A_{12}}} $.
 %%%%%
 By (2) of \ref{some inertia groups N-G2},
 we get
        $I_U(\lambda^{A_{17}^*,0,0})=H$.
By Clifford's Theorem, we obtain that $\mathrm{Ind}_H^U \tilde{\chi}^{A_{17}^*,0,A_{12}}\in \mathrm{Irr}(U)$
 for all $A_{17}^*\in \mathbb{F}_q^*$.
 Thus
 \begin{align*}
  \mathfrak{F}_6
 {=}& \bigcup _{A_{17}^*\in \mathbb{F}_q^*}\{\chi\in \mathrm{Irr}(U)
      \mid   \langle \chi ,
                      \mathrm{Ind}_H^U\mathrm{Ind}_N^H{\lambda^{A_{17}^*,0,0}}\rangle_U>0\}\\
 {=}& \bigcup _{\substack{A_{17}^*\in \mathbb{F}_q^*\\A_{12}\in \mathbb{F}_{q}}}
       \{\chi\in \mathrm{Irr}(U)
      \mid   \langle \chi , \mathrm{Ind}_H^U{\tilde{\chi}^{A_{17}^*,0,A_{12}}}\rangle_U>0 \}
 {=} \{\mathrm{Ind}_H^U{\tilde{\chi}^{A_{17}^*,0,A_{12}}}
      \mid   A_{17}^*\in \mathbb{F}_q^*,A_{12}\in \mathbb{F}_{q}\}.
 \end{align*}

 %%%%%%%%
 For $A_{17}^*\in \mathbb{F}_q^*$ and $A_{12}\in \mathbb{F}_{q}$,
 $Y_4Y_5\subseteq \ker(\tilde{\chi}^{A_{17}^*,0,A_{12}})$ and $Y_4Y_5 \trianglelefteq H$.
 Thus $\tilde{\chi}^{A_{17}^*,0,A_{12}}$ is the lift to $H$ of some irreducible character of
  $\bar{H}:={{Y_4Y_5}\backslash H}\cong \bar{Y}_1\bar{Y}_6$.
 Let
 $ \bar{\chi}_{6,q^2}^{A_{17}^*,A_{12}}\in \mathrm{Irr}(\bar{H})$,
 $\bar{\chi}_{6,q^2}^{A_{17}^*,A_{12}}(\bar{y}_1(t_1)\bar{y}_6(t_6))
     :=\vartheta(A_{12}t_1)\cdot \vartheta(A_{17}^*t_6)$,
 and
   $\tilde{\chi}_{6,q^2}^{A_{17}^*,A_{12}}$
     denote the lift of $\bar{\chi}_{6,q^2}^{A_{17}^*,A_{12}}$
     from $\bar{H}$ to $H$.
 Then $\tilde{\chi}_{6,q^2}^{A_{17}^*,A_{12}}=\tilde{\chi}^{A_{17}^*,0,A_{12}}$.
 If ${\chi}_{6,q^2}^{A_{17}^*,A_{12}}:=\mathrm{Ind}_H^U{\tilde{\chi}_{6,q^2}^{A_{17}^*,A_{12}}}$,
 then
 $\mathfrak{F}_6
 {=} \{\mathrm{Ind}_H^U{\tilde{\chi}^{A_{17}^*,0,A_{12}}}
      \mid   A_{17}^*\in \mathbb{F}_q^*,A_{12}\in \mathbb{F}_{q}\}
 {=} \{{\chi}_{6,q^2}^{A_{17}^*,A_{12}}
         \mid A_{17}^*\in \mathbb{F}_q^*, A_{12}\in \mathbb{F}_{q}\}$.
% \end{itemize}
 \end{proof}

%-%-%-%-%-%-%-%-%-%-%-%-%-%-%-%-%-%-%-%-%
%                                       %
%          character table              %
%                                       %
%-%-%-%-%-%-%-%-%-%-%-%-%-%-%-%-%-%-%-%-%

%%%%%%%%%%%%%%%%%%%%%%%%%%%%%%%%%%%%%%%%%%%%%%%%%%%%%%%%%%%%%%%%%%%%%%%%%%%%%%%%%%%%%%%%%%%%%%%%%%%%
%%%%%%%%%%%%%%%%%%%%%%%%%%%%%%%%%%%%%%%%%%%%%%%%%%%%%%%%%%%%%%%%%%%%%%%%%%%%%%%%%%%%%%%%%%%%%%%%%%%%

%%%%%%%%%%%%%%%%%%%%%%%%%%%%%%%%%%%%%%%%%%%%%%%%%%%%%%%%%%%%%%%%%%%%%
%%%%%%%%%%%%%%%%%%%%%%%%%%%%%%%%%%%%%%%%%%%%%%%%%%%%%%%%%%%%
\begin{Proposition}
\label{prop:character table-G2,p not 3}
The character table of $G_2^{syl}(q)$ $(q>3)$
is shown in Table \ref{table:character table-G2,p not 3}.
\end{Proposition}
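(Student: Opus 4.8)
The plan is to evaluate, family by family, each irreducible character produced in \ref{construction of irr. char. of G2} on the eight conjugacy-class representatives listed in Table~\ref{table:conjugacy classes-G2,p not 3}. The degrees are read off at once from the indices of the inducing subgroups: $\chi_{lin}^{A_{12},A_{23}}$ is linear; $\chi_{3,q}^{A_{13}^*}$, $\chi_{4,q}^{A_{15}^*,A_{23}}$ and $\chi_{5,q}^{A_{16}^*,A_{23},A_{13}}$ have degree $q$, each being the lift of an $\mathrm{Ind}_{\bar H}^{\bar U}$ with $[\bar U:\bar H]=q$; and $\chi_{6,q^2}^{A_{17}^*,A_{12}}=\mathrm{Ind}_H^U\tilde{\chi}_{6,q^2}^{A_{17}^*,A_{12}}$ has degree $[U:H]=q^2$. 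As an immediate consistency check, $\sum_{\chi}\chi(1)^2=q^2+(q-1)(q^2+q^3+q^4+q^5)=q^6=|U|$, and the number of characters equals the number of conjugacy classes, so it remains only to compute the entries of Table~\ref{table:character table-G2,p not 3}.

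For $\mathfrak{F}_{lin}$ the values are immediate from the definition of the lift. For $\mathfrak{F}_3$, $\mathfrak{F}_4$, $\mathfrak{F}_5$ and $\mathfrak{F}_6$ the computation rests on two standard facts: transitivity of induction, and the vanishing of an induced character $\mathrm{Ind}_L^M\psi$ on any $m\in M$ not $M$-conjugate into $L$, together with the formula $\mathrm{Ind}_L^M\psi(m)=\sum_i\psi(x_i m x_i^{-1})$ over coset representatives with $x_i m x_i^{-1}\in L$ otherwise. Concretely, I would use the already-established character tables of $H=Y_1Y_4Y_5Y_6$ (Table~\ref{table:character table-H-G2}) and $T=Y_2Y_3Y_4Y_5Y_6$ (Table~\ref{table:character table-T-G2}), together with the conjugation formulae of Lemma~\ref{prop:conjugacy classes of x_i-G2,p not 3}, to decide, for each class representative $y$, whether it is conjugate into the relevant subgroup and, if so, which conjugates occur. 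Since the inducing subgroup always contains the abelian normal subgroup $N=Y_4Y_5Y_6$, on which the relevant characters restrict to sums of orbit-conjugate linear characters (as recorded in \ref{some inertia groups N-G2}, \ref{irr. char. X1456-G2} and \ref{irr. char. X23456-G2}), each such sum collapses either to a product of $\vartheta$-values times a power of $q$, or to $0$ as soon as a nontrivial additive character sum $\sum_{t\in\mathbb{F}_q}\vartheta(ct)$ with $c\neq 0$ appears. The hypothesis $p>3$ enters exactly here: it makes the coefficients $2$ and $3$ occurring in Lemma~\ref{prop:conjugacy classes of x_i-G2,p not 3} invertible, so the substitutions are legitimate.

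I expect the principal difficulty to be organizational bookkeeping rather than any single hard step. For each conjugacy class $\{y(\dots,\hat s_k,\dots)\}$ one must identify the genuinely free parameters $\hat s_k$ and check that the value of every character is independent of them, which comes down to showing that the residual additive-character sums over $\mathbb{F}_q$ telescope to $q$ or vanish. Two families require a little care: distinguishing $\chi_{4,q}^{A_{15}^*,A_{23}}$ from $\chi_{5,q}^{A_{16}^*,A_{23},A_{13}}$ on the classes with representatives $y(0,0,t_3^*,0,t_5,0)$ and $y(0,t_2^*,0,t_4,t_5,0)$, and locating precisely the classes on which the degree-$q^2$ characters $\chi_{6,q^2}^{A_{17}^*,A_{12}}$ are nonzero, where one uses that $\mathrm{Ind}_H^U$ vanishes off the $U$-conjugates of $H=Y_1Y_4Y_5Y_6$, that $Y_6=Z(U)$ carries the central value $q^2\vartheta(A_{17}^*t_6)$, and then evaluates the remaining conjugate sums via Lemma~\ref{prop:conjugacy classes of x_i-G2,p not 3}. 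As a final safeguard I would verify that the completed Table~\ref{table:character table-G2,p not 3} is compatible with the supercharacter table of Table~\ref{table:supercharacter table-G2}: after the refinement of \ref{relations super- and conj. classes-G2}, each supercharacter must be the corresponding weighted sum over its family of the irreducible characters computed here, and the rows and columns of the character table must satisfy the orthogonality relations. These checks, together with the per-family evaluation, complete the proof.
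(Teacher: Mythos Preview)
Your proposal is correct and follows essentially the same approach as the paper: both evaluate the induced characters via the formula $\mathrm{Ind}_H^U\psi(y)=\frac{1}{|H|}\sum_{g:\,gyg^{-1}\in H}\psi(gyg^{-1})$ using the explicit conjugation formulae of Lemma~\ref{prop:conjugacy classes of x_i-G2,p not 3}, with the paper working out the $\mathfrak{F}_6$ case in detail (including the Gauss sum on $y_1(t_1^*)y_6(t_6)$) and declaring the rest similar. Your additional consistency checks against the supercharacter table and the orthogonality relations are not in the paper's proof but are sound safeguards.
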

%%%%%%%%%%%%%%%%%%%%%%%%%%%%%%%%%%%%%%%%%%%%%%%%%%%%%%%%%%%%%%%%%%%%%
%%%%%%%%%% table:supercharacter-G2
\begin{sidewaystable}
\caption{Character table of $G_2^{syl}(q)$ for $p>3$}
\label{table:character table-G2,p not 3}
%%%%%%%%%%%%%%
{%\tiny
\footnotesize
\begin{align*}
\renewcommand\arraystretch{1.5}
\begin{array}{l|ccccccccc}
%\hline
%%%%%%%%%%%%%%%%% row 0
×
& I_8
& \begin{array}{c}
  y_1(t_1^*)\\
  \cdot y_6(t_6)
  \end{array}
&
\begin{array}{c}
 y_2(t_2^*)\\
 \cdot y_4(t_4)\\
 \cdot y_5(t_5)
\end{array}
&
\begin{array}{c}
 y_2(t_2^*)\\
 \cdot y_1(t_1^*)
\end{array}
&
\begin{array}{c}
y_3(t_3^*)\\
\cdot y_5(t_5)
\end{array}
& y_4(t_4^*)
& y_5(t_5^*)
& y_6(t_6^*)\\
\hline
%%%%%%%%%%%%%%%%% row M0
\chi_{lin}^{0,0}
& 1
& 1
& 1 & 1 & 1 & 1 & 1 & 1\\
%%%%%%%%%%%%%%%%% row M_{12}
\chi_{lin}^{A_{12}^*,0}
& 1
& \vartheta (A_{12}^*t_1^*)
& 1
& \vartheta (A_{12}^*t_1^*)
& 1 & 1 & 1 & 1 \\
%%%%%%%%%%%%%%%%% row M_{23}
\chi_{lin}^{0,A_{23}^*}
& 1
& 1
& \vartheta (A_{23}^*t_2^*)
& \vartheta (A_{23}^*t_2^*)
& 1 & 1 & 1 & 1 \\
%%%%%%%%%%%%%%%%% row M_{12,23}
\chi_{lin}^{A_{12}^*,A_{23}^*}
& 1
& \vartheta (A_{12}^*t_1^*)
& \vartheta (A_{23}^*t_2^*)
& \begin{array}{l}
\vartheta (A_{12}^*t_1^*)\\
\cdot \vartheta (A_{23}^*t_2^*)
\end{array}
& 1 & 1 & 1 & 1
\\
%%%%%%%%%%%%%%%%% row M_{13}
\chi_{3,q}^{A_{13}^*}
& q
& 0
& 0
& 0
& \begin{array}{l}
  \vartheta(-A_{13}^*t_{3}^*)\\
  \cdot q
  \end{array}
& q & q & q
\\
%%%%%%%%%%%%%%%%% row M_{14,15}
\chi_{4,q}^{A_{15}^*,A_{23}}
& q
& 0
& \begin{smallmatrix}
    \sum_{r_1\in \mathbb{F}_q}
    \vartheta(-2A_{15}^*t_2^*r_1^2)\\
    \cdot \vartheta(2A_{15}^*t_4)
    \cdot \vartheta(A_{23}t_2^*)
  \end{smallmatrix}
& 0
& 0
& \begin{array}{l}
\vartheta(2{A_{15}^*}t_4^*)\\
\cdot  q
  \end{array}
& q & q
\\
%%%%%%%%%%%%%%%%% row M_{16}
\chi_{5,q}^{A_{16}^*,A_{23},A_{13}}
& q
& 0
& \begin{smallmatrix}
    \sum_{r_1\in \mathbb{F}_q}
    \vartheta(A_{13}t_2^*r_1\\
    -A_{16}^*t_2^*r_1^3+3A_{16}^*t_4r_1)\\
    \cdot \vartheta(A_{16}^*t_5)
    \cdot \vartheta(A_{23}t_2^*)
  \end{smallmatrix}
& 0
& \begin{smallmatrix}
    \sum_{r_1\in \mathbb{F}_q}
    \vartheta(3A_{16}^*t_3^*r_1^2)\\
    \cdot \vartheta(A_{16}^*t_5)
    \cdot \vartheta(-A_{13}t_3^*)
  \end{smallmatrix}
& 0
& \begin{array}{l}
  \vartheta(A_{16}^*t_{5}^*)\\
  \cdot q
  \end{array}
& q
\\
%%%%%%%%%%%%%%%%% row M_{17}
\chi_{6,q^2}^{A_{17}^*,A_{12}}
& q^{2}
& \begin{smallmatrix}
    \vartheta(A_{17}^*t_6)
    \cdot \vartheta(A_{12}t_1^*)\\
    \cdot \sum_{r_3\in \mathbb{F}_q}
    \vartheta(-3A_{17}t_1^*r_3^2)
  \end{smallmatrix}
& 0 & 0
& 0 & 0 & 0
& \begin{array}{l}
  \vartheta(A_{17}^*t_{6}^*)\\
  \cdot q^{2}
  \end{array}
\end{array}
\end{align*}
}
where
 the elements of the 1st column (i.e. the row headers)
 are the complete pairwise orthogonal
  irreducible characters of $G_2^{syl}(q)$
  (see Proposition \ref{construction of irr. char. of G2}).
 %%%%%%%%%
 The entries of the 1st row (i.e. the column headers)
 are all of the representatives of conjugacy classes of $G_2^{syl}(q)$
 (see Proposition \ref{prop:conjugacy classes-G2,p not 3}).
\end{sidewaystable}
%%%%%%%%%%%%%%%%%

 \begin{proof}
 Let $y:=y(t_1,t_2,t_3,t_4,t_5,t_6)\in U=G_2^{syl}(q)$.
  We shall determine the values of $\chi_{6,q^2}^{A_{17}^*,A_{12}}$
  for all $A_{17}^*\in \mathbb{F}_q^*$ and $A_{12}\in \mathbb{F}_{q}$.
 %%%%%%%%%%%%%%%%%
 We use the notations of (5) of Proposition \ref{construction of irr. char. of G2},
 then
 \begin{align*}
 \chi_{6,q^2}^{A_{17}^*,A_{12}}\left( y \right)
 =& \left(\mathrm{Ind}_H^U \tilde{\chi}_{6,q^2}^{A_{17}^*,A_{12}}\right)(y)
 = \frac{1}{|H|}
     \sum_{\substack{g\in {U}\\g\cdot y\cdot g^{-1}\in H} }
     \tilde{\chi}_{6,q^2}^{A_{17}^*,A_{12}}(g\cdot y\cdot g^{-1})\\
 =& \frac{1}{|H|}
     \sum_{\substack{g\in {U}\\g\cdot y\cdot g^{-1}\in H} }
     \bar{\chi}_{6,q^2}^{A_{17}^*,A_{12}}({Y_4Y_5}\cdot{(g y g^{-1})}).
 \end{align*}
 Thus,
 \begin{align*}
 \chi_{6,q^2}^{A_{17}^*,A_{12}}\left(y_2(t_2^*)y_4(t_4)y_5(t_5)\right)
 = \chi_{6,q^2}^{A_{17}^*,A_{12}}\left(y_2(t_2^*)y_1(t_1^*)\right)
 =\chi_{6,q^2}^{A_{17}^*,A_{12}}\left(y_3(t_3^*)y_5(t_5)\right)
 \stackrel{gyg^{-1}\notin H}{=}0,
 \end{align*}
 %%%%%%%%%%%%%%%%%%%%
 and
 \begin{align*}
  & \chi_{6,q^2}^{A_{17}^*,A_{12}}\left( y_4(t_4)y_5(t_5)y_6(t_6) \right)\\
 =& \frac{1}{|H|}
     \sum_{\substack{g:=y(r_1,r_2,r_3,r_4,r_5,r_6)\in {U}
             \\g\cdot  y_4(t_4)y_5(t_5)y_6(t_6)\cdot g^{-1}\in H} }
     \tilde{\chi}_{6,q^2}^{A_{17}^*,A_{12}}(g\cdot y_4(t_4)y_5(t_5)y_6(t_6)\cdot g^{-1})\\
 {=}
  & \frac{1}{|H|}
     \sum_{r_1,r_2,r_3,r_4,r_5,r_6\in \mathbb{F}_{q}}
     \tilde{\chi}_{6,q^2}^{A_{17}^*,A_{12}}
     (y_4(t_4)y_5(t_5+3r_1t_4)
      y_6(t_6+r_2t_5+3r_1r_2t_4+3r_3t_4))\\
 = & \frac{1}{|H|}
     \sum_{r_1,r_2,r_3,r_4,r_5,r_6\in \mathbb{F}_{q}}
     \bar{\chi}_{6,q^2}^{A_{17}^*,A_{12}}
     (\bar{y}_6\left(t_6+r_2t_5+3r_1r_2t_4+3r_3t_4\right)) \\
 = & \frac{1}{q}
     \sum_{r_1,r_2,r_3\in \mathbb{F}_q }
     \bar{\chi}_{6,q^2}^{A_{17}^*,A_{12}}
     (\bar{y}_6\left(t_6+r_2t_5+3r_1r_2t_4+3r_3t_4\right)).
 \end{align*}
Hence
 $\chi_{6,q^2}^{A_{17}^*,A_{12}}(I_8)=q^2$,
 $ \chi_{6,q^2}^{A_{17}^*,A_{12}}(y_4(t_4^*))
  =\chi_{6,q^2}^{A_{17}^*,A_{12}}(y_5(t_5^*))
  =0$,
 $ \chi_{6,q^2}^{A_{17}^*,A_{12}}(y_6(t_6^*))
   =q^2\cdot \vartheta(A_{17}^*t_6^*)$,
 and
 \begin{align*}
  &\chi_{6,q^2}^{A_{17}^*,A_{12}}\left( y_1(t_1^*)y_6(t_6) \right)
 = \frac{1}{|H|}
     \sum_{\substack{g:=y(r_1,r_2,r_3,r_4,r_5,r_6)\in {U}\\g\cdot y_1(t_1^*)y_6(t_6)\cdot g^{-1}\in H} }
     \tilde{\chi}_{6,q^2}^{A_{17}^*,A_{12}}(g\cdot y_1(t_1^*)y_6(t_6)\cdot g^{-1})\\
 {=}
  & \frac{1}{|H|}
     \sum_{\substack{r_2=0\\r_1,r_3,r_4,r_5,r_6\in \mathbb{F}_q} }
     \bar{\chi}_{6,q^2}^{A_{17}^*,A_{12}}
     (\bar{y}_1(t_1^*)
      \bar{y}_6(t_6-3t_1^*r_3^2))
 =  \sum_{r_3\in \mathbb{F}_{q}}
     \bar{\chi}_{6,q^2}^{A_{17}^*,A_{12}}
     (\bar{y}_1(t_1^*)
      \bar{y}_6(t_6-3t_1^*r_3^2))\\
 = & \vartheta(A_{12}t_1^*+A_{17}^*t_6)
    \cdot \sum_{r_3\in \mathbb{F}_{q}}{\vartheta
    (-3A_{17}^*t_1^*r_3^2)}.
 \end{align*}
 Thus we get all of the values of $\chi_{6,q^2}^{A_{17}^*,A_{12}}$.
 All the other values are determined by similar calculation.
 \end{proof}

%%%%%%%%%%%%%%%%%%%%%%%%
%%%%%%%%%%%%%%%%%%%%%%%%%%%%%%%%%%%%%%%%%%%%%%%%%%%%%%%%%%%%%%%%%%%%%
\begin{Proposition}[Supercharacters and irreducible characters]\label{relation: superchar. and irr.-G2}
The following relations between supercharacters and irreducible characters of $G_2^{syl}(q)$
are obtained.
\begin{align*}
 &\Psi_{M{(A_{17}^*e_{17})}}= q \sum_{A_{12} \in \mathbb{F}_{q}}{\chi_{6,q^2}^{A_{17}^*,A_{12}}},
\qquad
\Psi_{M{(A_{16}^*e_{16})}}= q \sum_{A_{13},A_{23}\in \mathbb{F}_{q}}
                             {\chi_{5,q}^{A_{16}^*,A_{23},A_{13}}},\\
%%%%%%%%%
&\Psi_{M{({A_{15}^*}(e_{14}+e_{15}))}}= q \sum_{A_{23}\in \mathbb{F}_{q}}
                                               {\chi_{4,q}^{A_{15}^*,A_{23}}},
\quad
\Psi_{M{(A_{13}^*e_{13})}}= \chi_{3,q}^{A_{13}^*},
%%%%%%%%%
\quad
\Psi_{M{(A_{12}e_{12}+A_{23}e_{23})}}= \chi_{lin}^{A_{12},A_{23}}.
\end{align*}
\end{Proposition}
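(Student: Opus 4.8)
The plan is to prove the five identities by comparing both sides on a complete set of conjugacy class representatives, using the supercharacter table (Table \ref{table:supercharacter table-G2}), the character table (Table \ref{table:character table-G2,p not 3}), and the dictionary between superclasses and conjugacy classes from \ref{relations super- and conj. classes-G2}. Since two complex characters of $U$ coincide as soon as they agree on every conjugacy class, and since each superclass $C_i(t_i^*)$ is a disjoint union of conjugacy classes, it suffices to evaluate each side of each identity on the eight families of representatives listed in Table \ref{table:character table-G2,p not 3}: the left-hand side is read off from Table \ref{table:supercharacter table-G2} via \ref{relations super- and conj. classes-G2} (a supercharacter being constant on a superclass, hence on every conjugacy class contained in it), and the right-hand side is computed from Table \ref{table:character table-G2,p not 3}.

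First I would dispose of the two easy identities. The module $M(A_{12}e_{12}+A_{23}e_{23})=\mathbb{C}[A_{12}e_{12}+A_{23}e_{23}]$ is one-dimensional, hence $\Psi_{M(A_{12}e_{12}+A_{23}e_{23})}$ is a linear character; its kernel contains $Y_3Y_4Y_5Y_6$ and its values on $Y_1,Y_2$ match those of $\chi_{lin}^{A_{12},A_{23}}$, and comparing with the linear block of Table \ref{table:character table-G2,p not 3} gives equality. For $\Psi_{M(A_{13}^*e_{13})}$ both sides have degree $q$ and are irreducible (the modules of family $\mathfrak{F}_3$ are irreducible by \ref{G2-orth.}(3)); both vanish off $C_0\cup C_3\cup C_4\cup C_5\cup C_6$, take the value $q\,\vartheta(-A_{13}^*t_3^*)$ on $C_3(t_3^*)$ and the value $q$ on $C_4,C_5,C_6$ and on $I_8$, so they agree.

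The remaining three identities, for $\mathfrak{F}_6$, $\mathfrak{F}_5$ and $\mathfrak{F}_4$, carry the actual content. For each I would first check that degrees match (respectively $q\cdot q\cdot q^2=q^4$, $q\cdot q^2\cdot q=q^4$, $q\cdot q\cdot q=q^3$, agreeing with the dimensions of the supermodules in Table \ref{Table: hook sep.-G2}), and then evaluate the scaled sum $q\sum\chi$ on each conjugacy class representative using Table \ref{table:character table-G2,p not 3}. The only tool needed is the orthogonality relation $\sum_{x\in\mathbb{F}_q}\vartheta(ax)=q\,\delta_{a,0}$: summing the tabulated irreducible values over the auxiliary parameters ($A_{12}$ for $\mathfrak{F}_6$; $A_{13},A_{23}$ for $\mathfrak{F}_5$; $A_{23}$ for $\mathfrak{F}_4$) collapses all the off-diagonal contributions, leaving exactly the entries of Table \ref{table:supercharacter table-G2}. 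The main obstacle — where one must be careful — is the columns $C_2(t_2^*)$ and $C_3(t_3^*)$ for $\mathfrak{F}_5$, whose entries already contain an inner Gauss-type sum $\sum_{r_1}\vartheta(\cdots)$ in which an auxiliary parameter occurs linearly (for instance $\sum_{r_1}\vartheta(A_{13}t_2^*r_1-A_{16}^*t_2^*r_1^3+3A_{16}^*t_4r_1)$ in the $C_2$ column): here one must sum over $A_{13}$ first, which forces $r_1=0$ since $t_2^*\neq 0$, and only then sum over $A_{23}$ to get $0$. Carrying this bookkeeping through all eight columns for each of the three characters completes the proof.

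As an alternative avoiding the tables, one can argue representation-theoretically: by \ref{notation:supermodules and G-module-G2} one has $M(A_{17}^*e_{17})=\mathrm{Res}^G_U\mathbb{C}\mathcal{O}_G([A_{17}^*e_{17}])$, while the proof of \ref{construction of irr. char. of G2} yields $\sum_{A_{12}\in\mathbb{F}_q}\tilde{\chi}_{6,q^2}^{A_{17}^*,A_{12}}=\mathrm{Ind}_{Y_4Y_5Y_6}^{Y_1Y_4Y_5Y_6}\lambda^{A_{17}^*,0,0}$, so that $q\sum_{A_{12}}\chi_{6,q^2}^{A_{17}^*,A_{12}}=q\,\mathrm{Ind}_{Y_4Y_5Y_6}^{U}\lambda^{A_{17}^*,0,0}$; it then remains to identify this induced character with the character of the orbit-module decomposition of $M(A_{17}^*e_{17})$ in \ref{notation:supermodules-G2}, and similarly (using the analogous descriptions of $\chi_{5,q}^{\ast}$ and $\chi_{4,q}^{\ast}$) for $\mathfrak{F}_5$ and $\mathfrak{F}_4$. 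Either route works; I expect the table comparison to be the most transparent to write down since every ingredient is already computed.
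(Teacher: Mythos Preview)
The paper states this proposition without proof: it is left as an immediate consequence of the supercharacter table (Table~\ref{table:supercharacter table-G2}) and the character table (Table~\ref{table:character table-G2,p not 3}), both of which are fully computed by this point. Your proposal --- evaluating each side on the conjugacy class representatives via the two tables and the orthogonality relation $\sum_{x\in\mathbb{F}_q}\vartheta(ax)=q\,\delta_{a,0}$ --- is exactly the intended verification, and the bookkeeping you outline (in particular summing over $A_{13}$ first in the $C_2$ column for $\mathfrak{F}_5$ to force $r_1=0$ before the $A_{23}$-sum kills the term) is correct.
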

%%%%%%%%%%%%%%%%%%%%%%%%%%%%%%%%%%%%
% %%
By Propositions
\ref{prop:conjugacy classes-G2,p not 3},
\ref{construction of irr. char. of G2}
and \ref{prop:character table-G2,p not 3},
we obtain the number of the conjugacy classes of $G_2^{syl}(q)$
and determine the numbers of the complex irreducible characters of degree $q^c$ with $c\in \mathbb{N}$
(also see \cite[Table 1]{GR2009} and \cite[Table 3]{GMR2016}).
%%%%%%%%%%%%%%%%%%%%%%%%%%%%%%%%%%%%%%%%%%%%%%%%%%%%%%%%%%%%%%%%%%%%%
%%%%%%%%% number of irreducible characters
Let
$\#\mathrm{Irr}_c$ be the number of irreducible characters of $G_2^{syl}(q)$
of dimension $q^c$ with $c\in \mathbb{N}$.
Then
$\#\mathrm{Irr}_2  = q^2-q = (q-1)^2+(q-1),{\ }
\#\mathrm{Irr}_1  = q^3-1 = (q-1)^3+3(q-1)^2+3(q-1),{\ }
\#\mathrm{Irr}_0  = q^2   = (q-1)^2+2(q-1)+1$
and
$ \# \{\text{Irreducible Characters of } G_2^{syl}(q) \}
 = \# \{\text{Conjugacy Classes of } G_2^{syl}(q)\}
 = q^3+2q^2-q-1
 = (q-1)^3+5(q-1)^2+6(q-1)+1.$
Hence,
if we consider the analogue of Higman's conjecture,
Lehrer's conjecture and Isaacs' conjecture of $A_n(q)$
for $G_2^{syl}(q)$,
then the conjectures are true for $G_2^{syl}(q)$.

%%%%%%%%%%%%%%%%%%%%%%%%%%%%%%%%%%%
\begin{Comparison}[Irreducible characters]
\label{com:character table-G2}
%%%%%%%%
For $G_2^{syl}(q)$,
Goodwin, Mosch and R{\"o}hrle
\cite{GMR2016}
obtained an algorithm for the adjoint orbits
and determined the numbers of the complex irreducible characters of the fixed degrees.
%%%%%%
Except the trivial character $\chi_{lin}^{0,0}$
and the linear characters $\{\chi_{lin}^{A_{12}^*,A_{23}^*}\mid A_{12}^*,A_{23}^*\in \mathbb{F}_q^*\}$,
Himstedt, Le and Magaard \cite[\S8.3]{HLM2016}
determined all the other irreducible characters of $G_2^{syl}(q)$
by parameterizing {\it midafis}.
%%%%%%%%%%%%%%
We construct all of the irreducible characters for $G_2^{syl}(q)$ by Clifford theory
and calculate the values of the irreducible characters on conjugacy classes (see Table \ref{table:character table-G2,p not 3}),
which is an adaption of that for ${{^3D}_4^{syl}}(q^3)$ (see \cite[\S4]{sun1}).
\end{Comparison}

% %%%%%%%%%%%%%%%%%%%%%%%%%%%%%%%%%%%

%-%-%-%-%-%-%-%-%-%-%-%-%-%-%-%-%-%-%-%-%
%                                       %
%           Acknowledgements            %
%                                       %
%-%-%-%-%-%-%-%-%-%-%-%-%-%-%-%-%-%-%-%-%

%%%%%%%%%%%%%%%%%%%%%%%%%%%%%%%%%%%%%%%%%%%%%%%%%%%%%%%%%%%%%%%%%%%%%%%%%%%%%%%%%%%%%%%%%%%%%%%%%%%%
%%%%%%%%%%%%%%%%%%%%%%%%%%%%%%%%%%%%%%%%%%%%%%%%%%%%%%%%%%%%%%%%%%%%%%%%%%%%%%%%%%%%%%%%%%%%%%%%%%%%
\section*{Acknowledgements}
This paper is a part of my PhD thesis \cite{sunphd} at the University of Stuttgart, Germany,
so I am deeply grateful to my supervisor Richard Dipper.
I also would like to thank  Jun Hu, Markus Jedlitschky
and Mathias Werth
for the helpful discussions and valuable suggestions.

%-%-%-%-%-%-%-%-%-%-%-%-%-%-%-%-%-%-%-%-%
%                                       %
%            Bibliography               %
%                                       %
%-%-%-%-%-%-%-%-%-%-%-%-%-%-%-%-%-%-%-%-%

%%%%%%%%%%%%%%%%%%%%%%%%%%%%%%%%%%%%%%%%%%%%%%%%%%%%%%%%%%%%%%%%%%%%%%%%%%%%%%%%%%%%%%%%%%%%%%%%%%%%
%%%%%%%%%%%%%%%%%%%%%%%%%%%%%%%%%%%%%%%%%%%%%%%%%%%%%%%%%%%%%%%%%%%%%%%%%%%%%%%%%%%%%%%%%%%%%%%%%%%%
%\bibliographystyle{alpha}
\bibliographystyle{abbrv}
\bibliography{bibliographyG2supercharacterandirr}

\end{document}